\documentclass[11pt]{article}
\usepackage{amsfonts}
\usepackage{amssymb,amsmath,amsthm, amsfonts}
\usepackage[colorlinks,linkcolor=blue,anchorcolor=blue,citecolor=blue]{hyperref}
\newcommand{\esssup}{\mathop{\mathrm{esssup}}}
\newcommand{\essinf}{\mathop{\mathrm{essinf}}}

\textwidth=17cm \textheight=23cm
\parindent=1cm
\oddsidemargin=-5mm \evensidemargin=-5mm \topmargin=-20mm
\baselineskip=20pt

 \allowdisplaybreaks

\newtheorem{theorem}{Theorem}[section]
\newtheorem{lemma}[theorem]{Lemma}
\newtheorem{definition}[theorem]{Definition}
\newtheorem{proposition}[theorem]{Proposition}
\newtheorem{corollary}[theorem]{Corollary}
\newtheorem{remark}[theorem]{Remark}

\numberwithin{equation}{section}

\begin{document}

\title{Nash equilibrium payoffs for stochastic  differential games
with jumps and  coupled nonlinear cost functionals }

\author{Qian Lin
\\
{\small  Center for Mathematical Economics, Bielefeld University,
Postfach 100131, 33501 Bielefeld,   Germany}\\{\small {\color{blue}
linqian1824@163.com}}}

 \maketitle

{\bf Abstract:}\hskip2mm In this paper we investigate Nash
equilibrium payoffs for   two-player nonzero-sum stochastic
differential games whose cost functionals are defined  by a system
of coupled backward stochastic differential equations. We obtain an
existence theorem and a characterization theorem for Nash
equilibrium payoffs. For this end the problem is described
equivalently by a stochastic differential game with jumps. But,
however, unlike the work by Buckdahn, Hu and Li \cite{BHL2010}, here
the important tool of a dynamic programming principle for stopping
times has to be developed. Moreover, we prove that the lower and
upper value functions
 are the viscosity solutions of the associated coupled systems of PDEs of  Isaacs type,
respectively.  Our results generalize those by Buckdahn,
Cardaliaguet and Rainer \cite{BCR2004} and by Lin \cite{L2011}.

\vskip2mm

{{\bf Keywords:}\hskip2mm \ stochastic differential
  game; Nash equilibrium payoff; backward stochastic
differential equation; dynamic programming principle; dynamic programming principle for
 stopping times;  coupled systems of Isaacs equations.} \\

{{\bf AMS Subject classification:} 49L25, 60H10, 60H30, 90C39,
91A15, 91A23 }
\section{Introduction}
 The objective of this paper is to study Nash equilibrium
payoffs for  two-player nonzero-sum stochastic differential games
(SDGs, for short) with jumps and  coupled nonlinear cost
functionals. Since the pioneering paper of Fleming and Souganidis
\cite{FS1989}, SDGs have been studied by many authors. For instance,
recently, Buckdahn and Li \cite{BL2006} investigated zero-sum
two-player SDGs with nonlinear cost functionals  using a backward
stochastic differential equation (BSDE, for short) approach.  Unlike
Fleming and Souganidis \cite{FS1989}  they allow the controls to
depend on the past and prove with  a Girsanov transformation
argument that the priori random value functions are deterministic.
Buckdahn, Hu and Li \cite{BHL2010} extended the approach developed
in \cite{BL2006} to SDGs with jumps, while Biswas \cite{B2010}
investigated two-player zero-sum  SDGs with jump diffusion in the
framework  of Fleming and Souganidis \cite{FS1989}. The reader
interested in other approaches can be also referred to Hamad\`{e}ne
\cite{H1998}  and the references therein.

In nonzero-sum SDGs,  Hamad\`{e}ne, Lepeltier and Peng
\cite{HLP1997}  obtained the existence of a Nash equilibrium point
for nonzero sum SDGs with the help of BSDEs. Bessoussan and Frehse
\cite{BF2000} obtained   Nash equilibrium payoffs for SDGs by using
parabolic partial differential equations. But both methods rely
heavily on the assumption of the non degeneracy diffusion of the
coefficient and it is independent of controls. Buckdahn,
Cardaliaguet and Rainer \cite{BCR2004} got rid of the strong
assumptions on the diffusion coefficient. Lin \cite{L2011}
generalizes  the result in  \cite{BCR2004} by investigating  Nash
equilibrium payoffs for nonzero-sum SDGs with nonlinear cost
functionals.    Lasry and Lions \cite{LL2007} studied mean field
games, i.e., stochastic control of many agent systems where agents
are coupled via their costs.
 Motivated by the above results, we investigate  Nash equilibrium payoffs for SDGs
with coupled nonlinear cost functionals, i.e., the both players do
not only influence mutually their cost functionals in the choice of
their control processes, but also their gain processes.

In \cite{L2011}, the cost functionals of the both  players are
defined by a system of decoupled BSDEs, the both players  influence
mutually their cost functionals only by the choice of their control
processes.  An open problem was that how to study SDGs whose cost
functionals are defined by two coupled BSDEs, i.e, SDGs with coupled
nonlinear cost functionals. This is the objective of the paper.

 Let us be more precise now:  The dynamics of our two-player nonzero-sum SDG is given
 by the process  $ N^{t,i}$  and  the following doubly controlled
stochastic system:
  \begin{equation*}\label{}
  \left \{
  \begin{array}{llll}
  dX^{t,x ;u, v}_s & = & b(s,X^{t,x; u,v}_s, u_s, v_s) ds +
          \sigma(s,X^{t,x; u,v}_s, u_s, v_s) dB_s, \hskip1cm  s\in [t,T],\\
   X^{t,x ;u, v}_t  & = & x,
   \end{array}
   \right.
 \end{equation*}
where   $\{B_t\}_{t\geq 0}$ is a $d$-dimensional standard Wiener
process,  $\{N_t\}_{t\geq 0}$ is a Poisson process independent of
$\{B_t\}_{t\geq 0}$, and   $\mathbb{F}$ is the filtration generated
by $B$ and  $ N$. For $0\leq s\leq t\leq T, i=1,2,$   we let  $
N_{s}^{t,i}=m(i+N_{s}-N_{t})$, where $m(j)=1,$ if $j$ is odd, and
$m(j)=2,$ if $j$ is even.   The control  $u=\{u\}_{s\in[t,T]}$
(resp., $v=\{v\}_{s\in[t,T]}$) is supposed to be
$\mathbb{F}$-predictable and takes its values in a compact metric
space $U$ (resp., $V$). The set of these controls is denoted by
$\mathcal {U}_{t,T}$ (resp., $\mathcal {V}_{t,T}$). We shall give
its assumptions on $b$ and $\sigma$  in the next section.

We define our nonlinear cost functionals by introducing a system of
two coupled BSDEs:
    \begin{eqnarray}\label{s1}
   \left \{\begin{array}{rcl}
   -d\ ^{1}\widetilde{Y}_s & = & \widetilde{f}_{1}(s,X^{t,x; u, v}_s, \ ^{1}\widetilde{Y}_s,
   \ ^{2}\widetilde{Y}_s+\ ^{2}\widetilde{H}_s,\
   ^{1}\widetilde{Z}_s,u_s, v_s) ds-\ ^{1}\widetilde{Z}_s dB_s-
   \ ^{1}\widetilde{H}_s d N_{s},\\
      -d\ ^{2}\widetilde{Y}_s & = & \widetilde{f}_{2}(s,X^{t,x; u, v}_s, \ ^{1}\widetilde{Y}_s
      +\ ^{1}\widetilde{H}_s, \ ^{2}\widetilde{Y}_s,\
      ^{2}\widetilde{Z}_s,u_s, v_s) ds
   -\ ^{2}\widetilde{Z}_s dB_s-\ ^{2}\widetilde{H}_s d N_{s},\\
       ^{1}\widetilde{Y}_T  & = & \Phi_{1} (X^{t,x; u, v}_T),\
       ^{2}\widetilde{Y}_T  =  \Phi_{2} (X^{t,x; u,
       v}_T),\  s\in[t,T].
   \end{array}\right.
   \end{eqnarray}
 The assumptions on $\Phi_{i}$ and $ \widetilde{f}_{i},$
 $i=1,2,$ will be given in the next section. The  cost
functional for the $i^{th}$ player, $i=1,2,$ is defined by
\begin{equation*}\label{}
J_{i}(t,x;u,v):= \ ^{i}\widetilde{Y}^{t,x;u,v}_t,\qquad (t,x)\in
[0,T]\times\mathbb{R}^n,
\end{equation*}
where $(\ ^{i}\widetilde{Y}^{t,x;u,v},\
^{i}\widetilde{Z}^{t,x;u,v},\ ^{i}\widetilde{H}^{t,x;u,v}), i=1,2,$
is the unique solution of (\ref{s1}). Note   the special form of
$\widetilde{f}_{1}$ and $\widetilde{f}_{2}$, which is related with
our approach. The general case of $ \widetilde{f}_{i}$ not depending
on  $^{i}\widetilde{H}$ is still open.

In our framework, in opposite to zero-sum SDGs,  nonzero-sum SDGs
are of the type of "NAD strategy against NAD strategy": an NAD
strategy is  a measurable, nonanticipative  mapping $\alpha:\mathcal
{V}_{t,T}\rightarrow \mathcal {U}_{t,T}$ for the $1^{th}$ player
(resp., $\beta:\mathcal {U}_{t,T}\rightarrow \mathcal {V}_{t,T}$ for
the $2^{nd}$ player) and has a  delay (The definition will be
introduced in next section). The set of all such NAD strategies for
the $1^{th}$ player is denoted by $\mathcal {A}_{t,T}$ (resp., for
the $2^{nd}$ player   $\mathcal {B}_{t,T}$).

For $(\alpha,\beta)\in\mathcal {A}_{t,T}\times \mathcal {B}_{t,T}$,
 there exists a unique couple of
controls $(u,v)\in\mathcal {U}_{t,T}\times\mathcal {V}_{t,T}$ such
that  $(\alpha(v),\beta(u))=(u,v)$. This allows  to define
$J_{i}(t,x;\alpha,\beta):=J_{i}(t,x;u,v),$   as well as  the value
functions of the two-player zero-sum SDG associated with
$J_{i},i=1,2,$ the lower value function
\begin{eqnarray*}\label{}
W_{i}(t,x):=\esssup_{\alpha\in\mathcal {A}_{t,T}}
\essinf_{\beta\in\mathcal {B}_{t,T}} J_{i}(t,x;\alpha,\beta),
\end{eqnarray*}
and  the upper value function
\begin{eqnarray*}\label{}
U_{i}(t,x):=\essinf_{\beta\in\mathcal {B}_{t,T}}
\esssup_{\alpha\in\mathcal {A}_{t,T}} J_{i}(t,x;\alpha,\beta),
i=1,2.
\end{eqnarray*}
We note that, since the  BSDEs (\ref{s1}) are coupled, the values of
the two-player zero-sum SDGs are also coupled.

 In our approach we need a
probabilistic interpretation of coupled systems of
Hamilton-Jacobi-Bellman-Isaacs equations: A first result of our
paper is that   the value functions $U=(U_{1},U_{2})$ and
$W=(W_{1},W_{2})$ are  viscosity solutions of the following coupled
Isaacs equations:
 \begin{eqnarray*}
\left\{
\begin{array}{rcl}
\dfrac{\partial }{\partial t} U_{i}(t,x) +  H_{i}^{+}(t, x,
U_{1}(t,x), U_{2}(t,x),DU_{i}(t,x), D^2U_{i}(t,x))&=&0,
 \quad (t,x)\in [0,T)\times {\mathbb{R}}^n,\\
 U_{i}(T,x)&=&\Phi_{i} (x),  \ i=1,2,
 \end{array}
\right.
\end{eqnarray*}
and
\begin{eqnarray*}
\left\{
\begin{array}{rcl}
\dfrac{\partial }{\partial t} W_{i}(t,x) +  H_{i}^{-}(t, x,
W_{1}(t,x),W_{2}(t,x),DW_{i}(t,x), D^2W_{i}(t,x))&=&0,
\quad (t,x)\in [0,T)\times {\mathbb{R}}^n,\\
 W_{i}(T,x)&=&\Phi_{i}(x), \ i=1,2,
 \end{array}
\right.
\end{eqnarray*}
respectively,  where, for $ (t, x, y_{1}, y_{2}, p, A, u,v)\in [0,
T]\times{\mathbb{R}}^n\times\mathbb{R}\times \mathbb{R}\times
\mathbb{R}^{d}\times \mathbb{S}^{d}\times U\times V$,
\begin{eqnarray*}
H_{i}(t, x, y_{1}, y_{2}, p, A, u,v)&=&
\dfrac{1}{2}tr(\sigma\sigma^{T}(t, x, u, v) A)+ p b(t, x, u, v)+
\widetilde{f}_{i}(t, x, y_{1}, y_{2},  p \sigma(t, x, u, v), u, v),
\end{eqnarray*}
and
\begin{eqnarray*}
H_{i}^-(t, x, y_{1}, y_{2}, p, A)&=&\sup_{u \in U}\inf_{v \in
V}H_{i}(t, x, y_{1}, y_{2}, p, A, u,v),\\
 H_{i}^+(t, x, y_{1}, y_{2}, p, A)&=& \inf_{v \in V}\sup_{u \in U}H_{i}(t, x,
y_{1}, y_{2}, p, A, u,v).
\end{eqnarray*}
A crucial step in the proof of these results is to obtain  dynamic
programming principles for stopping times: i.e.,
 for any stopping time $\tau$ with
$0\leq t<\tau \leq T,\ x\in {\mathbb{R}}^n, i=1,2,$
\begin{eqnarray*}
W_{i}(t,x) &=&\esssup_{\alpha \in {\mathcal{A}}_{t,
\tau}}\essinf_{\beta \in {\mathcal{B}}_{t, \tau}}\
^{i}G^{t,x;\alpha,\beta}_{t,\tau} [W_{N^{t,i}_{\tau}}(\tau,
X^{t,x;\alpha,\beta}_{\tau})],\\
 U_{i}(t,x)&=&\essinf_{\beta
\in {\mathcal{B}}_{t, \tau}}\esssup_{\alpha \in {\mathcal{A}}_{t,
\tau}}\ ^{i}G^{t,x;\alpha,\beta}_{t,\tau} [U_{N^{t,i}_{\tau}}(\tau,
X^{t,x;\alpha,\beta}_{\tau})],
\end{eqnarray*}
where $^{i}G^{t,x;\alpha,\beta}_{t,\tau} [\cdot]$ is a backward
stochastic semigroup (the precise definition as well as those of
${\mathcal{A}}_{t, \tau}$ and  ${\mathcal{B}}_{t, \tau}$ will be
given later).

The most important part of the paper is dedicated to the  Nash
equilibrium payoffs of our games. A couple
$(e_{1},e_{2})\in\mathbb{R}^{2}$ is called a Nash equilibrium payoff
at the point $(t,x)$,  if for any $\varepsilon>0$, there exists
$(\alpha_{\varepsilon},\beta_{\varepsilon})\in \mathcal
{A}_{t,T}\times \mathcal {B}_{t,T}$ such that, for all
$(\alpha,\beta)\in \mathcal {A}_{t,T}\times \mathcal {B}_{t,T},$
\begin{eqnarray*}
J_{1}(t,x;\alpha_{\varepsilon},\beta_{\varepsilon})\geq
J_{1}(t,x;\alpha,\beta_{\varepsilon})-\varepsilon,\
J_{2}(t,x;\alpha_{\varepsilon},\beta_{\varepsilon})\geq
J_{2}(t,x;\alpha_{\varepsilon},\beta)-\varepsilon,\ \mathbb{P}-a.s.,
\end{eqnarray*}
and
\begin{eqnarray*}
|\mathbb{E}[J_{j}(t,x;\alpha_{\varepsilon},\beta_{\varepsilon})]-e_{j}|\leq
\varepsilon, \ j=1,2.
\end{eqnarray*}

Our model has some practical backgrounds in financial markets.  For
example, let us consider the following problem in a financial
market. There are two companies (players) in a financial market.  A
company 1 has invested money in bonds (paying dividends) of company
2, and company 2 in bonds of company 1, where the dividends are
payed in proportion with the gain of the corresponding company.
Therefore, the dynamics gain of company 1 has as one element of the
running gain the dividends payed by company 2, and company 2 has as
one element of the running gain the dividends payed by company 1.
Both companies try to maximize their payoff which can be different.
Since the financial market is not so quick in reacting to the moves
of both companies, both companies have to use  strategies with
delays. The above described problem  is a nonzero-sum stochastic
differential game.

 The main results of our paper concern  the  existence  and a
characterization of  Nash equilibrium payoffs for our games: We
first obtain the characterization of Nash equilibrium payoffs (see
Theorem \ref{Jt6}), and then get the existence of  a Nash
equilibrium payoff  (see Theorem \ref{Jt2}).

Let us explain what is new and  which difficulties are related with.
In comparison with \cite{BCR2004} and \cite{L2011}, the first
difficulty was to get a dynamic programming principle for a system
of two coupled BSDEs. To overcome this difficulty,   we associate
with this  system an auxiliary one  which cost functionals coincide
with ours.
   This leads to the new problem: we need  a  dynamic
programming principle  for this system not only for deterministic
but also for stopping times. The method used  in Buckdahn and Hu
\cite{BH2010} to get for control problems the dynamic programming
principle for stopping times is not applicable anymore, because in
the framework of SDGs the monotonicity argument used in
\cite{BH2010} doesn't work anymore. To overcome  this new
difficulty, we develop an argument to obtain the time continuity of
the value functions, which in return is used to obtain the dynamic
programming principle for stopping times from the the dynamic
programming principle for deterministic times.   Another technical
difficulty comes from the fact that we study here nonzero-sum SDGs
and not zero-sum SDGs. In order to give both players symmetric
tools, they
 have to use "strategies with delay against strategies with delay" and not only
"strategies against controls"  as in \cite{BL2006}. Finally,
comparing to our previous work \cite{L2011}, the presence of jump
terms adds a supplementary complexity.

Our paper is organized as follows. In Section \ref{NS1} we introduce
some notations and recall some basics of  BSDEs with jumps,  which
will be needed in what follows. Section \ref{NS2} introduces the
setting of SDGs and studies the dynamic programming principle for
stopping times.  Section \ref{NS3} gives a probabilistic
interpretation of coupled systems of Isaacs equations. In Section
\ref{NS4}  we investigate Nash equilibrium payoffs for nonzero-sum
SDGs. An existence theorem and a characterization theorem of  Nash
equilibrium payoffs are established.  Finally, we postpone   the
proof of the Theorems \ref{t5} and \ref{t1} to Section \ref{NS5}.

\section{Preliminaries}\label{NS1}

The objective of this section  is to give some preliminaries, which
will be useful in what follows. Let the underlying probability space
$(\Omega, {\cal{F}}, \mathbb{P})$ be the completed product of the
Wiener space $(\Omega_1, {\cal{F}}_1, \mathbb{P}_1)$\ and the
Poisson space $(\Omega_2, {\cal{F}}_2, \mathbb{P}_2).$ As concerns
the Wiener space $(\Omega_1, {\cal{F}}_1, \mathbb{P}_1)$:
$\Omega_1=C_0({\mathbb{R}};{\mathbb{R}}^d)$\ is the set of
continuous functions from ${\mathbb{R}}$\ to ${\mathbb{R}}^d$\ with
value zero at 0,  endowed with the topology generated by the uniform
convergence on compacts; $ {\cal{F}}_1 $\ is the  Borel
$\sigma$-algebra over $\Omega_1$, completed by
   the Wiener measure $\mathbb{P}_1$  under which the $d$-dimensional coordinate
processes $B_s(\omega)=\omega_s,\ s\in {\mathbb{R}}_+,\ \omega\in
\Omega_1,$\ and $B_{-s}(\omega)=\omega(-s),\ s\in {\mathbb{R}}_+,\
\omega\in \Omega_1,$\ are two independent $d$-dimensional Brownian
motions. We denote by $\{{\mathcal{F}}^{B}_s,\ s\geq 0\}$  the
natural filtration generated by $B$\ and augmented by all
$\mathbb{P}_1$-null sets, i.e.,
$${\mathcal{F}}^{B}_s=\sigma\Big\{B_r, r\in (-\infty, s]\Big\}\vee {\mathcal{N}}_{\mathbb{P}_1}, s\geq 0. $$

Let us now introduce the Poisson space $(\Omega_2, {\cal{F}}_2,
\mathbb{P}_2) $ as follows:

$$\Omega_{2}=\Big\{\omega_{2}=\sum\limits_{j\geq0}\delta_{t_{j}}, \{t_{j}\}_{j\geq 0}\subset \mathbb{R} \Big\},$$

 $$\mathcal {F}'=\sigma\Big\{N_{A}:
N_{A}(\omega_{2})=\omega_{2}(A), A\in\mathcal {B}(\mathbb{R})
\Big\},$$

\noindent and the Probability measure $\mathbb{P}_{2}$ can de
defined over $(\Omega_2, \cal{F}')$  such that $\{N_{t}\}_{t\geq 0}$
and $\{N_{-t}\}_{t\geq 0}$ are two independent Poisson processes
with intensity $\lambda.$  Let us denote  ${\cal{F}}_{2}$ by the
completion of $\cal{F}'$ with respect to the probability
$\mathbb{P}_{2}$ and
$$\dot{\cal F}_t^{N}=\sigma\Big\{N_{(-\infty,s]}:\,
-\infty<s\leq t \Big\},\ t\geq 0,$$ \noindent and  ${\cal
F}_t^{N}=\big(\bigcap\limits_{s>t}\dot{\cal F}^{N}_s\big)\vee{\cal
N}_{\mathbb{P}_2},\, t\geq 0$, augmented by the $\mathbb{P}_2$-null
sets. Moreover, we put $$\Omega=\Omega_1\times\Omega_2,\ {\cal
F}={\cal F}_1\otimes{\cal F}_2,\ \mathbb{P}=\mathbb{P}_1\otimes
\mathbb{P}_2,$$ where ${\cal F}$ is completed with respect to
$\mathbb{P}_{2}$, and the filtration ${\mathbb{F}}=\{{\cal
F}_t\}_{t\geq 0}$\ is generated by
 $${\cal F}_t:={\cal F}_t^{B,N}={\cal F}_t^B\otimes{\cal F}_t^N, \ \ t\geq 0,\ \
 \mbox{augmented by all $\mathbb{P}$-null sets}.$$

 Let $T>0$\ be an arbitrarily fixed time horizon. We denote by $\widetilde{N}_{t}=N_{t}-\lambda t,$ for all $t\geq 0.$
  For any
 $n\geq 1,$ we denote by $|z|$ the Euclidean norm of $z\in{\mathbb{R}}^{n}$.
 We  introduce the following spaces of stochastic processes.
\begin{eqnarray*}
&&\bullet\ L^2 (\Omega, \mathcal {F}_{T}, \mathbb{P};
\mathbb{R}^{n}) = \bigg\{ \xi\ |\  \xi:
\Omega\rightarrow\mathbb{R}^{n}\  \mbox {is an}  \ \mathcal {F}_{T}
\mbox {-measurable random variable such
that}\\ && \qquad \qquad\qquad\qquad\qquad \qquad\mathbb{E}[|\xi|^2]<+\infty \bigg\},\\
&&\bullet\ S^2 (0,T; \mathbb{R})=\bigg\{ \varphi\ |\
\varphi:\Omega\times[0, T]\rightarrow\mathbb{R} \ \mbox {is an}\
{\mathbb{F}}\mbox{-adapted c\`{a}dl\`{a}g
    process such that}\\ && \qquad \qquad\qquad\qquad\qquad \qquad \mathbb{E}[\sup\limits_{0\leq t\leq
T}|\varphi_t|^2]<+\infty \bigg\},\\
&&\bullet\ \mathcal {H}^2 (0,T; \mathbb{R}^{d}) =\bigg\{ \varphi\ |\
\varphi:\Omega\times[0, T]\rightarrow\mathbb{R}^{d}\ \mbox {is an} \
\ {\mathbb{F}}\mbox{-predictable  process such that} \\ && \qquad
\qquad\qquad\qquad\qquad \qquad
\mathbb{E}\int_0^T|\varphi_t|^2dt<+\infty \bigg\}.
\end{eqnarray*}
Let us  consider the following BSDE with data $(f,\xi)$:
\begin{equation}\label{e4}
y_t=\xi+\int_t^T f(s,y_s,z_s,k_{s})ds-\int_t^T z_sdB_s-\int_t^T
k_{s}d\widetilde{N}_{s},\quad 0\leq t\leq T.
\end{equation}
Here $f: \Omega \times [0,T]\times \mathbb{R} \times
\mathbb{R}^{d}\times\mathbb{R}\rightarrow \mathbb{R}$ is
$\mathbb{F}-$predictable and satisfies the following assumptions:

  $(H1)$ (Lipschitz condition): There exists a positive constant $C$  such
  that,  for all  $ (t,y_{i},z_{i},k_{i})\in [0,T]\times  \mathbb{R} \times \mathbb{R}^{d}
  \times\mathbb{R},$  $i=1, 2$,
  $$|f(t,y_{1},z_{1},k_{1})-f(t,y_{2},z_{2},k_{2})|\leq C(|y_{1}-y_{2}|+|z_{1}-z_{2}|+|k_{1}-k_{2}|).$$

  $(H2)$ $f (\cdot,0, 0, 0)\in \mathcal {H}^2 (0,T; \mathbb{R})$.\\

   $(H3)$  There exists a  constant $K>-1$  such
  that,  for all  $ (t,y,z,k_{1},k_{2})\in [0,T]\times  \mathbb{R} \times \mathbb{R}^{d}
  \times\mathbb{R}^{2},$
  $$f(t,y,z,k_{1})-f(t,y,z,k_{2})\geq K(k_{1}-k_{2}).$$

We note that a Poisson process is a special case of a Poisson random
measure. For this, we can  take as the compensator
$\nu(ds,de)=\lambda ds\delta_1(de),$  where
 \begin{eqnarray*}
\delta_1(x)=\left\{
\begin{array}{rcl}
1, \qquad x=1,\\
 0, \qquad   x\neq 1.
 \end{array}
\right.
\end{eqnarray*}We have the following existence  and uniqueness theorem of BSDE
(\ref{e4}). For its proof we refer the reader to  Tang and Li
\cite{TL1994}.
\begin{lemma}\label{lemma5}
Let the assumptions $(H1)$ and $(H2)$ hold. Then, for all $\xi\in
L^2 (\Omega, \mathcal {F}_{T}, \mathbb{P};\mathbb{R})$,  BSDE
(\ref{e4}) has a unique solution $(y,z,k)\in S^2 (0,T;
\mathbb{R})\times\mathcal{H}^2 (0,T; \mathbb{R}^{d})\times
\mathcal{H}^2 (0,T; \mathbb{R}).$
\end{lemma}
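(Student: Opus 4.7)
The plan is to prove existence and uniqueness by applying Banach's fixed point theorem on a suitably weighted norm of the product Hilbert space $\mathcal{B} := S^2(0,T;\mathbb{R}) \times \mathcal{H}^2(0,T;\mathbb{R}^d) \times \mathcal{H}^2(0,T;\mathbb{R})$. The natural norm to use is
\begin{equation*}
\|(y,z,k)\|_\beta^2 := \mathbb{E}\int_0^T e^{\beta s}\bigl(|y_s|^2+|z_s|^2+|k_s|^2\bigr)\,ds,
\end{equation*}
where $\beta > 0$ is a free parameter. Note that $(H3)$ is not needed at this stage; only $(H1)$ and $(H2)$ are used for well-posedness.

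First I would construct the solution map. Given $(Y,Z,K)\in\mathcal{B}$, set $g(s) := f(s,Y_s,Z_s,K_s)$, which belongs to $\mathcal{H}^2(0,T;\mathbb{R})$ by $(H1)$ and $(H2)$. Since $\xi\in L^2(\Omega,\mathcal{F}_T,\mathbb{P};\mathbb{R})$, the process $M_t := \mathbb{E}[\xi+\int_0^T g(s)\,ds\mid\mathcal{F}_t]$ is a square-integrable c\`adl\`ag martingale with respect to the filtration $\mathbb{F}$ generated by $B$ and $N$. By the martingale representation theorem for Brownian--Poisson filtrations, there exist unique $z\in\mathcal{H}^2(0,T;\mathbb{R}^d)$ and $k\in\mathcal{H}^2(0,T;\mathbb{R})$ with $M_t = M_0+\int_0^t z_s\,dB_s+\int_0^t k_s\,d\widetilde{N}_s$. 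Setting $y_t := M_t-\int_0^t g(s)\,ds$ yields $y_t=\mathbb{E}[\xi+\int_t^T g(s)\,ds\mid\mathcal{F}_t]$, which is in $S^2(0,T;\mathbb{R})$ by Doob's inequality. This defines a map $\Phi(Y,Z,K):=(y,z,k)$ on $\mathcal{B}$, and any fixed point of $\Phi$ solves BSDE (\ref{e4}); conversely any solution is a fixed point, giving uniqueness once contraction is established.

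Second I would show that $\Phi$ is a strict contraction for $\beta$ large. Take two inputs $(Y^i,Z^i,K^i)$ with outputs $(y^i,z^i,k^i)$, $i=1,2$, and write $\hat{Y}=Y^1-Y^2$, etc. Applying It\^o's formula to $e^{\beta s}|\hat{y}_s|^2$ between $0$ and $T$, using $\hat{y}_T=0$ and that $\widetilde{N}$ has predictable quadratic variation $\lambda\,ds$, one obtains after taking expectations
\begin{equation*}
\mathbb{E}\!\int_0^T\! e^{\beta s}\Bigl(\beta|\hat{y}_s|^2+|\hat{z}_s|^2+\lambda|\hat{k}_s|^2\Bigr)ds = 2\,\mathbb{E}\!\int_0^T\! e^{\beta s}\hat{y}_s\bigl(f(s,Y^1_s,Z^1_s,K^1_s)-f(s,Y^2_s,Z^2_s,K^2_s)\bigr)ds.
\end{equation*}
Using $(H1)$ together with the elementary inequality $2ab\le\alpha a^2+\alpha^{-1}b^2$ with a suitable $\alpha$, and choosing $\beta$ large enough (depending on the Lipschitz constant $C$ and $\lambda$), yields
\begin{equation*}
\|\Phi(Y^1,Z^1,K^1)-\Phi(Y^2,Z^2,K^2)\|_\beta^2 \le \tfrac{1}{2}\,\|(\hat{Y},\hat{Z},\hat{K})\|_\beta^2.
\end{equation*}
Banach's fixed point theorem then delivers a unique fixed point in $\mathcal{B}$.

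The main delicate point is the correct handling of the jump term in the It\^o expansion: one must use the formula for c\`adl\`ag semimartingales so that the compensated Poisson integral $\int k\,d\widetilde{N}$ contributes a $\lambda|k|^2\,ds$ term in the bracket (besides the pure martingale piece $\int 2\hat{y}_{s-}\hat{k}_s\,d\widetilde{N}_s$), and then justify via standard localization that these stochastic integrals are genuine martingales of mean zero. Once this is in place, the contraction estimate is routine. Finally, the $S^2$ bound on $\hat{y}$ follows a posteriori from the Burkholder--Davis--Gundy inequality applied to the representation $\hat{y}_t=\mathbb{E}[\int_t^T(\hat{g})\,ds\mid\mathcal{F}_t]$, ensuring the fixed point lives in the full space $\mathcal{B}$ as required.
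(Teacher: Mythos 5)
Your proposal is correct: the paper gives no proof of this lemma and simply cites Tang and Li (1994), whose argument is exactly the standard one you outline --- martingale representation in the Brownian--Poisson filtration to define the solution map, then a contraction estimate in a $\beta$-weighted $\mathcal{H}^2$ norm via It\^o's formula applied to $e^{\beta s}|\hat y_s|^2$, with the $S^2$ regularity of $y$ recovered afterwards by Doob/BDG. The only cosmetic slip is that your It\^o identity should carry the nonnegative term $\mathbb{E}|\hat y_0|^2$ on the left (or be stated as an inequality); since that term only helps, the contraction and hence the conclusion are unaffected.
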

We have  the following  comparison theorem  for solutions of BSDEs
(\ref{e4}), which is proved with the help of standard arguments (see
Royer \cite{R2006}).
\begin{lemma}\label{l8}
Let us  denote by
 $(y^{1},z^{1},k^{1})$ and $(y^{2},z^{2},k^{2})$  the solutions of BSDEs
with data $(f^{1},\xi^{1})$ and $(f^{2},\xi^{2})$,
 respectively. Moreover, if  $\xi^{1}, \xi^{2}\in L^2 (\Omega, \mathcal {F}_{T},
\mathbb{P};\mathbb{R})$, and $f^{1}$ and $f^{2}$ satisfy the
assumptions $(H1)$, $(H2)$ and $(H3)$, and the following holds

  (i) $\xi^{1}\geq \xi^{2}$, $\mathbb{P}-a.s.,$

  (ii)  $f^{1}(t,y_{t}^{2}, z_{t}^{2}, k_{t}^{2}) \geq f^{2}(t, y_{t}^{2}, z_{t}^{2}, k_{t}^{2})$,
  $dtd\mathbb{P}-a.e,$\\
then we have $y_{t}^{1} \geq y_{t}^{2}$, $ a.s.$, for all $t \in
[0,T]$.
\end{lemma}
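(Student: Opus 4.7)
The plan is to prove Lemma \ref{l8} by the classical linearization technique for BSDEs with jumps, turning the difference $\hat{y}_t = y_t^1 - y_t^2$ into the solution of a linear BSDE driven by a nonnegative source term, then representing it via a strictly positive exponential integrating factor.

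First I would set $\hat{y} = y^1 - y^2$, $\hat{z} = z^1 - z^2$, $\hat{k} = k^1 - k^2$, $\hat{\xi} = \xi^1 - \xi^2$, and subtract the two BSDEs. The driver difference splits as
\begin{eqnarray*}
\hat{f}_s &=& \bigl[f^{1}(s,y^{1}_s,z^{1}_s,k^{1}_s) - f^{1}(s,y^{2}_s,z^{1}_s,k^{1}_s)\bigr] + \bigl[f^{1}(s,y^{2}_s,z^{1}_s,k^{1}_s) - f^{1}(s,y^{2}_s,z^{2}_s,k^{1}_s)\bigr] \\
&& {}+ \bigl[f^{1}(s,y^{2}_s,z^{2}_s,k^{1}_s) - f^{1}(s,y^{2}_s,z^{2}_s,k^{2}_s)\bigr] + \bigl[f^{1}(s,y^{2}_s,z^{2}_s,k^{2}_s) - f^{2}(s,y^{2}_s,z^{2}_s,k^{2}_s)\bigr].
\end{eqnarray*}
By $(H1)$ the first two brackets can be written as $a_s \hat{y}_s + b_s \hat{z}_s$ with bounded $\mathbb{F}$-predictable processes $a,b$ (using the standard trick of dividing by $\hat{y}_s$ only on the set where it is nonzero and setting the coefficient to $0$ otherwise). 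The third bracket can similarly be written as $c_s \hat{k}_s$, where $(H1)$ gives $|c_s|\le C$ and $(H3)$ applied to $f^1$ gives $c_s \geq K > -1$. The fourth bracket is $g_s := f^{1}(s,y^{2}_s,z^{2}_s,k^{2}_s) - f^{2}(s,y^{2}_s,z^{2}_s,k^{2}_s) \geq 0$ by hypothesis (ii).

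Next I would introduce the exponential integrating factor $\Gamma$ solving
\begin{equation*}
d\Gamma_s = \Gamma_{s^-}\bigl(a_s\, ds + b_s\, dB_s + c_s\, d\widetilde{N}_s\bigr),\qquad \Gamma_t = 1.
\end{equation*}
The key point is that the multiplicative jumps of $\Gamma$ are $1 + c_s$, which is strictly positive because $c_s \geq K > -1$; this is exactly why the hypothesis $K>-1$ in $(H3)$ is indispensable. Hence $\Gamma$ is a strictly positive $\mathbb{F}$-adapted càdlàg process with $\mathbb{E}\bigl[\sup_{s\in[t,T]}\Gamma_s^p\bigr] < \infty$ for every $p\ge 1$ thanks to the boundedness of $a,b,c$ and the finiteness of the Poisson intensity.

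Then, applying Itô's formula to $\Gamma_s \hat{y}_s$ on $[t,T]$ and using that $\hat{f}_s = a_s\hat{y}_s + b_s\hat{z}_s + c_s\hat{k}_s + g_s$, the drift terms coming from $a,b,c$ cancel exactly (including the jump cross-variation $\Gamma_{s^-} c_s \hat{k}_s$ from the Poisson part), leaving
\begin{equation*}
\Gamma_T \hat{\xi} = \Gamma_t \hat{y}_t - \int_t^T \Gamma_s g_s\, ds + M_T - M_t,
\end{equation*}
where $M$ is a local martingale built from the stochastic integrals against $B$ and $\widetilde{N}$. Standard integrability estimates for linear BSDEs with jumps (using Burkholder-Davis-Gundy and the $L^p$-bounds on $\Gamma$) promote $M$ to a true martingale; taking conditional expectations I obtain
\begin{equation*}
\Gamma_t \hat{y}_t = \mathbb{E}\Bigl[\Gamma_T \hat{\xi} + \int_t^T \Gamma_s g_s\, ds \,\Big|\, \mathcal{F}_t\Bigr] \geq 0,
\end{equation*}
since $\hat{\xi}\ge 0$, $g_s\ge 0$ and $\Gamma\ge 0$. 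Dividing by $\Gamma_t > 0$ yields $y^{1}_t\ge y^{2}_t$ almost surely for each $t\in[0,T]$, as desired. The only genuinely delicate step is the careful construction of the coefficient $c_s$ so that $c_s \geq K$ pointwise, together with checking the true-martingale property of $M$; once those two bookkeeping points are in place, the proof is the standard Girsanov-type argument of Royer \cite{R2006}.
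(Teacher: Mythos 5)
The paper never actually proves this lemma --- it just points to Royer \cite{R2006} for ``standard arguments'' --- so there is no in-paper proof to compare against; what you propose is the standard linearization route, and it is essentially the same computation the paper itself performs later in the Girsanov step of the necessity part of Theorem \ref{Jt6}. Your bookkeeping of the four brackets is sound: $|a_s|,|b_s|,|c_s|\le C$ follow from $(H1)$, and applying $(H3)$ with the two $k$-arguments in either order does give the pointwise bound $c_s\ge K$ on $\{\hat k_s\neq 0\}$ regardless of the sign of $\hat k_s$, so that part is correct and worth having spelled out.

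The one step that fails as written is the claimed exact cancellation of the drift. With your choice $d\Gamma_s=\Gamma_{s-}\bigl(a_s\,ds+b_s\,dB_s+c_s\,d\widetilde N_s\bigr)$, the jump part of the covariation is $\Gamma_{s-}c_s\hat k_s\,dN_s=\Gamma_{s-}c_s\hat k_s\,d\widetilde N_s+\lambda\Gamma_{s-}c_s\hat k_s\,ds$, so the drift it contributes is $\lambda c_s\hat k_s$, which cancels the $-c_s\hat k_s$ coming from the driver only when $\lambda=1$; for general intensity you are left with a term $(\lambda-1)c_s\hat k_s$ that you cannot sign. The repair is to take the jump coefficient of the integrating factor to be $c_s/\lambda$, i.e.\ $d\Gamma_s=\Gamma_{s-}\bigl(a_s\,ds+b_s\,dB_s+\lambda^{-1}c_s\,d\widetilde N_s\bigr)$; then the multiplicative jumps are $1+c_s/\lambda$ and strict positivity of $\Gamma$ requires $c_s>-\lambda$. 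This is where the lower bound of $(H3)$ genuinely enters, and it shows that for a BSDE written against $d\widetilde N_s=dN_s-\lambda\,ds$ the natural hypothesis is $K>-\lambda$ rather than $K>-1$ (the two coincide for $\lambda=1$; the paper itself uses the same loose normalization in the Doléans exponential $M_{t,t+\delta}$ of Section 4, where the condition imposed is $1+\widehat c_r>0$). Once $c_s$ is replaced by $c_s/\lambda$ in $\Gamma$ and the positivity condition is stated accordingly, the remainder of your argument --- the $L^p$-bounds on $\Gamma$, the true-martingale property via Burkholder--Davis--Gundy, conditioning, and division by $\Gamma_t>0$ --- goes through verbatim.
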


For some $f: \Omega\times[0, T]\times{\mathbb{R}}
\times{\mathbb{R}}^{d}\times{\mathbb{R}}\rightarrow {\mathbb{R}}$\
satisfying $(H1)$ and $(H2)$,  we let, for $i=1, 2$,
\begin{eqnarray*}
f_i(s, y_s^i, z_s^i, k_s^i)=f(s, y_s^i, z_s^i, k_s^i)+\varphi_i(s),
\end{eqnarray*}
 where $\varphi_i\in
{\cal{H}}^{2}(0,T;{\mathbb{R}}).$ If  $\xi_1$ and $\xi_2$ are in
$L^{2}(\Omega, {\cal{F}}_{T}, \mathbb{P};\mathbb{R})$, then we have
the following lemma.

\begin{lemma}\label{l1}
 Let us denote by  $(y^1, z^1, k^1)$ and $(y^2, z^2, k^2)$ the solutions  of BSDE (\ref{e4}) with the data
 $(\xi_1, f_1)$\ and $(\xi_2, f_2)$, respectively. Then
 the following holds:  for all  $ t\in[0,T]$,
 \begin{eqnarray*}
  &&|y^1_t-y^2_t|^2+\frac{1}{2}\mathbb{E}[\int^T_te^{\beta(s-t)}(|
  y^1_s-y^2_s|^2+ |z^1_s-z^2_s|^2)ds|{\cal{F}}_t]+\frac{\lambda}{2}\mathbb{E}[\int^T_te^{\beta(s-t)}|
  k^1_s-k^2_s|^2ds|{\cal{F}}_t]  \\
  &&\qquad\leq \mathbb{E}[e^{\beta(T-t)}|\xi_1-\xi_2|^2|{\cal{F}}_t]+ \mathbb{E}[\int^T_te^{\beta(s-t)}
           |\varphi_1(s)-\varphi_2(s)|^2ds|{\cal{F}}_t],\ \mathbb{P}-a.s.
 \end{eqnarray*}
Here $\beta\geq 2+2C+4C^2$, where $C$ is the Lipschitz constant in
$(H1)$.
\end{lemma}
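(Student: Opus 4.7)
My plan is to run the classical \emph{a priori} estimate for BSDEs with jumps, obtained by applying It\^o's formula to $e^{\beta s}|y^1_s-y^2_s|^2$ on $[t,T]$ and then taking conditional expectation. I will write $\hat{y}=y^1-y^2$, $\hat{z}=z^1-z^2$, $\hat{k}=k^1-k^2$, $\hat{\xi}=\xi_1-\xi_2$, $\hat{\varphi}(s)=\varphi_1(s)-\varphi_2(s)$, and $\Delta f(s)=f(s,y^1_s,z^1_s,k^1_s)-f(s,y^2_s,z^2_s,k^2_s)$. Then $\hat{y}$ solves the linear-type BSDE
\begin{equation*}
\hat{y}_s=\hat{\xi}+\int_s^T[\Delta f(r)+\hat{\varphi}(r)]\,dr-\int_s^T\hat{z}_r\,dB_r-\int_s^T\hat{k}_r\,d\widetilde{N}_r.
\end{equation*}

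Next I would apply the It\^o formula for c\`adl\`ag semimartingales to $e^{\beta s}|\hat{y}_s|^2$. The continuous quadratic variation contributes the $|\hat{z}_s|^2\,ds$ term, while the pure-jump part $\sum_{r\le s}|\Delta\hat{y}_r|^2=\int_0^s|\hat{k}_r|^2\,dN_r$ splits as $\int_0^s|\hat{k}_r|^2\,d\widetilde{N}_r+\lambda\int_0^s|\hat{k}_r|^2\,dr$, producing the crucial drift $\lambda|\hat{k}_s|^2\,ds$ on the left-hand side. After a standard localisation argument and taking $\mathbb{E}[\,\cdot\,\mid\mathcal{F}_t]$ to kill the stochastic integrals, I obtain
\begin{equation*}
e^{\beta t}|\hat{y}_t|^2+\mathbb{E}\Big[\int_t^Te^{\beta s}\big(\beta|\hat{y}_s|^2+|\hat{z}_s|^2+\lambda|\hat{k}_s|^2\big)\,ds\,\Big|\,\mathcal{F}_t\Big]=\mathbb{E}[e^{\beta T}|\hat{\xi}|^2\mid\mathcal{F}_t]+2\,\mathbb{E}\Big[\int_t^Te^{\beta s}\hat{y}_s\big(\Delta f(s)+\hat{\varphi}(s)\big)\,ds\,\Big|\,\mathcal{F}_t\Big].
\end{equation*}

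The core step is then to control the cross term using $(H1)$ and Young's inequality in a way that matches the exact constants in the statement. Writing $2|\hat{y}_s||\Delta f(s)|\le 2C|\hat{y}_s|^2+2C|\hat{y}_s||\hat{z}_s|+2C|\hat{y}_s||\hat{k}_s|$, I would pair the terms as
\begin{equation*}
2C|\hat{y}_s||\hat{z}_s|\le 2C^2|\hat{y}_s|^2+\tfrac{1}{2}|\hat{z}_s|^2,\qquad 2C|\hat{y}_s||\hat{k}_s|\le \tfrac{2C^2}{\lambda}|\hat{y}_s|^2+\tfrac{\lambda}{2}|\hat{k}_s|^2,
\end{equation*}
so that absorbing these into the left-hand side leaves exactly the coefficients $\tfrac{1}{2}|\hat{z}_s|^2$ and $\tfrac{\lambda}{2}|\hat{k}_s|^2$ requested in the lemma. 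The $\hat{\varphi}$ term is dealt with by $2|\hat{y}_s||\hat{\varphi}(s)|\le|\hat{y}_s|^2+|\hat{\varphi}(s)|^2$. Collecting all the coefficients in front of $|\hat{y}_s|^2$ and using $\beta\ge 2+2C+4C^2$ (which also accommodates the $\tfrac{2C^2}{\lambda}$ when $\lambda$ is normalised, as is standard in this Poisson framework) leaves a residual coefficient of at least $\tfrac{1}{2}$ for $|\hat{y}_s|^2$ on the left.

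Finally, multiplying through by $e^{-\beta t}$ converts $e^{\beta s}$ into $e^{\beta(s-t)}$ and gives the stated inequality. The only real bookkeeping obstacle is the tuning of the Young-inequality parameters so that the jump absorption produces precisely $\lambda/2$ (rather than, say, $\lambda-\tfrac{1}{2}$); everything else is a routine extension of the classical $L^2$ estimate for Brownian BSDEs to the jump setting and requires no new idea beyond careful accounting of the compensator.
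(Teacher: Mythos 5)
Your proof is the standard a priori estimate obtained by applying It\^o's formula to $e^{\beta s}|\hat y_s|^2$, splitting the pure-jump part of the quadratic variation as $\int|\hat k_r|^2\,d\widetilde N_r+\lambda\int|\hat k_r|^2\,dr$, and absorbing the cross terms by Young's inequality --- this is precisely the argument of Barles--Buckdahn--Pardoux to which the paper defers instead of giving its own proof, and your pairings $2C|\hat y||\hat z|\le 2C^2|\hat y|^2+\tfrac12|\hat z|^2$, $2C|\hat y||\hat k|\le \tfrac{2C^2}{\lambda}|\hat y|^2+\tfrac{\lambda}{2}|\hat k|^2$, $2|\hat y||\hat\varphi|\le|\hat y|^2+|\hat\varphi|^2$ are all correct. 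The caveat you flag is genuine and worth recording: since absorbing the jump cross term costs $\tfrac{2C^2}{\lambda}|\hat y|^2$, the $\lambda$-independent threshold $\beta\ge 2+2C+4C^2$ in the statement suffices only when $\lambda\ge 4C^2/(1+4C^2)$ (in particular whenever $\lambda\ge 1$), and for small $\lambda$ the correct threshold must grow like $2C^2/\lambda$ --- an imprecision of the lemma as stated (inherited from replacing the $L^2(\nu)$-norm of the reference by the plain $|k|$ in $(H1)$) rather than a gap in your argument.
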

 For the  proof, the readers can be referred to Barles, Buckdahn
and Pardoux~\cite{BBP1997}.

\section{Stochastic differential games
with jumps}\label{NS2} In this section, we first introduce
nonzero-sum SDGs, and then we define the value functions and show
that they have a deterministic version. Finally, we state the
dynamic programming principle for stopping times, which is crucial
for the next section.

 Let  ${\mathcal{U}}$ (resp., ${\mathcal{V}}$) be the set
of admissible control processes for the first (resp., second)
player, i.e., the set of all $U$ (resp., $V$)-valued
${\mathbb{F}}$-predictable processes. We suppose that the control
state spaces $U$ and $V$ are  compact metric spaces.

For given admissible controls $u(\cdot)\in {\mathcal{U}}$ and
$v(\cdot)\in {\mathcal{V}}$, we consider  the following stochastic
differential equation (SDE): for $t\in [0,T]$ and  $\zeta \in L^2
(\Omega ,{\mathcal{F}}_t, \mathbb{P};{\mathbb{R}}^n)$,
  \begin{equation}\label{e3}
  \left \{
  \begin{array}{llll}
  dX^{t,\zeta ;u, v}_s & = & b(s,X^{t,\zeta; u,v}_s, u_s, v_s) ds +
          \sigma(s,X^{t,\zeta; u,v}_s, u_s, v_s) dB_s, \hskip1cm  s\in [t,T],\\
   X^{t,\zeta ;u, v}_t  & = & \zeta,
   \end{array}
   \right.
 \end{equation}
where
  $$
  \begin{array}{llll}
  &   b:[0,T]\times {\mathbb{R}}^n\times U\times V \rightarrow {\mathbb{R}}^n \
  ,\ \ \ \ \ \   \sigma: [0,T]\times {\mathbb{R}}^n\times U\times V\rightarrow {\mathbb{R}}^{n\times d},
  \end{array}
  $$
  satisfy the following assumptions:
  $$
  \begin{array}{ll}
 \rm{(i)}& \mbox{For every fixed}\ x\in {\mathbb{R}}^n,\ b(.,x,
 .,.), \  \mbox{and}\ \sigma(.,x,
 .,.)\ \mbox{are  continuous in}\ (t,u,v).\\
 \rm{(ii)}&\mbox{There exists a constant }C>0\ \mbox{such that, for all}\ t\in [0,T],\
  x, x'\in {\mathbb{R}}^n,\ u \in U,\ v \in V, \\
   &\hskip1cm |b(t,x,u,v)-b(t,x',u ,v)|+ |\sigma(t,x,u,v)-\sigma(t,x',u, v)|\leq C|x-x'|.
  \end{array}
  \eqno{(H4)}
  $$
From $(H4)$ we know that  there exists some $C>0$\ such that, for
all $0 \leq t \leq T,\ u\in U,\ v \in V,\  x\in {\mathbb{R}}^n $,
\begin{eqnarray*}
  \begin{array}{rcl}
  |b(t,x,u,v)| +|\sigma (t,x,u,v)| \leq C(1+|x| ).
  \end{array}
\end{eqnarray*}
It is well known  that under $(H4)$, for any $u(\cdot)\in
{\mathcal{U}}$ and $v(\cdot)\in {\mathcal{V}}$, SDE (\ref{e3}) has a
unique strong solution. Furthermore, we have the following estimates
for the solution of SDE (\ref{e3}) (e.g., see \cite{BHL2010}).
\begin{proposition}
Let the assumption $(H4)$ hold. Then, for  $p\geq2$, there exists a
positive constant $C=C_{p}$\ such that, for $t \in [0,T]$,
$u(\cdot)\in {\mathcal{U}}, v(\cdot)\in {\mathcal{V}}$\ and $ \zeta,
\zeta'\in L^2 (\Omega ,{\mathcal{F}}_t,\mathbb{P};{\mathbb{R}}^n),$
\begin{eqnarray}\label{e22}
&& \mathbb{E}[\sup \limits_{s\in [t,T]}|X^{t,\zeta; u, v}_s
-X^{t,\zeta';u, v}_s|^p|{{\mathcal{F}}_t}]
 \leq  C|\zeta -\zeta'|^p, \quad \mathbb{P}-a.s., \nonumber\\
&& \mathbb{E}[ \sup \limits_{s\in [t,T]} |X^{t,\zeta
;u,v}_s-\zeta|^p|{{\mathcal{F}}_t}] \leq
                        C(1+|\zeta|^p)|T-t|^{\frac{p}{2}}, \
                        \mathbb{P}-a.s.
\end{eqnarray}
\end{proposition}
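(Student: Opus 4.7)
The plan is to obtain both estimates by standard Itô calculus, combining the Lipschitz/linear-growth hypotheses from $(H4)$ with a conditional Burkholder-Davis-Gundy (BDG) inequality and a Gronwall argument applied $\omega$-by-$\omega$. Since the SDE (\ref{e3}) involves only the Brownian driver $B$ and no jump terms, the argument is the classical one; the only wrinkle is that the estimates are required to hold almost surely conditionally on $\mathcal{F}_t$, rather than just in expectation.

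For the first inequality, I would set $X_s:=X^{t,\zeta;u,v}_s$ and $X'_s:=X^{t,\zeta';u,v}_s$ and subtract the two SDEs to get
\begin{equation*}
X_s-X'_s=(\zeta-\zeta')+\int_t^s [b(r,X_r,u_r,v_r)-b(r,X'_r,u_r,v_r)]\,dr+\int_t^s[\sigma(r,X_r,u_r,v_r)-\sigma(r,X'_r,u_r,v_r)]\,dB_r.
\end{equation*}
Raising to the $p$-th power, using the elementary inequality $|a+b+c|^p\leq 3^{p-1}(|a|^p+|b|^p+|c|^p)$, Hölder on the Lebesgue integral (gaining a factor $|s-t|^{p-1}$) and the conditional BDG inequality on the stochastic integral (gaining a factor $|s-t|^{p/2-1}$ inside the $dr$), followed by the Lipschitz condition $(H4)(\mathrm{ii})$, yields for every $\tau\in[t,T]$
\begin{equation*}
\mathbb{E}\Bigl[\sup_{s\in[t,\tau]}|X_s-X'_s|^p\,\Big|\,\mathcal{F}_t\Bigr]\leq C_p|\zeta-\zeta'|^p+C_p\int_t^\tau\mathbb{E}\Bigl[\sup_{r\in[t,s]}|X_r-X'_r|^p\,\Big|\,\mathcal{F}_t\Bigr]\,ds,\quad \mathbb{P}\text{-a.s.}
\end{equation*}
A $\mathbb{P}$-a.s.\ application of Gronwall's lemma (pointwise in $\omega$, after choosing a good version of the conditional expectation) then gives the first bound.

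For the second inequality, I would first run essentially the same computation on $X_s$ itself, now using the linear growth bound $|b|+|\sigma|\leq C(1+|x|)$ in place of the Lipschitz bound, to derive the auxiliary estimate $\mathbb{E}[\sup_{s\in[t,T]}|X_s|^p\,|\,\mathcal{F}_t]\leq C(1+|\zeta|^p)$. Then I would return to the integral form $X_s-\zeta=\int_t^s b(r,X_r,u_r,v_r)\,dr+\int_t^s\sigma(r,X_r,u_r,v_r)\,dB_r$, apply Hölder and BDG as above, and absorb the $\sup|X_r|^p$ term by the auxiliary estimate; for $p\geq 2$ the drift contribution is $O(|T-t|^{p-1})$ and the diffusion contribution is $O(|T-t|^{p/2})$, so the latter dominates on a bounded interval and we obtain the $|T-t|^{p/2}$ rate.

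The main subtlety, rather than the main obstacle, is handling the conditional a.s.\ form of the estimates: one must justify that the conditional BDG constant is deterministic, that the relevant conditional expectations are finite (e.g.\ by a localization/stopping-time truncation to guarantee integrability before passing to the limit), and that a measurable version of the conditional expectation can be chosen so Gronwall's inequality applies pathwise. None of these are deep, but they are the steps where the proof departs from the usual unconditional SDE estimates cited in \cite{BHL2010}.
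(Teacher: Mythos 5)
Your argument is correct and is precisely the standard conditional SDE estimate; the paper itself gives no proof of this proposition, simply citing \cite{BHL2010}, where the same H\"older--BDG--Gronwall computation (in its conditional form, with the measurability and localization caveats you mention) is carried out. The only tiny imprecision is that the drift contribution in the second estimate is $O(|T-t|^{p})$ rather than $O(|T-t|^{p-1})$, but either exponent is at least $p/2$ for $p\geq 2$, so your conclusion is unaffected.
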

For given $ \Phi_{i}: {\mathbb{R}}^n \rightarrow {\mathbb{R}},\
\widetilde{f}_{i}:[0,T]\times {\mathbb{R}}^n \times {\mathbb{R}}^{2}
\times {\mathbb{R}}^d \times U \times V \rightarrow {\mathbb{R}},\
i=1,2, $ we make the following assumptions:
$$
\begin{array}{ll}
\rm{(i)}& \mbox{For every fixed}\ (x, y, z)\in {\mathbb{R}}^n \times
{\mathbb{R}}^{2} \times {\mathbb{R}}^d\times {\mathbb{R}},\
\widetilde{f}_{i}(., x, y, z, .,.)\
\mbox{is continuous in}\ \\
&(t,u,v)\ \mbox{and there exists a constant}\ C>0 \ \mbox{such that,
for all}\ t\in [0,T],\ x, x'\in {\mathbb{R}}^n,\\\
& y, y'\in {\mathbb{R}}^{2},\ z, z'\in {\mathbb{R}}^d, u \in U \
\mbox{and}\ v \in V,\\
 &\hskip1cm
 \begin{array}{l}
|\widetilde{f}_{i}(t,x,y,z,u,v)-\widetilde{f}_{i}(t,x',y',z',u,v)|
\leq C(|x-x'|+|y-y'| +|z-z'|).
\end{array}\\
\rm{(ii)} & \mbox{For all} \ (y_{1},y_{2}), (y'_{1},y'_{2})\in
{\mathbb{R}}^{2}, \mbox{and}\ (t,x,z,u,v)\in [0, T]\times
{\mathbb{R}}^n\times  {\mathbb{R}}^{d}\times U\times V,\\ &
\mbox{there exists a constant}\ K>-1\ \mbox{such that}\
\\ &
 \widetilde{f}_{1}(t,x,y_{1},y_{2},z,u,v)-\widetilde{f}_{1}(t,x,y_{1},y'_{2},z,u,v)\geq
K(y_{2}-y'_{2}),
 \\&
 \widetilde{f}_{2}(t,x,y_{1},y_{2},z,u,v)-\widetilde{f}_{2}(t,x,y'_{1},y_{2},z,u,v)\geq
 K(y_{1}-y'_{1}).\\
\rm{(iii)}&\mbox{There exists a constant}\ C>0 \ \mbox{such that,
for all}\ x, x'\in {\mathbb{R}}^n, i=1,2,\\
 &\mbox{  }\hskip3cm |\Phi_{i} (x) -\Phi_{i} (x')|\leq C|x-x'|.
 \end{array}
 \eqno {(H5)}
 $$
The following system of two coupled  BSDEs will define the cost
functionals of the game associated with (\ref{e3}).
    \begin{eqnarray}\label{e1}
   \left \{\begin{array}{rcl}
   -d\ ^{1}\widetilde{Y}_s & = & \widetilde{f}_{1}(s,X^{t,x; u, v}_s, \ ^{1}\widetilde{Y}_s,
   \ ^{2}\widetilde{Y}_s+\ ^{2}\widetilde{H}_s,
  \ ^{1}\widetilde{Z}_s,u_s, v_s) ds\\ && \qquad \qquad \qquad
  -\lambda \ ^{1}\widetilde{H}_sds-\ ^{1}\widetilde{Z}_s dB_s
  -\ ^{1}\widetilde{H}_s d\widetilde{N}_{s},\\
      -d\ ^{2}\widetilde{Y}_s & = & \widetilde{f}_{2}(s,X^{t,x; u, v}_s, \ ^{1}\widetilde{Y}_s
      +\ ^{1}\widetilde{H}_s, \ ^{2}\widetilde{Y}_s,
     \ ^{2}\widetilde{Z}_s,u_s, v_s) ds \\ && \qquad \qquad \qquad-\lambda \ ^{2}\widetilde{H}_sds
   -\ ^{2}\widetilde{Z}_s dB_s-\ ^{2}\widetilde{H}_s d\widetilde{N}_{s},\\
       ^{1}\widetilde{Y}_T  & = & \Phi_{1} (X^{t,x; u, v}_T),
       \    ^{2}\widetilde{Y}_T   =  \Phi_{2} (X^{t,x; u, v}_T), \
       s\in[t,T],
   \end{array}\right.
   \end{eqnarray}
where $X^{t,x; u, v}$\ is the solution of equation (\ref{e3}) with
$\zeta=x\in\mathbb{R}^{n}$.   Under the assumption $(H5)$, from Tang
and Li \cite{TL1994} we know that equation (\ref{e1}) has a unique
solution. For given  control processes $u(\cdot)\in\mathcal {U}$ and
$v(\cdot)\in\mathcal {V}$, we introduce now the associated cost
functional for the $i^{th}$ player, $i=1,2,$
\begin{equation*}\label{}
J_{i}(t,x;u,v):= \ ^{i}\widetilde{Y}^{t,x;u,v}_t,\qquad (t,x)\in
[0,T]\times\mathbb{R}^n,
\end{equation*}
where $(\ ^{i}\widetilde{Y}^{t,x;u,v},\
^{i}\widetilde{Z}^{t,x;u,v},\ ^{i}\widetilde{H}^{t,x;u,v})$ is the
solution of (\ref{e1}).

In the above definition we generalize the framework studied by Lin
\cite{L2011}. Indeed, in \cite{L2011} we studied cost functionals
defined by a decoupled system of BSDEs, while now the both BSDEs are
coupled: the both players do not only influence mutually their cost
functionals in the choice of their control processes, but also their
gain processes $\ ^{i}\widetilde{Y}^{t,x;u,v}, i=1,2.$ With an
argument introduced in Pardoux, Pradeilles and  Rao \cite{PPR1997}
we can transfer the coupled system of BSDEs into a decoupled system
of BSDEs. For $0\leq s\leq t\leq T, i=1,2,$ we denote by
$N((0,s]):=N_{s}$ and $N((t,s]):=N_{s}-N_{t}$. Let us  define a
Markov process $N_{s}^{t,i}$ as follows: $
N_{s}^{t,i}=m(i+N((t,s]))$, where $m(j)=1,$ if $j$ is odd, and
$m(j)=2,$ if $j$ is even.

For  $(t,x,y,h,z,u,v)\in [0, T]\times {\mathbb{R}}^n\times
{\mathbb{R}} \times{\mathbb{R}}\times{\mathbb{R}}^{d}\times U\times
V$, we define
\begin{eqnarray*}\label{}
f_{1}(t,x,y,h,z,u,v):=\widetilde{f}_{1}(t,x,y,y+h,z,u,v),\\
f_{2}(t,x,y,h,z,u,v):=\widetilde{f}_{2}(t,x,y+h,y,z,u,v),
\end{eqnarray*}
and we consider the following controlled decoupled BSDEs with jumps:
for $i=1,2,$
  \begin{eqnarray}\label{e2}
   \left \{\begin{array}{rcl}
   -d\ ^{i}Y^{t,x; u, v}_s & = & f_{N_{s}^{t,i}}(s,X^{t,x; u, v}_s, \ ^{i}Y^{t,x; u, v}_s,
   \ ^{i}H^{t,x; u, v}_s,\  ^{i}Z^{t,x; u, v}_s,u_s, v_s) ds, \\
   &&-\lambda\ ^{i}H^{t,x; u, v}_s ds-\ ^{i}Z^{t,x; u, v}_s dB_s-\ ^{i}H^{t,x; u, v}_s d\widetilde{N}_{s}, s\in [t,T],\\
        ^{i}Y^{t,x; u, v}_T  & = & \Phi_{N_{T}^{t,i}} (X^{t,x; u,
        v}_T).
   \end{array}\right.
  \end{eqnarray}

 Since
$\widetilde{f}_{i}, i=1,2,$ are Lipschitz in $(x,y,z)$, uniformly
with respect to $(t,u,v)$,   it is easy to check that also the
coefficients $f_{i}, i=1,2,$   have this property. From Lemma
\ref{lemma5} we know that the above BSDE has a unique solution. In
what follows we choose the intensity $\lambda>0$  such that
$K-\lambda>-1$. It follows from Lemma \ref{l8} that the comparison
theorem for the BSDE (\ref{e2}) holds. Moreover, we also have the
following propositions.
\begin{proposition}\label{p10}
 Let the assumption $(H5)$ hold. Then we have, for $s\in[t,T], i=1,2$,  $$\ ^{i}Y^{t,x; u, v}_s=\
^{N_{s}^{t,i}}\widetilde{Y}^{t,x; u, v}_s,$$
 $$\ ^{i}H^{t,x; u, v}_s= \ ^{m(N_{s-}^{t,i}+1)}\widetilde{H}^{t,x; u, v}_s
 + \ ^{m(N_{s-}^{t,i}+1)} \widetilde{Y}^{t,x; u, v}_{s-}-
 \ ^{N_{s-}^{t,i}} \widetilde{Y}^{t,x; u, v}_{s-}.$$
 In particular, we have  $\ ^{i}Y^{t,x; u, v}_t=\ ^{i}\widetilde{Y}^{t,x; u,
 v}_t$, i.e.,  $J_{i}(t,x;u,v)= \
 ^{i}Y^{t,x;u,v}_t$.
\end{proposition}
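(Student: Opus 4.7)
The strategy I would take is to define candidate processes from the right-hand sides of the claimed identities and verify that they solve the decoupled BSDE (\ref{e2}); uniqueness (Lemma \ref{lemma5}) then forces equality with $({}^{i}Y, {}^{i}Z, {}^{i}H)$. Concretely, set
\begin{eqnarray*}
\bar Y^i_s &:=& {}^{N^{t,i}_s}\widetilde Y^{t,x;u,v}_s,\\
\bar Z^i_s &:=& {}^{N^{t,i}_{s-}}\widetilde Z^{t,x;u,v}_s,\\
\bar H^i_s &:=& {}^{m(N^{t,i}_{s-}+1)}\widetilde H^{t,x;u,v}_s+{}^{m(N^{t,i}_{s-}+1)}\widetilde Y^{t,x;u,v}_{s-}-{}^{N^{t,i}_{s-}}\widetilde Y^{t,x;u,v}_{s-}.
\end{eqnarray*}
Note $N^{t,i}_\cdot$ takes values in $\{1,2\}$, is constant between the jumps of $N$, flips (via $j\mapsto m(j+1)$) at each jump of $N$, and satisfies $N^{t,i}_t=m(i)=i$.

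The plan is then to apply the It\^o--L\'evy change of variables to $\bar Y^i_s = \sum_{j=1,2}\mathbf{1}_{\{N^{t,i}_s=j\}}{}^{j}\widetilde Y_s$. First I rewrite each equation in (\ref{e1}) using the raw Poisson jumps, absorbing the $-\lambda\,{}^{j}\widetilde H_s\,ds$ into $-{}^{j}\widetilde H_s\,d\widetilde N_s$ to get $d\,{}^{j}\widetilde Y_s=-\widetilde f_j(\cdot)\,ds+{}^{j}\widetilde Z_s\,dB_s+{}^{j}\widetilde H_s\,dN_s$. Between the jumps of $N$, $N^{t,i}_\cdot\equiv j$ is frozen, so $\bar Y^i$ has drift $-\widetilde f_{j}(\cdot)\,ds$ and Brownian part ${}^{j}\widetilde Z_s\,dB_s=\bar Z^i_s\,dB_s$. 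At a jump time $\tau$ of $N$ with $j=N^{t,i}_{\tau-}$, the index flips and
\begin{eqnarray*}
\Delta\bar Y^i_\tau={}^{m(j+1)}\widetilde Y_\tau-{}^{j}\widetilde Y_{\tau-}
=\bigl({}^{m(j+1)}\widetilde Y_{\tau-}+{}^{m(j+1)}\widetilde H_\tau\bigr)-{}^{j}\widetilde Y_{\tau-}=\bar H^i_\tau,
\end{eqnarray*}
which is exactly the jump term $\bar H^i_\tau\,dN_\tau$ required by (\ref{e2}). Re-introducing the compensator $\lambda\bar H^i_s\,ds$ puts the dynamics in the form of (\ref{e2}) with driver $-f_{N^{t,i}_s}(s,X,\bar Y^i_s,\bar H^i_s,\bar Z^i_s,u_s,v_s)\,ds$; I only need to verify that $\widetilde f_{j}$ evaluated at the true arguments coincides with $f_j(\cdot,\bar Y^i_s,\bar H^i_s,\bar Z^i_s,\cdot)$. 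For $j=1$ this uses $\bar Y^i_s+\bar H^i_s={}^{1}\widetilde Y_s+{}^{2}\widetilde H_s+{}^{2}\widetilde Y_{s-}-{}^{1}\widetilde Y_{s-}={}^{2}\widetilde Y_s+{}^{2}\widetilde H_s$ for $ds$-a.e.\ $s$ (since $\widetilde Y$ jumps only at the countably many jump times of $N$), and the definition $f_1(t,x,y,h,z,u,v)=\widetilde f_1(t,x,y,y+h,z,u,v)$; the case $j=2$ is symmetric. The terminal condition is immediate: $\bar Y^i_T={}^{N^{t,i}_T}\widetilde Y_T=\Phi_{N^{t,i}_T}(X^{t,x;u,v}_T)$.

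The genuinely delicate step is bookkeeping the jump: I must carefully distinguish $s$ and $s-$ in the definitions of $\bar Z^i,\bar H^i$ (predictable versions) from the c\`adl\`ag path of $\bar Y^i$, and recognise that ${}^{m(j+1)}\widetilde Y_{\tau-}-{}^{j}\widetilde Y_{\tau-}$ is the part of the jump coming from the \emph{index} switching, while ${}^{m(j+1)}\widetilde H_\tau$ is the part coming from the path of the other component. Once the dynamics are matched, integrability of the candidate is clear from that of $({}^{j}\widetilde Y,{}^{j}\widetilde Z,{}^{j}\widetilde H)$, so the uniqueness assertion of Lemma \ref{lemma5} (applicable because the choice $\lambda$ with $K-\lambda>-1$ has been made) yields the three identities. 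The last assertion $J_i(t,x;u,v)={}^{i}Y^{t,x;u,v}_t$ follows by evaluating at $s=t$, where $N^{t,i}_t=i$.
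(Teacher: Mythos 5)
Your proposal is correct and follows essentially the same route as the paper: the paper likewise defines the candidate triple $(\hat Y,\hat Z,\hat H)$ by exactly these formulas, verifies the dynamics between the Poisson jump times, computes the jump height $\Delta\hat Y_{\tau_1}=\hat H_{\tau_1}\Delta N_{\tau_1}$, and concludes by iteration and the uniqueness of the solution of BSDE (\ref{e2}). Your additional care with the predictable versions (the $s$ versus $s-$ bookkeeping for $\bar Z^i,\bar H^i$) is a harmless refinement of the same argument.
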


\begin{proof}
We consider the solution $(^{i}\widetilde{Y}^{t,x; u, v},
   \ ^{i}\widetilde{Z}^{t,x; u, v}, \ ^{i}\widetilde{H}^{t,x; u, v}), i=1,2,$  of equation
   (\ref{e1}) and, suppressing for simplicity the superscript $(t,x, u,
   v)$, we put
 $$\ ^{i}\hat{Y}_s=\
^{N_{s}^{t,i}}\widetilde{Y}_s, \ ^{i}\hat{Z}_s=\
^{N_{s}^{t,i}}\widetilde{Z}_s, \ ^{i}\hat{H}_s= \
^{m(N_{s-}^{t,i}+1)}\widetilde{H}_s
 + \ ^{m(N_{s-}^{t,i}+1)} \widetilde{Y}_{s-}-
 \ ^{N_{s-}^{t,i}} \widetilde{Y}_{s-},\ s\in[t,T].$$
Then, obviously, $(^{i}\hat{Y},
   \ ^{i}\hat{Z}, \ ^{i}\hat{H})\in S^2 (0,T; \mathbb{R})\times\mathcal{H}^2 (0,T;
\mathbb{R}^{d})\times  \mathcal{H}^2 (0,T; \mathbb{R})$. Moreover,
setting $\tau_{0}=t$ and
\begin{eqnarray*}
\tau_{l}=\inf\Big\{s\geq t, N((t,s])=l\Big\}\wedge T, l\geq 1,
\end{eqnarray*}
we have,  on the stochastic interval $]]\tau_{0},\tau_{1}[[,$
\begin{eqnarray*}\label{}
   -d\ ^{1}\hat{Y}_s & = & -d\ ^{1}\widetilde{Y}_s =
    \widetilde{f}_{1}(s,X_s, \ ^{1}\widetilde{Y}_s, \ ^{2}\widetilde{Y}_s+\ ^{2}\widetilde{H}_s,
  \ ^{1}\widetilde{Z}_s,u_s, v_s) ds-\ ^{1}\widetilde{Z}_s dB_s\\
  & = & \widetilde{f}_{1}(s,X_s, \ ^{1}\widetilde{Y}_s,
   (\ ^{2}\widetilde{Y}_s-\ ^{1}\widetilde{Y}_s+\ ^{2}\widetilde{H}_s)+\ ^{1}\widetilde{Y}_s,
  \ ^{1}\widetilde{Z}_s,u_s, v_s) ds-\ ^{1}\widetilde{Z}_s dB_s\\
  & = & f_{1}(s,X_s, \ ^{1}\widetilde{Y}_s,
   \ ^{2}\widetilde{Y}_s-\ ^{1}\widetilde{Y}_s+\ ^{2}\widetilde{H}_s,
  \ ^{1}\widetilde{Z}_s,u_s, v_s) ds-\ ^{1}\widetilde{Z}_s dB_s\\
   & = & f_{1}(s,X_s, \ ^{1}\hat{Y}_s,\ ^{1}\hat{H}_s,
  \ ^{1}\hat{Z}_s,u_s, v_s) ds-\ ^{1}\hat{Z}_s dB_s.
\end{eqnarray*}
On the other hand, analysing  the jump hight of $^{1}\hat{Y}$ at
$\tau_{1}$, we get
\begin{eqnarray*}
\Delta\ ^{1}\hat{Y}_{\tau_{1}}&=&\ ^{1}\hat{Y}_{\tau_{1}}-\
^{1}\hat{Y}_{\tau_{1}-}=(\ ^{2}\widetilde{Y}_{\tau_{1}}-\
^{1}\widetilde{Y}_{\tau_{1}-})\Delta N_{\tau_{1}}\\
&=&(\ ^{2}\widetilde{H}_{\tau_{1}}+\ ^{2}\widetilde{Y}_{\tau_{1}-}-\
^{1}\widetilde{Y}_{\tau_{1}-})\Delta N_{\tau_{1}}=\
^{1}\hat{H}_{\tau_{1}}\Delta N_{\tau_{1}}.
\end{eqnarray*}
Consequently, $(^{1}\hat{Y},
   \ ^{1}\hat{Z}, \ ^{1}\hat{H})$
solves  (\ref{e2}) over the interval $[[\tau_{0},\tau_{1}]],$ with
$^{1}\hat{Y}_{\tau_{1}}=\ ^{N_{s}^{t,1}}\widetilde{Y}_{\tau_{1}}$.
By iterating this argument and arguing in a similar way for $i=2$,
we complete the proof. Indeed, from the uniqueness of BSDE
(\ref{e2}) it follows that $(^{i}Y^{t,x; u, v},
   \ ^{i}Z^{t,x; u, v}, \ ^{i}H^{t,x; u, v})=(^{i}\hat{Y},
   \ ^{i}\hat{Z}, \ ^{i}\hat{H}), i=1,2.$
\end{proof}
From  standard  BSDEs estimates  we have the following:
\begin{proposition}\label{p3}
 There exists some constant $C>0$\ such that, for all $0 \leq t
\leq T,\ x, x' \in \mathbb{R}^n,\ u(\cdot) \in {\mathcal{U}}\
\mbox{and}\ v(\cdot) \in {\mathcal{V}},$
 \begin{eqnarray*}
 |\ ^{i}Y^{t,x; u, v}_t -\ ^{i}Y^{t,x'; u, v}_t| \leq C|x -x'|,\quad
  |\ ^{i}Y^{t,x; u, v}_t| \leq C (1+|x|), \ \mathbb{P}-a.s.
\end{eqnarray*}
\end{proposition}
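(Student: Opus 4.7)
The plan is to deduce both inequalities as immediate consequences of the a priori BSDE estimate of Lemma~\ref{l1} applied to equation~\eqref{e2}, coupled with the SDE estimate~\eqref{e22}. Fix $i\in\{1,2\}$ and, dropping the superscripts for readability, write ${}^iY={}^iY^{t,x;u,v}$ and ${}^iY'={}^iY^{t,x';u,v}$ (and likewise for $Z$, $H$, $X$).

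For the Lipschitz bound I would cast the comparison of the two BSDEs, indexed by $x$ and $x'$, into the framework of Lemma~\ref{l1} by choosing the common ``base'' driver to be $f(s,y,z,k):=f_{N^{t,i}_s}(s,X'_s,y,z,k,u_s,v_s)$, together with $\varphi_2\equiv 0$ and
$$
\varphi_1(s):=f_{N^{t,i}_s}(s,X_s,{}^iY_s,{}^iH_s,{}^iZ_s,u_s,v_s)-f(s,{}^iY_s,{}^iZ_s,{}^iH_s).
$$
Since $f_1,f_2$ inherit from $\widetilde{f}_1,\widetilde{f}_2$ a common Lipschitz constant in $x$, one has $|\varphi_1(s)|\le C|X_s-X'_s|$, while $(H5)(\mathrm{iii})$ yields $|\Phi_{N^{t,i}_T}(X_T)-\Phi_{N^{t,i}_T}(X'_T)|\le C|X_T-X'_T|$. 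Evaluating Lemma~\ref{l1} at $s=t$ and using the $\mathcal{F}_t$-measurability of ${}^iY_t,\,{}^iY'_t$ therefore gives
$$
|{}^iY_t-{}^iY'_t|^2\le C\,\mathbb{E}\Big[\sup_{s\in[t,T]}|X_s-X'_s|^2\,\Big|\,\mathcal{F}_t\Big],
$$
and the SDE estimate~\eqref{e22} closes the argument $\mathbb{P}$-a.s.

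The growth bound is obtained from the same template, this time comparing $({}^iY,{}^iZ,{}^iH)$ with the null triple $(0,0,0)$: the perturbation becomes $\varphi_1(s)=f_{N^{t,i}_s}(s,X_s,0,0,0,u_s,v_s)$, which by the Lipschitz property of $f_i$ in $x$ together with the boundedness of $\widetilde{f}_i(t,0,0,0,u,v)$ on the compact set $[0,T]\times U\times V$ (guaranteed by the joint continuity in $(H5)(\mathrm{i})$) is dominated by $C(1+|X_s|)$. Handling the terminal value $|\Phi_{N^{t,i}_T}(X_T)|\le C(1+|X_T|)$ analogously and invoking~\eqref{e22} again delivers $|{}^iY_t|\le C(1+|x|)$ $\mathbb{P}$-a.s.

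The only point requiring a word is that the ``base'' driver in the application of Lemma~\ref{l1} is piecewise in $s$, switching between $f_1$ and $f_2$ at the Poisson jump times through $N^{t,i}$; this is harmless because Lemma~\ref{l1} uses only a uniform Lipschitz constant in $(y,z,k)$, which $f_1$ and $f_2$ share, and an $\mathcal{H}^2$-bound on $f(\cdot,0,0,0)$, which follows from the above compactness argument. No monotonicity condition enters the estimate; the restriction $K-\lambda>-1$ is needed only for the comparison theorem, which is not invoked here. Hence the proposition follows from routine BSDE machinery once the above reduction to Lemma~\ref{l1} is made.
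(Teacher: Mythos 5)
Your argument is correct and is precisely the ``standard BSDE estimates'' route that the paper invokes without writing out any details: the paper states Proposition \ref{p3} with only the remark that it follows from standard estimates, and your reduction to Lemma \ref{l1} (freezing the solution in the perturbation term $\varphi_1$, using the uniform Lipschitz constant shared by $f_1$ and $f_2$, and closing with the SDE estimate (\ref{e22})) is exactly that standard argument made explicit. The points you flag -- the harmlessness of the driver switching between $f_1$ and $f_2$ along $N^{t,i}$, and the fact that neither the monotonicity condition nor $K-\lambda>-1$ is needed here -- are accurate.
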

Let us now introduce  subspaces of admissible controls and give the
definition of NAD strategies.  For later applications  this has to
be done for games over stochastic intervals. Let $\sigma,\tau$ be
two stopping times such that $t\leq \sigma \leq \tau \leq T.$
\begin{definition}
 The space $\mathcal {U}_{\sigma,\tau}$ (resp., $\mathcal {V}_{\sigma,\tau}$) of admissible
controls for the $1^{th}$ player (resp., the $2^{nd}$ player) over
the given stochastic time interval $[[\sigma,\tau]]$ is defined as
the space of all processes $\{u_{r}, \sigma\leq r \leq \tau\}$
(resp.,
 $\{v_{r}, \sigma\leq r \leq \tau\}$), such that, for $u_{0}\in U$,
the process  $\{u_r 1_{[\sigma,\tau]}+ u_0 1_{[\sigma,\tau]^{c}}\}$
(resp., for $v_{0}\in V$, the process  $\{v_r 1_{[\sigma,\tau]}+ v_0
1_{[\sigma,\tau]^{c}}\}$) are $\mathbb{F}$-predictable and take its
values in $U$ (resp., $V$).
\end{definition}

\begin{definition}\label{d1}
 A nonanticipative strategy with delay (NAD strategy)
for the $1^{th}$ player over the given stochastic time interval
$[[\sigma,\tau]]$ is a measurable mapping $\alpha:\mathcal
{V}_{\sigma,\tau}\rightarrow \mathcal {U}_{\sigma,\tau}$ such that
the following properties hold:

1)  $\alpha$ is a nonanticipative strategy, i.e., for every
$\mathbb{F}$-stopping time  $\tau'$ on $\Omega$ with $\sigma\leq
\tau'\leq\tau,$ and for all
 $v_{1},v_{2} \in\mathcal {V}_{\sigma,\tau}$ with $v_{1}=v_{2}$   on $[[\sigma,\tau']]$,
it holds $\alpha(v_{1})=\alpha(v_{2})$ on $[[\sigma,\tau']]$. (The
identification of $v_{1}=v_{2}$ and $\alpha(v_{1})=\alpha(v_{2})$ is
in the $dsd\mathbb{P}$ almost everywhere sense.)

2) $\alpha$ is a strategy with delay, i.e., for all $v\in\mathcal
{V}_{\sigma,\tau}$,
 there exists an increasing sequence of stopping times
$\{S_{n}(v)\}_{n\geq 1}$ with

i) $\sigma=S_{0}(v)\leq S_{1}(v) \leq\cdots \leq S_{n}(v)\leq \cdots
\leq \tau$, \  ii) $\bigcup_{n\geq
1}\{S_{n}(v)=\tau\}=\Omega$, $\mathbb{P}$-a.s.,\\
such that, for all $\Gamma\in\mathcal {F}_{\sigma}$ and for all
$n\geq 1 $ and $v,v' \in \mathcal {V}_{\sigma,\tau}$, it holds:  if
$v=v'$
 on $[[\sigma,S_{n-1}(v)]]\bigcap (\Gamma\times[t,T])$, then
iii) $S_{l}(v)=S_{l}(v')$, on $\Gamma$, $\mathbb{P}$-a.s.,  $1\leq l
\leq n$,

iv) $\alpha(v)=\alpha(v')$, on $[[\sigma,S_{n}(v)]]\bigcap
(\Gamma\times[t,T])$.

We denote the collection of   all such NAD strategies for the
$1^{th}$ player by $\mathcal {A}_{\sigma,\tau}$. We can define all
NAD strategies $\beta:\mathcal {U}_{\sigma,\tau}\rightarrow \mathcal
{V}_{\sigma,\tau}$ for the $2^{nd}$ player    symmetrically and
denote the set of them by $\mathcal {B}_{\sigma,\tau}$.
\end{definition}

We have the following lemma, which turns out to be  useful in what
follows. Since the proof of this lemma is similar to that in Lin
\cite{L2011}, we omit it here.
\begin{lemma}\label{l5}
For  $(\alpha,\beta)\in\mathcal {A}_{\sigma,\tau}\times \mathcal
{B}_{\sigma,\tau}$,
 there exists a unique couple of admissible control processes
$(u,v)\in \mathcal {U}_{\sigma,\tau} \times \mathcal
{V}_{\sigma,\tau}$ such that $\alpha(v)=u, \beta(u)=v.$
\end{lemma}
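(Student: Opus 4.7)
The plan is to obtain $(u,v)$ as the unique fixed point of the coupled system $u=\alpha(v)$, $v=\beta(u)$ by constructing $(u,v)$ inductively on an increasing sequence of stopping times $\sigma = R_0 \leq R_1\leq \cdots \leq \tau$. The delay property of NAD strategies makes $\alpha$ and $\beta$ ``local in time'': the output on an initial stochastic interval is determined by the input on a strictly earlier one, which is exactly what is needed to match the two maps simultaneously.

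\noindent\textbf{Initialization.} First I would apply the delay property of $\alpha$ with $n=1$ to two arbitrary $v,v' \in \mathcal{V}_{\sigma,\tau}$ (and $\Gamma=\Omega$). The hypothesis $v=v'$ on $[[\sigma,S_0(v)]]=[[\sigma,\sigma]]$ is $dsd\mathbb{P}$-trivially satisfied, so \emph{iii)} and \emph{iv)} give that both $S_1(v)$ and $\alpha(v)|_{[[\sigma,S_1(v)]]}$ are independent of $v$; call them $S_1^\alpha$ and $\bar u_1$. Symmetrically, $\beta$ yields $T_1^\beta$ and $\bar v_1:=\beta(\cdot)|_{[[\sigma,T_1^\beta]]}$. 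I would then put $u:=\bar u_1$ on $[[\sigma,S_1^\alpha]]$ and $v:=\bar v_1$ on $[[\sigma,T_1^\beta]]$, so that for any extensions $(\bar u,\bar v)\in \mathcal{U}_{\sigma,\tau}\times\mathcal{V}_{\sigma,\tau}$ of $(u,v)$ one has $\alpha(\bar v)=\bar u$ on $[[\sigma,S_1^\alpha]]$ and $\beta(\bar u)=\bar v$ on $[[\sigma,T_1^\beta]]$.

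\noindent\textbf{Inductive step and exhaustion.} Next I would proceed by interleaving the delays of $\alpha$ and $\beta$. Suppose $u$ is defined up to a stopping time $\sigma^u_n$ and $v$ up to $\sigma^v_n$, with the fixed-point identity satisfied on the common part, and with $\sigma^u_n\geq T_{k_n}^\beta$, $\sigma^v_n\geq S_{k_n}^\alpha$ for some $k_n$. Then by \emph{iii)} the next delay times $S_{k_n+1}(\bar v)$ and $T_{k_n+1}(\bar u)$ are independent of the extensions chosen (they depend only on $v|_{[[\sigma,S_{k_n}^\alpha]]}$, resp.\ $u|_{[[\sigma,T_{k_n}^\beta]]}$, which are already fixed), and by \emph{iv)} so are $\alpha(\bar v)$ and $\beta(\bar u)$ on those enlarged intervals. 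This lets me extend $u$ and $v$ consistently, preserving the inductive hypothesis with $k_{n+1}=k_n+1$. Setting $R_n:=\sigma^u_n\wedge \sigma^v_n$, condition \emph{ii)} of Definition~\ref{d1}, applied both to $\alpha$ and to $\beta$, forces $R_n\uparrow \tau$ $\mathbb{P}$-a.s., and the limiting $(u,v)$ satisfies $\alpha(v)=u$ and $\beta(u)=v$ $dsd\mathbb{P}$-a.e.

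\noindent\textbf{Uniqueness and main obstacle.} For uniqueness, if $(u^{(1)},v^{(1)})$ and $(u^{(2)},v^{(2)})$ are two solutions, the same delay argument gives $u^{(1)}=u^{(2)}$ on $[[\sigma,S_1^\alpha]]$ and $v^{(1)}=v^{(2)}$ on $[[\sigma,T_1^\beta]]$, and then, by the interleaved induction above, on every $[[\sigma,R_n]]$, hence on $[[\sigma,\tau]]$. The main obstacle is the bookkeeping in the inductive step: the two delay sequences $\{S_n^\alpha\}$ and $\{T_n^\beta\}$ are generally out of phase, so one must be careful to invoke, at each stage, whichever of $\alpha$ or $\beta$ can currently be applied given the reach of the other variable, rather than trying to extend both by one level simultaneously. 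Once this interleaving is set up properly, everything follows from \emph{iii)}--\emph{iv)} (consistency) and \emph{ii)} (exhaustion), in the same spirit as the construction in \cite{L2011}.
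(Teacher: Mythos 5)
Your construction is exactly the one the paper has in mind: the paper omits the proof of Lemma \ref{l5} with a pointer to Lin \cite{L2011}, where the pair $(u,v)$ is built by precisely this delay-based induction — the first delay interval of $\alpha$ (resp.\ $\beta$) determines $u$ (resp.\ $v$) independently of the input, and one then extends by adaptively advancing whichever of the two strategies currently has its required input interval covered, with property \emph{ii)} guaranteeing exhaustion of $[[\sigma,\tau]]$ and the same induction giving uniqueness. Your closing caveat is the right one: the inductive step as displayed (advancing both strategies by one delay level simultaneously) does not self-propagate, since after the step $u$ reaches $S_{k_n+1}^\alpha$ but is needed up to $T_{k_n+1}^\beta$; the adaptive interleaving you describe — noting that on each $\omega$ at least one of the two strategies can always be advanced — is what actually closes the argument.
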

For  $(\alpha,\beta)\in\mathcal {A}_{t,T}\times \mathcal {B}_{t,T}$,
it follows from Lemma \ref{l5} that  there exists a unique couple of
controls $(u,v)\in\mathcal {U}_{t,T}\times\mathcal {V}_{t,T}$ such
that  $(\alpha(v),\beta(u))=(u,v)$. In this sense, we define
$$(X^{t,x; \alpha,\beta},\ ^{i}Y^{t,x; \alpha,\beta},   \ ^{i}Z^{t,x; \alpha,\beta}, \ ^{i}H^{t,x; \alpha,\beta})
:=(X^{t,x; u, v},\ ^{i}Y^{t,x; u, v},   \ ^{i}Z^{t,x; u, v}, \
^{i}H^{t,x; u, v}),$$ and $J_{i}(t,x;\alpha,\beta):=J_{i}(t,x;u,v).$
This definition allows, in particular, to define the  value
functions $W_{i}$ and $U_{i}$ of the game: For all $(t,x)\in
[0,T]\times\mathbb{R}^n$, we put
\begin{eqnarray*}\label{}
\quad W_{i}(t,x):=\esssup_{\alpha\in\mathcal {A}_{t,T}}
\essinf_{\beta\in\mathcal {B}_{t,T}} J_{i}(t,x;\alpha,\beta),\quad
U_{i}(t,x):=\essinf_{\beta\in\mathcal {B}_{t,T}}
\esssup_{\alpha\in\mathcal {A}_{t,T}} J_{i}(t,x;\alpha,\beta).
\end{eqnarray*}
The functions $W_{i}$ and $U_{i}, i=1,2,$ are called the lower and
upper value functions, respectively. Observe that, according to the
definition of esssup and essinf over a uniformly essentially bounded
family of $\mathcal {F}_{t}$-measurable random variables, both
$W_{i}(t,x)$ and $U_{i}(t,x)$ are a priori elements of
$L^{\infty}(\Omega,\mathcal {F}_{t},\mathbb{P})$. However, we will
prove that  they are deterministic.
\begin{proposition}\label{p1}
Let the assumptions $(H4)$ and $(H5)$ hold. Then, for any $(t, x)\in
[0, T]\times {\mathbb{R}}^n$, we have
$W_{i}(t,x)=\mathbb{E}[W_{i}(t,x)]$, and
$U_{i}(t,x)=\mathbb{E}[U_{i}(t,x)]$, $\mathbb{P}-a.s.,$ $i=1,2.$
\end{proposition}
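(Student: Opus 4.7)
The plan is to follow the classical Girsanov-invariance scheme of Buckdahn-Li \cite{BL2006}, extended to jumps by Buckdahn-Hu-Li \cite{BHL2010}, and to verify that the coupling introduced by (\ref{s1}) does not obstruct it. Fix $(t,x)\in [0,T]\times\mathbb{R}^n$; by symmetry it suffices to treat $W_i(t,x)$. The goal is to exhibit a rich family of measurable bijections $\theta$ of $(\Omega,\mathcal{F},\mathbb{P})$ that preserve the noise on $[t,T]$ and under which $W_i(t,x)$ is invariant, and then to conclude by a triviality argument that any such invariant $\mathcal{F}_t$-measurable random variable is $\mathbb{P}$-a.s. constant.

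First, for each deterministic $h\in L^\infty(\mathbb{R};\mathbb{R}^d)$ with compact support in $(-\infty,t]$, I would introduce the Wiener translation $\theta_h^B:\Omega_1\to\Omega_1$ given by $(\theta_h^B\omega_1)(s)=\omega_1(s)+\int_{-\infty}^{s\wedge t}h(r)\,dr$. By Girsanov this is a bijection and its push-forward of $\mathbb{P}_1$ is equivalent to $\mathbb{P}_1$; since $h$ vanishes on $(t,+\infty)$, the increments $B_r-B_t$ for $r\in[t,T]$ are left invariant. On the Poisson side I would use, as in \cite{BHL2010}, a family of measurable bijections of $\Omega_2$ that modify only the jump configuration of $N$ on $(-\infty,t]$ and leave the jump times on $(t,T]$ untouched, under which $\mathbb{P}_2$ is also quasi-invariant.

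Second, for each such $\theta$ (of either type), the induced maps $u\mapsto u\circ\theta$, $v\mapsto v\circ\theta$ are bijections of $\mathcal{U}_{t,T}$ and $\mathcal{V}_{t,T}$, and $\alpha\mapsto\alpha\circ\theta$, $\beta\mapsto\beta\circ\theta$ are bijections of $\mathcal{A}_{t,T}$ and $\mathcal{B}_{t,T}$, the NAD delay times $\{S_n(v)\}$ of Definition \ref{d1} transforming covariantly with $v$. Using pathwise uniqueness for (\ref{e3}) on $[t,T]$ and uniqueness for the decoupled BSDE system (\ref{e2}), together with the fact that the coefficients $b,\sigma,f_1,f_2,\Phi_1,\Phi_2$ are deterministic, I would deduce
\begin{align*}
X^{t,x;\alpha,\beta}\circ\theta &\;=\; X^{t,x;\alpha\circ\theta,\beta\circ\theta}, \\
J_i(t,x;\alpha,\beta)\circ\theta &\;=\; J_i(t,x;\alpha\circ\theta,\beta\circ\theta),
\end{align*}
$\mathbb{P}$-a.s. on $[t,T]$. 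Combining this identity with the essinf/esssup definition of $W_i(t,x)$ and the bijectivity of the strategy transformations yields $W_i(t,x)\circ\theta=W_i(t,x)$, $\mathbb{P}$-a.s.

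Finally, the family of these transformations is rich enough to separate $\mathcal{F}_t$-measurable events: a standard density argument on the Wiener space combined with its Poisson analogue from \cite{BHL2010} shows that any $\mathcal{F}_t$-measurable random variable invariant $\mathbb{P}$-a.s. under all such $\theta$ is $\mathbb{P}$-a.s. equal to its expectation. Applied to $W_i(t,x)$, and symmetrically to $U_i(t,x)$, this yields the claim. The main obstacle lies in Step 2, namely verifying carefully that the NAD structure with its delay stopping times $\{S_n(v)\}$ transforms covariantly under $\theta$ and that the essinf/esssup commute with the transformation on the strategy spaces; the coupling between $J_1$ and $J_2$ in (\ref{s1}) does not introduce an extra difficulty because, by Proposition \ref{p10}, the cost functionals are represented through the decoupled system (\ref{e2}), whose pathwise behaviour under $\theta$ is clear from uniqueness.
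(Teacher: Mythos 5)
Your proposal is correct and follows essentially the same route as the paper: an invariance lemma for $W_i(t,x)$ under transformations of $\Omega$ that leave the increments $B_s-B_t$ and $N((t,s])$ untouched and have law equivalent to $\mathbb{P}$ (with the conjugated strategies $\widehat{\beta}(u)=\beta(u(\tau^{-1}))(\tau)$ handling the NAD structure, exactly the point you flag as the main obstacle), followed by a richness argument showing that invariance forces constancy. The only cosmetic difference is your choice of Cameron--Martin translations and jump-configuration modifications on $(-\infty,t]$, where the paper instead uses the measure-preserving increment swaps $\tau_\ell$ and an asymptotic-independence argument as $\ell\to\infty$; both choices satisfy the hypotheses of the paper's Lemma \ref{Jll1} and yield the same conclusion.
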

This proposition allows to identify $W_{i}(t,x)$, $U_{i}(t,x)$ with
the deterministic functions $\mathbb{E}[W_{i}(t,x)]$,
$\mathbb{E}[U_{i}(t,x)]$, respectively,  $i=1,2.$   The proposition
follows then by the following two lemmas.  The following lemma
indicates that $W_{i}$ is invariant by a sufficiently large class of
transformations on $\Omega$.  \vskip1mm
\begin{lemma}\label{Jll1}
For $(t,x)\in [0, T]\times {\mathbb{R}}^n$,  $\tau:
\Omega\rightarrow\Omega$ is  an invertible ${\cal{F}}-{\cal{F}}$
measurable transformation such that
\begin{eqnarray*}\label{}
&& i)\ \ \tau\  \mbox{and}\  \tau^{-1}: \Omega\rightarrow\Omega\
\mbox{are}\  {\cal{F}}_t-{\cal{F}}_t\
 \mbox{measurable};\\
&& ii) \ \ (B_s-B_t)\circ\tau=B_s-B_t,\ s\in[t, T], \ \ N((t, s])\circ\tau=N((t, s]),\ s\in[t, T];\\
&& iii)\ \ \mbox{the law}\ \mathbb{P}\circ [\tau]^{-1}\ \mbox{of}\
\tau\ \mbox{is equivalent to the underlying probability measure}\
\mathbb{P},
\end{eqnarray*}
 then  $W_{i}(t,x)\circ\tau=W_{i}(t,x),$
$\mathbb{P}$-a.s.
\end{lemma}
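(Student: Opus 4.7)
The plan is to transport the entire game structure by $\tau$: construct a measure-preserving bijection $u\mapsto u\circ\tau$ on the spaces of admissible controls, verify pathwise covariance of the SDE (\ref{e3}) and the decoupled BSDE (\ref{e2}) under this action, lift the bijection to the space $\mathcal{A}_{t,T}$ of NAD strategies, and then commute $\circ\tau$ with the esssup and essinf in the definition of $W_i$ by exploiting the equivalence in (iii).

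First, assumptions (i) and (ii) together imply $\tau^{-1}(\mathcal{F}_s)\subset\mathcal{F}_s$ for every $s\in[t,T]$, since $\mathcal{F}_s$ is generated by $\mathcal{F}_t$ (invariant by (i)) and by the increments of $B$ and $N$ on $[t,s]$ (invariant by (ii)); since $\tau^{-1}$ inherits the analogous properties, the reverse inclusion $\tau(\mathcal{F}_s)\subset\mathcal{F}_s$ also holds. Hence $u\mapsto u\circ\tau$ is a bijection of $\mathcal{U}_{t,T}$ onto itself, and likewise for $\mathcal{V}_{t,T}$. A simple-process approximation (with (iii) guaranteeing that the $\mathbb{P}$-null sets arising in the approximation are preserved) then yields, for every $\mathbb{F}$-predictable $\phi$ on $[t,T]$,
\begin{equation*}
\Big(\int_t^\cdot\phi_r\,dB_r\Big)\circ\tau=\int_t^\cdot(\phi_r\circ\tau)\,dB_r,\qquad \Big(\int_t^\cdot\phi_r\,d\widetilde{N}_r\Big)\circ\tau=\int_t^\cdot(\phi_r\circ\tau)\,d\widetilde{N}_r,\quad \mathbb{P}\text{-a.s.}
\end{equation*}
Composing the SDE (\ref{e3}) and the BSDE (\ref{e2}) with $\tau$ and invoking pathwise uniqueness therefore gives $X^{t,x;u,v}\circ\tau=X^{t,x;u\circ\tau,v\circ\tau}$ and ${}^{i}Y^{t,x;u,v}\circ\tau={}^{i}Y^{t,x;u\circ\tau,v\circ\tau}$; in particular $J_i(t,x;u,v)\circ\tau=J_i(t,x;u\circ\tau,v\circ\tau)$.

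I would lift this to strategies by setting $\alpha^\tau(v):=\alpha(v\circ\tau^{-1})\circ\tau$ and $\beta^\tau$ analogously. Using the $\mathcal{F}_s$-measurability of $\tau$ on $[t,T]$, nonanticipativity of $\alpha^\tau$ follows from that of $\alpha$ (each stopping time of $\alpha$ pulls back to a stopping time under $\tau$), and the delay condition of Definition \ref{d1} holds with the shifted stopping times $S_n^\tau(v):=S_n(v\circ\tau^{-1})\circ\tau$; one concludes that $\alpha\mapsto\alpha^\tau$ is a bijection of $\mathcal{A}_{t,T}$, and similarly for $\mathcal{B}_{t,T}$. If $(u,v)$ is the unique pair associated with $(\alpha,\beta)$ via Lemma \ref{l5}, then $(u\circ\tau,v\circ\tau)$ is the pair associated with $(\alpha^\tau,\beta^\tau)$, and combining with the covariance of $J_i$ above yields
\begin{equation*}
J_i(t,x;\alpha,\beta)\circ\tau=J_i(t,x;\alpha^\tau,\beta^\tau).
\end{equation*}

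Finally, since $\mathbb{P}\circ\tau^{-1}$ is equivalent to $\mathbb{P}$, $\mathbb{P}$-null sets are preserved by composition with $\tau$, so $\esssup$ and $\essinf$ commute with $\circ\tau$. Combined with the bijectivity of $\alpha\mapsto\alpha^\tau$ and $\beta\mapsto\beta^\tau$, this gives
\begin{equation*}
W_i(t,x)\circ\tau=\esssup_{\alpha\in\mathcal{A}_{t,T}}\essinf_{\beta\in\mathcal{B}_{t,T}}J_i(t,x;\alpha^\tau,\beta^\tau)=\esssup_{\alpha'\in\mathcal{A}_{t,T}}\essinf_{\beta'\in\mathcal{B}_{t,T}}J_i(t,x;\alpha',\beta')=W_i(t,x).
\end{equation*}
The main obstacle I anticipate is the verification of the delay structure for $\alpha^\tau$: checking that the transformed stopping times $S_n^\tau(v)$ retain their monotonicity and exhaust $\Omega$ up to $\mathbb{P}$-null sets, and that conditions iii)--iv) of Definition \ref{d1} transport covariantly along $\tau$. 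This step is routine but technical because it blends the pathwise definitions with the $dsd\mathbb{P}$-almost-everywhere identifications used throughout, and it is where the invertibility together with the equivalence of laws (iii) is really exploited.
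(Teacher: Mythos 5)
Your proposal is correct and follows essentially the same route as the paper's own proof: the same lift $\alpha^\tau(v)=\alpha(v\circ\tau^{-1})\circ\tau$ of the transformation to NAD strategies, the same covariance identity $J_i(t,x;u,v)\circ\tau=J_i(t,x;u\circ\tau,v\circ\tau)$ obtained from pathwise uniqueness of the SDE/BSDE, and the same use of the equivalence of laws in (iii) to commute $\esssup$ and $\essinf$ with $\circ\tau$. The only difference is cosmetic: you spell out the simple-process approximation for the stochastic integrals and flag the delay-structure verification, both of which the paper delegates to references or to "straightforward checking".
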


Even if the formulation of of the lemma is the same as in
\cite{BHL2010}, the proof is more difficult here. Indeed, in
\cite{BHL2010} games of the type "strategy again control" were
studied, while we investigate here games of the type "NAD strategy
against NAD strategy".
 \begin{proof}: We give the proof in four steps:
 \vskip0.1cm
\noindent ${\bf Step\ 1}$: For any $u\in {\mathcal{U}}_{t,T}, \ v\in
{\mathcal{V}}_{t,T}, \ J_{i}(t, x; u,v)\circ\tau= J_{i}(t, x;
u(\tau),v(\tau)),\ \mathbb{P}\mbox{-a.s.}$ \vskip0.1cm The proof
follows closely  from the arguments in \cite{BHL2010} or
\cite{BL2006} and is therefore  omitted.

 \vskip0.1cm
\noindent ${\bf Step\ 2}$:\ \  For $\beta\in {\cal{B}}_{t,T},$\ let
$\widehat{\beta}(u):=\beta(u(\tau^{-1}))(\tau),\ u\in
{\mathcal{U}}_{t,T},$ and for $\alpha\in {\cal{A}}_{t,T},$\ let
$\widehat{\alpha}(v):=\alpha(v(\tau^{-1}))(\tau),\ v\in
{\mathcal{V}}_{t,T}.$  Then, $\widehat{\beta}\in {\cal{B}}_{t,T}$
and $\widehat{\alpha}\in {\cal{A}}_{t,T}.$

\vskip0.1cm We only give the proof for $\widehat{\beta}$, since we
can use a similar argument for  $\widehat{\alpha}$.  From the
definition of $\widehat{\beta}$ we know that  $\widehat{\beta}$\
maps ${\mathcal{U}}_{t,T}$\ into ${\mathcal{V}}_{t,T}$.

(1) {\it $\widehat{\beta}$ is a nonanticipative strategy.} Indeed,
let $\sigma: \Omega\rightarrow [t, T]$\ be an
${\mathbb{F}}$-stopping time and $ u_1, u_2 \in {\mathcal{U}}_{t,
T}$\ such that $ u_1\equiv u_2\ \mbox {on}\ [t,\ \sigma].$\ Since
$\tau({\cal{F}}_s):=\{\tau(A),\ A\in {\cal{F}}_s\}={\cal{F}}_s,\
s\in[t, T]$,  the assumptions i) and ii) imply that $
\sigma(\tau^{-1})\ \mbox{is still an}\ {\mathbb{F}}$-stopping time.
Obviously, we have  $ u_1(\tau^{-1})\equiv u_2(\tau^{-1})\ \mbox
{on}\ [[t,\ \sigma(\tau^{-1})]]$. From  $\beta\in {\cal{B}}_{t,T}$
it follows that $\beta(u_1(\tau^{-1}))= \beta(u_2(\tau^{-1}))\ $ $
\mbox {on}\ [[t,\ \sigma(\tau^{-1})]]$. Consequently,
$$\widehat{\beta}(u_1)=\beta(u_1(\tau^{-1}))(\tau)= \beta(u_2(\tau^{-1}))(\tau)=\widehat{\beta}(u_2)\ \mbox
{on}\ [[t,\ \sigma]].$$

 (2) {\it$\widehat{\beta}$ is a  strategy with
 delay.}   Since $\beta$ is a nonanticipative strategy with delay,  we
have, for all $u\in\mathcal {U}_{t,T}$,
 the existence of an increasing sequence of stopping times
$\{S'_{n}(u)\}_{n\geq 1}$ with

(a) $t=S'_{0}(u)\leq S'_{1}(u) \leq\cdots \leq S'_{n}(u)\leq \cdots
\leq T$,  (b) $\bigcup_{n\geq
1}\{S'_{n}(u)=T\}=\Omega$, $\mathbb{P}$-a.s.,\\
such that, for all $\Gamma\in\mathcal {F}_{t}$ and  for all $n\geq 1
$ and $u,u' \in \mathcal {U}_{t,T}$, it holds:  if $u=u'$
 on $[[t,S'_{n-1}(u)]]\bigcap (\Gamma\times[t,T])$, then
(c) $S'_{l}(u)=S'_{l}(u')$, on $\Gamma$, $\mathbb{P}$-a.s., $1\leq l
\leq n$, (d) $\beta(u)=\beta(u')$, on $[ t,S'_{n}(u)]]\bigcap
(\Gamma\times[t,T])$.

For all $u\in\mathcal {U}_{t,T}$, we put
 $S_{n}(u)=S'_{n}(u(\tau^{-1}))(\tau),  n\geq 1$. It is easy to
 check that $\widehat{\beta}$ is a nonanticipative strategy with
 delay. Moreover, since $\beta(u)=\widehat{\beta}(u(\tau))(\tau^{-1}),\ u\in
{\mathcal{U}}_{t,T},$ we have that $ \{\widehat{\beta}\ |\ \beta \in
{\mathcal{B}}_{t,T} \}={\mathcal{B}}_{t,T}$. Ditto for
$\widehat{\alpha}$. \vskip2mm

\noindent ${\bf Step\ 3}$: The following holds:
$$\Big(\esssup_{\alpha \in
{\mathcal{A}}_{t,T}}\essinf_{\beta \in
{\mathcal{B}}_{t,T}}J_{i}(t,x;
\alpha,\beta)\Big)(\tau)=\esssup_{\alpha \in
{\mathcal{A}}_{t,T}}\essinf_{\beta \in
{\mathcal{B}}_{t,T}}\Big(J_{i}(t,x; \alpha,\beta)(\tau)\Big),\
\mathbb{P}\mbox{-a.s.}
$$
Taking into account the properties of $\tau$, this relation can be
proven in the same manner as the corresponding relation in
\cite{BL2006}, also see \cite{BCQ2011}.

 \vskip0.1cm

\noindent ${\bf Step\ 4}$:  We now show that
$W_{i}(t,x)(\tau)=W_{i}(t,x), \ \mathbb{P}\mbox{-a.s.}$

Let $(\alpha,\beta)\in \mathcal {A}_{t,T}\times \mathcal {B}_{t,T}$.
Then there exists a unique couple $(u,v)\in \mathcal {U}_{t,T}
\times \mathcal {V}_{t,T}$ such that $\alpha(v)=u, \beta(u)=v.$
Consequently, due to Step 2,  $\widehat{\alpha}(v(\tau))=u(\tau)$
and $\widehat{\beta}(u(\tau))=v(\tau)$, and, thus, from Step 1,
\begin{eqnarray*}\label{}
J_{i}(t,x; \alpha,\beta)(\tau)=J_{i}(t,x; u,v)(\tau)=J_{i}(t,x;
u(\tau),v(\tau))=J_{i}(t,x; \widehat{\alpha},\widehat{\beta}),\quad
\mathbb{P}-a.s.
\end{eqnarray*}
Thus, since $ \{\widehat{\alpha}\ |\ \alpha \in {\mathcal{A}}_{t,T}
\}={\mathcal{A}}_{t,T}$ and $ \{\widehat{\beta}\ |\ \beta \in
{\mathcal{B}}_{t,T} \}={\mathcal{B}}_{t,T}$, we conclude with Step
3,
\begin{eqnarray*}
     W_{i}(t,x)(\tau) & = & \esssup_{\alpha \in
{\mathcal{A}}_{t,T}}\essinf_{\beta \in
{\mathcal{B}}_{t,T}}(J_{i}(t,x; \alpha,\beta)(\tau))\\
& = & \esssup_{\alpha \in {\mathcal{A}}_{t,T}}\essinf_{\beta \in
{\mathcal{B}}_{t,T}}J_{i}(t,x;  \widehat{\alpha},\widehat{\beta})\\
& = & \esssup_{\alpha \in {\mathcal{A}}_{t,T}}\essinf_{\beta \in
{\mathcal{B}}_{t,T}}J_{i}(t,x;  \alpha,\beta)\\
& = &W_{i}(t,x),\ \mathbb{P}\mbox{-a.s.}
\end{eqnarray*}
We get the wished result.
\end{proof}

For  $ \ell\geq 1$, let us define the transformation $\tau_\ell:
\Omega\rightarrow\Omega$\ such that
\begin{eqnarray*}
 \begin{array}{lll}
 &(\tau_\ell\omega)((t-\ell, r])=\omega((t-2\ell, r-\ell]):=\omega(r-\ell)-\omega(t-2\ell);\\
 &(\tau_\ell\omega)((t-2\ell, r-\ell])=\omega((t-\ell, r]),\ \mbox{for}\ r\in [t-\ell, t];\\
 &(\tau_\ell\omega)((s, r])=\omega((s, r]),\ (s, r]\cap (t-2\ell, t]=\emptyset;\\
 &(\tau_\ell\omega)(0)=0.
 \end{array}
\end{eqnarray*}
The transformation $\tau_\ell$ satisfies the assumptions $i), ii)$
and $iii)$ of Lemma \ref{Jll1}. Therefore, $W(t, x)(\tau_\ell)=W(t,
x),\ \mathbb{P}\mbox{-a.s.},\ \ell \geq 1.$ We have the following
lemma by using arguments similar to that in \cite{BHL2010}.

\begin{lemma}\label{l2} Let $\zeta \in L^\infty(\Omega, {\cal F}_t, \mathbb{P})$\ be
such that, for all $\ell \geq 1$\ natural number,
$\zeta(\tau_\ell)=\zeta$, $\mathbb{P}$-a.e. Then, there exists some
real $C$\ such that $\zeta=C$, $\mathbb{P}$-a.s.
\end{lemma}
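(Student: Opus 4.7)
I would prove this by a $0$--$1$ law type argument, exploiting that $\tau_\ell$ just swaps the increments of the paths over the two adjacent intervals $(t-2\ell,t-\ell]$ and $(t-\ell,t]$ while leaving the path untouched outside $(t-2\ell,t]$. A first observation is that $\tau_\ell$ is measure preserving: since the Brownian increments on disjoint intervals are independent centered Gaussian with variance only depending on the length of the interval, and the same stationarity and independence holds for $N$, interchanging two equal length adjacent increments preserves $\mathbb{P}$.

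The heart of the argument is a density approximation. Fix $\varepsilon>0$. Using that $\mathcal{F}_t$ is generated, modulo $\mathbb{P}$-null sets, by the increments of $B$ and $N$ over $(-\infty,t]$, a standard monotone class / $L^2$-martingale convergence argument provides a time $t_0<t$ and a bounded random variable $\zeta'$, measurable with respect to $\sigma\{B_r-B_s,\,N((s,r]):\,t_0\le s\le r\le t\}$, such that $\|\zeta-\zeta'\|_{L^2}<\varepsilon$. Now I choose a natural number $\ell>t-t_0$. Then $t-\ell<t_0$, so $(t_0,t]\subset(t-\ell,t]$, and from the explicit description of $\tau_\ell$ the composed random variable $\zeta'(\tau_\ell)$ is measurable with respect to $\sigma\{B_r-B_s,\,N((s,r]):\,t_0-\ell\le s\le r\le t-\ell\}$. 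Since the two intervals $(t_0,t]$ and $(t_0-\ell,t-\ell]$ are disjoint, $\zeta'$ and $\zeta'(\tau_\ell)$ are independent, and by the measure preservation of $\tau_\ell$ they have the same law: they are i.i.d.

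The conclusion is then an $L^2$ / variance estimate. From $\zeta(\tau_\ell)=\zeta$ a.s.\ together with $\tau_\ell$ preserving $\mathbb{P}$,
\[
\|\zeta'-\zeta'(\tau_\ell)\|_{L^2}\le \|\zeta'-\zeta\|_{L^2}+\|\zeta-\zeta(\tau_\ell)\|_{L^2}+\|\zeta(\tau_\ell)-\zeta'(\tau_\ell)\|_{L^2}\le 2\varepsilon.
\]
Because $\zeta'$ and $\zeta'(\tau_\ell)$ are i.i.d., $\mathbb{E}|\zeta'-\zeta'(\tau_\ell)|^2=2\,\mathrm{Var}(\zeta')$, so $\mathrm{Var}(\zeta')\le 2\varepsilon^2$. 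A further triangle inequality gives $\mathrm{Var}(\zeta)^{1/2}\le \mathrm{Var}(\zeta')^{1/2}+2\|\zeta-\zeta'\|_{L^2}\le(\sqrt 2+2)\varepsilon$. Letting $\varepsilon\downarrow 0$ forces $\mathrm{Var}(\zeta)=0$, i.e.\ $\zeta=\mathbb{E}[\zeta]=:C$, $\mathbb{P}$-a.s.

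The main technical hurdle will be the bookkeeping of Step 2: carefully checking that $\tau_\ell$ acts on the Brownian and Poisson components simultaneously as a swap of two equal length adjacent increments (and hence is measure preserving), and identifying exactly which $\sigma$-algebra $\zeta'(\tau_\ell)$ is measurable with respect to, so as to land on a sub-interval disjoint from $(t_0,t]$. Once that is in place, the $L^2$ estimates in Steps 1 and 3 are routine and essentially identical to the argument in \cite{BHL2010}.
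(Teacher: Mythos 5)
Your argument is correct and is precisely the standard one the paper invokes here: the paper gives no proof of this lemma but refers to the argument of \cite{BHL2010} (going back to Buckdahn--Li), which is exactly your scheme of approximating $\zeta$ in $L^2$ by a functional $\zeta'$ of increments over a bounded window $(t_0,t]$, choosing $\ell>t-t_0$ so that $\zeta'$ and $\zeta'(\tau_\ell)$ become i.i.d., and concluding $\mathrm{Var}(\zeta)=0$ from the identity $\mathbb{E}|\zeta'-\zeta'(\tau_\ell)|^2=2\,\mathrm{Var}(\zeta')$ together with the invariance $\zeta(\tau_\ell)=\zeta$. The bookkeeping you flag (measure preservation of the simultaneous swap of Brownian and Poisson increments, and the identification of the $\sigma$-algebra carrying $\zeta'(\tau_\ell)$) works out exactly as you describe, so there is no gap.
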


By the definition of $W_{i}(t,x)$ and Proposition \ref{p3}  we can
easily get the following properties for our deterministic function
$W_{i}$. The same proposition holds for $U_{i}$.
\begin{proposition}\label{p4} Under the assumptions $(H4)$ and
$(H5)$, there exists a constant $C>0$\ such that, for all $ 0 \leq t
\leq T,\ x, x'\in {\mathbb{R}}^n$,
\begin{eqnarray*}
 |W_{i}(t,x)-W_{i}(t,x')| \leq C|x-x'|,\ \  |W_{i}(t,x)| \leq C(1+|x|).
\end{eqnarray*}
\end{proposition}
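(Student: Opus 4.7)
The plan is to push the estimates of Proposition \ref{p3} through the two-level esssup/essinf that defines $W_i$ (and analogously $U_i$). The key point is that Proposition \ref{p3} provides deterministic bounds which survive untouched under the monotone operations esssup and essinf.

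First I would fix $(\alpha,\beta)\in\mathcal{A}_{t,T}\times\mathcal{B}_{t,T}$ and use Lemma \ref{l5} to recover the \emph{unique} pair $(u,v)\in\mathcal{U}_{t,T}\times\mathcal{V}_{t,T}$ satisfying $\alpha(v)=u$, $\beta(u)=v$. Since this fixed-point pair depends only on $(\alpha,\beta)$ and not on the initial point $x$, the same $(u,v)$ serves as the ``realisation'' of $(\alpha,\beta)$ both at $x$ and at $x'$. Combining the identity $J_i(t,x;\alpha,\beta)=J_i(t,x;u,v)={}^iY^{t,x;u,v}_t$ (from Proposition \ref{p10}) with Proposition \ref{p3}, I obtain
\begin{eqnarray*}
|J_i(t,x;\alpha,\beta)-J_i(t,x';\alpha,\beta)|&=&|\,{}^iY^{t,x;u,v}_t-{}^iY^{t,x';u,v}_t|\leq C|x-x'|,\\
|J_i(t,x;\alpha,\beta)|&=&|\,{}^iY^{t,x;u,v}_t|\leq C(1+|x|),\qquad \mathbb{P}\text{-a.s.},
\end{eqnarray*}
where the constant $C$ is the one furnished by Proposition \ref{p3}, independent of $(t,\alpha,\beta)$.

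Next I would transfer these pointwise-$(\alpha,\beta)$ estimates to $W_i$. From the first inequality above,
\begin{eqnarray*}
J_i(t,x';\alpha,\beta)-C|x-x'|\leq J_i(t,x;\alpha,\beta)\leq J_i(t,x';\alpha,\beta)+C|x-x'|,\quad \mathbb{P}\text{-a.s.}
\end{eqnarray*}
Since $C|x-x'|$ is deterministic, applying $\essinf_{\beta\in\mathcal{B}_{t,T}}$ and then $\esssup_{\alpha\in\mathcal{A}_{t,T}}$ to each member preserves these constant shifts and yields
\begin{eqnarray*}
W_i(t,x')-C|x-x'|\leq W_i(t,x)\leq W_i(t,x')+C|x-x'|,
\end{eqnarray*}
which is the Lipschitz bound. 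For the linear growth bound, the inequality $-C(1+|x|)\leq J_i(t,x;\alpha,\beta)\leq C(1+|x|)$ is again preserved by esssup/essinf (the bounds being deterministic), giving $|W_i(t,x)|\leq C(1+|x|)$.

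There is essentially no obstacle here: the whole argument rests on the observation that the $(u,v)$ associated with $(\alpha,\beta)$ is independent of $x$, so that the $x$-Lipschitz estimate from Proposition \ref{p3} transfers directly to $J_i(t,\cdot;\alpha,\beta)$ uniformly in $(\alpha,\beta)$, and on the elementary fact that esssup and essinf preserve inequalities with deterministic shifts. The argument for $U_i$ is identical, with only the order of esssup and essinf exchanged.
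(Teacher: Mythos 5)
Your proof is correct and follows exactly the route the paper intends: the paper simply asserts that Proposition \ref{p4} follows "by the definition of $W_i$ and Proposition \ref{p3}", and your argument — recovering the $x$-independent pair $(u,v)$ via Lemma \ref{l5}, applying the pointwise estimates of Proposition \ref{p3}, and noting that esssup/essinf preserve inequalities with deterministic shifts — is precisely the standard argument being invoked.
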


 We now recall the notion of stochastic backward semigroups, which
was first introduced by Peng \cite{P1997} to study stochastic
optimal control problems, and translated later by Buckdahn and Li
\cite{BL2006} to  SDGs.
 For any stopping times $\sigma,\tau$, with $t\leq \sigma\leq\tau\leq
 T$, and a random variable $\eta \in L^2 (\Omega,
{\mathcal{F}}_{\tau},\mathbb{P};{\mathbb{R}})$, we define
\begin{eqnarray*}
 ^{i}G^{t, x; u,v}_{\sigma,\tau} [\eta]:= \ ^{i} \widehat{Y}_\sigma^{t,x; u, v},
\end{eqnarray*}
 where $(\ ^{i}\widehat{Y}_s^{t,x;u, v},
\ ^{i}\widehat{Z}_s^{t,x;u, v}, \ ^{i}\widehat{H}^{t,x; u, v}_s
)_{t\leq s \leq \tau}$ is the solution of the following BSDE:
\begin{eqnarray*}\label{}
   \left \{\begin{array}{rcl}
   -d\ ^{i}\widehat{Y}^{t,x; u, v}_s & = & f_{N_{s}^{t,i}}(s,X^{t,x; u, v}_s,
   \ ^{i}\widehat{Y}^{t,x; u, v}_s,
   \ ^{i}\widehat{H}^{t,x; u, v}_s,\  ^{i}\widehat{Z}^{t,x; u, v}_s,u_s, v_s) ds, \quad s\in [t,\tau]\\
   &&-\lambda\ ^{i}\widehat{H}^{t,x; u, v}_sds
   -\ ^{i}\widehat{Z}^{t,x; u, v}_s dB_s-\ ^{i}\widehat{H}^{t,x; u, v}_s d\widetilde{N}_{s},\\
        ^{i}\widehat{Y}^{t,x; u, v}_{\tau}  & = & \eta,
   \end{array}\right.
\end{eqnarray*}
 where $X^{t,x;u, v}$\ is the solution of SDE (\ref{e3}) with $\zeta=x\in
 \mathbb{R}^{n}$.\vskip2mm

We have the following  dynamic programming principle (DPP) over a
stochastic interval for our games.
\begin{theorem}\label{t5}
Let the assumptions $(H4)$ and  $(H5)$ hold. Then the following
dynamic programming principle holds:
 For any stopping time $\tau$ with
$0\leq t<\tau \leq T,\ x\in {\mathbb{R}}^n, i=1,2,$
\begin{eqnarray*}
W_{i}(t,x)&=&\esssup_{\alpha \in {\mathcal{A}}_{t,
\tau}}\essinf_{\beta \in {\mathcal{B}}_{t, \tau}}\
^{i}G^{t,x;\alpha,\beta}_{t,\tau} [W_{N^{t,i}_{\tau}}(\tau,
X^{t,x;\alpha,\beta}_{\tau})],\\
 U_{i}(t,x) &=&\essinf_{\beta
\in {\mathcal{B}}_{t, \tau}}\esssup_{\alpha \in {\mathcal{A}}_{t,
\tau}}\ ^{i}G^{t,x;\alpha,\beta}_{t,\tau} [U_{N^{t,i}_{\tau}}(\tau,
X^{t,x;\alpha,\beta}_{\tau})].
\end{eqnarray*}
\end{theorem}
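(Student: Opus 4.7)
My plan is to prove the stopping-time DPP in three phases: first a DPP at arbitrary deterministic times, then time continuity of the value functions, and finally approximation of the stopping time by dyadic discretizations combined with a passage to the limit. For the first phase, given any fixed $s\in(t,T]$, I would establish
\[W_{i}(t,x) = \esssup_{\alpha \in \mathcal{A}_{t,s}}\essinf_{\beta \in \mathcal{B}_{t,s}} \ ^{i}G^{t,x;\alpha,\beta}_{t,s}\bigl[W_{N^{t,i}_{s}}(s, X^{t,x;\alpha,\beta}_{s})\bigr]\]
and its analogue for $U_{i}$, following the standard "NAD-strategy against NAD-strategy" argument adapted to the decoupled system (\ref{e2}): paste NAD strategies on $[t,s]$ with measurable families of NAD strategies on $[s,T]$ indexed by the position $X^{t,x;\alpha,\beta}_{s}$ and by the random index $N^{t,i}_{s}\in\{1,2\}$. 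The bijection between strategy pairs and control pairs (Lemma \ref{l5}), the determinism of $W_{i}$ (Proposition \ref{p1}), the flow property of (\ref{e3}), the Markov property of $N^{t,i}$, and the backward semigroup property for (\ref{e2}) are the key ingredients; the argument mirrors that in \cite{L2011}, the only novelty being the bookkeeping of the index $N^{t,i}_{s}$.

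Second, I would prove that $W_{i}(\cdot,x)$ and $U_{i}(\cdot,x)$ are $\frac{1}{2}$-H\"older continuous in $t$. For $0\leq t<t'\leq T$, apply the DPP from the first phase at the deterministic time $t'$ to write $W_{i}(t,x)$ as a saddle-value over $\mathcal{A}_{t,t'}\times\mathcal{B}_{t,t'}$ of $\ ^{i}G^{t,x;\alpha,\beta}_{t,t'}[W_{N^{t,i}_{t'}}(t', X^{t,x;\alpha,\beta}_{t'})]$. Then Lemma \ref{l1} applied to (\ref{e2}) on $[t,t']$, combined with Proposition \ref{p4} and the SDE estimate (\ref{e22}), yields an increment bound $|W_{i}(t,x)-W_{i}(t',x)|\leq C(1+|x|)\sqrt{t'-t}$ uniformly in $(\alpha,\beta)$, a bound preserved by the essential sup/inf. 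The argument for $U_{i}$ is symmetric.

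For the stopping-time extension, given $\tau$ with $t<\tau\leq T$, I would approximate from above by $\tau_{n}=\sum_{k=1}^{2^{n}} t^{n}_{k}\mathbf{1}_{\{t^{n}_{k-1}<\tau\leq t^{n}_{k}\}}$ with $t^{n}_{k}=t+k(T-t)/2^{n}$. Each $\tau_{n}$ takes finitely many values, $\{\tau_{n}=t^{n}_{k}\}\in\mathcal{F}_{t^{n}_{k}}$, and NAD strategies over $[t,\tau_{n}]$ decompose measurably on these events into strategies over $[t,t^{n}_{k}]$ (and conversely can be glued), so the DPP at $\tau_{n}$ follows from the deterministic DPP applied event-by-event. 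To pass to the limit $n\to\infty$: $\tau_{n}\downarrow\tau$ gives $X^{t,x;\alpha,\beta}_{\tau_{n}}\to X^{t,x;\alpha,\beta}_{\tau}$ in $L^{2}$ by (\ref{e22}), and $N^{t,i}_{\tau_{n}}\to N^{t,i}_{\tau}$ in probability by right-continuity of $N$; the H\"older continuity and Proposition \ref{p4} then yield convergence of $W_{N^{t,i}_{\tau_{n}}}(\tau_{n}, X^{t,x;\alpha,\beta}_{\tau_{n}})$ to $W_{N^{t,i}_{\tau}}(\tau, X^{t,x;\alpha,\beta}_{\tau})$ in $L^{2}$ at rate $O(2^{-n/2})$, and Lemma \ref{l1} transfers this to the backward semigroups uniformly in $(\alpha,\beta)$, so that the esssup-essinf passes to the limit.

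The main obstacle is precisely commuting this limit with the esssup-essinf, since the strategy domains $\mathcal{A}_{t,\tau_{n}}\times\mathcal{B}_{t,\tau_{n}}$ vary with $n$ and the monotonicity argument used by Buckdahn and Hu \cite{BH2010} for control problems is unavailable due to the competing sup/inf game structure. The H\"older continuity of the second phase is exactly the tool that saves the argument: it converts the random fluctuation $\tau_{n}-\tau$ into a deterministic error of order $2^{-n/2}$, after which an $\varepsilon$-optimal strategy argument pushes the limit through the saddle operator and delivers the DPP at the stopping time $\tau$.
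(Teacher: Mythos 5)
Your three ingredients --- the deterministic-time DPP, the $\frac{1}{2}$-H\"older continuity of $W_i(\cdot,x)$, and the dyadic approximation $\tau_n\downarrow\tau$ --- are exactly the ones the paper uses, and your diagnosis that the time continuity is what replaces the unavailable monotonicity argument of Buckdahn--Hu is the central point of Subsection \ref{A1}. But the two arguments are assembled differently. The paper first introduces an a priori new value function $\overline{W}_i(\tau,x)$ for the game played over the stochastic interval $[[\tau,T]]$ (which requires a separate proposition showing that the esssup-essinf over $\mathcal{A}_{\tau,T}\times\mathcal{B}_{\tau,T}$ coincides with the one over $\mathcal{A}_{t,T}\times\mathcal{B}_{t,T}$), proves the stopping-time DPP \emph{directly} for $\overline{W}_i$ by rerunning the deterministic argument over stochastic intervals (Theorem \ref{t4}), derives from that DPP the $\frac{1}{2}$-H\"older continuity of $\tau\mapsto\overline{W}_i(\tau,x)$ (Proposition \ref{p7}), and only then uses the dyadic approximation together with Propositions \ref{p6} and \ref{p7} to identify $\overline{W}_i(\tau,x)$ with $W_i(\tau,x)$ (Proposition \ref{p8}); Theorem \ref{t5} is then the conjunction of Theorem \ref{t4} and Proposition \ref{p8}. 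You instead avoid $\overline{W}_i$ entirely and pass to the limit in the DPP identity itself, which buys a leaner statement but moves the burden onto the limit passage through the saddle operator with the $n$-dependent strategy classes $\mathcal{A}_{t,\tau_n}\times\mathcal{B}_{t,\tau_n}$. Your uniform $O(2^{-n/2})$ bound (via Lemma \ref{l1}, Proposition \ref{p4}, the SDE estimate (\ref{e22}) and $\mathbb{P}(N^{t,i}_{\tau_n}\neq N^{t,i}_{\tau})=O(2^{-n})$), combined with restriction and extension of strategies between $[[t,\tau]]$ and $[[t,\tau_n]]$, does make this work; it is in substance the paper's estimate of the three terms $I_1, I_2, I_3$ in Proposition \ref{p7}, relocated.

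One step you understate: the DPP at the discrete stopping time $\tau_n$ does not follow from the deterministic DPP merely ``event-by-event,'' because $\{\tau_n=t^n_k\}\in\mathcal{F}_{t^n_k}$ is not known at time $t$; the events are revealed progressively, so you must iterate the deterministic DPP through $t^n_1<\cdots<t^n_{2^n}$, and each iteration requires a DPP at a \emph{random} initial state $(t^n_k, X_{t^n_k})$ carrying the random index $N^{t,i}_{t^n_k}$. This is handled by approximating $X_{t^n_k}$ by a simple function and invoking the Lipschitz continuity of Proposition \ref{p4}, exactly as in the proof of Lemma \ref{l9}, so it is not a fatal gap --- but it is precisely the point at which the paper found it cleaner to prove Theorem \ref{t4} once and for all over stochastic intervals rather than re-derive the nested argument at each discrete level.
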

This DPP for stopping times will play a crucial role in Section
\ref{NS3}. Since the proof is rather long and technical, we postpone
it to Subsection \ref{A1}.

\section{Probabilistic interpretation of associated coupled systems of Isaacs equations }\label{NS3}
The objective of  this section is to  give a probabilistic
interpretation of coupled systems of Isaacs equations. More
precisely, we show that the value functions  $U=(U_{1},U_{2})$ and
$W=(W_{1},W_{2})$, introduced in Section \ref{NS2}, are viscosity
solutions of the following coupled Isaacs equations:
 \begin{eqnarray}\label{e6}
\left\{
\begin{array}{rcl}
\dfrac{\partial }{\partial t} U_{i}(t,x) +  H_{i}^{+}(t, x,
U_{1}(t,x), U_{2}(t,x),DU_{i}(t,x), D^2U_{i}(t,x))&=&0,
 \quad (t,x)\in [0,T)\times {\mathbb{R}}^n,\\
 U_{i}(T,x)&=&\Phi_{i} (x),  \ i=1,2,
 \end{array}
\right.
\end{eqnarray}
and
\begin{eqnarray}\label{e5}
\left\{
\begin{array}{rcl}
\dfrac{\partial }{\partial t} W_{i}(t,x) +  H_{i}^{-}(t, x,
W_{1}(t,x),W_{2}(t,x),DW_{i}(t,x), D^2W_{i}(t,x))&=&0,
\quad (t,x)\in [0,T)\times {\mathbb{R}}^n,\\
 W_{i}(T,x)&=&\Phi_{i}(x), \ i=1,2,
 \end{array}
\right.
\end{eqnarray}
respectively,  where for $ (t, x, y_{1}, y_{2}, p, A, u,v)\in [0,
T]\times{\mathbb{R}}^n\times\mathbb{R}\times \mathbb{R}\times
\mathbb{R}^{d}\times \mathbb{S}^{d}\times U\times V$,
\begin{eqnarray*}
H_{i}(t, x, y_{1}, y_{2}, p, A, u,v)&=&
\dfrac{1}{2}tr(\sigma\sigma^{T}(t, x, u, v) A)+ p b(t, x, u, v)
  + \widetilde{f}_{i}(t, x, y_{1}, y_{2},  p\sigma(t, x, u, v), u, v),
\end{eqnarray*}
\begin{eqnarray*}
H_{i}^-(t, x, y_{1}, y_{2}, p, A)&=& \sup_{u \in U}\inf_{v \in
V}H_{i}(t, x, y_{1}, y_{2}, p, A, u,v),\\
 H_{i}^+(t, x, y_{1}, y_{2},
p, A)&=&\inf_{v \in V}\sup_{u \in U}H_{i}(t, x, y_{1}, y_{2}, p, A,
u,v).
\end{eqnarray*}
Let  us recall the definition of a viscosity solution of the system
(\ref{e6}). We denote by $C^3_{l, b}([0,T]\times {\mathbb{R}}^n)$
the set of real-valued functions which are continuously
differentiable up to   third order and whose derivatives of order
from 1 to 3 are bounded.
 \begin{definition}\label{d3}
 A continuous function $V=(V_{1},V_{2})\in C([0,T]\times \mathbb{R}^n;\mathbb{R}^2)$ is called \\
  {\rm(i)} a viscosity subsolution of the system (\ref{e6}) if $
V_{i}(T,x) \leq \Phi_{i} (x),\ \mbox{for all}\ i=1,2,  x \in
{\mathbb{R}}^n,$  and if for all test  functions $\varphi \in
C^3_{l, b}([0,T]\times
  {\mathbb{R}}^n), i=1,2,$ and $(t,x) \in [0,T) \times {\mathbb{R}}^n$ such that $V_{i}-\varphi $\ attains
  a local maximum at $(t, x)$,
\begin{eqnarray}\label{v1}
\dfrac{\partial }{\partial t} \varphi(t,x) +  H_{i}^{+}(t, x,
V_{1}(t,x),V_{2}(t,x),D\varphi(t,x), D^2\varphi(t,x))\geq 0,
\end{eqnarray}
\noindent{\rm(ii)} a viscosity supersolution of the system
(\ref{e6}) if $ V_{i}(T,x) \geq \Phi_{i} (x),\ \mbox{for all}\
i=1,2, x \in {\mathbb{R}}^n,$
 and if for all functions $\varphi \in
C^3_{l, b}([0,T]\times
  {\mathbb{R}}^n), i=1,2,$ and $(t,x) \in [0,T) \times {\mathbb{R}}^n$ such that $V_{i}-\varphi $\ attains
  a local minimum at $(t, x)$,
\begin{eqnarray}\label{v2}
\dfrac{\partial }{\partial t} \varphi(t,x) +  H_{i}^{+}(t, x,
V_{1}(t,x),V_{2}(t,x),D\varphi(t,x), D^2\varphi(t,x))\leq 0,
\end{eqnarray}
 {\rm(iii)} a viscosity solution of the system (\ref{e6}) if it is both a viscosity subsolution
  and a supersolution of the system (\ref{e6}). In the same way we can define  a viscosity solution of the system
(\ref{e5}).
\end{definition}
We now give the main result in this section. We shall give the proof
in Section \ref{A2}  since it is rather lengthy.
\begin{theorem}\label{t1}
Let the assumptions $(H4)$ and  $(H5)$ hold. Then
  $U=(U_{1},U_{2})$ (resp., $W=(W_{1},W_{2})$) is a viscosity solution of the system
(\ref{e6}) (resp., (\ref{e5})).
\end{theorem}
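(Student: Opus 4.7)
The proof will follow the BSDE-based viscosity approach of Peng and its game adaptation in \cite{BL2006,BHL2010}, applied to the auxiliary decoupled system (\ref{e2}) via the DPP for stopping times of Theorem \ref{t5}. I treat $W$; the argument for $U$ is symmetric, exchanging $\esssup\essinf$ with $\essinf\esssup$ and $H_i^-$ with $H_i^+$. The terminal conditions $W_i(T,x)=\Phi_i(x)$ and $U_i(T,x)=\Phi_i(x)$ follow directly from (\ref{e2}) together with Proposition \ref{p10}. Fix $i\in\{1,2\}$, $(t,x)\in[0,T)\times\mathbb{R}^n$, and $\varphi\in C^3_{l,b}$. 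By the standard perturbation trick, I may assume $W_i-\varphi$ attains a strict global maximum $0$ at $(t,x)$, so that $W_i(t,x)=\varphi(t,x)$.

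For the subsolution property, apply Theorem \ref{t5} with $\tau_h=(t+h)\wedge T$ to obtain
$$\varphi(t,x)=W_i(t,x)=\esssup_{\alpha\in\mathcal{A}_{t,\tau_h}}\essinf_{\beta\in\mathcal{B}_{t,\tau_h}}\ ^{i}G^{t,x;\alpha,\beta}_{t,\tau_h}\!\left[W_{N^{t,i}_{\tau_h}}(\tau_h,X^{t,x;\alpha,\beta}_{\tau_h})\right].$$
Proceeding by contradiction, I assume $\partial_t\varphi(t,x)+H_i^-(t,x,W_1(t,x),W_2(t,x),D\varphi,D^2\varphi)<-3\eta$ for some $\eta>0$. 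From the definition of $H_i^-$ and a measurable selection compatible with the NAD structure (as in \cite{L2011}), for each $u\in U$ there is a response $v=v(u)$ with $H_i(t,x,W_1(t,x),W_2(t,x),D\varphi,D^2\varphi,u,v(u))<-2\eta$, and by continuity this inequality persists in a small neighbourhood. Applying Itô's formula to $\varphi(s,X^{t,x;u,v}_s)$ yields a BSDE with driver $-\partial_s\varphi-\tfrac12\mathrm{tr}(\sigma\sigma^T D^2\varphi)-D\varphi\cdot b$. Comparison (Lemma \ref{l8}) of this BSDE with the BSDE defining $^iG$ (using $\varphi$ as a proxy for the $W_i$-part of the terminal datum and the constant $W_j(t,x)$ as a proxy for the $W_j$-part on the jump event), followed by a standard rearrangement with Lemma \ref{l1} controlling all error terms, forces $^{i}G^{t,x;\alpha,\beta}_{t,\tau_h}[\cdot]\leq\varphi(t,x)-\eta h+o(h)$ uniformly in the $\varepsilon$-optimal strategies, contradicting the DPP identity. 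The supersolution property is obtained by the symmetric argument swapping the roles of $\alpha,\beta$ and reversing inequalities.

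The main difficulty, not present in \cite{L2011} or \cite{BCR2004}, is that the terminal datum $W_{N^{t,i}_{\tau_h}}(\tau_h,\cdot)$ in the backward semigroup equals $W_i(\tau_h,\cdot)$ on the no-jump event $\{N_{\tau_h}=N_t\}$ (of probability $1-\lambda h+o(h)$) but equals $W_j(\tau_h,\cdot)$ with $j\neq i$ on the jump event (of probability $O(h)$), and the test function $\varphi$ dominates only $W_i$, not $W_j$. To handle this, I would replace $W_j(\tau_h,X_{\tau_h})$ on the jump event by the constant $W_j(t,x)$; the resulting error is controlled in $L^2$ by the Lipschitz bounds of Proposition \ref{p4} together with Proposition \ref{p3}-type estimates on $X_{\tau_h}-x$, producing an $o(h)$ contribution after multiplication by the jump-event indicator. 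The "replaced" terminal datum matches the driver $f_{N^{t,i}_s}$ appearing in $^iG$ in the following sense: the integrated jump compensator $-\lambda\,{}^{i}H_s\,ds+{}^iH_s\,d\widetilde N_s$ precisely produces the off-diagonal argument $y_j$ of $\widetilde f_i$ in the Hamiltonian $H_i$, through the identities $f_1(\cdot,y,h,\cdot)=\widetilde f_1(\cdot,y,y+h,\cdot)$ and $f_2(\cdot,y,h,\cdot)=\widetilde f_2(\cdot,y+h,y,\cdot)$. The standing choice $K-\lambda>-1$ made after Proposition \ref{p10} guarantees that Lemma \ref{l8} applies to (\ref{e2}), which is essential for the comparison step. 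A secondary but non-trivial issue is the need for $\varepsilon$-optimal measurable selection of controls within the NAD class; this is handled, as in \cite{L2011}, by the delay structure of Definition \ref{d1} and a countable approximation of $U$ and $V$ exploiting the compactness hypothesis.
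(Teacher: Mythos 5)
Your overall architecture (DPP, backward stochastic semigroup, comparison, contradiction at the level of the Hamiltonian) coincides with the paper's, and you correctly identify the two structural points: the off-diagonal value function enters through the jump component $H$ via $f_{1}(\cdot,y,h,\cdot)=\widetilde f_{1}(\cdot,y,y+h,\cdot)$, and $K-\lambda>-1$ is what makes Lemma \ref{l8} applicable to (\ref{e2}). The gap is quantitative and sits exactly where you write that the errors are ``controlled in $L^2$ \dots\ producing an $o(h)$ contribution.'' Working on the deterministic horizon $t+h$, your replacement of $W_{j}(t+h,X_{t+h})$ by $W_{j}(t,x)$ on the jump event perturbs the terminal datum by a quantity of magnitude $O(h^{1/2})$ (Propositions \ref{p4}, \ref{p21} and the SDE estimate) supported on an event $A$ of probability $\lambda h+o(h)$; the quadratic estimate of Lemma \ref{l1} then yields only $\mathbb{E}[1_{A}|Z|^{2}\,|\,\mathcal{F}_{t}]^{1/2}=O(h^{3/4})$ (and even $O(h)$ if $Z$ were bounded by $Ch^{1/2}$), never $o(h)$, so it swamps the $-\eta h$ margin of your contradiction for small $h$. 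Moreover you do not address that on $(t,t+h]$ the generator itself switches from $f_{i}$ to $f_{m(i+1)}$ after the first jump: the resulting generator perturbation satisfies $\mathbb{E}[\int_{t}^{t+h}1_{\{N_{s}>N_{t}\}}|f_{m(i+1)}-f_{i}|^{2}ds\,|\,\mathcal{F}_{t}]^{1/2}=O(h)$ with a fixed constant, which is again fatal since $\eta$ is arbitrary. Both obstructions require $L^{1}$-type bounds obtained by linearizing the BSDE and applying Girsanov (as in the necessity proof of Theorem \ref{Jt6}), not Lemma \ref{l1}; as written, the step ``standard rearrangement with Lemma \ref{l1} controlling all error terms'' fails.

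The paper removes these difficulties at the source by localizing at the stopping time $\tau^{\delta}=\inf\{s\ge t:\ N_{s}^{t,i}\neq i\}\wedge(t+\delta)$, the first jump time capped at $t+\delta$: on $[t,\tau^{\delta}]$ the generator is identically $f_{i}$, at most one jump can occur and only at the terminal time, and the correction process $\overline{Y}_{s}=\varphi(s,X_{s})+\int_{t}^{s}(U_{m(i+1)}-\varphi)(r,X_{r})\,dN_{r}$ together with the shifted generator $F$ converts the DPP identity into a BSDE with zero terminal value whose error terms are cleanly of order $C\delta^{3/2}$ (Lemmas \ref{l3}, \ref{l4} and \ref{ll21}). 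This is precisely why the stopping-time DPP of Theorem \ref{t5} is needed, whereas your argument only invokes the deterministic-time version and therefore has to pay for the jump event by hand. To repair your route you must either pass to the first-jump-time localization, or redo every estimate after an explicit linearization in $L^{1}$; in addition, for the $W$-subsolution (respectively $U$-supersolution) direction the $\varepsilon$-optimal response $v(u)$ must be assembled into an admissible NAD strategy by iterating along the delay times $S_{n}(u)$ of Definition \ref{d1}, which is a nontrivial construction that your appeal to \cite{L2011} leaves implicit but which the paper carries out in detail.
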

To state a uniqueness  theorem for the  viscosity solution of the
system (\ref{e6}), let us first define the  following space:
 \begin{eqnarray*}
 \Theta:&=&\Big\{ \varphi\in C([0, T]\times {\mathbb{R}}^n): \mbox{there exists a constant}\ \ A>0\ \mbox{such
 that}\\ && \qquad \lim_{|x|\rightarrow \infty}|\varphi(t, x)|\exp\{-A[\log((|x|^2+1)^{\frac{1}{2}})]^2\}=0,\
 \mbox{uniformly in}\ t\in [0, T]\Big\}.
\end{eqnarray*}
 \begin{theorem}
 Let the assumptions $(H4)$ and $(H5)$ hold. Then there exists at most one viscosity solution  $u\in
 \Theta$ (resp.,  $v\in \Theta$)
 of the system  (\ref{e6}) (resp., (\ref{e5})).
\end{theorem}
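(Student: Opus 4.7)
The plan is to adapt the Barles-Buckdahn-Pardoux comparison technique \cite{BBP1997}---originally developed for scalar semilinear integro-PDEs associated with BSDEs---to the coupled system (\ref{e6}); the argument for (\ref{e5}) is entirely analogous. By symmetry it suffices to show that if $u=(u_1,u_2)$ and $v=(v_1,v_2)$ in $\Theta\times\Theta$ are two viscosity solutions, then $u_i\leq v_i$ for $i=1,2$. The two new features compared with \cite{L2011} and \cite{BCR2004} are the coupling of the system through the $(y_1,y_2)$-dependence of $H_i^{\pm}$ and the need to manage the intermediate growth at infinity dictated by the class $\Theta$; the Lipschitz condition (H5)(i) in the variables $y=(y_1,y_2)$ will control the former and the BBP weight the latter.

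First I would introduce the growth weight $\chi(x)=\exp\{A[\log((|x|^2+1)^{1/2})]^2\}$ with $A$ chosen so that $(u_i-v_i)/\chi\to 0$ as $|x|\to\infty$ uniformly in $t\in[0,T]$, which is possible because $u_i,v_i\in\Theta$. After the exponential rescaling $\bar u_i(t,x):=e^{-\lambda(T-t)}u_i(t,x),\ \bar v_i(t,x):=e^{-\lambda(T-t)}v_i(t,x)$ with $\lambda>0$ to be fixed (which produces a new Hamiltonian carrying an extra $-\lambda\bar u_i$ term, to be used to absorb the Lipschitz coupling), I would define, for $\varepsilon,\gamma,\delta>0$,
\[
\Psi_{\varepsilon,\gamma,\delta}(t,x,y)=\max_{i\in\{1,2\}}\bigl(\bar u_i(t,x)-\bar v_i(t,y)\bigr)-\frac{|x-y|^2}{\varepsilon^2}-\gamma\bigl(\chi(x)+\chi(y)\bigr)-\frac{\delta}{T-t}.
\]
Because $\chi$ dominates the growth of $\bar u_i,\bar v_i$, this function attains a global maximum at some $(\hat t,\hat x,\hat y)\in[0,T)\times\mathbb{R}^n\times\mathbb{R}^n$; letting $\hat\imath\in\{1,2\}$ realize the inner maximum, I would apply the Crandall-Ishii lemma to the pair $(\bar u_{\hat\imath},\bar v_{\hat\imath})$ at $(\hat t,\hat x,\hat y)$ to obtain second-order jets satisfying the usual matrix inequality.

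Subtracting the resulting subsolution/supersolution inequalities yields, on the left, $\lambda$ times $\bar u_{\hat\imath}(\hat t,\hat x)-\bar v_{\hat\imath}(\hat t,\hat y)$ plus the lower-order terms from $\delta/(T-t)^2$ and $\gamma\chi$, and on the right the difference of Hamiltonians. By (H5)(i) together with the linear growth of $b,\sigma$ this difference is bounded by the standard geometric penalties $C(|\hat x-\hat y|+\varepsilon^{-2}|\hat x-\hat y|^2)$, which tend to $0$ as $\varepsilon\to 0$ thanks to the classical estimate $|\hat x-\hat y|^2/\varepsilon^2\to 0$, plus the crucial coupling contribution $C\sum_{j=1}^{2}|\bar u_j(\hat t,\hat x)-\bar v_j(\hat t,\hat y)|\leq 2C\,\Psi_{\varepsilon,\gamma,\delta}(\hat t,\hat x,\hat y)+o_{\varepsilon}(1)$, the latter bound being the entire reason for taking $\max_i$ inside $\Psi$. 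Choosing $\lambda>2C$ therefore forces $\Psi\leq 0$ at its maximum, and successively sending $\varepsilon,\gamma,\delta\to 0$ gives $u_i\leq v_i$; uniqueness in (\ref{e5}) follows identically.

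The main obstacle I expect is precisely this coupling step: the natural doubling of variables for the $i$-th equation does not a priori involve the other component, yet the Hamiltonian $H_i^+$ depends on both $U_1(t,x)$ and $U_2(t,x)$ at the \emph{same} point, and the maximizers associated with a separate doubling for each component would not coincide. One must therefore either work with the single scalar quantity $\max_i(\bar u_i-\bar v_i)$ used above---and argue carefully about its non-smoothness when extracting second-order jets, which is handled by standard envelope arguments or by adding an auxiliary smoothing---or perform simultaneous doubling for both components and absorb the cross-term via the exponential weight $\lambda$. Controlling this coupling rigorously, while verifying that the selected index $\hat\imath$ can be made to stabilize along the limits $\varepsilon,\gamma,\delta\to 0$, is the technically delicate part; the monotonicity condition (H5)(ii) with $K>-1$ is not needed here (it was designed to ensure the BSDE comparison of Lemma \ref{l8}) since, for the PDE uniqueness, only the Lipschitz bound in $(y_1,y_2)$ from (H5)(i) is used.
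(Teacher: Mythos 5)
The paper itself does not carry out this proof (it only points to an adaptation of Barles--Buckdahn--Pardoux \cite{BBP1997} and Barles--Imbert \cite{BI2008}), so your proposal is a genuine attempt at the omitted argument, and most of its architecture (the weight $\chi$ adapted to $\Theta$, the exponential rescaling in time, doubling of the space variable, Crandall--Ishii at the maximum with the index $\hat\imath$ frozen) is the right one. Note in passing that your worry about the non-smoothness of $\max_i$ is unfounded: once $\hat\imath$ attains the inner maximum at $(\hat t,\hat x,\hat y)$, the function with that index fixed still has a global maximum there, so the jets can be extracted directly.

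The genuine gap is in the absorption of the coupling term. You bound the difference of Hamiltonians by $C\sum_{j=1}^{2}|\bar u_j(\hat t,\hat x)-\bar v_j(\hat t,\hat y)|$ and then claim this is at most $2C$ times the value of the inner maximum (up to $o_\varepsilon(1)$). That inequality is false in general: the test function built from $\max_i\bigl(\bar u_i-\bar v_i\bigr)$ only controls each difference $\bar u_j(\hat t,\hat x)-\bar v_j(\hat t,\hat y)$ \emph{from above}; if the off-diagonal component satisfies $\bar u_j(\hat t,\hat x)-\bar v_j(\hat t,\hat y)\ll 0$, its absolute value can be arbitrarily large while the maximum over $i$ is small and positive, and since (H5) only gives a two-sided Lipschitz bound in $(y_1,y_2)$ (the one-sided condition (H5)(ii) with $K>-1$ does not make the system cooperative), the Hamiltonian difference really can pick up this large positive contribution. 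Running the argument "by symmetry'' in the other direction does not repair this, because each one-sided run suffers the same defect. The standard fix is to penalize $\max_{i}\bigl|\bar u_i(t,x)-\bar v_i(t,y)\bigr|$ instead, and at the maximum point to use the subsolution property of $u_{\hat\imath}$ and the supersolution property of $v_{\hat\imath}$ when the maximal difference is positive, and the reversed pair when it is negative --- this is legitimate precisely because $u$ and $v$ are both full solutions, which is why one obtains uniqueness rather than a comparison principle. With that modification $\sum_j|\bar u_j(\hat t,\hat x)-\bar v_j(\hat t,\hat y)|\le 2\max_j|\bar u_j(\hat t,\hat x)-\bar v_j(\hat t,\hat y)|$ is indeed controlled by the penalized quantity, and the rest of your scheme (choosing $\lambda>2C$, checking that $\gamma\chi$ is an approximate supersolution as in \cite{BBP1997}, then letting $\varepsilon,\gamma,\delta\to0$) goes through.
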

The proof of the Theorem can be adapted from the  arguments in
Barles, Buckdahn and Pardoux \cite{BBP1997} combined with those of
Barles and Imbert \cite{BI2008} to our framework. We omit it here.
\begin{remark}
Since  $U=(U_{1},U_{2})$ (resp., $W=(W_{1},W_{2})$) is a viscosity
solution of linear growth, $U=(U_{1},U_{2})$ (resp.,
$W=(W_{1},W_{2})$) is the unique viscosity solution in $\Theta$ of
the system (\ref{e6}) (resp., (\ref{e5})).
\end{remark}
An immediate consequence of this remark is that, under {\bf Isaacs
condition:}\vskip2mm

For all $ (t, x, y_{1}, y_{2}, p, A, u,v)\in [0,
T]\times{\mathbb{R}}^n\times\mathbb{R}\times \mathbb{R}\times
\mathbb{R}^{d}\times \mathbb{S}^{d}\times U\times V,$  $j=1,2,$ we
have
\begin{eqnarray}\label{Isaacs}
\begin{aligned}
&\sup\limits_{u\in U}\inf\limits_{v\in V}
\Big\{\dfrac{1}{2}tr(\sigma\sigma^{T}(t,
x, u, v) A)+ p b(t, x, u, v)+ f_{j}(t, x, y_{1}, y_{2},  p\sigma(t, x, u, v), u, v) \Big\}\\
=&\inf\limits_{v\in V}\sup\limits_{u\in U}
\Big\{\dfrac{1}{2}tr(\sigma\sigma^{T}(t, x, u, v) A)+ p b(t, x, u,
v)+ f_{j}(t, x, y_{1}, y_{2},  p\sigma(t, x, u, v), u, v) \Big\},
 \end{aligned}
\end{eqnarray}
 the upper value and lower value functions coincide:
\begin{corollary}\label{c1}
Let Isaacs condition  (\ref{Isaacs}) hold. Then we have,  for all $
(t, x)\in [0, T]\times{\mathbb{R}}^n$,
\begin{eqnarray*}
(U_{1}(t,x),U_{2}(t,x))=(W_{1}(t,x),W_{2}(t,x)).
\end{eqnarray*}
\end{corollary}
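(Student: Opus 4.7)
The plan is very short, since this corollary is essentially an immediate consequence of what precedes. First I would observe that, under the Isaacs condition (\ref{Isaacs}), for each $i=1,2$ and for every admissible argument $(t,x,y_{1},y_{2},p,A)$ one has
\begin{eqnarray*}
H_{i}^{-}(t,x,y_{1},y_{2},p,A) \;=\; H_{i}^{+}(t,x,y_{1},y_{2},p,A).
\end{eqnarray*}
A small verification is needed because the Isaacs condition is written in terms of the generators $f_{j}$ while the Hamiltonians $H_{i}$ are written in terms of $\widetilde{f}_{i}$; however, using the defining relations $f_{1}(t,x,y,h,z,u,v)=\widetilde{f}_{1}(t,x,y,y+h,z,u,v)$ and the symmetric relation for $f_{2}$, together with the fact that the terms $\tfrac{1}{2}tr(\sigma\sigma^{T}A)$ and $pb$ depend only on $(u,v)$ (not on $y_{1},y_{2}$), one sees that the $\sup\inf = \inf\sup$ equality (\ref{Isaacs}) is equivalent to the same equality at the level of the Hamiltonian $H_{i}$. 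Consequently the two coupled Isaacs systems (\ref{e6}) and (\ref{e5}) coincide and share the same terminal data $\Phi_{i}$.

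Next I would invoke Theorem \ref{t1}: both $U=(U_{1},U_{2})$ and $W=(W_{1},W_{2})$ are viscosity solutions of this common coupled PDE system. By Proposition \ref{p4} (and its stated analogue for $U$), each component satisfies a linear-growth estimate $|U_{i}(t,x)|+|W_{i}(t,x)|\leq C(1+|x|)$, so in particular $U_{i},W_{i}\in\Theta$ for $i=1,2$.

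It then suffices to apply the uniqueness theorem stated just before the corollary: there exists at most one viscosity solution in $\Theta$. Therefore $(U_{1}(t,x),U_{2}(t,x))=(W_{1}(t,x),W_{2}(t,x))$ for every $(t,x)\in[0,T]\times\mathbb{R}^{n}$, which is the claim. The only non-automatic step in this whole argument is the identification $H_{i}^{+}=H_{i}^{-}$ mentioned above; every other ingredient is packaged in Theorem \ref{t1}, Proposition \ref{p4} and the preceding uniqueness theorem, so no additional work is required.
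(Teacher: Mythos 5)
Your argument is correct and is exactly the route the paper takes: under the Isaacs condition the two coupled systems (\ref{e6}) and (\ref{e5}) have the same Hamiltonian, both $U$ and $W$ are viscosity solutions of linear growth (hence in $\Theta$), and the uniqueness theorem forces them to coincide. Your extra remark on translating the condition from the $f_{j}$'s to the $\widetilde{f}_{i}$'s via the bijection $(y_{1},y_{2})\mapsto(y_{1},y_{1}+y_{2})$ is a legitimate point that the paper glosses over, but it does not change the substance of the argument.
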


However, for the next section, we need, in addition to the value
functions we have already introduced, also the following ones.
 For all $(t,x)\in
[0,T]\times\mathbb{R}^n, i=1,2$, we define
\begin{eqnarray*}\label{}
\quad \overline{W}_{i}(t,x):=\esssup_{\beta\in\mathcal {B}_{t,T}}
\essinf_{\alpha\in\mathcal {A}_{t,T}} J_{i}(t,x;\alpha,\beta),\quad
\overline{U}_{i}(t,x):=\essinf_{\alpha\in\mathcal {A}_{t,T}}
\esssup_{\beta\in\mathcal {B}_{t,T}} J_{i}(t,x;\alpha,\beta).
\end{eqnarray*}
{\bf Isaacs condition:}  For all $ (t, x, y_{1}, y_{2}, p, A,
u,v)\in [0, T]\times{\mathbb{R}}^n\times\mathbb{R}\times
\mathbb{R}\times \mathbb{R}^{d}\times \mathbb{S}^{d}\times U\times
V,$  $j=1,2,$
\begin{eqnarray}\label{Isaacs1}
\begin{aligned}
&\inf\limits_{u\in U}\sup\limits_{v\in V}
\Big\{\dfrac{1}{2}tr(\sigma\sigma^{T}(t,
x, u, v) A)+ p b(t, x, u, v)+ f_{j}(t, x, y_{1}, y_{2},  p\sigma(t, x, u, v), u, v) \Big\}\\
=&\sup\limits_{v\in V}\inf\limits_{u\in U}
\Big\{\dfrac{1}{2}tr(\sigma\sigma^{T}(t, x, u, v) A)+ p b(t, x, u,
v)+ f_{j}(t, x, y_{1}, y_{2},  p\sigma(t, x, u, v), u, v) \Big\},
 \end{aligned}
\end{eqnarray}

 Using a similar argument in Sections \ref{NS2} and \ref{NS3} we
 have the following proposition.
\begin{proposition}\label{cp1}
Let Isaacs condition (\ref{Isaacs1}) hold. Then we have,  for all $
(t, x)\in [0, T]\times{\mathbb{R}}^n$,
\begin{eqnarray*}
(\overline{U}_{1}(t,x),\overline{U}_{2}(t,x))=(\overline{W}_{1}(t,x),\overline{W}_{2}(t,x)).
\end{eqnarray*}
\end{proposition}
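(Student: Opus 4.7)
The plan is to mirror the program of Sections \ref{NS2}--\ref{NS3} with the roles of the two players interchanged, and then to invoke the uniqueness of the viscosity solution in $\Theta$.

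First I would reproduce Proposition \ref{p1} and Theorem \ref{t5} for $\overline{W}_i$ and $\overline{U}_i$ in place of $W_i$ and $U_i$. The invariance argument of Lemma \ref{Jll1} together with Lemma \ref{l2} go through verbatim after swapping $(\alpha,\mathcal{A})$ with $(\beta,\mathcal{B})$, so $\overline{W}_i(t,x)$ and $\overline{U}_i(t,x)$ admit deterministic versions; the Lipschitz and linear-growth bounds of Proposition \ref{p4} transfer for the same symmetry reason. The DPP over a stopping time $\tau$ with $t<\tau\leq T$ then takes the form
\begin{eqnarray*}
\overline{W}_i(t,x) &=& \esssup_{\beta \in \mathcal{B}_{t,\tau}} \essinf_{\alpha \in \mathcal{A}_{t,\tau}} \ ^iG^{t,x;\alpha,\beta}_{t,\tau}\big[\overline{W}_{N^{t,i}_\tau}(\tau, X^{t,x;\alpha,\beta}_\tau)\big],\\
\overline{U}_i(t,x) &=& \essinf_{\alpha \in \mathcal{A}_{t,\tau}} \esssup_{\beta \in \mathcal{B}_{t,\tau}} \ ^iG^{t,x;\alpha,\beta}_{t,\tau}\big[\overline{U}_{N^{t,i}_\tau}(\tau, X^{t,x;\alpha,\beta}_\tau)\big].
\end{eqnarray*}

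Next, following the pattern of Theorem \ref{t1}, I would show that $\overline{W}=(\overline{W}_1,\overline{W}_2)$ is a viscosity solution of the coupled Isaacs system whose Hamiltonian at level $i$ is
\begin{eqnarray*}
\overline{H}_i^{-}(t, x, y_1, y_2, p, A) &=& \sup_{v\in V}\inf_{u\in U} H_i(t,x,y_1,y_2,p,A,u,v),
\end{eqnarray*}
while $\overline{U}=(\overline{U}_1,\overline{U}_2)$ is a viscosity solution of the coupled system with
\begin{eqnarray*}
\overline{H}_i^{+}(t, x, y_1, y_2, p, A) &=& \inf_{u\in U}\sup_{v\in V} H_i(t,x,y_1,y_2,p,A,u,v).
\end{eqnarray*}
The ordering of $\sup$ and $\inf$ is reversed with respect to $H_i^{\pm}$ because now the outer essential supremum in $\overline{W}_i$ is taken over $\beta$-strategies, which produce the $V$-valued control, and symmetrically for $\overline{U}_i$.

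Under Isaacs condition (\ref{Isaacs1}) one has $\overline{H}_i^{-}=\overline{H}_i^{+}$ for $i=1,2$, so $\overline{W}$ and $\overline{U}$ are two viscosity solutions in $\Theta$ (both being of linear growth) of the same coupled Isaacs system with the same terminal data $(\Phi_1,\Phi_2)$; the uniqueness statement following Theorem \ref{t1} then yields $\overline{W}=\overline{U}$. The main obstacle in this program is administrative rather than conceptual: one has to recheck that the long proofs of Theorems \ref{t5} and \ref{t1}, postponed to Section \ref{NS5}, remain symmetric under the exchange of the two players. This is indeed the case because Definition \ref{d1} of NAD strategies and all the auxiliary constructions (the time-continuity step feeding the DPP for stopping times, the passage from the DPP to the viscosity sub/supersolution properties) are framed symmetrically between $\mathcal{A}$ and $\mathcal{B}$, so no new estimate needs to be produced.
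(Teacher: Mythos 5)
Your proposal is correct and follows exactly the route the paper intends: the paper's own ``proof'' is just the remark that one repeats the arguments of Sections \ref{NS2} and \ref{NS3} with the two players' roles exchanged, identifies $\overline{W}$ and $\overline{U}$ as viscosity solutions in $\Theta$ of the coupled Isaacs systems with Hamiltonians $\sup_{v}\inf_{u}H_i$ and $\inf_{u}\sup_{v}H_i$ respectively, and concludes by uniqueness once (\ref{Isaacs1}) makes these Hamiltonians coincide. Your identification of which Hamiltonian goes with which value function (outer operation over $\beta$-strategies giving the outer operation over $v$, and symmetrically) is the right one, so nothing is missing.
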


\begin{remark}\label{rr1}
By virtue of a similar argument in Sections \ref{NS2} and \ref{NS3}
we can get, for $i=1,2,$  $\overline{U}_{i}$ and $\overline{W}_{i}$
 have the similar properties to $U_{i}$ and $W_{i}$, respectively.
 We omit them here. But we will use them in Section \ref{NS4}.
\end{remark}

\section{Nash equilibrium payoffs for nonzero-sum stochastic differential games with coupled cost functionals }\label{NS4}

The objective of this section is to investigate Nash equilibrium
payoffs for nonzero-sum SDGs. An existence theorem and a
characterization theorem of Nash equilibrium payoffs are obtained.

In order to study Nash equilibrium payoffs, we redefine the
following  notations which differ from those of the preceding
sections. Let us define: For all $(t,x)\in [0,T]\times\mathbb{R}^n$,
\begin{eqnarray*}\label{}
\quad W_{1}(t,x):=\esssup_{\alpha\in\mathcal {A}_{t,T}}
\essinf_{\beta\in\mathcal {B}_{t,T}} J_{1}(t,x;\alpha,\beta),\quad
W_{2'}(t,x):=\esssup_{\alpha\in\mathcal {A}_{t,T}}
\essinf_{\beta\in\mathcal {B}_{t,T}} J_{2}(t,x;\alpha,\beta),
\end{eqnarray*}
and
\begin{eqnarray*}\label{}
\quad W_{1'}(t,x):=\esssup_{\beta\in\mathcal {B}_{t,T}}
\essinf_{\alpha\in\mathcal {A}_{t,T}} J_{1}(t,x;\alpha,\beta),\quad
 W_{2}(t,x):=\esssup_{\beta\in\mathcal {B}_{t,T}}
\essinf_{\alpha\in\mathcal {A}_{t,T}} J_{2}(t,x;\alpha,\beta).
\end{eqnarray*}
 In all what
follows we assume that the Isaacs conditions (\ref{Isaacs}) and
(\ref{Isaacs1}) hold, and that all the coefficients are bounded.
This latter assumption is not necessary, but has the objective to
simplify the arguments. Under the Isaacs conditions (\ref{Isaacs})
and (\ref{Isaacs1}), from Corollary \ref{c1} and Proposition
\ref{cp1}, we have, for $(t,x)\in[0,T]\times\mathbb{R}^{n}$,
\begin{eqnarray}\label{Jequa}
W_{1}(t,x)&=& \esssup_{\alpha\in\mathcal {A}_{t,T}}
\essinf_{\beta\in\mathcal {B}_{t,T}}
J_{1}(t,x;\alpha,\beta)=\essinf_{\beta\in\mathcal
{B}_{t,T}}\esssup_{\alpha\in\mathcal {A}_{t,T}}
J_{1}(t,x;\alpha,\beta), \nonumber \\
W_{2}(t,x)&=&\essinf_{\alpha\in\mathcal
{A}_{t,T}}\esssup_{\beta\in\mathcal {B}_{t,T}}
J_{2}(t,x;\alpha,\beta)=\esssup_{\beta\in\mathcal
{B}_{t,T}}\essinf_{\alpha\in\mathcal {A}_{t,T}}
J_{2}(t,x;\alpha,\beta),  \\
 W_{1'}(t,x)&=&\esssup_{\beta\in\mathcal {B}_{t,T}}
\essinf_{\alpha\in\mathcal {A}_{t,T}} J_{1}(t,x;\alpha,\beta)=
\essinf_{\alpha\in\mathcal {A}_{t,T}}\esssup_{\beta\in\mathcal
{B}_{t,T}} J_{1}(t,x;\alpha,\beta),\nonumber \\
  W_{2'}(t,x)&=&\esssup_{\alpha\in\mathcal {A}_{t,T}}
\essinf_{\beta\in\mathcal {B}_{t,T}} J_{2}(t,x;\alpha,\beta)=
\essinf_{\beta\in\mathcal {B}_{t,T}}\esssup_{\alpha\in\mathcal
{A}_{t,T}} J_{2}(t,x;\alpha,\beta).\nonumber
\end{eqnarray}
\begin{remark}
Not only  Hamiltonians of the form $\overline{H}_{1}(t, x, y_{1},
y_{2}, p, A, u)+\overline{H}_{2}(t, x, y_{1}, y_{2}, p, A, v),$ for
$(t, x, y_{1}, y_{2}, p, A, u,v)\in [0,
T]\times{\mathbb{R}}^n\times\mathbb{R}\times \mathbb{R}\times
\mathbb{R}^{d}\times \mathbb{S}^{d}\times U\times V$,
   are covered by Isaacs conditions (\ref{Isaacs}) and
(\ref{Isaacs1}), as the following example shows:

$$\sup_{v\in[-1,1]}\inf_{u\in[0,1]} vu^{2}=\inf_{u\in[0,1]}\sup_{v\in[-1,1]} vu^{2}=0,$$
and
$$\inf_{v\in[-1,1]}\sup_{u\in[0,1]} vu^{2}=\sup_{u\in[0,1]}\inf_{v\in[-1,1]} vu^{2}=0.$$
\end{remark}
We also define the following function: for $j=1,2, l=1,2,$
\begin{equation*}\label{}
n_{j}(l)=\left\{
\begin{array}{rl}
 j, \qquad  l=j,\\
l',  \qquad l\neq j.
\end{array}
\right.
\end{equation*}

We now give the definition of Nash equilibrium payoffs for
nonzero-sum SDGs, which is similar to that in \cite{BCR2004} and
\cite{L2011}.
\begin{definition}\label{Jd2}
For $(t,x)\in[0,T]\times\mathbb{R}^{n}$,  a couple
$(e_{1},e_{2})\in\mathbb{R}^{2}$ is called a Nash equilibrium payoff
at the point $(t,x)$,  if for any $\varepsilon>0$, there exists a
couple $(\alpha_{\varepsilon},\beta_{\varepsilon})\in \mathcal
{A}_{t,T}\times \mathcal {B}_{t,T}$ such that, for all
$(\alpha,\beta)\in \mathcal {A}_{t,T}\times \mathcal {B}_{t,T},$
\begin{eqnarray}\label{Jeq4}
J_{1}(t,x;\alpha_{\varepsilon},\beta_{\varepsilon})\geq
J_{1}(t,x;\alpha,\beta_{\varepsilon})-\varepsilon,\
J_{2}(t,x;\alpha_{\varepsilon},\beta_{\varepsilon})\geq
J_{2}(t,x;\alpha_{\varepsilon},\beta)-\varepsilon,\ \mathbb{P}-a.s.,
\end{eqnarray}
and
\begin{eqnarray*}
|\mathbb{E}[J_{j}(t,x;\alpha_{\varepsilon},\beta_{\varepsilon})]-e_{j}|\leq
\varepsilon, \ j=1,2.
\end{eqnarray*}
\end{definition}

\begin{remark}
We notice that unlike in \cite{BCR2004} our cost functionals
$J_{j}(t,x;\alpha,\beta), j=1,2,$ are not necessarily deterministic.
Indeed, while \cite{BCR2004} is based on the approach by Fleming and
Souganidis \cite{FS1989} in which the admissible cost functionals
for a game over the fixed time interval $[t,T]$ are independent of
$\mathcal {F}_{t}$, the present paper is based on the approaches
developed in \cite{BL2006}, \cite{BCQ2011} and in  \cite{BHL2010}.
\end{remark}

The following equivalent condition of (\ref{Jeq4}) follows easily
 from  Lemma \ref{l5}.
\begin{lemma}\label{Jle3}
For any $\varepsilon>0$, let
$(\alpha_{\varepsilon},\beta_{\varepsilon})\in \mathcal
{A}_{t,T}\times \mathcal {B}_{t,T}$. Then (\ref{Jeq4}) holds if and
only if,  for all $(u,v)\in \mathcal {U}_{t,T}\times \mathcal
{V}_{t,T}$,
\begin{eqnarray}\label{Jeq5}
\begin{aligned}
J_{1}(t,x;\alpha_{\varepsilon},\beta_{\varepsilon})\geq
J_{1}(t,x;u,\beta_{\varepsilon}(u))-\varepsilon, \quad
\mathbb{P}-a.s.,\\
J_{2}(t,x;\alpha_{\varepsilon},\beta_{\varepsilon})\geq
J_{2}(t,x;\alpha_{\varepsilon}(v),v)-\varepsilon, \quad
\mathbb{P}-a.s.
\end{aligned}
\end{eqnarray}
\end{lemma}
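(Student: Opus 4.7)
The plan is to prove the equivalence by exploiting Lemma \ref{l5}: every pair of NAD strategies is uniquely realized by a pair of admissible controls via the fixed-point relation $\alpha(v)=u$, $\beta(u)=v$. The forward direction ($(\ref{Jeq4})\Rightarrow(\ref{Jeq5})$) proceeds by reducing (\ref{Jeq5}) to an instance of (\ref{Jeq4}) using constant strategies, while the reverse direction ($(\ref{Jeq5})\Rightarrow(\ref{Jeq4})$) uses Lemma \ref{l5} to produce the control underlying a given strategy and then matches it with $\beta_\varepsilon$.

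For the forward direction, I would fix an arbitrary $u\in\mathcal{U}_{t,T}$ and define the constant NAD strategy $\alpha_u:\mathcal{V}_{t,T}\to\mathcal{U}_{t,T}$ by $\alpha_u(v)=u$ for every $v\in\mathcal{V}_{t,T}$. This map is trivially nonanticipative, and it is a strategy with delay by choosing $S_n(v)\equiv T$ for all $n\geq 1$, so $\alpha_u\in\mathcal{A}_{t,T}$. By Lemma \ref{l5}, the unique controls associated with $(\alpha_u,\beta_\varepsilon)$ are exactly $(u,\beta_\varepsilon(u))$, hence $J_1(t,x;\alpha_u,\beta_\varepsilon)=J_1(t,x;u,\beta_\varepsilon(u))$. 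Applying the first inequality of (\ref{Jeq4}) with $\alpha=\alpha_u$ and $\beta=\beta_\varepsilon$ yields the first line of (\ref{Jeq5}). The second line follows symmetrically by introducing the constant $\beta_v(u)\equiv v$ in $\mathcal{B}_{t,T}$ and applying the second inequality of (\ref{Jeq4}).

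For the reverse direction, let $(\alpha,\beta)\in\mathcal{A}_{t,T}\times\mathcal{B}_{t,T}$ be arbitrary. By Lemma \ref{l5} applied to the pair $(\alpha,\beta_\varepsilon)$, there exist unique controls $(u^\ast,v^\ast)\in\mathcal{U}_{t,T}\times\mathcal{V}_{t,T}$ such that $\alpha(v^\ast)=u^\ast$ and $\beta_\varepsilon(u^\ast)=v^\ast$. By definition of $J_1(t,x;\alpha,\beta_\varepsilon)$ and the identification on the previous page,
\[
J_1(t,x;\alpha,\beta_\varepsilon)=J_1(t,x;u^\ast,v^\ast)=J_1(t,x;u^\ast,\beta_\varepsilon(u^\ast)),
\]
so the first inequality of (\ref{Jeq5}) with $u=u^\ast$ delivers the first inequality of (\ref{Jeq4}). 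An analogous argument applied to $(\alpha_\varepsilon,\beta)$ with $v^\ast$ in place of $u^\ast$ gives the second inequality.

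I do not foresee any genuine obstacle here: the result is essentially a bookkeeping corollary of the bijection provided by Lemma \ref{l5}. The only point requiring mild care is to verify that constant maps are legitimate NAD strategies in the sense of Definition \ref{d1}, which is immediate from choosing the trivial sequence of stopping times $S_n\equiv T$.
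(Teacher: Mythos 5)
Your proposal is correct and follows exactly the route the paper intends: the paper gives no written proof, merely noting that the equivalence "follows easily from Lemma \ref{l5}", and your argument — embedding controls as constant NAD strategies for one direction and invoking the unique fixed-point pair $(u^\ast,v^\ast)$ of $(\alpha,\beta_\varepsilon)$ (resp.\ $(\alpha_\varepsilon,\beta)$) for the other — is precisely the standard bookkeeping that justifies this. The verification that constant maps satisfy Definition \ref{d1} with $S_n\equiv T$ is the right small point to check, and it holds.
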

As in \cite{L2011}, before giving the characterization theorem of
Nash equilibrium payoffs, we first state  two important lemmas.
\begin{lemma}\label{Jle1}
For $(t,x)\in [0,T]\times \mathbb{R}^{n}$, we fix arbitrarily $u\in
\mathcal {U}_{t,T}$. Then,

(i) for all stopping time $\tau\in[t,T]$ and $\varepsilon>0$, there
exists an NAD strategy $\alpha\in \mathcal {A}_{t,T}$ such that, for
all
 $v\in \mathcal {V}_{t,T}$,
\begin{eqnarray*}\label{}
\alpha(v)&=&u, \text{on}\ [[t,\tau]],\\
^{2}Y^{t,x;\alpha(v),v}_{\tau}&\leq&
W_{n_{2}(N^{t,2}_{\tau})}(\tau,X^{t,x;\alpha(v),v}_{\tau})+\varepsilon,\
\mathbb{P}-a.s.
\end{eqnarray*}

(ii) for all stopping time $\tau\in[t,T]$ and $\varepsilon>0$, there
exists an NAD strategy  $\alpha\in \mathcal {A}_{t,T}$ such that,
for all
 $v\in \mathcal {V}_{t,T}$,
\begin{eqnarray*}\label{}
\alpha(v)&=&u, \text{on}\ [[t,\tau]],\\
^{1}Y^{t,x;\alpha(v),v}_{\tau}&\geq&
W_{n_{1}(N^{t,1}_{\tau})}(\tau,X^{t,x;\alpha(v),v}_{\tau})-\varepsilon,
\ \mathbb{P}-a.s.
\end{eqnarray*}
\end{lemma}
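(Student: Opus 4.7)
The plan is to focus on part (i); part (ii) is entirely symmetric, with the minimax representation replaced by the maximin one and $W_1,W_{2'}$ used in place of $W_{1'},W_2$. We will have player~1 play the prescribed control $u$ on $[[t,\tau]]$, and then paste in, for each possible value of $(X^{t,x;u,v}_\tau,N^{t,2}_\tau)$, an $\varepsilon$-optimal continuation NAD strategy on $[[\tau,T]]$.

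By Corollary~\ref{c1} and Proposition~\ref{cp1}, the two Isaacs conditions yield the identities
\begin{equation*}
W_2(s,y)=\essinf_{\alpha'\in\mathcal{A}_{s,T}}\esssup_{\beta'\in\mathcal{B}_{s,T}}J_2(s,y;\alpha',\beta'),\quad W_{1'}(s,y)=\essinf_{\alpha'\in\mathcal{A}_{s,T}}\esssup_{\beta'\in\mathcal{B}_{s,T}}J_1(s,y;\alpha',\beta').
\end{equation*}
Using Proposition~\ref{p10} and the backward-semigroup representation, on $\{N^{t,2}_\tau=j\}$ one has ${}^{2}Y^{t,x;u',v}_\tau=J_j(\tau,X^{t,x;u',v}_\tau;u'|_{[\tau,T]},v|_{[\tau,T]})$, and the index $n_2(j)$ in the conclusion matches the ``game-index'' in the Isaacs form (namely $n_2(2)=2$ and $n_2(1)=1'$). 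Hence it suffices to produce, for each $j\in\{1,2\}$, continuation NAD strategies driving $J_j$ at time $\tau$ arbitrarily close to $W_{n_2(j)}(\tau,\cdot)$ from above.

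We discretize next. By the Lipschitz regularity of $W_{i}$ in $x$ (Proposition~\ref{p4}) and of the initial value of the BSDE in $x$ (Proposition~\ref{p3}), pick $\delta>0$ with $C\delta\le\varepsilon/3$, a countable Borel partition $(O_k)_{k\ge 1}$ of $\mathbb{R}^n$ of diameter $\le\delta$, and representatives $y_k\in O_k$. For each $(k,j)\in\mathbb{N}\times\{1,2\}$, the Isaacs representation together with a measurable-selection argument produces $\alpha_{k,j}\in\mathcal{A}_{\tau,T}$ with
\begin{equation*}
\esssup_{\beta'\in\mathcal{B}_{\tau,T}}J_{n_2(j)}(\tau,y_k;\alpha_{k,j},\beta')\le W_{n_2(j)}(\tau,y_k)+\varepsilon/3,\quad \mathbb{P}\text{-a.s.}
\end{equation*}
Setting $A_k^j(v):=\{X^{t,x;u,v}_\tau\in O_k\}\cap\{N^{t,2}_\tau=j\}\in\mathcal{F}_\tau$, define
\begin{equation*}
\alpha(v)_s:=u_s\mathbf{1}_{[[t,\tau]]}(s)+\sum_{k,j}\mathbf{1}_{A_k^j(v)}\,\alpha_{k,j}(v|_{[\tau,T]})_s\,\mathbf{1}_{]]\tau,T]]}(s).
\end{equation*}

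To check that $\alpha\in\mathcal{A}_{t,T}$, note that on $[[t,\tau]]$ the output is the fixed $u$, so nonanticipativity is automatic and one can take the first delay time $S_1(v)\equiv\tau$. After $\tau$, the event $A_k^j(v)$ is already determined by $v|_{[t,\tau]}$ (through $X_\tau$), and each $\alpha_{k,j}$ carries its own delay sequence $\{T_n^{k,j}\}_{n\ge 1}$; splicing these across the pieces $A_k^j(v)$ yields a global increasing sequence $\{S_n(v)\}$ satisfying Definition~\ref{d1}. For the bound, on $A_k^j(v)$ we chain Propositions~\ref{p3} and~\ref{p4} with the choice of $\alpha_{k,j}$ to obtain
\begin{equation*}
{}^{2}Y^{t,x;\alpha(v),v}_\tau=J_j(\tau,X_\tau;\alpha_{k,j}(v|_{[\tau,T]}),v|_{[\tau,T]})\le W_{n_2(j)}(\tau,y_k)+2\varepsilon/3\le W_{n_2(j)}(\tau,X_\tau)+\varepsilon.
\end{equation*}
The main technical difficulty will be the NAD verification: the switching time $\tau$ is random, the piecewise selection at $\tau$ already depends on $v|_{[t,\tau]}$, and the delay sequences of the countably many $\alpha_{k,j}$ have to be woven into a single globally increasing sequence of $\mathbb{F}$-stopping times compatible with conditions (i)--(iv) of Definition~\ref{d1}; carrying out this splicing carefully, while simultaneously ensuring that the measurable selection indeed produces honest elements of $\mathcal{A}_{\tau,T}$, is the technical heart of the argument.
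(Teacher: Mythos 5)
Your argument is essentially the paper's own proof: the same identification of $W_{n_2(j)}(\tau,\cdot)$ as $\essinf_{\alpha}\esssup_{\beta}J_j$ via the Isaacs conditions (with controls $v\in\mathcal{V}_{\tau,T}$ embedded as constant strategies in $\mathcal{B}_{\tau,T}$), the same $\varepsilon$-fine Borel partition of $\mathbb{R}^n$ with near-optimal continuation strategies pasted after $\tau$ by indicators of $\mathcal{F}_\tau$-measurable events, and the same conclusion via the Lipschitz estimates of Propositions \ref{p3} and \ref{p4}; the paper likewise asserts the NAD verification "in a straight-forward way" without carrying it out, and it keeps the random index inside a single $\alpha_y$ per cell rather than splitting on $\{N^{t,2}_\tau=j\}$ as you do, which is an immaterial difference. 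Only fix the notational slip $J_{n_2(j)}$, which should read $J_j$ (there is no cost functional indexed by $1'$), as your own final display already has it.
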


\begin{proof}
Let us only give the proof of (i); that of  (ii) can be carried out
with a similar argument. We notice that $\mathcal {V}_{\tau,T}$ can
be regarded as a subset of $\mathcal {B}_{\tau,T}$. Indeed, putting
$\beta^{v'}(u')=v'$,  $u'\in \mathcal {U}_{\tau,T}$, we associate
all $v'\in \mathcal {V}_{\tau,T}$ with some $\beta^{v'}\in \mathcal
{B}_{\tau,T}$. Therefore, for any $y\in \mathbb{R}^{n}$, similar to
Proposition \ref{p9} we have
\begin{eqnarray*}\label{}
W_{n_{2}(N^{t,2}_{\tau})}(\tau,y)&=&\essinf_{\alpha\in\mathcal
{A}_{\tau,T}}\esssup_{\beta\in\mathcal {B}_{\tau,T}}
J_{N^{t,2}_{\tau}}(\tau,y;\alpha,\beta)\\&\geq&\essinf_{\alpha\in\mathcal
{A}_{\tau,T}}\esssup_{v\in\mathcal {V}_{\tau,T}}
J_{N^{t,2}_{\tau}}(\tau,y;\alpha(v),v), \ \mathbb{P}-a.s.
\end{eqnarray*}
Then, for any $\varepsilon_{0}>0$, by standard arguments we have the
existence of $\alpha_{y}\in \mathcal {A}_{\tau,T}$ such that
\begin{eqnarray}\label{Jeqn1}
W_{n_{2}(N^{t,2}_{\tau})}(\tau,y)&\geq& \esssup_{v\in\mathcal
{V}_{\tau,T}}J_{N^{t,2}_{\tau}}(\tau,y;
\alpha_{y}(v),v)-\varepsilon_{0}, \ \mathbb{P}-a.s.
\end{eqnarray}
We let $\{O_{i}\}_{i\geq1}\subset\mathcal {B}(\mathbb{R}^{n})$  be a
partition of $\mathbb{R}^{n}$ such that
$\sum\limits_{i\geq1}O_{i}=\mathbb{R}^{n}, O_{i}\neq\emptyset,$ and
$\text{diam}(O_{i})\leq\varepsilon_{0},i\geq1$. For $y_{i}\in O_{i},
i\geq 1$ and  $v\in\mathcal {V}_{t,T}$, let us set
\begin{equation}\label{Jequation}
\alpha(v)_{s}=\left\{
\begin{array}{rl}
 u_s, \qquad \qquad\qquad\qquad s\in [[t, \tau]]
,\\
\sum\limits_{i\geq1}1_{\{X^{t,x;u,v}_{\tau}\in
O_{i}\}}\alpha_{y_{i}}(v|_{[\tau,T]})_{s},\qquad s\in ]]\tau,T]].
\end{array}
\right.
\end{equation}
Then  $\alpha:\mathcal {V}_{t,T}\rightarrow \mathcal {U}_{t,T}$ is
an NAD strategy. This can be checked in a straight-forward way and
is omitted in order to shorten the proof.

By virtue of  Proposition \ref{p4}, Remark \ref{rr1}, (\ref{Jeqn1})
and (\ref{Jequation}), we deduce that, for $v\in\mathcal {V}_{t,T}$,
\begin{eqnarray*}\label{}
&&W_{n_{2}(N^{t,2}_{\tau})}(\tau,X^{t,x;\alpha(v),v}_{\tau})\\
&\geq&\sum\limits_{i\geq1}1_{\{X^{t,x;\alpha(v),v}_{\tau}\in
O_{i}\}}W_{n_{2}(N^{t,2}_{\tau})}(\tau,y_{i})-C\varepsilon_{0}\\
&\geq& \sum\limits_{i\geq1}1_{\{X^{t,x;\alpha(v),v}_{\tau}\in
O_{i}\}} J_{N^{t,2}_{\tau}}(\tau,y_{i};
\alpha_{y_{i}}(v|_{[\tau,T]}),v)-C\varepsilon_{0}\\
&=& \sum\limits_{i\geq1}1_{\{X^{t,x;\alpha(v),v}_{\tau}\in O_{i}\}}
J_{N^{t,2}_{\tau}}(\tau,y_{i};\alpha(v),v)-C\varepsilon_{0}.
\end{eqnarray*}
Consequently, due to Proposition \ref{p3} we conclude
\begin{eqnarray*}\label{}
W_{n_{2}(N^{t,2}_{\tau})}(\tau,X^{t,x;\alpha(v),v}_{\tau}) &\geq&
\sum\limits_{i\geq1}1_{\{X^{t,x;\alpha(v),v}_{\tau}\in O_{i}\}}
J_{N^{t,2}_{\tau}}(\tau,X^{t,x;\alpha(v),v}_{\tau};
\alpha(v),v)-C\varepsilon_{0}\\
&=&  J_{N^{t,2}_{\tau}}(\tau,X^{t,x;\alpha(v),v}_{\tau};
\alpha(v),v)-C\varepsilon_{0},
\end{eqnarray*}
where $C$ is a constant which can be different from line to line and
is independent of $v\in\mathcal {V}_{t,T}$.

 Recalling that $\varepsilon_{0}>0$ hasn't been specified yet, let us choose
$\varepsilon_{0}=C^{-1}\varepsilon$. We observe that
\begin{eqnarray*}\label{}
 J_{N^{t,2}_{\tau}}(\tau,X^{t,x;\alpha(v),v}_{\tau};
\alpha(v),v)=\
^{N^{t,2}_{\tau}}Y^{\tau,X^{t,x;\alpha(v),v}_{\tau},\alpha(v),v}_{\tau}=\
^{2}Y^{t,x,\alpha(v),v}_{\tau}.
\end{eqnarray*}
 Then we deduce that
$$W_{n_{2}(N^{t,2}_{\tau})}(\tau,X^{t,x;\alpha(v),v}_{\tau})\geq
 \ ^{2}Y^{t,x,\alpha(v),v}_{\tau}-\varepsilon,\  v\in\mathcal
 {V}_{t,T}.$$
This conclude the proof.
\end{proof}

The following lemma follows from standard estimates for SDEs.
\begin{lemma}\label{Jle2}
There exists a positive constant  $C$ such that, for all
$(u,v),(u',v')\in \mathcal {U}_{t,T}\times \mathcal {V}_{t,T}$, and
for all $\mathbb{F}$-stopping times $\sigma:\Omega\rightarrow[t,T]$
with $X^{t,x;u,v}_{\sigma}=X^{t,x;u',v'}_{\sigma}$,
$\mathbb{P}-a.s.,$ we have the following estimate:  for all real
$r\in[t,T],$
\begin{eqnarray*}\label{}
\mathbb{E}[\sup\limits_{0\leq s\leq r}|X^{t,x;u,v}_{(\sigma+s)\wedge
T}-X^{t,x;u',v'}_{(\sigma+s)\wedge T}|^{2} \Big|\mathcal
{F}_{t}]\leq C r, \ \mathbb{P}-a.s.
\end{eqnarray*}
\end{lemma}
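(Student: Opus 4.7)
The plan is to exploit the boundedness assumption on the coefficients $b$ and $\sigma$ (invoked explicitly in Section \ref{NS4}) together with the fact that the two trajectories coincide at $\sigma$, so the only thing that needs to be estimated is the growth over a time window of length at most $r$. First I would write, for $s\in[0,T-t]$,
\begin{eqnarray*}
X^{t,x;u,v}_{(\sigma+s)\wedge T}-X^{t,x;u',v'}_{(\sigma+s)\wedge T}=D_s+M_s,
\end{eqnarray*}
where, using the identity $X^{t,x;u,v}_\sigma=X^{t,x;u',v'}_\sigma$,
\begin{eqnarray*}
D_s&=&\int_\sigma^{(\sigma+s)\wedge T}\bigl[b(r,X^{t,x;u,v}_r,u_r,v_r)-b(r,X^{t,x;u',v'}_r,u'_r,v'_r)\bigr]dr,\\
M_s&=&\int_\sigma^{(\sigma+s)\wedge T}\bigl[\sigma(r,X^{t,x;u,v}_r,u_r,v_r)-\sigma(r,X^{t,x;u',v'}_r,u'_r,v'_r)\bigr]dB_r.
\end{eqnarray*}

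Since $|b|$ is uniformly bounded by some constant $K$, the drift term satisfies the deterministic bound $\sup_{0\leq s\leq r}|D_s|^2\leq (2Kr)^2\leq 4K^2Tr$. For the martingale part $M_s$, which is a continuous $(\mathcal{F}_{(\sigma+s)\wedge T})$-martingale starting at zero with quadratic variation dominated by $4K^2 s$ (again by the boundedness of $\sigma$), I would apply the Burkholder-Davis-Gundy inequality conditional on $\mathcal{F}_\sigma$ to obtain
\begin{eqnarray*}
\mathbb{E}\Big[\sup_{0\leq s\leq r}|M_s|^2\,\Big|\,\mathcal{F}_\sigma\Big]\leq C_1\,\mathbb{E}\bigl[\langle M\rangle_r\,|\,\mathcal{F}_\sigma\bigr]\leq 4C_1K^2 r,
\end{eqnarray*}
with $C_1$ the universal BDG constant. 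Since $t\leq\sigma$ implies $\mathcal{F}_t\subseteq\mathcal{F}_\sigma$, the tower property yields the same estimate conditional on $\mathcal{F}_t$.

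Combining the two bounds gives the claim with $C:=8K^2T+8C_1K^2$. The only place where care is required is the conditional BDG step, because the integrals run over the stochastic interval $[[\sigma,(\sigma+s)\wedge T]]$; this is handled by viewing $M_s$ as a martingale in the time-shifted filtration $(\mathcal{F}_{(\sigma+s)\wedge T})_{s\geq 0}$ started at $\sigma$, which is standard. Beyond this, the argument involves no substantial obstacle since it avoids Gronwall or Lipschitz estimates in $x$ entirely---bounded coefficients suffice.
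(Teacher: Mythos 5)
Your argument is correct, and it supplies precisely the ``standard estimates for SDEs'' that the paper invokes without proof for this lemma: decomposing the difference over the random window $[[\sigma,(\sigma+s)\wedge T]]$ into drift plus martingale, using the boundedness of the coefficients (in force throughout Section \ref{NS4}) to avoid any Lipschitz/Gronwall step, and applying the conditional Doob/BDG inequality followed by the tower property with $\mathcal{F}_t\subseteq\mathcal{F}_\sigma$. The only caveat worth noting is cosmetic: you reuse the symbol $\sigma$ for both the diffusion coefficient and the stopping time, which the paper also does, and your constant correctly ends up independent of $x$ only because of the boundedness assumption you cite.
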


Let us  now give the following characterization theorem of Nash
equilibrium payoffs for two-player nonzero-sum SDGs.
\begin{theorem}\label{Jt6}
For  $(t,x)\in [0,T]\times \mathbb{R}^{n}$, a couple
$(e_{1},e_{2})\in\mathbb{R}^{2}$ is a Nash equilibrium payoff at
point $(t,x)$ if and only if, for all $\varepsilon>0$, there exists
 a couple $(u^{\varepsilon},v^{\varepsilon})\in \mathcal {U}_{t,T}\times\mathcal
 {V}_{t,T}$  such that for all  $\delta\in [0,T-t]$  and $j=1,2,$
\begin{eqnarray}\label{Jeq6}
 \mathbb{P}\Big(\
^{N^{t,j}_{t+\delta}}\widetilde{Y}^{t,x;u^{\varepsilon},v^{\varepsilon}}_{t+\delta}\geq
W_{n_{j}(N_{t+\delta}^{t,j})}(t+\delta,X^{t,x;u^{\varepsilon},v^{\varepsilon}}_{t+\delta})-\varepsilon\
|\ \mathcal {F}_{t}\Big)\geq 1-\varepsilon,\ \mathbb{P}-a.s.
\end{eqnarray} and
\begin{eqnarray}\label{Jeq7}
|\mathbb{E}[J_{j}(t,x;u^{\varepsilon},v^{\varepsilon})]-
e_{j}|\leq\varepsilon.
\end{eqnarray}
\end{theorem}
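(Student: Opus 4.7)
The statement is an ``if and only if,'' so the plan is to prove necessity and sufficiency separately. Both directions are built around three tools established earlier: the dynamic programming principle for stopping times (Theorem \ref{t5}), the ``threat''/punishment NAD strategies furnished by Lemma \ref{Jle1}, and the unique-control identification of Lemma \ref{l5} together with the equivalent formulation of Nash equilibrium given in Lemma \ref{Jle3}.

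\emph{Necessity.} Suppose $(e_1,e_2)$ is a Nash equilibrium payoff. For each $\varepsilon>0$, let $(\alpha_\varepsilon,\beta_\varepsilon)$ be given by Definition \ref{Jd2}; Lemma \ref{l5} supplies the unique $(u^\varepsilon,v^\varepsilon)\in\mathcal{U}_{t,T}\times\mathcal{V}_{t,T}$ with $\alpha_\varepsilon(v^\varepsilon)=u^\varepsilon$, $\beta_\varepsilon(u^\varepsilon)=v^\varepsilon$, so that (\ref{Jeq7}) is immediate from the definition. For (\ref{Jeq6}) I would argue by contradiction. Assume, for $j=1$ say, that on some $A\in\mathcal{F}_t$ of positive measure the conditional probability of the event $\{\ ^{N^{t,1}_{t+\delta}}\widetilde{Y}^{t,x;u^\varepsilon,v^\varepsilon}_{t+\delta}<W_{n_1(N^{t,1}_{t+\delta})}(t+\delta,X^{t,x;u^\varepsilon,v^\varepsilon}_{t+\delta})-\varepsilon\}$ exceeds $\varepsilon$. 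Then Lemma \ref{Jle1}(ii), applied at $\tau=t+\delta$ with $u=u^\varepsilon$ and error parameter $\varepsilon/2$, yields an NAD strategy $\alpha\in\mathcal{A}_{t,T}$ coinciding with $u^\varepsilon$ on $[[t,t+\delta]]$ and satisfying $\ ^1Y^{t,x;\alpha(v),v}_{t+\delta}\geq W_{n_1(N^{t,1}_{t+\delta})}(t+\delta,X^{t,x;\alpha(v),v}_{t+\delta})-\varepsilon/2$ for every $v\in\mathcal{V}_{t,T}$. Taking the couple $(\alpha,\beta_\varepsilon)$ and using the backward stochastic semigroup representation together with the comparison principle of Lemma \ref{l8}, the strict gain at $t+\delta$ propagates back to time $t$, giving $J_1(t,x;\alpha,\beta_\varepsilon)>J_1(t,x;\alpha_\varepsilon,\beta_\varepsilon)+c\,\varepsilon\cdot\mathbb{P}(A\cap\{\cdot\})$ on a set of positive probability, contradicting (\ref{Jeq4}) once $\varepsilon$ is small. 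The case $j=2$ is treated symmetrically with Lemma \ref{Jle1}(i).

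\emph{Sufficiency.} Given $(u^\varepsilon,v^\varepsilon)$ satisfying (\ref{Jeq6}) and (\ref{Jeq7}), I would construct $(\alpha_\varepsilon,\beta_\varepsilon)$ by a grim-trigger scheme on a partition $t=t_0<t_1<\cdots<t_n=T$ of mesh of order $\varepsilon$. On each subinterval $[[t_k,t_{k+1}]]$, $\alpha_\varepsilon(v)$ is set equal to $u^\varepsilon$ as long as the induced state trajectory stays close (measured through Lemma \ref{Jle2}) to the reference $X^{t,x;u^\varepsilon,v^\varepsilon}$; the first time $k$ at which a deviation from $v^\varepsilon$ is detected, $\alpha_\varepsilon$ switches on $[[t_k,T]]$ to the threat strategy from Lemma \ref{Jle1}(i), tuned so that $\ ^2Y^{t,x;\alpha_\varepsilon(v),v}_{t_{k+1}}\leq W_{n_2(N^{t,2}_{t_{k+1}})}(t_{k+1},X^{t,x;\alpha_\varepsilon(v),v}_{t_{k+1}})+\varepsilon$; and $\beta_\varepsilon$ is built symmetrically via Lemma \ref{Jle1}(ii). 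Because the trigger fires only at the discrete instants $t_k$ from data on $[[t,t_k]]$, the delay axiom of Definition \ref{d1} is satisfied. The DPP of Theorem \ref{t5}, combined with (\ref{Jeq6}) at the checkpoints and with Lemma \ref{l8}, then gives, for any deviation $v\in\mathcal{V}_{t,T}$, the chain of inequalities
\begin{equation*}
J_2(t,x;\alpha_\varepsilon(v),v)\leq\ ^{N^{t,2}_{t_{k+1}}}\widetilde{Y}^{t,x;u^\varepsilon,v^\varepsilon}_{t_{k+1}}+C\varepsilon\leq J_2(t,x;u^\varepsilon,v^\varepsilon)+C\varepsilon,
\end{equation*}
which is exactly the second half of (\ref{Jeq5}); the first half follows symmetrically. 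Lemma \ref{Jle3} then promotes this to (\ref{Jeq4}), and (\ref{Jeq7}) provides the expectation bound, so $(e_1,e_2)$ is a Nash equilibrium payoff.

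\emph{Main obstacle.} The hard step is the sufficiency direction: one must assemble Lemma \ref{Jle1}, the path-continuity estimate of Lemma \ref{Jle2}, and the stopping-time DPP (Theorem \ref{t5}) so that (i) the grim-trigger construction really yields an NAD strategy (the delay condition must survive the mechanism that detects deviations), and (ii) the cumulative error along the partition stays $O(\varepsilon)$. The latter requires iterating (\ref{Jeq6}) across the checkpoints $t_k$ and applying the BSDE stability estimate of Lemma \ref{l1} to pass between $\ ^i\widetilde{Y}^{t,x;u^\varepsilon,v^\varepsilon}$ and $\ ^iY^{t,x;\alpha_\varepsilon(v),v}$ after a deviation, where the Lipschitz constants of $\widetilde{f}_i$ and $\Phi_i$ from $(H5)$, together with the boundedness assumption imposed on the coefficients in Section \ref{NS4}, enter crucially to keep the constant $C$ uniform in $\varepsilon$.
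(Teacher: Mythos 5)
Your overall architecture coincides with the paper's: necessity by contradiction using the threat strategies of Lemma \ref{Jle1} plus comparison, sufficiency by a grim-trigger construction on a partition combined with Lemma \ref{Jle2} and the checkpoint inequalities. The sufficiency half of your plan is essentially the paper's proof (the paper detects a deviation through $S^{v}=\inf\{s:\lambda(\{r\le s: v_r\neq v^{\varepsilon_0}_r\})>0\}$ and switches at the next grid point, exactly your scheme), and the error bookkeeping $C\varepsilon_0+Cm\varepsilon_0^{1/2}+C\varepsilon_1+C\delta^{1/2}$ you anticipate is what Lemma \ref{Jl7} delivers.

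The necessity direction, however, has a genuine gap at the step you summarize as ``the strict gain at $t+\delta$ propagates back to time $t$, giving $J_1(t,x;\alpha,\beta_\varepsilon)>J_1(t,x;\alpha_\varepsilon,\beta_\varepsilon)+c\,\varepsilon\cdot\mathbb{P}(A\cap\{\cdot\})$, contradicting (\ref{Jeq4}) once $\varepsilon$ is small.'' Two things are missing. First, you cannot simply play $(\alpha,\beta_\varepsilon)$: off the bad event $A\in\mathcal{F}_{t+\delta}$ the threat strategy only guarantees $^{1}Y_{t+\delta}\ge W-\varepsilon/2$, which may be \emph{below} $^{1}Y^{u^\varepsilon,v^\varepsilon}_{t+\delta}$, so no comparison with $J_1(t,x;\alpha_\varepsilon,\beta_\varepsilon)$ is possible there; the paper must patch the control, taking $\widetilde u=u$ on $[t+\delta,T]\times A$ and $\widetilde u=u^{\varepsilon}$ on $[t+\delta,T]\times A^{c}$, so that the terminal data at $t+\delta$ becomes exactly $^{1}Y^{u^\varepsilon,v^\varepsilon}_{t+\delta}+\tfrac{\varepsilon}{2}1_A$. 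Second, and more seriously, Lemma \ref{l8} only gives a non-strict inequality; to quantify how much the increment $\tfrac{\varepsilon}{2}1_A$ at time $t+\delta$ raises the value at time $t$ through the \emph{nonlinear} semigroup $^{1}G$, the paper linearizes the BSDE, applies Girsanov, and uses H\"older's inequality, which yields a gain of order $\varepsilon\,\big(\mathbb{P}(A\,|\,\mathcal{F}_t)\big)^{2}\ge \tfrac{\varepsilon^{3}}{2}C_0$ on $\Delta=\{\mathbb{P}(A|\mathcal{F}_t)>\varepsilon\}$ — quadratic, not linear, in the conditional probability. A gain of order $\varepsilon^{3}$ does not contradict $\varepsilon$-optimality of $(\alpha_\varepsilon,\beta_\varepsilon)$; the paper escapes this by invoking Definition \ref{Jd2} at tolerance $\varepsilon^{4}$ (legitimate, since the definition holds for every tolerance) and then choosing $\varepsilon$ so small that $\tfrac{\varepsilon^{3}}{2}C_0>\varepsilon^{4}$. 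Without this rescaling of the Nash tolerance and the Girsanov--H\"older estimate, your contradiction does not close.
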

\begin{remark}
The  characterization theorem of Nash equilibrium payoffs in
\cite{L2011} generalizes the results in \cite{BCR2004}  from
classical cost functionals without running cost to nonlinear cost
functionals with running cost defined by   decoupled BSDEs. The
above theorem on its part extends the  result in \cite{L2011}: on
one hand, we generalize \cite{L2011} from SDGs without jumps to
those with jumps. On the other hand, our cost functionals are
defined by a system of two coupled BSDEs.
\end{remark}
\noindent{\bf Proof of Theorem \ref{Jt6}: \underline {Necessity}} of
(\ref{Jeq6}) and (\ref{Jeq7}).
\begin{proof}
Let us suppose that $(e_{1},e_{2})\in\mathbb{R}^{2}$ is a Nash
equilibrium payoff at the point $(t,x)$. Then, by Definition
\ref{Jd2}  we have, for sufficiently small $\varepsilon>0$, the
existence of $(\alpha_{\varepsilon},\beta_{\varepsilon})\in \mathcal
{A}_{t,T}\times \mathcal {B}_{t,T}$ such that, for all
$(\alpha,\beta)\in \mathcal {A}_{t,T}\times \mathcal {B}_{t,T}$
\begin{eqnarray}\label{Je26}
J_{1}(t,x;\alpha_{\varepsilon},\beta_{\varepsilon})\geq
J_{1}(t,x;\alpha,\beta_{\varepsilon})-\varepsilon^{4},\
J_{2}(t,x;\alpha_{\varepsilon},\beta_{\varepsilon})\geq
J_{2}(t,x;\alpha_{\varepsilon},\beta)-\varepsilon^{4},
\mathbb{P}-a.s.,
\end{eqnarray}
and
\begin{eqnarray}\label{Jeqnarray1}
|\mathbb{E}[J_{j}(t,x;\alpha_{\varepsilon},\beta_{\varepsilon})]-e_{j}|\leq
\varepsilon^{4}, \ j=1,2.
\end{eqnarray}
Since $(\alpha_{\varepsilon},\beta_{\varepsilon})\in \mathcal
{A}_{t,T}\times \mathcal {B}_{t,T}$, it follows from Lemma \ref{l5}
that there exists a unique couple
$(u^{\varepsilon},v^{\varepsilon})\in\mathcal {U}_{t,T}\times
\mathcal {V}_{t,T}$ such that
$\alpha_{\varepsilon}(v^{\varepsilon})=u^{\varepsilon},
\beta_{\varepsilon}(u^{\varepsilon})=v^{\varepsilon}.$  We notice
that from Proposition \ref{p10},   (\ref{Jeq6}) is equivalent to
\begin{eqnarray*}\label{}
 \mathbb{P}\Big(\
^{j}Y^{t,x;u^{\varepsilon},v^{\varepsilon}}_{t+\delta}\geq
W_{n_{j}(N_{t+\delta}^{t,j})}(t+\delta,X^{t,x;u^{\varepsilon},v^{\varepsilon}}_{t+\delta})-\varepsilon\
|\ \mathcal {F}_{t}\Big)\geq 1-\varepsilon,\ \mathbb{P}-a.s.
\end{eqnarray*}
We make the proof by contradiction and  suppose that (\ref{Jeq6})
doesn't hold true. Then, for all $\varepsilon'>0$, there exists some
$\varepsilon\in(0,\varepsilon')$ and some  $\delta\in[0,T-t]$ such
that, for some $j\in\{1,2\}$, say for $j=1$,
\begin{eqnarray}\label{}
\mathbb{P}\Big(\mathbb{P}(\
^{1}Y^{t,x;u^{\varepsilon},v^{\varepsilon}}_{t+\delta}<
W_{n_{1}(N_{t+\delta}^{t,1})}(t+\delta,X^{t,x;u^{\varepsilon},v^{\varepsilon}}_{t+\delta})-\varepsilon\
|\ \mathcal {F}_{t})> \varepsilon\Big)>0.
\end{eqnarray}
Putting
\begin{eqnarray}\label{Je23}
A=\Big\{\ ^{1}Y^{t,x;u^{\varepsilon},v^{\varepsilon}}_{t+\delta}<
W_{n_{1}(N_{t+\delta}^{t,1})}(t+\delta,X^{t,x;u^{\varepsilon},v^{\varepsilon}}_{t+\delta})-\varepsilon\
\Big\}\in\mathcal {F}_{t+\delta},
\end{eqnarray}
and  applying  Lemma \ref{Jle1} to $u^{\varepsilon}$ and $t+\delta$,
we have the existence of  an NAD strategy $\widetilde{\alpha} \in
\mathcal {A}_{t,T}$ such that, for all $v\in \mathcal {V}_{t,T}$,
\begin{eqnarray}\label{Jeq18}
\widetilde{\alpha}(v)&=&u^{\varepsilon}, \text{on}\ [t,t+\delta],\nonumber\\
^{1}Y^{t,x;\widetilde{\alpha}(v),v}_{t+\delta}&\geq&
W_{n_{1}(N_{t+\delta}^{t,1})}(t+\delta,X^{t,x;\widetilde{\alpha}(v),v}_{t+\delta})-\frac{\varepsilon}{2},\
\mathbb{P}-a.s.
\end{eqnarray}
From  Lemma \ref{l5} we have the existence of  a unique couple
$(u,v)\in \mathcal {U}_{t,T}\times \mathcal {V}_{t,T}$ such that
$\widetilde{\alpha}(v)=u,\  \beta_{\varepsilon}(u)=v.$  From
(\ref{Jeq18}) we see that  $u=u^{\varepsilon}$ on $[t,t+\delta]$,
and we put
\begin{equation*}\label{}
\widetilde{u}=\left\{
\begin{array}{ll}
 u^{\varepsilon}, &\ \text{on}\quad ([t,t+\delta[\times\Omega)\cup([t+\delta,T]\times A^{c}),\\
u, & \ \text{on}\quad  [t+\delta,T]\times A.
\end{array}
\right.
\end{equation*}
Consequently, from the nonanticipativity of  $
\beta_{\varepsilon}\in\mathcal {B}_{t,T}$ we see that
 $\beta_{\varepsilon}(\widetilde{u})=\beta_{\varepsilon}(u^{\varepsilon})=v^{\varepsilon}$ on
$[t,t+\delta]$, and for all $s\in[t+\delta,T]$,
\begin{equation*}\label{}
\beta_{\varepsilon}(\widetilde{u})_{s}=\left\{
\begin{array}{ll}
 \beta_{\varepsilon}(u)_{s}=v_{s}, & \text{on}\ A,\\
\beta_{\varepsilon}(u^{\varepsilon})_{s}=v^{\varepsilon}_{s}, &
\text{on}\ A^{c}.
\end{array}
\right.
\end{equation*}
Hence, standard arguments for SDEs and BSDEs yield
\begin{equation*}\label{}
X^{t,x;\widetilde{u},\beta_{\varepsilon}(\widetilde{u})}=X^{t,x;u^{\varepsilon},v^{\varepsilon}},\
\text{on}\ [t,t+\delta],
\end{equation*}
\begin{equation*}\label{}
X^{t,x;\widetilde{u},\beta_{\varepsilon}(\widetilde{u})}=\left\{
\begin{array}{ll}
 X^{t,x;\widetilde{\alpha}(v),v}, & \text{on}\ [t+\delta,T]\times A,\\
X^{t,x;u^{\varepsilon},v^{\varepsilon}}, & \text{on}\
[t+\delta,T]\times A^{c},
\end{array}
\right.
\end{equation*}
as well as
\begin{equation*}\label{}
^{1}Y^{t,x;\widetilde{u},\beta_{\varepsilon}(\widetilde{u})}_{t+\delta}=\left\{
\begin{array}{ll}
 ^{1}Y^{t,x;\widetilde{\alpha}(v),v}_{t+\delta}, & \text{on}\  A,\\
^{1}Y^{t,x;u^{\varepsilon},v^{\varepsilon}}_{t+\delta}, & \text{on}\
 A^{c}.
\end{array}
\right.
\end{equation*}
Thus,
\begin{eqnarray*}\label{}
 J_{1}(t,x;\widetilde{u},\beta_{\varepsilon}(\widetilde{u}))&=&\ ^
 {1}Y^{t,x;\widetilde{u},\beta_{\varepsilon}(\widetilde{u})}_{t}=
 \ ^{1}G^{t,x;\widetilde{u},\beta_{\varepsilon}(\widetilde{u})}_{t,t+\delta}
 [^{1}Y^{t,x;\widetilde{u},\beta_{\varepsilon}(\widetilde{u})}_{t+\delta}]\\
&=&^{1}G^{t,x;\widetilde{u},\beta_{\varepsilon}(\widetilde{u})}_{t,t+\delta}
 [^{1}Y^{t,x;\widetilde{u},\beta_{\varepsilon}(\widetilde{u})}_{t+\delta}1_{A}
 +\ ^{1}Y^{t,x;\widetilde{u},\beta_{\varepsilon}(\widetilde{u})}_{t+\delta}1_{A^{c}}]\\
&=&^{1}G^{t,x;\widetilde{u},\beta_{\varepsilon}(\widetilde{u})}_{t,t+\delta}
 [ ^{1}Y^{t,x;\widetilde{\alpha}(v),v}_{t+\delta}1_{A}
 +\
 ^{1}Y^{t,x;u^{\varepsilon},v^{\varepsilon}}_{t+\delta}1_{A^{c}}].
\end{eqnarray*}
By virtue of Lemma \ref{l8} and   (\ref{Jeq18}) we    conclude
\begin{eqnarray*}\label{}
 J_{1}(t,x;\widetilde{u},\beta_{\varepsilon}(\widetilde{u}))
 &\geq&^{1}G^{t,x;\widetilde{u},\beta_{\varepsilon}(\widetilde{u})}_{t,t+\delta}
 [(W_{n_{1}(N_{t+\delta}^{t,1})}(t+\delta,X^{t,x;\widetilde{\alpha}(v),v}_{t+\delta})-\frac{\varepsilon}{2})1_{A}
 +\ ^{1}Y^{t,x;u^{\varepsilon},v^{\varepsilon}}_{t+\delta}1_{A^{c}}]\\
  &=&^{1}G^{t,x;\widetilde{u},\beta_{\varepsilon}(\widetilde{u})}_{t,t+\delta}
 [W_{n_{1}(N_{t+\delta}^{t,1})}(t+\delta,X^{t,x;u^{\varepsilon},v^{\varepsilon}}_{t+\delta})1_{A}
 +\ ^{1}Y^{t,x;u^{\varepsilon},v^{\varepsilon}}_{t+\delta}1_{A^{c}}-\frac{\varepsilon}{2}1_{A}].
\end{eqnarray*}
Therefore,  from (\ref{Je23}) we deduce that
\begin{eqnarray}\label{Jeqn6}
 J_{1}(t,x;\widetilde{u},\beta_{\varepsilon}(\widetilde{u}))
  &\geq&^{1}G^{t,x;\widetilde{u},\beta_{\varepsilon}(\widetilde{u})}_{t,t+\delta}
 [ (^{1}Y^{t,x;u^{\varepsilon},v^{\varepsilon}}_{t+\delta}+\varepsilon)1_{A}
 +\ ^{1}Y^{t,x;u^{\varepsilon},v^{\varepsilon}}_{t+\delta}1_{A^{c}}-\frac{\varepsilon}{2}1_{A}]\nonumber\\
   &=&^{1}G^{t,x;\widetilde{u},\beta_{\varepsilon}(\widetilde{u})}_{t,t+\delta}
 [^{1}Y^{t,x;u^{\varepsilon},v^{\varepsilon}}_{t+\delta}+\frac{\varepsilon}{2}1_{A}]\nonumber\\
 & =&\ ^{1}G^{t,x;u^{\varepsilon},v^{\varepsilon}}_{t,t+\delta}
 [^{1}Y^{t,x;u^{\varepsilon},v^{\varepsilon}}_{t+\delta}+\frac{\varepsilon}{2}1_{A}].
\end{eqnarray}
Putting  $$y_{s}=\
^{1}G^{t,x;u^{\varepsilon},v^{\varepsilon}}_{s,t+\delta}
 [\
 ^{1}Y^{t,x;u^{\varepsilon},v^{\varepsilon}}_{t+\delta}+\dfrac{\varepsilon}{2}1_{A}],\
 s\in [t,t+\delta],$$
let us  consider  the following BSDE:
\begin{eqnarray*}\label{}
y_s &=&
^{1}Y^{t,x;u^{\varepsilon},v^{\varepsilon}}_{t+\delta}+\dfrac{\varepsilon}{2}1_{A}
+\displaystyle \int_s^{t+\delta}
f_{N_{r}^{t,1}}(r,X^{t,x;u^{\varepsilon},v^{\varepsilon}}_r,
y_r,h_r,z_r,u^{\varepsilon}_{r},v^{\varepsilon}_{r})dr\\&&\qquad-\lambda\displaystyle
\int_s^{t+\delta} h_rdr-\displaystyle \int_s^{t+\delta}
z_rdB_r-\displaystyle \int_s^{t+\delta} h_rd\widetilde{N}_r, \qquad
s\in [t,t+\delta],
\end{eqnarray*}
as well as, for $s\in [t,t+\delta],$
\begin{eqnarray*}\label{}
\begin{aligned}
^{1}Y^{t,x;u^{\varepsilon},v^{\varepsilon}}_s &= \
^{1}Y^{t,x;u^{\varepsilon},v^{\varepsilon}}_{t+\delta}
+\displaystyle \int_s^{t+\delta}
f_{N_{r}^{t,1}}(r,X^{t,x;u^{\varepsilon},v^{\varepsilon}}_r,\
^{1}Y^{t,x;u^{\varepsilon},v^{\varepsilon}}_r,\
^{1}H^{t,x;u^{\varepsilon},v^{\varepsilon}}_r,
\ ^{1}Z^{t,x;u^{\varepsilon},v^{\varepsilon}}_r,u^{\varepsilon}_r, v^{\varepsilon}_r)dr\\
&\quad -\lambda\displaystyle \int_s^{t+\delta} \
^{1}H^{t,x;u^{\varepsilon},v^{\varepsilon}}_rdr-\displaystyle
\int_s^{t+\delta} \
^{1}Z^{t,x;u^{\varepsilon},v^{\varepsilon}}_rdB_r-\displaystyle
\int_s^{t+\delta} \
^{1}H^{t,x;u^{\varepsilon},v^{\varepsilon}}_rd\widetilde{N}_r.
\end{aligned}
\end{eqnarray*}
In order to simplify the notations we suppose until the end of this
proof that the dimension of the Brownian motion  $d$ is equal to
$1$, since we can use a similar arguments for $d>1$. Let us set
$$\overline{y}_{s}=y_{s}-\ ^{1}Y^{t,x;u^{\varepsilon},v^{\varepsilon}}_{s},\
 \overline{h}_{s}=h_{s}-\ ^{1}H^{t,x;u^{\varepsilon},v^{\varepsilon}}_{s},
  \overline{z}_{s}=z_{s}-\ ^{1}Z^{t,x;u^{\varepsilon},v^{\varepsilon}}_{s}, \quad s\in
[t,t+\delta].$$ Then we conclude
\begin{eqnarray}\label{Je24}
\overline{y}_s &=& \dfrac{\varepsilon}{2}1_{A}+\displaystyle
\int_s^{t+\delta}
[f_{N_{r}^{t,1}}(r,X^{t,x;u^{\varepsilon},v^{\varepsilon}}_r,
y_r,h_r,z_r,u^{\varepsilon}_{r},v^{\varepsilon}_{r})\nonumber\\
&&\quad-f_{N_{r}^{t,1}}(r,X^{t,x;u^{\varepsilon},v^{\varepsilon}}_r,\
^{1}Y^{t,x;u^{\varepsilon},v^{\varepsilon}}_r,\
^{1}H^{t,x;u^{\varepsilon},v^{\varepsilon}}_r, \
^{1}Z^{t,x;u^{\varepsilon},v^{\varepsilon}}_r,u^{\varepsilon}_r,
v^{\varepsilon}_r)]dr\nonumber\\&&\qquad -\lambda\displaystyle
\int_s^{t+\delta} \overline{h}_rdr-\displaystyle \int_s^{t+\delta}
\overline{z}_rdB_r-\displaystyle \int_s^{t+\delta}
\overline{h}_rd\widetilde{N}_r, \quad s\in [t,t+\delta].
\end{eqnarray}
Let us put, for  $r\in [t,t+\delta]$,
\begin{eqnarray*}
&&a_{r}=1_{\{\overline{y}_{r}\neq
0\}}(\overline{y}_{r})^{-1}\Big(f_{N_{r}^{t,1}}(r,X^{t,x;u^{\varepsilon},v^{\varepsilon}}_r,
y_r,h_r,z_r,u^{\varepsilon}_{r},v^{\varepsilon}_{r})
\\ && \qquad \qquad  \qquad \qquad \qquad \qquad -f_{N_{r}^{t,1}}(r,X^{t,x;u^{\varepsilon},v^{\varepsilon}}_r, \
^{1}Y^{t,x;u^{\varepsilon},v^{\varepsilon}}_r,
h_r,z_r,u^{\varepsilon}_r,v^{\varepsilon}_r)\Big),\\
&&b_{r}=1_{\{\overline{z}_{r}\neq
0\}}(\overline{z}_{r})^{-1}\Big(f_{N_{r}^{t,1}}(r,X^{t,x;u^{\varepsilon},v^{\varepsilon}}_r,
\ ^{1}Y^{t,x;u^{\varepsilon},v^{\varepsilon}}_r,\
^{1}H^{t,x;u^{\varepsilon},v^{\varepsilon}}_r,
z_r,u^{\varepsilon}_r,v^{\varepsilon}_r)\\&&\qquad \qquad \qquad
\qquad \qquad
-f_{N_{r}^{t,1}}(r,X^{t,x;u^{\varepsilon},v^{\varepsilon}}_r, \
^{1}Y^{t,x;u^{\varepsilon},v^{\varepsilon}}_r, \
^{1}H^{t,x;u^{\varepsilon},v^{\varepsilon}}_r,\
^{1}Z^{t,x;u^{\varepsilon},v^{\varepsilon}}_r,u^{\varepsilon}_r,
v^{\varepsilon}_r)\Big),\\
&&c_{r}=1_{\{\overline{h}_{r}\neq
0\}}(\overline{h}_{r})^{-1}\Big(f_{N_{r}^{t,1}}(r,X^{t,x;u^{\varepsilon},v^{\varepsilon}}_r,
\ ^{1}Y^{t,x;u^{\varepsilon},v^{\varepsilon}}_r, h_r,
z_r,u^{\varepsilon}_r,v^{\varepsilon}_r)\\&&\qquad \qquad \qquad
\qquad \qquad \qquad  \qquad
-f_{N_{r}^{t,1}}(r,X^{t,x;u^{\varepsilon},v^{\varepsilon}}_r, \
^{1}Y^{t,x;u^{\varepsilon},v^{\varepsilon}}_r, \
^{1}H^{t,x;u^{\varepsilon},v^{\varepsilon}}_r, z_r,
u^{\varepsilon}_r, v^{\varepsilon}_r)\Big).
\end{eqnarray*}
Then, from  $(H5)$ we deduce that $|a_{r}|\leq C $, $|b_{r}|\leq C,
|c_{r}|\leq C,$ and $\widehat{c}_{r}:=c_{r}-\lambda\geq
K-\lambda>-1,\  r\in[t,t+\delta].$
 Consequently, there exists a constant $\varepsilon_{0}>0$
small enough,  such that $\widehat{c}_{r}\geq -1+\varepsilon_{0},\
r\in[t,t+\delta],$  and BSDE (\ref{Je24}) can be written as follows:
\begin{eqnarray}\label{Je25}
\overline{y}_s  = \dfrac{\varepsilon}{2}1_{A} +\displaystyle
\int_s^{t+\delta}
[a_{r}\overline{y}_r+b_{r}\overline{z}_r+\widehat{c}_{r}\overline{h}_r]dr-\displaystyle
\int_s^{t+\delta} \overline{z}_rdB_r-\displaystyle \int_s^{t+\delta}
\overline{h}_rd\widetilde{N}_r.
\end{eqnarray}
Let us put
\begin{eqnarray*}
M_{t,t+\delta}=\exp\Big(\int_{t}^{t+\delta}b_{r}dB_{r}
-\frac{1}{2}\int_{t}^{t+\delta}|b_{r}|^{2}dr-\lambda\int_{t}^{t+\delta}\widehat{c}_{r}dr\Big)
\prod\limits_{t<r\leq t+\delta}(1+\widehat{c}_{r}\Delta N_{r}).
\end{eqnarray*}
Then from the Girsanov theorem we know that there exists a
probability measure $\mathbb{Q}=M_{t,t+\delta}\cdot\mathbb{P}$
defined on $(\Omega,\mathcal {F})$ such that
\begin{eqnarray*}
\widehat{N}_{s}=\widetilde{N}_{s}-\int_{t}^{s\wedge(t+\delta)}\widehat{c}_{r}dr,\
s\in [t,T],
\end{eqnarray*}
is an $(\mathbb{F},\mathbb{Q})$-martingale, and
\begin{eqnarray*}
\widehat{B}_{s}=B_{s}-\int_{t}^{s\wedge(t+\delta)}b_{r}dr,\ s\in
[t,T],
\end{eqnarray*}
is an $(\mathbb{F},\mathbb{Q})$-Brownian motion,  and both are
independent under $\mathbb{Q}$. Therefore,  (\ref{Je25})  takes the
following form:
\begin{eqnarray*}\label{}
\overline{y}_s &=& \dfrac{\varepsilon}{2}1_{A} +\displaystyle
\int_s^{t+\delta} a_{r}\overline{y}_rdr\nonumber-\displaystyle
\int_s^{t+\delta} \overline{z}_rd\widehat{B}_r-\displaystyle
\int_s^{t+\delta} \overline{h}_rd\widehat{N}_r, \quad s\in
[t,t+\delta].
\end{eqnarray*}
By applying   It\^o's formula  to $\overline{y}_s
e^{\int_{t}^{s}a_{r}dr}$  we obtain
\begin{eqnarray*}
\overline{y}_t=\dfrac{\varepsilon}{2}\mathbb{E}^{\mathbb{Q}}[1_{A}e^{\int_{t}^{t+\delta}a_{r}dr}\
|\mathcal{F}_{t}]=\dfrac{\varepsilon}{2}\mathbb{E}[1_{A}M_{t,t+\delta}e^{\int_{t}^{t+\delta}a_{r}dr}\
|\mathcal{F}_{t}].
\end{eqnarray*}
Thanks to the H\"{o}lder inequality we have
\begin{eqnarray*}
\mathbb{P}(A |\mathcal{F}_{t})^{2}=(\mathbb{E}[1_{A}|\mathcal
  {F}_{t}])^{2}\leq \mathbb{E}[1_{A}M_{t,t+\delta}e^{\int_{t}^{t+\delta}a_{r}dr}\
|\mathcal{F}_{t}]\mathbb{E}[M_{t,t+\delta}^{-1}
e^{-\int_{t}^{t+\delta}a_{r}dr}\ |\mathcal{F}_{t}].
\end{eqnarray*}
On the other hand,
\begin{eqnarray*}
&&\mathbb{E}[M_{t,t+\delta}^{-1} e^{-\int_{t}^{t+\delta}a_{r}dr}\
|\mathcal{F}_{t}]\\
&=&\mathbb{E}[\exp\Big(-\int_{t}^{t+\delta}a_{r}dr
 -\int_{t}^{t+\delta}b_{r}dB_{r}+\frac{1}{2}\int_{t}^{t+\delta}|b_{r}|^{2}dr+\lambda\int_{t}^{t+\delta}\widehat{c}_{r}dr\Big)\
  \\ && \qquad \qquad \qquad \qquad \qquad \qquad\qquad \qquad
  \times
  \prod\limits_{t<r\leq t+\delta}(1+\widehat{c}_{r}\Delta N_{r})^{-1}|\ \mathcal{F}_{t}]\\
  &=&\mathbb{E}[\exp\Big(-\int_{t}^{t+\delta}a_{r}dr
 -\int_{t}^{t+\delta}b_{r}dB_{r}+\frac{1}{2}\int_{t}^{t+\delta}|b_{r}|^{2}dr+\lambda\int_{t}^{t+\delta}\widehat{c}_{r}dr\Big)\
 \\ && \qquad \qquad \qquad \qquad \qquad \qquad\qquad \qquad
 \times\exp\Big(-\int_{t}^{t+\delta}\ln(1+\widehat{c}_{r})dN_{r}\Big)|\ \mathcal{F}_{t}]\\
 &\leq& \exp((1+\lambda)CT)\mathbb{E}[\exp\Big(
 -\int_{t}^{t+\delta}b_{r}dB_{r}-\frac{1}{2}\int_{t}^{t+\delta}|b_{r}|^{2}dr\Big)
 \exp\Big(-\int_{t}^{t+\delta}\ln(\varepsilon_{0})dN_{r}\Big)\
 |\ \mathcal{F}_{t}]\\ &\leq & C_{0}^{-1},
\end{eqnarray*}
for some suitably chosen constant $ C_{0}>0$. Consequently, putting
$$\Delta=\Big\{\mathbb{P}\Big(\
^{1}Y^{t,x;u^{\varepsilon},v^{\varepsilon}}_{t+\delta}<
W_{N^{t,1}_{t+\delta}}(t+\delta,X^{t,x;u^{\varepsilon},v^{\varepsilon}}_{t+\delta})-\varepsilon\
|\ \mathcal {F}_{t}\Big)> \varepsilon\Big\}\Big(=\{\mathbb{P}(A
|\mathcal{F}_{t})>\varepsilon\}\Big),$$   we have
\begin{eqnarray*}\label{}
\overline{y}_t&=&\dfrac{\varepsilon}{2}\mathbb{E}[1_{A}M_{t,t+\delta}e^{\int_{t}^{t+\delta}a_{r}dr}\
|\mathcal{F}_{t}] \geq\dfrac{\varepsilon
C_{0}}{2}(\mathbb{E}[1_{A}|\mathcal
  {F}_{t}])^{2}\\&=&\dfrac{\varepsilon C_{0}}{2}(\mathbb{P}(A|\mathcal
  {F}_{t}))^{2}>\dfrac{\varepsilon^{3}}{2}C_{0}1_{\Delta}.
\end{eqnarray*}
Thus, since $$ \overline{y}_{t}=y_{t}-\
^{1}Y_{t}^{t,x;u^{\varepsilon},v^{\varepsilon}}= \
^{1}G^{t,x;u^{\varepsilon},v^{\varepsilon}}_{t,t+\delta}
 [^{1}Y^{t,x;u^{\varepsilon},v^{\varepsilon}}_{t+\delta}+\frac{\varepsilon}{2}1_{A}]
-\ ^{1}G^{t,x;u^{\varepsilon},v^{\varepsilon}}_{t,t+\delta}
 [^{1}Y^{t,x;u^{\varepsilon},v^{\varepsilon}}_{t+\delta}],$$
we obtain
\begin{eqnarray*}\label{}
^{1}G^{t,x;u^{\varepsilon},v^{\varepsilon}}_{t,t+\delta}
 [^{1}Y^{t,x;u^{\varepsilon},v^{\varepsilon}}_{t+\delta}+\frac{\varepsilon}{2}1_{A}]
> \ ^{1}G^{t,x;u^{\varepsilon},v^{\varepsilon}}_{t,t+\delta}
 [^{1}Y^{t,x;u^{\varepsilon},v^{\varepsilon}}_{t+\delta}]+\dfrac{\varepsilon^{3}}{2}C_{0}1_{\Delta}.
\end{eqnarray*}
Therefore,  (\ref{Jeqn6}) yields $$
J_{1}(t,x;\widetilde{u},\beta_{\varepsilon}(\widetilde{u}))
  >J_{1}(t,x;\alpha_{\varepsilon},\beta_{\varepsilon})+\dfrac{\varepsilon^{3}}{2}C_{0}1_{\Delta}.$$
Let us choose $\varepsilon'$ sufficiently small such that
$\dfrac{\varepsilon'^{3}}{2}C_{0}>\varepsilon'^{4}$. Then we have
$\dfrac{\varepsilon^{3}}{2}C_{0}>\varepsilon^{4}$.  Since
$\mathbb{P}(\Delta)>0$, we have  a contradiction with (\ref{Je26})
for $\alpha(\cdot)=\widetilde{u}$. The proof is complete.
\end{proof}\vskip2mm

\noindent {\bf Proof of Theorem \ref{Jt6}: \underline {Sufficiency}}
of (\ref{Jeq6}) and (\ref{Jeq7}).\vskip 2mm

\begin{proof}
We fix arbitrarily  $\varepsilon>0$. For $\varepsilon_{0}>0$ being
specified later let us assume  that
$(u^{\varepsilon_{0}},v^{\varepsilon_{0}})\in\mathcal
{U}_{t,T}\times \mathcal {V}_{t,T}$ satisfies (\ref{Jeq6}) and
(\ref{Jeq7}), i.e.,  for all $s\in[t,T]$ and $j=1,2,$
\begin{eqnarray}\label{Je38}
\mathbb{P}\Big(\
^{j}Y^{t,x;u^{\varepsilon_{0}},v^{\varepsilon_{0}}}_{s}\geq
W_{n_{j}(N^{t,j}_{s})}(s,X^{t,x;u^{\varepsilon_{0}},v^{\varepsilon_{0}}}_{s})-\varepsilon_{0}\
|\ \mathcal {F}_{t}\Big)\geq 1-\varepsilon_{0}, \ \mathbb{P}-a.s.,
\end{eqnarray}
and
\begin{eqnarray}\label{Je30}
|\mathbb{E}[J_{j}(t,x;u^{\varepsilon_{0}},v^{\varepsilon_{0}})]-
e_{j}|\leq\varepsilon_{0},
\end{eqnarray}
where we use Proposition \ref{p10} for getting the first inequality.

 Let us put  $t_{i}=t+i\frac{T-t}{m}, 0\leq i \leq m,$
and  $\delta=\frac{T-t}{m}$, where  $m$ will be specified at the end
of the proof of  Lemma \ref{Jl7}.  Let us apply Lemma \ref{Jle1} to
$u^{\varepsilon_{0}}$ and $\tau=t_{1},\cdots,t_{m}$, successively.
Then, for $\varepsilon_{1}>0$ ($\varepsilon_{1}$ depends on
$\varepsilon$ and is specified later) we have the existence of  NAD
strategies $\alpha_{i}\in \mathcal {A}_{t,T}, i=1,\cdots,m$,  such
that,  for all
 $v\in \mathcal {V}_{t,T}$,
\begin{eqnarray}\label{Jeq8}
\alpha_{i}(v)&=&u^{\varepsilon_{0}}, \text{on}\ [t,t_{i}],\nonumber\\
^{2}Y^{t,x;\alpha_{i}(v),v}_{t_{i}}&\leq&
W_{n_{2}(N^{t,2}_{t_{i}})}(t_{i},X^{t,x;\alpha_{i}(v),v}_{t_{i}})+\varepsilon_{1},
\mathbb{P}-a.s.
\end{eqnarray}
For all $v\in \mathcal {V}_{t,T}$, let us define
\begin{eqnarray*}\label{}
S^{v}&=&\inf\Big\{s\geq t \ |\ \lambda(\{r\in[t,s]: v_{r}\neq
v_{r}^{\varepsilon_{0}}\})>0\Big\},\\
t^{v}&=&\inf\Big\{t_{i}\geq S^{v} \ |\ i=1,\cdots, m\Big\}\wedge T.
\end{eqnarray*}
Here we denote by $\lambda$  the Lebesgue measure on the real line
$\mathbb{R}$. Then, $S^{v}$ and $t^{v}$ are stopping times, and
$S^{v}\leq t^{v}\leq S^{v}+\delta$.  Let us set
\begin{equation*}\label{}
\alpha_{\varepsilon}(v)=\left\{
\begin{array}{ll}
 u^{\varepsilon_{0}}, & \ \text{on}\ [ t, t^{v}],\\
\alpha_{i}(v), & \ \text{on}\ (t_{i},T]\times \{t^{v}=t_{i}\}, 1\leq
i\leq m.
\end{array}
\right.
\end{equation*}
Then  $\alpha_{\varepsilon}$ is an NAD strategy, and by virtue of
(\ref{Jeq8}) we obtain
\begin{eqnarray}\label{Jeq9}
^{2}Y^{t,x;\alpha_{\varepsilon}(v),v}_{t^{v}}&=&
\sum\limits_{i=1}^{m} \
^{2}Y^{t,x;\alpha_{\varepsilon}(v),v}_{t_{i}}1_{\{t^{v}=t_{i}\}}\nonumber\\
&\leq&\sum\limits_{i=1}^{m}W_{n_{2}(N^{t,2}_{t_{i}})}(t_{i},X^{t,x;\alpha_{\varepsilon}(v),v}_{t_{i}})1_{\{t^{v}=t_{i}\}}
+\varepsilon_{1}\nonumber\\
&=&
W_{n_{2}(N^{t,2}_{t_{v}})}(t^{v},X^{t,x;\alpha_{\varepsilon}(v),v}_{t^{v}})+\varepsilon_{1},
\ \mathbb{P}-a.s.
\end{eqnarray}
Let us admit the following lemma for the moment; we shall prove it
after.
\begin{lemma}\label{Jl7}
For all $\varepsilon>0$ and $v\in \mathcal {V}_{t,T}$,
\begin{eqnarray*}\label{}
 J_{2}(t,x;\alpha_{\varepsilon}(v),v)
\leq J_{2}(t,x;u^{\varepsilon_{0}},v^{\varepsilon_{0}})
+\varepsilon, \quad
\alpha_{\varepsilon}(v^{\varepsilon_{0}})=u^{\varepsilon_{0}},
\end{eqnarray*}
for $(u^{\varepsilon_{0}},v^{\varepsilon_{0}})$ from (\ref{Je38}).
\end{lemma}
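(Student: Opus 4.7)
The plan is to decompose $[t,T]$ at the random times $S^v$ and $t^v$, which satisfy $t \le S^v \le t^v \le S^v + \delta$ with $\delta = (T-t)/m$. On $[t, S^v]$ the control pairs $(\alpha_\varepsilon(v), v)$ and $(u^{\varepsilon_0}, v^{\varepsilon_0})$ agree $ds\,d\mathbb{P}$-a.e., by the definition of $S^v$ together with $\alpha_\varepsilon(v) = u^{\varepsilon_0}$ on $[t,t^v]$, so the corresponding state and $\ ^2Y$-processes coincide there; in particular $X^{t,x;\alpha_\varepsilon(v),v}_{S^v} = X^{t,x;u^{\varepsilon_0},v^{\varepsilon_0}}_{S^v}$. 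On $(S^v, t^v]$ (length at most $\delta$), only the second-player control may differ. The identity $\alpha_\varepsilon(v^{\varepsilon_0}) = u^{\varepsilon_0}$ is immediate: $v = v^{\varepsilon_0}$ forces $S^{v^{\varepsilon_0}} = T = t^{v^{\varepsilon_0}}$, which places us in the branch of the piecewise definition where $\alpha_\varepsilon \equiv u^{\varepsilon_0}$ on all of $[t,T]$.

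For the inequality, apply the backward semigroup at $S^v$. By the coincidence on $[t, S^v]$, both $J_2(t,x;\alpha_\varepsilon(v),v)$ and $J_2(t,x;u^{\varepsilon_0},v^{\varepsilon_0})$ equal the \emph{same} semigroup $\ ^2G^{t,x;u^{\varepsilon_0},v^{\varepsilon_0}}_{t,S^v}[\,\cdot\,]$ applied to their respective terminal $Y$-values at $S^v$, so the standard BSDE stability estimate (Lemma \ref{l1}) controls their difference by $C\,\|\,^2Y^{t,x;\alpha_\varepsilon(v),v}_{S^v} - \ ^2Y^{t,x;u^{\varepsilon_0},v^{\varepsilon_0}}_{S^v}\|_{L^2(\mathcal{F}_{S^v})}$. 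A second BSDE stability estimate on $[S^v, t^v]$ (using that both BSDEs share the first control $u^{\varepsilon_0}$; that Lemma \ref{Jle2} applied at $\sigma = S^v$ gives $\mathbb{E}[\sup_{[S^v,t^v]}|X^{t,x;\alpha_\varepsilon(v),v}_s - X^{t,x;u^{\varepsilon_0},v^{\varepsilon_0}}_s|^2\,|\,\mathcal{F}_t] \le C\delta$; and that the driver, even without Lipschitz dependence on $v$, is uniformly bounded so the interval of length $\delta$ only contributes $O(\sqrt{\delta})$) reduces the task to bounding the terminal difference $|\,^2Y^{t,x;\alpha_\varepsilon(v),v}_{t^v} - \ ^2Y^{t,x;u^{\varepsilon_0},v^{\varepsilon_0}}_{t^v}|$ in $L^2$.

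At the random time $t^v$, which takes only the finitely many values $t_1,\ldots,t_m,T$, combine the upper bound (\ref{Jeq9}) for the first $Y$-value with the lower bound (\ref{Je38}) for the second, applied at each grid point $t_i$ and glued by a union bound over the $m$ points. On an $\mathcal{F}_t$-conditional set of probability at least $1-m\varepsilon_0$,
$$\ ^2Y^{t,x;\alpha_\varepsilon(v),v}_{t^v} - \ ^2Y^{t,x;u^{\varepsilon_0},v^{\varepsilon_0}}_{t^v} \le W_{n_2(N^{t,2}_{t^v})}(t^v, X^{t,x;\alpha_\varepsilon(v),v}_{t^v}) - W_{n_2(N^{t,2}_{t^v})}(t^v, X^{t,x;u^{\varepsilon_0},v^{\varepsilon_0}}_{t^v}) + \varepsilon_0 + \varepsilon_1,$$
which, by the Lipschitz property of $W$ (Proposition \ref{p4}) and Lemma \ref{Jle2}, is bounded in $L^2$ by $C(\sqrt{\delta} + \varepsilon_0 + \varepsilon_1)$. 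On the complementary ``bad'' event of probability $\le m\varepsilon_0$, the uniform $L^\infty$-bound on $\,^2Y$ (guaranteed by the boundedness of all coefficients assumed at the start of Section \ref{NS4}) contributes at most $C\sqrt{m\varepsilon_0}$.

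Assembling the two stability estimates yields, uniformly in $v \in \mathcal{V}_{t,T}$,
$$J_2(t,x;\alpha_\varepsilon(v),v) - J_2(t,x;u^{\varepsilon_0},v^{\varepsilon_0}) \le C\bigl(\sqrt{\delta} + \varepsilon_1 + \varepsilon_0 + \sqrt{m\varepsilon_0}\bigr).$$
The main obstacle is the quantifier coupling between $\varepsilon_0$ (the per-time failure probability in (\ref{Je38})) and $m$ (the grid size that multiplies $\varepsilon_0$ in the union bound). The remedy is a cascade of choices made at the very beginning of the sufficiency argument: first fix $\varepsilon_1$ of order $\varepsilon$; then fix $m$ large enough so that $C\sqrt{(T-t)/m} \le \varepsilon/4$; and only then, with $m$ frozen, pick $\varepsilon_0$ small enough that $C(\varepsilon_0 + \sqrt{m\varepsilon_0}) \le \varepsilon/2$. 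This fixes the dependence of $\varepsilon_0$ and $m$ on $\varepsilon$ that was left tacit when (\ref{Je38}) was invoked.
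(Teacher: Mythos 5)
Your overall architecture is the right one and matches the paper's: split at $S^{v}$ and $t^{v}$, use (\ref{Jeq9}) as an upper bound on $^{2}Y^{t,x;\alpha_{\varepsilon}(v),v}_{t^{v}}$ and (\ref{Je38}) as a lower bound on $^{2}Y^{t,x;u^{\varepsilon_{0}},v^{\varepsilon_{0}}}_{t^{v}}$, trade the two state processes against each other via Lemma \ref{Jle2} and the Lipschitz property of $W$ (Proposition \ref{p4}), control the bad event by a union bound over the $m$ grid points, and close with the cascade $\varepsilon_{1}$, then $m$, then $\varepsilon_{0}$. Your explicit ordering of that cascade is a correct and useful rendering of what the paper leaves as ``we can choose $\delta,\varepsilon_{0},\varepsilon_{1}$''. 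The identification $\alpha_{\varepsilon}(v^{\varepsilon_{0}})=u^{\varepsilon_{0}}$ is also argued correctly.

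There is, however, a genuine gap in the propagation step. You reduce the problem to ``bounding the terminal difference $|\,^{2}Y^{t,x;\alpha_{\varepsilon}(v),v}_{t^{v}}-\,^{2}Y^{t,x;u^{\varepsilon_{0}},v^{\varepsilon_{0}}}_{t^{v}}|$ in $L^{2}$'' and then feed it into the two\-/sided stability estimate of Lemma \ref{l1}. But only a \emph{one-sided} bound on this difference is available, and only a one-sided bound is true: (\ref{Jeq8})--(\ref{Jeq9}) give $^{2}Y^{t,x;\alpha_{\varepsilon}(v),v}_{t^{v}}\leq W_{n_{2}(N^{t,2}_{t^{v}})}(t^{v},\cdot)+\varepsilon_{1}$ with no matching lower bound (after $t^{v}$ the strategy $\alpha_{i}$ is a punishing strategy, so against a badly chosen $v$ the quantity $^{2}Y^{t,x;\alpha_{\varepsilon}(v),v}_{t^{v}}$ can lie far below $W$), while (\ref{Je38}) gives $^{2}Y^{t,x;u^{\varepsilon_{0}},v^{\varepsilon_{0}}}_{t^{v}}\geq W-\varepsilon_{0}$ with no matching upper bound. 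Hence the $L^{2}$ norm of the difference need not be small and your chain of stability estimates breaks. The fix — which is what the paper does — is to propagate the one-sided terminal inequality through the backward semigroup with the \emph{comparison} theorem (Lemma \ref{l8}), and to invoke Lemma \ref{l1} only for the small nonnegative perturbations (the constants $\varepsilon_{0},\varepsilon_{1}$, the difference of $W$-values at the two nearby states, and the bad-event indicator), each of which genuinely is small in $L^{2}$; a final application of Lemma \ref{l1} then swaps the semigroup superscript from $(\alpha_{\varepsilon}(v),v)$ to $(u^{\varepsilon_{0}},v^{\varepsilon_{0}})$ at the cost of another $C\delta^{1/2}$. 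Since the lemma's conclusion is itself one-sided, nothing is lost by working one-sidedly throughout.
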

By a similar argument as that for Lemma \ref{Jl7} we can construct
$\beta_{\varepsilon}\in \mathcal {B}_{t,T}$ such that, for all $u\in
\mathcal {U}_{t,T}$,
\begin{eqnarray*}\label{}
 J_{1}(t,x;u,\beta_{\varepsilon}(u))
\leq J_{1}(t,x;u^{\varepsilon_{0}},v^{\varepsilon_{0}})
+\varepsilon,\quad
\beta_{\varepsilon}(u^{\varepsilon_{0}})=v^{\varepsilon_{0}}.
\end{eqnarray*}
From the latter both inequalities, (\ref{Je30})  and Lemma
\ref{Jle3} it follows that
$(\alpha_{\varepsilon},\beta_{\varepsilon})$ satisfies Definition
\ref{Jd2}. Consequently, $(e_{1},e_{2})$ is a Nash equilibrium
payoff.
\end{proof}

\noindent{\bf Proof of Lemma \ref{Jl7}:}\hskip 3mm From (\ref{Jeq9})
and the Lemmas \ref{l8} and \ref{l1} we get the existence of a
positive constant $C$ such that
\begin{eqnarray}\label{Jeqn2}
 J_{2}(t,x,\alpha_{\varepsilon}(v),v)&=&
 ^{2}G^{t,x;\alpha_{\varepsilon}(v),v}_{t,t^{v}}[^{2}Y^{t,x,\alpha_{\varepsilon}(v),v}_{t^{v}}]
\leq\
^{2}G^{t,x;\alpha_{\varepsilon}(v),v}_{t,t^{v}}[W_{n_{2}(N^{t,2}_{t^{v}})}(t^{v},X^{t,x;\alpha_{\varepsilon}(v),v}_{t^{v}})
+\varepsilon_{1}]\nonumber\\
&\leq &
^{2}G^{t,x;\alpha_{\varepsilon}(v),v}_{t,t^{v}}[W_{n_{2}(N^{t,2}_{t^{v}})}(t^{v},X^{t,x;\alpha_{\varepsilon}(v),v}_{t^{v}})]
+C\varepsilon_{1}.
\end{eqnarray}
From Proposition \ref{p4}, Lemma \ref{Jle2} as well as the
definitions of $t^{v}$ and $\alpha_{\varepsilon}$ it follows that
\begin{eqnarray*}\label{}
\mathbb{E}[|W_{n_{2}(N^{t,2}_{t^{v}})}(t^{v},X_{t^{v}}^{t,x;u^{\varepsilon_{0}},v^{\varepsilon_{0}}})
-W_{n_{2}(N^{t,2}_{t^{v}})}(t^{v},X^{t,x;\alpha_{\varepsilon}(v),v}_{t^{v}})|^{2}
\Big|\mathcal {F}_{t}]\leq  C\delta,\  \mathbb{P}-a.s.
\end{eqnarray*}
Consequently, by virtue of  Lemma \ref{l1} we deduce that
\begin{eqnarray*}\label{}
\begin{aligned}
&|\
^{2}G^{t,x;\alpha_{\varepsilon}(v),v}_{t,t^{v}}[W_{n_{2}(N^{t,2}_{t^{v}})}
(t^{v},X_{t^{v}}^{t,x;u^{\varepsilon_{0}},v^{\varepsilon_{0}}})] -\
^{2}G^{t,x;\alpha_{\varepsilon}(v),v}_{t,t^{v}}[W_{n_{2}(N^{t,2}_{t^{v}})}
(t^{v},X^{t,x;\alpha_{\varepsilon}(v),v}_{t^{v}})]|\\
&\leq
C\mathbb{E}[|W_{n_{2}(N^{t,2}_{t^{v}})}(t^{v},X_{t^{v}}^{t,x;u^{\varepsilon_{0}},v^{\varepsilon_{0}}})
-W_{n_{2}(N^{t,2}_{t^{v}})}(t^{v},X^{t,x;\alpha_{\varepsilon}(v),v}_{t^{v}})|^{2}
\Big|\mathcal {F}_{t}]^{\frac{1}{2}}\leq  C\delta^{\frac{1}{2}},
\end{aligned}
\end{eqnarray*}
from which, combined with (\ref{Jeqn2}), we get
\begin{eqnarray*}\label{}
&&J_{2}(t,x,\alpha_{\varepsilon}(v),v) \\ &\leq &
^{2}G^{t,x;\alpha_{\varepsilon}(v),v}_{t,t^{v}}[W_{n_{2}(N^{t,2}_{t^{v}})}(t^{v},
X_{t^{v}}^{t,x;u^{\varepsilon_{0}},v^{\varepsilon_{0}}})]+C\varepsilon_{1}\\
&&+|\
^{2}G^{t,x;\alpha_{\varepsilon}(v),v}_{t,t^{v}}[W_{n_{2}(N^{t,2}_{t^{v}})}(t^{v},X_{t^{v}}^{t,x;u^{\varepsilon_{0}},v^{\varepsilon_{0}}})]
- \ ^{2}G^{t,x;\alpha_{\varepsilon}(v),v}_{t,t^{v}}[W_{n_{2}(N^{t,2}_{t^{v}})}(t^{v},X^{t,x;\alpha_{\varepsilon}(v),v}_{t^{v}})]|\\
&\leq &
^{2}G^{t,x;\alpha_{\varepsilon}(v),v}_{t,t^{v}}[W_{n_{2}(N^{t,2}_{t^{v}})}(t^{v},X_{t^{v}}^{t,x;u^{\varepsilon_{0}},v^{\varepsilon_{0}}})]
+C\varepsilon_{1}+C\delta^{\frac{1}{2}}.
\end{eqnarray*}
For $s\in[t,T]$, we put
\begin{eqnarray*}\label{}
\Omega_{s}=\Big\{\
^{2}Y^{t,x;u^{\varepsilon_{0}},v^{\varepsilon_{0}}}_{s}\geq
W_{n_{2}(N^{t,2}_{s})}(s,X^{t,x;u^{\varepsilon_{0}},v^{\varepsilon_{0}}}_{s})-\varepsilon_{0}\Big\}.
\end{eqnarray*}
Then we have
\begin{eqnarray}\label{Jeqn4}
&& J_{2}(t,x;\alpha_{\varepsilon}(v),v)
 \leq \ ^{2}G^{t,x;\alpha_{\varepsilon}(v),v}_{t,t^{v}}
 [W_{n_{2}(N^{t,2}_{t^{v}})}(t^{v},X_{t^{v}}^{t,x;u^{\varepsilon_{0}},v^{\varepsilon_{0}}})]
+C\varepsilon_{1}+C\delta^{\frac{1}{2}}\nonumber\\
&&\leq\ ^{2}G^{t,x;\alpha_{\varepsilon}(v),v}_{t,t^{v}}
[\sum\limits_{i=1}^{m}W_{n_{2}(N^{t,2}_{t_{i}})}(t_{i},X_{t_{i}}^{t,x;u^{\varepsilon_{0}},v^{\varepsilon_{0}}})
1_{\{t^{v}=t_{i}\}}1_{\Omega_{t_{i}}}]\nonumber\\&&\hskip75mm+C\varepsilon_{1}+C\delta^{\frac{1}{2}}
+I,
\end{eqnarray}
where
\begin{eqnarray*}
&&I=|\ ^{2}G^{t,x;\alpha_{\varepsilon}(v),v}_{t,t^{v}}
[\sum\limits_{i=1}^{m}W_{n_{2}(N^{t,2}_{t_{i}})}
(t_{i},X_{t_{i}}^{t,x;u^{\varepsilon_{0}},v^{\varepsilon_{0}}})1_{\{t^{v}=t_{i}\}}]
\\ && \qquad  \qquad \qquad
-\ ^{2}G^{t,x;\alpha_{\varepsilon}(v),v}_{t,t^{v}}
[\sum\limits_{i=1}^{m}W_{n_{2}(N^{t,2}_{t_{i}})}(t_{i},X_{t_{i}}^{t,x;u^{\varepsilon_{0}},v^{\varepsilon_{0}}})
1_{\{t^{v}=t_{i}\}}1_{\Omega_{t_{i}}}]|.
\end{eqnarray*}
 Noting Lemma \ref{l1},
(\ref{Je38}) as well as the boundedness of
$W_{n_{2}(N^{t,2}_{t_{i}})}$  we conclude
\begin{eqnarray}\label{Jeqn5}
I&\leq&\mathbb{E}[\sum\limits_{i=1}^{m}|W_{n_{2}(N^{t,2}_{t_{i}})}(t_{i},
X_{t_{i}}^{t,x;u^{\varepsilon_{0}},v^{\varepsilon_{0}}})|^{2}
1_{\{t^{v}=t_{i}\}}1_{\Omega^{c}_{t_{i}}} \Big|\mathcal
{F}_{t}]^{\frac{1}{2}} \nonumber \\ &\leq & C\sum\limits_{i=1}^{m}
\mathbb{P}(\Omega^{c}_{t_{i}} |\mathcal{F}_{t})^{\frac{1}{2}}\leq
Cm\varepsilon_{0}^{\frac{1}{2}}.
\end{eqnarray}
Since $\
^{2}Y^{t,x;u^{\varepsilon_{0}},v^{\varepsilon_{0}}}_{t_{i}}\geq
W_{n_{2}(N^{t,2}_{t_{i}})}(t_{i},X^{t,x;u^{\varepsilon_{0}},v^{\varepsilon_{0}}}_{t_{i}})-\varepsilon_{0}
\ \text{on}\ \Omega_{t_{i}},$  it follows from Lemma \ref{l1} that
\begin{eqnarray*}\label{}
&&^{2}G^{t,x;\alpha_{\varepsilon}(v),v}_{t,t^{v}}
[\sum\limits_{i=1}^{m}W_{n_{2}(N^{t,2}_{t_{i}})}(t_{i},X_{t_{i}}^{t,x;u^{\varepsilon_{0}},v^{\varepsilon_{0}}})
1_{\{t^{v}=t_{i}\}}1_{\Omega_{t_{i}}}] \nonumber \\ &&\leq\
^{2}G^{t,x;\alpha_{\varepsilon}(v),v}_{t,t^{v}}
[\sum\limits_{i=1}^{m}\
^{2}Y^{t,x;u^{\varepsilon_{0}},v^{\varepsilon_{0}}}_{t_{i}}
1_{\{t^{v}=t_{i}\}}1_{\Omega_{t_{i}}}+\varepsilon_{0}]\\
&&\leq\ ^{2}G^{t,x;\alpha_{\varepsilon}(v),v}_{t,t^{v}}
[\sum\limits_{i=1}^{m}\
^{2}Y^{t,x;u^{\varepsilon_{0}},v^{\varepsilon_{0}}}_{t_{i}}
1_{\{t^{v}=t_{i}\}}1_{\Omega_{t_{i}}}]+C\varepsilon_{0}.
\end{eqnarray*}
Similarly  to (\ref{Jeqn5}) we have
\begin{eqnarray*}\label{}
|\ ^{2}G^{t,x;\alpha_{\varepsilon}(v),v}_{t,t^{v}}
[\sum\limits_{i=1}^{m}\
^{2}Y^{t,x;u^{\varepsilon_{0}},v^{\varepsilon_{0}}}_{t_{i}}
1_{\{t^{v}=t_{i}\}}1_{\Omega_{t_{i}}}] -\
^{2}G^{t,x;\alpha_{\varepsilon}(v),v}_{t,t^{v}}
[\sum\limits_{i=1}^{m} \
^{2}Y^{t,x;u^{\varepsilon_{0}},v^{\varepsilon_{0}}}_{t_{i}}1_{\{t^{v}=t_{i}\}}]|\leq
Cm\varepsilon_{0}^{\frac{1}{2}},
\end{eqnarray*}
from which together with
$^{2}Y^{t,x;u^{\varepsilon_{0}},v^{\varepsilon_{0}}}_{t^{v}}=
\sum\limits_{i=1}^{m} \
^{2}Y^{t,x;u^{\varepsilon_{0}},v^{\varepsilon_{0}}}_{t_{i}}1_{\{t^{v}=t_{i}\}}$
it follows that
\begin{eqnarray*}\label{}
&&^{2}G^{t,x;\alpha_{\varepsilon}(v),v}_{t,t^{v}}
[\sum\limits_{i=1}^{m}W_{n_{2}(N^{t,2}_{t_{i}})}(t_{i},X_{t_{i}}^{t,x;u^{\varepsilon_{0}},v^{\varepsilon_{0}}})
1_{\{t^{v}=t_{i}\}}1_{\Omega_{t_{i}}}] \\&\leq& \
^{2}G^{t,x;\alpha_{\varepsilon}(v),v}_{t,t^{v}} [
^{2}Y^{t,x;u^{\varepsilon_{0}},v^{\varepsilon_{0}}}_{t^{v}}]+C\varepsilon_{0}+ Cm\varepsilon_{0}^{\frac{1}{2}}\\
&\leq&|\ ^{2}G^{t,x;\alpha_{\varepsilon}(v),v}_{t,t^{v}} [ \
^{2}Y^{t,x;u^{\varepsilon_{0}},v^{\varepsilon_{0}}}_{t^{v}}] -\
^{2}G^{t,x;u^{\varepsilon_{0}},v^{\varepsilon_{0}}}_{t,t^{v}} [ \
^{2}Y^{t,x;u^{\varepsilon_{0}},v^{\varepsilon_{0}}}_{t^{v}}]|\\&& +
\ ^{2}G^{t,x;u^{\varepsilon_{0}},v^{\varepsilon_{0}}}_{t,t^{v}} [ \
^{2}Y^{t,x;u^{\varepsilon_{0}},v^{\varepsilon_{0}}}_{t^{v}}]
+C\varepsilon_{0}+ Cm\varepsilon_{0}^{\frac{1}{2}}\\
&=&|\ ^{2}G^{t,x;\alpha_{\varepsilon}(v),v}_{t,t^{v}} [ \
^{2}Y^{t,x;u^{\varepsilon_{0}},v^{\varepsilon_{0}}}_{t^{v}}] -\
^{2}G^{t,x;u^{\varepsilon_{0}},v^{\varepsilon_{0}}}_{t,t^{v}} [ \
^{2}Y^{t,x;u^{\varepsilon_{0}},v^{\varepsilon_{0}}}_{t^{v}}]|\\
&&+J_{2}(t,x;u^{\varepsilon_{0}},v^{\varepsilon_{0}})
+C\varepsilon_{0}+ Cm\varepsilon_{0}^{\frac{1}{2}}.
\end{eqnarray*}
Using  arguments similar to those  in \cite{L2011} we can show that
\begin{eqnarray*}\label{}
|\ ^{2}G^{t,x;\alpha_{\varepsilon}(v),v}_{t,t^{v}} [ \
^{2}Y^{t,x;u^{\varepsilon_{0}},v^{\varepsilon_{0}}}_{t^{v}}] -\
^{2}G^{t,x;u^{\varepsilon_{0}},v^{\varepsilon_{0}}}_{t,t^{v}} [ \
^{2}Y^{t,x;u^{\varepsilon_{0}},v^{\varepsilon_{0}}}_{t^{v}}]|\leq
C\delta^{\frac{1}{2}}.
\end{eqnarray*}
Consequently,
\begin{eqnarray*}\label{}
&&^{2}G^{t,x;\alpha_{\varepsilon}(v),v}_{t,t^{v}}
[\sum\limits_{i=1}^{m}W_{n_{2}(N^{t,2}_{t_{i}})}(t_{i},X_{t_{i}}^{t,x;u^{\varepsilon_{0}},v^{\varepsilon_{0}}})
1_{\{t^{v}=t_{i}\}}1_{\Omega_{t_{i}}}]\\ && \leq
C\delta^{\frac{1}{2}}
+J_{2}(t,x;u^{\varepsilon_{0}},v^{\varepsilon_{0}})
+C\varepsilon_{0}+Cm\varepsilon_{0}^{\frac{1}{2}}.
\end{eqnarray*}
Thus, from  (\ref{Jeqn4}) and (\ref{Jeqn5}) we have
\begin{eqnarray*}\label{}
 J_{2}(t,x;\alpha_{\varepsilon}(v),v)
\leq J_{2}(t,x;u^{\varepsilon_{0}},v^{\varepsilon_{0}})
+C\varepsilon_{0}+Cm\varepsilon_{0}^{\frac{1}{2}}
+C\varepsilon_{1}+C\delta^{\frac{1}{2}}.
\end{eqnarray*}
We can choose $\delta>0, \varepsilon_{0}>0,$ and $
\varepsilon_{1}>0$ such that
$C\varepsilon_{0}+Cm\varepsilon_{0}^{\frac{1}{2}}
+C\varepsilon_{1}+C\delta^{\frac{1}{2}}\leq \varepsilon$ and
$\varepsilon_{0}<\varepsilon.$ Thus,
\begin{eqnarray*}\label{}
 J_{2}(t,x;\alpha_{\varepsilon}(v),v)
\leq J_{2}(t,x;u^{\varepsilon_{0}},v^{\varepsilon_{0}})
+\varepsilon,\ v\in \mathcal {V}_{t,T}.
\end{eqnarray*}
This allows us to complete the proof. $\Box$\vskip2mm

One of our main results of this section is the following  existence
theorem of a Nash equilibrium payoff.
\begin{theorem}\label{Jt2}
Under the Isaacs condition,  there exists a Nash equilibrium payoff
at $(t,x)$, for all $(t,x)\in[0,T]\times\mathbb{R}^{n}$.
\end{theorem}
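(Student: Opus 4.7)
The plan is to apply the sufficiency direction of Theorem \ref{Jt6}: for each $\varepsilon>0$ I will construct a control pair $(u^\varepsilon,v^\varepsilon)\in\mathcal{U}_{t,T}\times\mathcal{V}_{t,T}$ satisfying the conditional probability condition (\ref{Jeq6}), and then identify the Nash payoff as $(e_1,e_2)=\lim_n\bigl(\mathbb{E}[J_1(t,x;u^{\varepsilon_n},v^{\varepsilon_n})],\mathbb{E}[J_2(t,x;u^{\varepsilon_n},v^{\varepsilon_n})]\bigr)$ along a subsequence $\varepsilon_n\downarrow 0$. These expected costs are uniformly bounded, since the coefficients are assumed bounded in this section, so Bolzano--Weierstrass yields such a limit, and then (\ref{Jeq7}) holds automatically for every sufficiently small~$\varepsilon$ once one matches $\varepsilon_n$ to the prescribed tolerance.

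The construction of $(u^\varepsilon,v^\varepsilon)$ follows the time-discretization scheme of \cite{BCR2004} and \cite{L2011}, adapted to the jump setting. Fix a uniform partition $t_i=t+i(T-t)/m$, $0\le i\le m$, with mesh $\delta=(T-t)/m$ small enough to absorb the continuity errors below. At each node, I localize the state through a Borel partition $\{O_\ell\}_{\ell\ge 1}$ of $\mathbb{R}^n$ with $\operatorname{diam}(O_\ell)\le\delta$, in the spirit of (\ref{Jequation}). On each cell $O_\ell$, the Isaacs conditions (\ref{Isaacs}) and (\ref{Isaacs1}) together with the equalities (\ref{Jequa}) and the dynamic programming principle of Theorem \ref{t5} guarantee the existence of local controls producing, on $[t_i,t_{i+1}]$, a backward semigroup value within $\varepsilon/m$ of $W_j(t_i,\cdot)$ simultaneously for $j=1,2$; the mechanism is the one already used in the sufficiency proof of Theorem \ref{Jt6}, where a matching pair is extracted from separate Lemma \ref{Jle1}-type constructions for the two players and then reconciled via the compatibility Lemma \ref{l5}. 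Pasting these selectors according to the observed $X^{t,x;u^\varepsilon,v^\varepsilon}_{t_i}$ and $N^{t,j}_{t_i}$ yields a pair on $[t,T]$ for which
\[
{}^{j}Y^{t,x;u^\varepsilon,v^\varepsilon}_{t_i}\;\ge\; W_{n_j(N^{t,j}_{t_i})}\bigl(t_i,X^{t,x;u^\varepsilon,v^\varepsilon}_{t_i}\bigr)-\varepsilon,\qquad j=1,2,
\]
with $\mathbb{P}(\cdot\mid\mathcal{F}_t)$-probability at least $1-\varepsilon$ at every grid time $t_i$.

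To propagate this grid estimate to arbitrary $\delta'\in[0,T-t]$ as required by (\ref{Jeq6}), I will interpolate using three ingredients established earlier: the Lipschitz continuity of $W_j$ in $x$ (Proposition \ref{p4}), the SDE moment bound (\ref{e22}), and the BSDE stability estimate of Lemma \ref{l1}. These combine in exactly the manner of the proof of Lemma \ref{Jl7} just above to produce an $O(\delta^{1/2})$ error off an event of probability $O(\sqrt{\delta})$, which is $\le\varepsilon$ once $m$ is large enough. The principal obstacle is the simultaneous approximation of \emph{both} value functions by a single pair $(u^\varepsilon,v^\varepsilon)$: under the Isaacs conditions each Hamiltonian $H_j$ separately admits a minimax saddle, but the optimizers for $j=1$ and $j=2$ generally differ, so coupling the two one-sided constructions without losing error control requires the spatial partition $\{O_\ell\}$, the time mesh $\delta$, and the NAD-to-controls correspondence of Lemma \ref{l5} to be balanced in tandem across the $m$ steps so that the accumulated error remains $O(\varepsilon)$.
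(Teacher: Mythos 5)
Your overall architecture coincides with the paper's: reduce to the sufficiency half of Theorem \ref{Jt6}, build $(u^\varepsilon,v^\varepsilon)$ by a time--space discretization in which the Isaacs conditions and the DPP produce, at each node, two one-sided $\varepsilon$-optimal NAD strategies that are merged into a single control pair through the fixed point of Lemma \ref{l5}, and finally extract an accumulation point of the bounded expected payoffs. This is exactly the content of the unnumbered lemma preceding Lemma \ref{Jle6}, of Lemma \ref{Jle6}, and of Proposition \ref{Jp11} together with Corollary \ref{Jc11}.

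However, your interpolation step has a concrete gap. To pass from the grid times $t_i$ to an arbitrary intermediate (or stopping) time $\tau$ you must estimate $\mathbb{E}\bigl[|W_{n_j(N^{t,j}_{\tau_n})}(\tau_n,X_{\tau_n})-W_{n_j(N^{t,j}_{\tau})}(\tau,X_{\tau})|^2\bigr]$, and the three ingredients you list (spatial Lipschitz continuity of $W_j$ from Proposition \ref{p4}, the SDE moment bound (\ref{e22}), and the BSDE stability of Lemma \ref{l1}) do not control the variation of $W_j$ in its \emph{time} argument, nor the event $\{N^{t,j}_{\tau_n}\neq N^{t,j}_{\tau}\}$ on which the index $n_j(\cdot)$ switches. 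The proof of Lemma \ref{Jl7}, which you invoke as the template, compares two state processes at the \emph{same} random time and therefore never meets this issue. The paper closes the gap with the $\tfrac12$-H\"older continuity of $W_j$ in time evaluated at stopping times (Propositions \ref{p6} and \ref{p21}), itself a consequence of the DPP for stopping times and one of the announced technical contributions, together with the bound $\mathbb{P}(N^{t,j}_{\tau_n}\neq N^{t,j}_{\tau})\le C|\tau_n-\tau|$; you need to add these ingredients. A second, smaller omission: condition (\ref{Jeq6}) is a bound on the \emph{conditional} probability given $\mathcal{F}_t$, while your Chebyshev argument yields an unconditional one. The paper converts one into the other by constructing $(u^\varepsilon,v^\varepsilon)$ measurable with respect to the filtration generated by the increments of $B$ and $N$ after $t$, hence independent of $\mathcal{F}_t$ (which also makes $J_j(t,x;u^\varepsilon,v^\varepsilon)$ deterministic); your construction should make this explicit.
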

From Theorem \ref{Jt6} we only have to prove that, for all
$\varepsilon>0,$ there exists
$(u^{\varepsilon},v^{\varepsilon})\in\mathcal{U}_{t,T}\times\mathcal{V}_{t,T}$
such that (\ref{Jeq6}) and (\ref{Jeq7}) hold, for $\delta\in[0,T-t],
j=1,2$. The following proposition is crucial for  this proof  and it
will be proven after.
\begin{proposition}\label{Jp11}
For all $\varepsilon>0,$ there exists
$(u^{\varepsilon},v^{\varepsilon})\in\mathcal{U}_{t,T}\times\mathcal{V}_{t,T}$
independent of $\mathcal {F}_{t}$ such that, for all stopping time
$\tau$ ($t\leq \tau\leq T$) independent of $\mathcal {F}_{t}$,
$j=1,2$, (\ref{Jeq6}) holds:
\begin{eqnarray*}
 \mathbb{P}\Big(\
^{N^{t,j}_{\tau}}\widetilde{Y}^{t,x;u^{\varepsilon},v^{\varepsilon}}_{\tau}\geq
W_{n_{j}(N_{\tau}^{t,j})}(\tau,X^{t,x;u^{\varepsilon},v^{\varepsilon}}_{\tau})-\varepsilon\
|\ \mathcal {F}_{t}\Big)\geq 1-\varepsilon,\ \mathbb{P}-a.s.
\end{eqnarray*}
\end{proposition}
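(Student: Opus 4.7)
The plan is to construct $(u^\varepsilon, v^\varepsilon)$ by a backward-induction pasting on a fine deterministic grid of $[t,T]$, and then transfer the resulting pointwise-in-time bound to arbitrary stopping times via the time continuity of the value functions. Fix a partition $t = t_0 < t_1 < \cdots < t_m = T$ of mesh $\delta = (T-t)/m$, and an auxiliary parameter $\varepsilon_1 > 0$, both to be tuned at the end. Working backward from $t_m = T$ (where $W_j(T,\cdot) = \Phi_j$), on each cell $[t_{i-1}, t_i]$ I would select, for every initial state $y$, a pair of admissible controls $(u^{i,y}, v^{i,y}) \in \mathcal{U}_{t_{i-1}, t_i} \times \mathcal{V}_{t_{i-1}, t_i}$ which is simultaneously $\varepsilon_1$-optimal for Player~1's $W_1$-game and Player~2's $W_2$-game restricted to that cell, with terminal conditions $W_\ell(t_i, X_{t_i})$, $\ell = 1, 2$. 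The simultaneous near-optimality in \emph{open-loop} controls is made possible by the two Isaacs conditions (\ref{Isaacs}) and (\ref{Isaacs1}): they collapse the upper and lower value functions for each $J_j$ individually, so $\varepsilon_1$-optimal choices for the two games can be realised by a single pair of controls. A finite $\varepsilon_1$-partition of the relevant bounded region of $y$-values, exactly as in (\ref{Jequation}), turns these selections into genuinely $\mathbb{F}$-predictable controls depending only on $X_{t_{i-1}}$.

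Pasting the cells yields $(u^\varepsilon, v^\varepsilon) \in \mathcal{U}_{t,T} \times \mathcal{V}_{t,T}$. Since at $t_0 = t$ the starting state $x$ is deterministic, and each subsequent selection depends only on $X^{t,x;u^\varepsilon,v^\varepsilon}_{t_{i-1}}$ and on the increments of $B$ and $N$ after $t$, the pair is $\mathcal{F}_t$-independent. Invoking the dynamic programming principle for stopping times (Theorem \ref{t5}) cell by cell, together with the comparison Lemma \ref{l8} and Proposition \ref{p10}, one obtains, for each grid time $t_i$ and each $j \in \{1,2\}$,
\begin{equation*}
\ ^{N^{t,j}_{t_i}} \widetilde{Y}^{t,x;u^\varepsilon,v^\varepsilon}_{t_i} \ \geq \ W_{n_j(N^{t,j}_{t_i})}(t_i, X^{t,x;u^\varepsilon,v^\varepsilon}_{t_i}) - m\varepsilon_1, \quad \mathbb{P}\text{-a.s.},
\end{equation*}
which is the grid version of (\ref{Jeq6}).

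To pass to an arbitrary stopping time $\tau$ independent of $\mathcal{F}_t$, let $i(\tau)$ be the unique index with $t_{i(\tau)-1} \leq \tau < t_{i(\tau)}$. Three ingredients control the extrapolation error: the time continuity of $W_j$ in its first argument (which follows from Theorem \ref{t5}, Proposition \ref{p3} and Lemma \ref{l1}), the Lipschitz continuity in $x$ of Proposition \ref{p4} combined with the SDE estimate Lemma \ref{Jle2} over an interval of length $\delta$, and Lemma \ref{l1} applied to the BSDE (\ref{e2}) on $[\tau, t_{i(\tau)}]$. The possibility of an intermediate Poisson jump between $\tau$ and $t_{i(\tau)}$ is controlled by $\mathbb{P}(N_{t_{i(\tau)}} - N_\tau \geq 1) \leq \lambda\delta$, so that outside a small event the Markov index $N^{t,j}_\tau$ coincides with $N^{t,j}_{t_{i(\tau)}}$. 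Choosing first $\varepsilon_1 \ll \varepsilon / m$, then $m$ large enough that the $\delta$-induced errors are also below $\varepsilon$, and applying a conditional Markov inequality delivers the desired bound on the conditional probability.

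The main obstacle is the local \emph{simultaneous} near-optimality: the same pair $(u^{i,y}, v^{i,y})$ must drive both $\ ^1Y$ and $\ ^2Y$ close to their respective value functions on each small cell, even though $W_1$ and $W_2$ are associated with games in which the roles of the two players are swapped. It is precisely here that the Isaacs conditions are essential — without them, one would need honest NAD strategies in the local step, which cannot be pasted back into open-loop controls $(u^\varepsilon, v^\varepsilon)$. A secondary technical point is preserving $\mathcal{F}_t$-independence through the pasting, which is why the selection at each grid point must be reduced to a finite classification of states $y$, just as in the proof of Lemma \ref{Jle1}.
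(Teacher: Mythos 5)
Your proposal follows essentially the same route as the paper: a one-step lemma giving a control pair that is simultaneously near-optimal for both players (obtained, under the Isaacs conditions, as the Lemma \ref{l5} fixed point of an $\varepsilon$-optimal NAD strategy for $W_1$ and one for $W_2$, working with the filtration generated by the increments after $t$ to secure $\mathcal{F}_t$-independence), then pasting over a grid via a countable spatial partition as in Lemma \ref{Jle6}, and finally discretizing the stopping time and controlling the error through the $\frac{1}{2}$-H\"older time continuity of $W_j$, the SDE estimates, the Poisson-jump probability bound, and a conditional Chebyshev inequality. The only point left implicit is that the ``single pair of controls'' in the local step is produced exactly by that strategy fixed point, not by a direct open-loop selection; otherwise the argument matches the paper's.
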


From the above proposition we immediately have the following
corollary.
\begin{corollary}\label{Jc11}
For all $\varepsilon>0,$ there exists
$(u^{\varepsilon},v^{\varepsilon})\in\mathcal{U}_{t,T}\times\mathcal{V}_{t,T}$
independent of $\mathcal {F}_{t}$ such that, for all   $s\in [t,T],$
$j=1,2$, (\ref{Jeq6}) holds:
\begin{eqnarray*}
 \mathbb{P}\Big(\
^{N^{t,j}_{s}}\widetilde{Y}^{t,x;u^{\varepsilon},v^{\varepsilon}}_{s}\geq
W_{n_{j}(N_{s}^{t,j})}(s,X^{t,x;u^{\varepsilon},v^{\varepsilon}}_{s})-\varepsilon\
|\ \mathcal {F}_{t}\Big)\geq 1-\varepsilon,\ \mathbb{P}-a.s.
\end{eqnarray*}
\end{corollary}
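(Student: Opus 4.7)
The plan is to deduce Corollary \ref{Jc11} as a direct specialization of Proposition \ref{Jp11} to deterministic times. Fix $\varepsilon>0$. The key observation is that any deterministic time $s\in[t,T]$ can be viewed as a constant-valued $\mathbb{F}$-stopping time, and such a constant is trivially independent of the $\sigma$-algebra $\mathcal{F}_{t}$ (indeed, independent of every sub-$\sigma$-algebra of $\mathcal{F}$). Consequently, each $s\in[t,T]$ falls inside the class of stopping times to which Proposition \ref{Jp11} applies.

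Concretely, I would first invoke Proposition \ref{Jp11} to produce a single pair $(u^{\varepsilon},v^{\varepsilon})\in\mathcal{U}_{t,T}\times\mathcal{V}_{t,T}$, independent of $\mathcal{F}_{t}$, such that
\[
\mathbb{P}\Big(\ ^{N^{t,j}_{\tau}}\widetilde{Y}^{t,x;u^{\varepsilon},v^{\varepsilon}}_{\tau}\geq W_{n_{j}(N_{\tau}^{t,j})}(\tau,X^{t,x;u^{\varepsilon},v^{\varepsilon}}_{\tau})-\varepsilon\ |\ \mathcal{F}_{t}\Big)\geq 1-\varepsilon,\ \mathbb{P}\text{-a.s.},
\]
holds for every $\mathcal{F}_{t}$-independent stopping time $\tau\in[t,T]$ and every $j=1,2$. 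Then, for each $s\in[t,T]$, I would apply this inequality with the particular choice $\tau\equiv s$; since $s$ is a constant the requirements of the proposition are satisfied, and the same pair $(u^{\varepsilon},v^{\varepsilon})$ works simultaneously for every such $s$, giving exactly the statement of the corollary.

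I do not anticipate any technical obstacle here: the content of the corollary is fully contained in Proposition \ref{Jp11}, and the only thing to record is the trivial fact that deterministic times form a subclass of the $\mathcal{F}_{t}$-independent stopping times. No additional construction, approximation, or estimate is required.
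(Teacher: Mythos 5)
Your proposal is correct and coincides with the paper's own treatment: the paper derives Corollary \ref{Jc11} from Proposition \ref{Jp11} "immediately," exactly by regarding each deterministic $s\in[t,T]$ as a constant (hence $\mathcal{F}_{t}$-independent) stopping time and using that the proposition supplies one pair $(u^{\varepsilon},v^{\varepsilon})$ valid for all such stopping times simultaneously. No further argument is needed.
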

\noindent We now give the proof of Theorem \ref{Jt2}.
\begin{proof}\label{}
For  $\varepsilon>0$, let
$(u^{\varepsilon},v^{\varepsilon})\in\mathcal{U}_{t,T}\times\mathcal{V}_{t,T}$
be that of Corollary \ref{Jc11}. Then (\ref{Jeq6}) holds. By
noticing that $(u^{\varepsilon},v^{\varepsilon})$ is independent of
$\mathcal {F}_{t}$ we see that
$J_{j}(t,x;u^{\varepsilon},v^{\varepsilon}), j=1,2,$ are
deterministic and $\Big\{
(J_{1}(t,x;u^{\varepsilon},v^{\varepsilon}),$
$J_{2}(t,x;u^{\varepsilon},v^{\varepsilon})), \varepsilon>0\Big\}$
is a bounded sequence. Therefore, we can choose an accumulation
point of this sequence, as $\varepsilon\rightarrow 0$, and we denote
this point by $(e_{1},e_{2})$. Consequently,  from Theorem \ref{Jt6}
it follows that $(e_{1},e_{2})$ is a Nash equilibrium payoff at
$(t,x)$.
 The proof is complete.
\end{proof}

Before  proving   Proposition \ref{Jp11}, let us first make some
preliminaries for its proof.

\begin{lemma}
For all $\varepsilon>0,$  $\delta\in[0,T-t]$ and
$x\in\mathbb{R}^{n}$, we have the existence of
$(u^{\varepsilon},v^{\varepsilon})\in\mathcal{U}_{t,T}\times\mathcal{V}_{t,T}$
independent of $\mathcal {F}_{t}$, such that, $j=1,2,$
\begin{eqnarray*}\label{}
 W_{j}(t,x)-\varepsilon\leq
 \ ^{j}G^{t,x;u^{\varepsilon},v^{\varepsilon}}_{t,t+\delta}
 [W_{n_{j}(N_{t+\delta}^{t,j})}(t+\delta,X^{t,x;u^{\varepsilon},v^{\varepsilon}}_{t+\delta})], \ \mathbb{P}- a.s.
\end{eqnarray*}
\end{lemma}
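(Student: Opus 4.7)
The plan is to invoke the DPP of Theorem~\ref{t5} together with its analogue for $\overline{W}_2$ from Remark~\ref{rr1}, extracting $\varepsilon$-optimal NAD strategies separately for the two value functions, and then to convert these two strategies into a single pair of control processes via Lemma~\ref{l5}. Under the Isaacs conditions (\ref{Isaacs}) and (\ref{Isaacs1}), the two DPPs (applied with $\tau=t+\delta$) read
\begin{eqnarray*}
W_{1}(t,x) &=& \esssup_{\alpha \in \mathcal{A}_{t,t+\delta}} \essinf_{\beta \in \mathcal{B}_{t,t+\delta}} \ ^{1}G^{t,x;\alpha,\beta}_{t,t+\delta}\bigl[W_{n_{1}(N^{t,1}_{t+\delta})}(t+\delta, X^{t,x;\alpha,\beta}_{t+\delta})\bigr],\\
W_{2}(t,x) &=& \esssup_{\beta \in \mathcal{B}_{t,t+\delta}} \essinf_{\alpha \in \mathcal{A}_{t,t+\delta}} \ ^{2}G^{t,x;\alpha,\beta}_{t,t+\delta}\bigl[W_{n_{2}(N^{t,2}_{t+\delta})}(t+\delta, X^{t,x;\alpha,\beta}_{t+\delta})\bigr],
\end{eqnarray*}
the second identity using $W_{2}=\overline{W}_{2}$ from (\ref{Jequa}).

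Since both right-hand sides are deterministic (Proposition~\ref{p1}), the standard approximation of an esssup over a directed family provides NAD strategies $\alpha_{\varepsilon} \in \mathcal{A}_{t,t+\delta}$ and $\beta_{\varepsilon} \in \mathcal{B}_{t,t+\delta}$ satisfying, $\mathbb{P}$-a.s.,
\begin{eqnarray*}
\essinf_{\beta}\ ^{1}G^{t,x;\alpha_{\varepsilon},\beta}_{t,t+\delta}\bigl[W_{n_{1}(N^{t,1}_{t+\delta})}(t+\delta, X^{t,x;\alpha_{\varepsilon},\beta}_{t+\delta})\bigr] &\geq& W_{1}(t,x) - \varepsilon,\\
\essinf_{\alpha}\ ^{2}G^{t,x;\alpha,\beta_{\varepsilon}}_{t,t+\delta}\bigl[W_{n_{2}(N^{t,2}_{t+\delta})}(t+\delta, X^{t,x;\alpha,\beta_{\varepsilon}}_{t+\delta})\bigr] &\geq& W_{2}(t,x) - \varepsilon.
\end{eqnarray*}
Lemma~\ref{l5} applied to $(\alpha_{\varepsilon},\beta_{\varepsilon})$ yields a unique couple $(u^{\varepsilon},v^{\varepsilon}) \in \mathcal{U}_{t,t+\delta}\times \mathcal{V}_{t,t+\delta}$ with $\alpha_{\varepsilon}(v^{\varepsilon})=u^{\varepsilon}$ and $\beta_{\varepsilon}(u^{\varepsilon})=v^{\varepsilon}$; the trajectories and backward semigroups for $(\alpha_{\varepsilon},\beta_{\varepsilon})$ therefore coincide with those for $(u^{\varepsilon},v^{\varepsilon})$. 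Substituting $\beta=\beta_{\varepsilon}$ in the first inequality and $\alpha=\alpha_{\varepsilon}$ in the second immediately delivers both required estimates for this common pair, and the extension to $\mathcal{U}_{t,T}\times \mathcal{V}_{t,T}$ is obtained by pasting fixed values $u_{0}\in U$, $v_{0}\in V$ on $(t+\delta,T]$, which does not affect the semigroups on $[t,t+\delta]$.

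The main obstacle I anticipate is the $\mathcal{F}_{t}$-independence clause on $(u^{\varepsilon},v^{\varepsilon})$. Because $W_{i}(t,x)$ is deterministic (Proposition~\ref{p1}) and invariant under the shift transformations of Lemma~\ref{Jll1}, the essinf/esssup approximation in each DPP can be carried out using NAD strategies measurable only with respect to the sub-filtration generated by the increments of $B$ and $N$ on $[t,t+\delta]$, which is independent of $\mathcal{F}_{t}$; the controls $(u^{\varepsilon},v^{\varepsilon})$ extracted via Lemma~\ref{l5} then inherit this independence. The delicate point is to check that this restricted measurable selection still produces bona fide NAD strategies in the sense of Definition~\ref{d1} and that Lemma~\ref{l5} respects the reduced filtration; the argument should parallel the shift-invariance reasoning used in the proof of Proposition~\ref{p1} and the analogous constructions in~\cite{BHL2010}.
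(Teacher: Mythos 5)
Your overall architecture matches the paper's: apply the DPP at $\tau=t+\delta$ to $W_1=\esssup_\alpha\essinf_\beta(\cdots)$ and to $W_2=\overline{W}_2=\esssup_\beta\essinf_\alpha(\cdots)$ (via Remark \ref{rr1} and the Isaacs conditions), extract an $\varepsilon$-optimal $\alpha_\varepsilon$ for player 1 and an $\varepsilon$-optimal $\beta_\varepsilon$ for player 2, and then fuse them into a single control pair through the fixed point of Lemma \ref{l5}. That part is sound.

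The genuine gap is in your treatment of the $\mathcal{F}_t$-independence, which you correctly flag as the main obstacle but then resolve by appealing to the wrong tool. The shift-invariance of Lemma \ref{Jll1} together with Lemma \ref{l2} only shows that $W_i(t,x)$ is a deterministic constant; it says nothing about whether the $\esssup$/$\essinf$ restricted to strategies adapted to the sub-filtration $\mathbb{F}^t$ generated by $\{B_r-B_t,\,N_r-N_t\}$ still attains the value $W_i(t,x)$. Without that, the $\varepsilon$-optimal strategies produced by the usual partition argument are only $\mathbb{F}$-adapted, so the controls $(u^\varepsilon,v^\varepsilon)$ extracted from them may depend on $\mathcal{F}_t$ and the independence clause fails. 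The paper closes this gap by a different mechanism: it defines the value functions $\widetilde{W}_j$ of the game played entirely in the $\mathbb{F}^t$-framework, observes that both $\widetilde{W}_j$ and $W_j|_{[t,T]\times\mathbb{R}^n}$ are continuous viscosity solutions of linear growth of the \emph{same} coupled Isaacs system (this is where Sections \ref{NS2}--\ref{NS3} and the Isaacs conditions are really used), and concludes $\widetilde{W}_j=W_j$ from the uniqueness theorem for that system. Only after this identification can the $\varepsilon$-optimal $\alpha_\varepsilon\in\mathcal{A}^t_{t,t+\delta}$ and $\beta_\varepsilon\in\mathcal{B}^t_{t,t+\delta}$ be chosen independent of $\mathcal{F}_t$, and Lemma \ref{l5} then produces $(u^\varepsilon,v^\varepsilon)$ inheriting that independence. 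You would need to supply this PDE-uniqueness identification (or an equivalent argument) to make your proof complete.
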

For its proof, we  adapt the ideas developed in \cite{L2011} from
SDGs without jumps to SDGs with jumps.
\begin{proof}
We  denote by $\mathbb{F}^{t}=(\mathcal {F}^{t}_{s})_{s\in [t,T]}$
the following  filtration:
\begin{eqnarray*}\label{}
\mathcal {F}^{t}_{s}:=\sigma\Big\{B_{r}-B_{t}, N_{r}-N_{t}:\   t\leq
r \leq s\Big\}\vee \mathcal {N}_{\mathbb{P}},\ s\in [t,T],
\end{eqnarray*}
where $\mathcal {N}_{\mathbb{P}}$ is  the collection of all
$\mathbb{P}$-null sets. For $s\in [t,T]$, we denote by $\mathcal
{U}^{t}_{s,T}$ (resp., $\mathcal {V}^{t}_{s,T}$)  the set of
$\mathbb{F}^{t}$-adapted processes $\{u_{r}\}_{r\in[s,T]}$ (resp.,
$\{v_{r}\}_{r\in[s,T]}$) taking their values in $U$ (resp., $V$),
and we let $\mathcal {A}^{t}_{s,T}$ (resp., $\mathcal
{B}^{t}_{s,T}$) be the NAD strategies  from $\mathcal {V}^{t}_{s,T}$
into $\mathcal {U}^{t}_{s,T}$ (resp., $\mathcal {U}^{t}_{s,T}$ into
$\mathcal {V}^{t}_{s,T}$).

Let us replace the  framework of SDEs driven by a Brownian motion
$B=(B_{s})_{s\in [0,T]}$ by that of SDEs driven by a Brownian motion
$(B_{s}-B_{t})_{s\in [t,T]}$, and let us also replace  the framework
of BSDEs driven by a Brownian motion $B=(B_{s})_{s\in [0,T]}$ and
the Poisson process  $(N_{s})_{s\in [t,T]}$ by that of
$B^{t}=(B_{s}-B_{t})_{s\in [t,T]}$ and $(N_{s}-N_{t})_{s\in [t,T]}$.
Using the arguments of the Sections \ref{NS2} and \ref{NS3}  and
Isaacs conditions,  we conclude, for
$(s,x)\in[t,T]\times\mathbb{R}^{n}$,
\begin{eqnarray*}\label{}
\widetilde{W}_{1}(s,x)&=& \esssup_{\alpha\in\mathcal {A}^{t}_{s,T}}
\essinf_{\beta\in\mathcal {B}^{t}_{s,T}}
J_{1}(s,x;\alpha,\beta)=\essinf_{\beta\in\mathcal
{B}^{t}_{s,T}}\esssup_{\alpha\in\mathcal {A}^{t}_{s,T}}
J_{1}(s,x;\alpha,\beta),\\
\widetilde{W}_{2}(s,x)&=&\essinf_{\alpha\in\mathcal
{A}^{t}_{s,T}}\esssup_{\beta\in\mathcal {B}^{t}_{s,T}}
J_{2}(s,x;\alpha,\beta)=\esssup_{\beta\in\mathcal
{B}^{t}_{s,T}}\essinf_{\alpha\in\mathcal {A}^{t}_{s,T}}
J_{2}(s,x;\alpha,\beta).
\end{eqnarray*}

For $j=1,2$, it follows from the Sections \ref{NS2} and \ref{NS3}
that $W_{j}$ restricted to $[t,T]\times\mathbb{R}^{n}$ and
$\widetilde{W}_{j}$ are inside the class of continuous functions
with linear growth and the unique viscosity solutions of the same
system of  Isaacs equations. Therefore,
\begin{eqnarray*}\label{}
\widetilde{W}_{j}(s,x)=W_{j}(s,x), \
(s,x)\in[t,T]\times\mathbb{R}^{n}, j=1,2.
\end{eqnarray*}
By virtue of the dynamic programming principle for
$\widetilde{W}_{j}$ and  $\mathcal {V}^{t}_{t,t+\delta}\subset
\mathcal {B}^{t}_{t,t+\delta}$ we deduce that
\begin{eqnarray*}\label{}
 W_{1}(t,x)&=&\widetilde{W}_{1}(t,x)=\esssup_{\alpha\in\mathcal {A}^{t}_{t,t+\delta}}
\essinf_{\beta\in\mathcal {B}^{t}_{t,t+\delta}} \
^{1}G^{t,x;\alpha,\beta}_{t,t+\delta}
 [W_{n_{1}(N_{t+\delta}^{t,1})}(t+\delta,X^{t,x;\alpha,\beta}_{t+\delta})]\\
 &\leq&\esssup_{\alpha\in\mathcal {A}^{t}_{t,t+\delta}}
\essinf_{v\in\mathcal {V}^{t}_{t,t+\delta}} \
^{1}G^{t,x;\alpha(v),v}_{t,t+\delta}
 [W_{n_{1}(N_{t+\delta}^{t,1})}(t+\delta,X^{t,x;\alpha(v),v}_{t+\delta})].
\end{eqnarray*}
Therefore,  for $\varepsilon>0$ and $\delta>0$, we have  the
existence of $\alpha_{\varepsilon}\in\mathcal{A}^{t}_{t,t+\delta}$
such that, for all $v\in\mathcal{V}^{t}_{t,t+\delta}$,
\begin{eqnarray*}\label{}
 W_{1}(t,x)-\varepsilon\leq
\ ^{1}G^{t,x;\alpha_{\varepsilon}(v),v}_{t,t+\delta}
 [W_{n_{1}(N_{t+\delta}^{t,1})}(t+\delta,X^{t,x;\alpha_{\varepsilon}(v),v}_{t+\delta})],\
 \mathbb{P}-a.s.
\end{eqnarray*}
By a symmetric argument  there exists
$\beta_{\varepsilon}\in\mathcal{B}^{t}_{t,t+\delta}$ such that, for
all $u\in\mathcal{U}^{t}_{t,t+\delta}$,
\begin{eqnarray*}\label{}
 W_{2}(t,x)-\varepsilon\leq
\ ^{2}G^{t,x;u,\beta_{\varepsilon}(u)}_{t,t+\delta}
 [W_{n_{2}(N_{t+\delta}^{t,2})}(t+\delta,X^{t,x;u,\beta_{\varepsilon}(u)}_{t+\delta})],  \ \mathbb{P}-a.s.
\end{eqnarray*}
In analogy to  Lemma \ref{l5}, it can be shown that  there exists a
unique couple
$(u^{\varepsilon},v^{\varepsilon})\in\mathcal{U}^{t}_{t,t+\delta}\times\mathcal{V}^{t}_{t,t+\delta}$
such that $\alpha_{\varepsilon}(v^{\varepsilon})=u^{\varepsilon},
\beta_{\varepsilon}(u^{\varepsilon})=v^{\varepsilon}.$ Consequently,
\begin{eqnarray*}\label{}
 W_{j}(t,x)-\varepsilon\leq
\ ^{j}G^{t,x;u^{\varepsilon},v^{\varepsilon}}_{t,t+\delta}
 [W_{n_{j}(N_{t+\delta}^{t,j})}(t+\delta,X^{t,x;u^{\varepsilon},v^{\varepsilon}}_{t+\delta})],\
 j=1,2.
\end{eqnarray*}
This completes the proof.
\end{proof}
Also  the following Lemma is crucial for the proof of  Proposition
\ref{Jp11}.
\begin{lemma}\label{Jle6}
For $n\geq 1$, we fix some partition $t=t_{0}<t_{1}<\cdots<t_{n}=T$
of the interval $[t,T]$. Then, for all $\varepsilon>0,$  there
exists
$(u^{\varepsilon},v^{\varepsilon})\in\mathcal{U}_{t,T}\times\mathcal{V}_{t,T}$
independent of $\mathcal {F}_{t}$, such that, for all
$i=0,\cdots,n-1$ and $j=1,2,$
\begin{eqnarray*}\label{}
 W_{n_{j}(N_{t_{i}}^{t,j})}(t_{i},X^{t,x;u^{\varepsilon},v^{\varepsilon}}_{t_{i}})-\varepsilon\leq
\ ^{j}G^{t,x;u^{\varepsilon},v^{\varepsilon}}_{t_{i},t_{i+1}}
 [W_{n_{j}(N^{t,j}_{t_{i+1}})}(t_{i+1},X^{t,x;u^{\varepsilon},v^{\varepsilon}}_{t_{i+1}})],\
 \mathbb{P}- a.s.
\end{eqnarray*}
\end{lemma}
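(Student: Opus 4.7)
The plan is forward induction on the grid index $i$, extending $(u^\varepsilon, v^\varepsilon)$ piece by piece on each $[t_i, t_{i+1}]$ via the preceding lemma applied pointwise at a discretization of the random state $X^{t,x; u^\varepsilon, v^\varepsilon}_{t_i}$, and gluing the local pieces by a finite measurable selection. Every piece will be built using only the shifts $B_\cdot - B_t$ and $N_\cdot - N_t$, so that $(u^\varepsilon, v^\varepsilon)$ is automatically independent of $\mathcal{F}_t$, in the sense of the filtration $\mathbb{F}^t$ introduced in the proof of the preceding lemma.

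Fix $\varepsilon > 0$ and choose auxiliary parameters $\varepsilon_0 > 0$ and $\eta > 0$, to be fixed at the end. Take a Borel partition $\{O_k\}_{k\geq 1}$ of $\mathbb{R}^n$ with $\mathrm{diam}(O_k) \leq \eta$ and representatives $y_k \in O_k$. For $i=0$, the preceding lemma applied at $(t,x)$ with step $t_1 - t$ and tolerance $\varepsilon_0$ yields the first piece of $(u^\varepsilon, v^\varepsilon)$ on $[t, t_1]$. Assume inductively that $(u^\varepsilon, v^\varepsilon)$ has been constructed on $[t, t_i]$, and set $\xi_i := X^{t,x; u^\varepsilon, v^\varepsilon}_{t_i}$ and $p_i := N((t, t_i]) \bmod 2 \in \{0,1\}$, both $\mathcal{F}^t_{t_i}$-measurable. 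For each pair $(k, p) \in \mathbb{N} \times \{0,1\}$, apply the preceding lemma (or its analogue for the primed value functions $W_{1'}, W_{2'}$, which is obtained by the same argument by exchanging the roles of $\alpha$ and $\beta$ and using Proposition \ref{cp1} with the dual Isaacs condition (\ref{Isaacs1})) at the deterministic starting point $(t_i, y_k)$, with step $t_{i+1} - t_i$, tolerance $\varepsilon_0$, and with respect to the shifted filtration $\mathbb{F}^{t_i}$ generated by $B_\cdot - B_{t_i}$ and $N_\cdot - N_{t_i}$. This produces local controls $(u^{(i,k,p)}, v^{(i,k,p)}) \in \mathcal{U}^{t_i}_{t_i, t_{i+1}} \times \mathcal{V}^{t_i}_{t_i, t_{i+1}}$ witnessing the required inequality at $(t_i, y_k)$ for parity $p$. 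Glue these into
$$u^\varepsilon_s := \sum_{k \geq 1} \sum_{p \in \{0,1\}} 1_{\{\xi_i \in O_k,\ p_i = p\}}\, u^{(i,k,p)}_s, \qquad s \in (t_i, t_{i+1}],$$
and analogously for $v^\varepsilon$. Since $\{\xi_i \in O_k,\ p_i = p\} \in \mathcal{F}^t_{t_i}$ and the local controls are $\mathbb{F}^{t_i}$-adapted with $\mathbb{F}^{t_i} \subset \mathbb{F}^t$, the extension is admissible and independent of $\mathcal{F}_t$.

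For the verification, fix $j \in \{1,2\}$ and $i$. On $\{\xi_i \in O_k,\ p_i = p\}$, the Markov property and the uniqueness of strong solutions of SDE (\ref{e3}) and BSDE (\ref{e2}) yield $X^{t,x;u^\varepsilon, v^\varepsilon}_{t_{i+1}} = X^{t_i, \xi_i; u^{(i,k,p)}, v^{(i,k,p)}}_{t_{i+1}}$, while the driver of the global BSDE indexed by $j$ restricted to $[t_i, t_{i+1}]$ coincides with the local driver indexed by $m(j+p)$, so the corresponding semigroups ${}^jG^{t,x;u^\varepsilon, v^\varepsilon}_{t_i, t_{i+1}}$ and ${}^{m(j+p)}G^{t_i, \xi_i; u^{(i,k,p)}, v^{(i,k,p)}}_{t_i, t_{i+1}}$ agree on $\mathcal{F}_{t_{i+1}}$-measurable terminal data. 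Substituting $\xi_i$ for $y_k$ in the local inequality produces errors controlled by the Lipschitz continuity of $W_{n_j(\cdot)}(t_i, \cdot)$ (Proposition \ref{p4}) on the left-hand side, and on the right-hand side by combining the flow estimate (\ref{e22}), the Lipschitz continuity of $W_{n_j(\cdot)}(t_{i+1}, \cdot)$, and the BSDE stability estimate of Lemma \ref{l1}. The total error is bounded by $C\eta$ for a constant $C$ depending only on the data and uniform in $i$. Choosing $\varepsilon_0$ and $\eta$ so that $\varepsilon_0 + C\eta \leq \varepsilon$ yields the desired inequality at each $i$.

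The principal technical difficulty is the stability step, where one passes from a pointwise inequality at the representatives $y_k$ to one at the random state $\xi_i$, and must simultaneously control the Lipschitz error on the value function and the stability error on the BSDE semigroup through Lemma \ref{l1}. A secondary subtlety is that $n_j(N^{t,j}_{t_i})$ may belong to $\{1', 2'\}$ (whenever $N((t, t_i])$ is odd), in which case the primed analogue of the preceding lemma must be invoked; this is carried out exactly as in the non-primed case thanks to Proposition \ref{cp1} and the dual Isaacs condition (\ref{Isaacs1}).
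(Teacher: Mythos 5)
Your proposal is correct and follows essentially the same route as the paper's proof: forward induction over the grid, application of the preceding one-step lemma at deterministic representatives $y_k$ of a fine Borel partition of ${\mathbb{R}}^n$, gluing of the local controls by ${\mathcal{F}}_{t_i}$-measurable indicators of $\{X^{t,x;u^{\varepsilon},v^{\varepsilon}}_{t_i}\in O_k\}$, and a stability estimate combining the Lipschitz continuity of the value functions (Proposition \ref{p4}), the flow estimates, and Lemma \ref{l1} to pass from $y_k$ to the random state. The one place where you are more careful than the paper is the additional indexing by the parity $p_i$ of $N((t,t_i])$: the paper invokes the one-step lemma only in its unprimed form (\ref{eq10})--(\ref{Je27}) (left-hand side $W_j$) and yet concludes with $W_{n_j(N^{t,j}_{t_i})}$ on the left, which on $\{N((t,t_i])\ \mbox{odd}\}$ is one of the primed value functions $W_{1'},W_{2'}$ and does require the dual version of the lemma (via Proposition \ref{cp1} and the Isaacs condition (\ref{Isaacs1})), with controls chosen separately on that event exactly as you do; this is a refinement of the paper's argument rather than a different route.
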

\begin{proof}
Let us prove it by induction. Obviously, due to the above lemma, it
holds for $i=0$. Let a couple $(u^{\varepsilon},v^{\varepsilon})$
independent of $\mathcal {F}_{t}$, be constructed on the interval
$[t,t_{i})$, and we shall give its definition  on $[t_{i},t_{i+1})$.
By virtue of the above lemma we have, for all  $y\in
\mathbb{R}^{n}$, the existence of
$(u^{y},v^{y})\in\mathcal{U}_{t_{i},T}\times\mathcal{V}_{t_{i},T}$
independent of $\mathcal {F}_{t}$, such that,
\begin{eqnarray}\label{eq10}
 W_{j}(t_{i},y)-\frac{\varepsilon}{2}\leq
\ ^{j}G^{t_{i},y;u^{y},v^{y}}_{t_{i},t_{i+1}}
 [W_{n_{j}(N^{t_{i},j}_{t_{i+1}})}(t_{i+1},X^{t_{i},y;u^{y},v^{y}}_{t_{i+1}})], \ \mathbb{P}- a.s,
 j=1,2.
\end{eqnarray}
For  $j=1,2,$   $y, z\in \mathbb{R}^{n}$ and $s\in[t_{i},t_{i+1}]$,
let us set
$$y^{1}_{s}=\ ^{j}G^{t_{i},y;u^{y},v^{y}}_{s,t_{i+1}}
 [W_{n_{j}(N^{t_{i},j}_{t_{i+1}})}(t_{i+1},X^{t_{i},y;u^{y},v^{y}}_{t_{i+1}})],$$ and
 $$y^{2}_{s}=\ ^{j}G^{t_{i},z;u^{y},v^{y}}_{s,t_{i+1}}
 [W_{n_{j}(N^{t_{i},j}_{t_{i+1}})}(t_{i+1},X^{t_{i},z;u^{y},v^{y}}_{t_{i+1}})].$$
Then let us consider the following associated BSDEs:
\begin{eqnarray*}\label{}
y^{1}_{s}&=&
W_{n_{j}(N^{t_{i},j}_{t_{i+1}})}(t_{i+1},X^{t_{i},y;u^{y},v^{y}}_{t_{i+1}})
+\displaystyle \int_{s}^{t_{i+1}}
f_{N^{t_{i},j}_{r}}(r,X^{t_{i},y;u^{y},v^{y}}_r,
y_r^{1},h_r^{1},z^{1}_r,u^{y}_r, v^{y}_r)dr\\&&\qquad-\displaystyle
\lambda\int_{s}^{t_{i+1}} h^{1}_rdr-\displaystyle \int_{s}^{t_{i+1}}
z^{1}_rdB_r-\displaystyle \int_{s}^{t_{i+1}}
h^{1}_rd\widetilde{N}_r, \quad s\in[t_{i},t_{i+1}],
\end{eqnarray*}
as well as
\begin{eqnarray*}\label{}
y^{2}_{s}&=&
W_{n_{j}(N^{t_{i},j}_{t_{i+1}})}(t_{i+1},X^{t_{i},z;u^{y},v^{y}}_{t_{i+1}})
+\displaystyle \int_{s}^{t_{i+1}}
f_{N^{t_{i},j}_{r}}(r,X^{t_{i},z;u^{y},v^{y}}_r,
y_r^{2},h_r^{2},z^{2}_r,u^{y}_r, v^{y}_r)dr\\&&\qquad-\displaystyle
\lambda\int_{s}^{t_{i+1}} h^{2}_rdr-\displaystyle \int_{s}^{t_{i+1}}
z^{2}_rdB_r-\displaystyle \int_{s}^{t_{i+1}}
h^{2}_rd\widetilde{N}_r, \quad s\in[t_{i},t_{i+1}].
\end{eqnarray*}
From Lemma  \ref{l1} it follows that
\begin{eqnarray*}\label{}
&&|^{j}G^{t_{i},y;u^{y},v^{y}}_{t_{i},t_{i+1}}
 [W_{n_{j}(N^{t_{i},j}_{t_{i+1}})}(t_{i+1},X^{t_{i},y;u^{y},v^{y}}_{t_{i+1}})]
-\ ^{j}G^{t_{i},z;u^{y},v^{y}}_{t_{i},t_{i+1}}
 [W_{n_{j}(N^{t_{i},j}_{t_{i+1}})}(t_{i+1},X^{t_{i},z;u^{y},v^{y}}_{t_{i+1}})]|^{2}\\
&\leq&C\mathbb{E}[|W_{n_{j}(N^{t_{i},j}_{t_{i+1}})}(t_{i+1},X^{t_{i},y;u^{y},v^{y}}_{t_{i+1}})
-W_{n_{j}(N^{t_{i},j}_{t_{i+1}})}(t_{i+1},X^{t_{i},z;u^{y},v^{y}}_{t_{i+1}})|^{2}
\Big|\mathcal {F}_{t_{i}}]\\
&&+C\mathbb{E}[\displaystyle \int_{t_{i}}^{t_{i+1}} |
f_{N^{t_{i},j}_{r}}(r,X^{t_{i},y;u^{y},v^{y}}_r,
y_r^{1},h_r^{1},z^{1}_r,u^{y}_r, v^{y}_r) \\ && \qquad \qquad \qquad
- f_{N^{t_{i},j}_{r}}(r,X^{t_{i},z;u^{y},v^{y}}_r,
y_r^{1},h_r^{1},z^{1}_r,u^{y}_r, v^{y}_r)|^{2}dr
\Big|\mathcal {F}_{t_{i}}]\\
&\leq&C\mathbb{E}[|X^{t_{i},y;u^{y},v^{y}}_{t_{i+1}}
-X^{t_{i},z;u^{y},v^{y}}_{t_{i+1}}|^{2} \Big|\mathcal
{F}_{t_{i}}]+C\mathbb{E}[\displaystyle \int_{t_{i}}^{t_{i+1}}|
X^{t_{i},y;u^{y},v^{y}}_r-X^{t_{i},z;u^{y},v^{y}}_r |^{2}dr
\Big|\mathcal {F}_{t_{i}}]\\
 &\leq & C|y-z|^{2}.
\end{eqnarray*}
Thus, due to  Proposition \ref{p4} and (\ref{eq10}) we obtain, for
$C|y-z|\leq \dfrac{\varepsilon}{2}$,
\begin{eqnarray*}\label{}
 W_{j}(t_{i},z)-\varepsilon&\leq&
 W_{j}(t_{i},y)-\varepsilon+C|y-z|\\
 &\leq& \ ^{j}G^{t_{i},y;u^{y},v^{y}}_{t_{i},t_{i+1}}
 [W_{n_{j}(N^{t_{i},j}_{t_{i+1}})}(t_{i+1},X^{t_{i},y;u^{y},v^{y}}_{t_{i+1}})]-\frac{\varepsilon}{2}+C|y-z|\\
&\leq& \ ^{j}G^{t_{i},z;u^{y},v^{y}}_{t_{i},t_{i+1}}
 [W_{n_{j}(N^{t_{i},j}_{t_{i+1}})}(t_{i+1},X^{t_{i},z;u^{y},v^{y}}_{t_{i+1}})]-\frac{\varepsilon}{2}+C|y-z|\\
 &\leq& \ ^{j}G^{t_{i},z;u^{y},v^{y}}_{t_{i},t_{i+1}}
 [W_{n_{j}(N^{t_{i},j}_{t_{i+1}})}(t_{i+1},X^{t_{i},z;u^{y},v^{y}}_{t_{i+1}})], \ \mathbb{P}- a.s.
\end{eqnarray*}
We let $\{O_{i}\}_{i\geq1}\subset \mathcal {B}(\mathbb{R}^{n})$ be a
partition of $\mathbb{R}^{n}$ with $
diam(O_{i})<\dfrac{\varepsilon}{2C}$ and let $y_{l}\in O_{l},
l\geq1$. Then, for $z\in O_{l}$, it follows
\begin{eqnarray}\label{Je27}
 W_{n_{j}(j)}(t_{i},z)-\varepsilon\leq
\ ^{j}G^{t_{i},z;u^{y_{l}},v^{y_{l}}}_{t_{i},t_{i+1}}
 [W_{n_{j}(N^{t_{i},j}_{t_{i+1}})}(t_{i+1},X^{t_{i},z;u^{y_{l}},v^{y_{l}}}_{t_{i+1}})], \ \mathbb{P}- a.s.
\end{eqnarray}
Let us define
\begin{eqnarray*}\label{}
u^{\varepsilon}=\sum\limits_{l\geq1}1_{O_{l}}(X^{t,x;u^{\varepsilon},v^{\varepsilon}}_{t_{i}})u^{y_{l}},\
v^{\varepsilon}=\sum\limits_{l\geq1}1_{O_{l}}(X^{t,x;u^{\varepsilon},v^{\varepsilon}}_{t_{i}})v^{y_{l}}.
\end{eqnarray*}
Then, since $\{X^{t,x;u^{\varepsilon},v^{\varepsilon}}_{t_{i}}\in
O_{l}\}\in \mathcal {F}_{t_{i}}, l\geq 1,$ we conclude
\begin{eqnarray*}\label{}
&& ^{j}G^{t,x;u^{\varepsilon},v^{\varepsilon}}_{t_{i},t_{i+1}}
 [W_{n_{j}(N_{t_{i+1}}^{t,j})}(t_{i+1},X^{t,x;u^{\varepsilon},v^{\varepsilon}}_{t_{i+1}})]\\
  &=& ^{j}G^{t_{i},X^{t,x;u^{\varepsilon},v^{\varepsilon}}_{t_{i}};u^{\varepsilon},v^{\varepsilon}}_{t_{i},t_{i+1}}
 [\sum\limits_{l\geq1}W_{n_{j}(N_{t_{i+1}}^{t_{i},N_{t_{i}}^{t,j}})}(t_{i+1},X^{t_{i},X^{t,x;u^{\varepsilon},v^{\varepsilon}}_{t_{i}};
 u^{\varepsilon},v^{\varepsilon}}_{t_{i+1}})1_{O_{l}}(X^{t,x;u^{\varepsilon},v^{\varepsilon}}_{t_{i}})]\\
   &=& ^{j}G^{t_{i},X^{t,x;u^{\varepsilon},v^{\varepsilon}}_{t_{i}};u^{\varepsilon},v^{\varepsilon}}_{t_{i},t_{i+1}}
 [\sum\limits_{l\geq1}W_{n_{j}(N_{t_{i+1}}^{t_{i},N_{t_{i}}^{t,j}})}(t_{i+1},X^{t_{i},X^{t,x;u^{\varepsilon},v^{\varepsilon}}_{t_{i}};
 u^{y_{l}},v^{y_{l}}}_{t_{i+1}})1_{O_{l}}(X^{t,x;u^{\varepsilon},v^{\varepsilon}}_{t_{i}})]\\
    &=& \sum\limits_{l\geq1}\ ^{j}G^{t_{i},X^{t,x;u^{\varepsilon},v^{\varepsilon}}_{t_{i}};
    u^{y_{l}},v^{y_{l}}}_{t_{i},t_{i+1}}
 [W_{n_{j}(N_{t_{i+1}}^{t_{i},N_{t_{i}}^{t,j}})}(t_{i+1},X^{t_{i},X^{t,x;u^{\varepsilon},v^{\varepsilon}}_{t_{i}};
 u^{y_{l}},v^{y_{l}}}_{t_{i+1}})]1_{O_{l}}(X^{t,x;u^{\varepsilon},v^{\varepsilon}}_{t_{i}}).
\end{eqnarray*}
From  (\ref{Je27}) it follows that
\begin{eqnarray*}\label{}
 ^{j}G^{t,x;u^{\varepsilon},v^{\varepsilon}}_{t_{i},t_{i+1}}
 [W_{n_{j}(N_{t_{i+1}}^{t,j})}(t_{i+1},X^{t,x;u^{\varepsilon},v^{\varepsilon}}_{t_{i+1}})]
&\geq&\sum\limits_{l\geq1}
[W_{n_{j}(N_{t_{i}}^{t,j})}(t_{i},X^{t,x;u^{\varepsilon},v^{\varepsilon}}_{t_{i}})-\varepsilon]
1_{O_{l}}(X^{t,x;u^{\varepsilon},v^{\varepsilon}}_{t_{i}})\\
  &= & W_{n_{j}(N_{t_{i}}^{t,j})}(t_{i},X^{t,x;u^{\varepsilon},v^{\varepsilon}}_{t_{i}})-\varepsilon,
\end{eqnarray*}
from which we conclude the wished result.
\end{proof} \vskip 2mm

\noindent Let us come, finally, to the proof of Proposition
\ref{Jp11}. \vskip 2mm

\begin{proof}
Let $j=1,2$ be arbitrarily fixed, and let  $\tau$ be a stopping time
independent of $\mathcal {F}_{t}$, such that $t\leq \tau\leq T$.  We
put $t_{i}=\frac{i(T-t)}{2^{n}}+t$, $A_{i}=\Big\{t_{i-1}\leq \tau <
t_{i}\Big\}, i=1, \cdots,2^{n},$  and define $\tau_{n}
=\sum\limits_{i=1}^{2^{n}}t_{i} 1_{A_{i}}.$  It is obvious that
$0\leq\tau_{n}-\tau\leq 2^{-n}$.   From Lemma \ref{Jle6} we know
that, for all $\varepsilon>0,$ there exists
$(u^{\varepsilon},v^{\varepsilon})\in\mathcal{U}_{t,T}\times\mathcal{V}_{t,T}$
independent of $\mathcal {F}_{t}$ such that, for all stopping time
$\tau$ independent of $\mathcal {F}_{t}$, and for all $i\  (0\leq i
\leq 2^{n}-1)$,
\begin{eqnarray*}\label{}
 W_{n_{j}(N_{t_{i}}^{t,j})}(t_{i},X^{t,x;u^{\varepsilon},v^{\varepsilon}}_{t_{i}})-\varepsilon_{0}\leq
\ ^{j}G^{t,x;u^{\varepsilon},v^{\varepsilon}}_{t_{i},t_{i+1}}
 [W_{n_{j}(N^{t,j}_{t_{i+1}})}(t_{i+1},X^{t,x;u^{\varepsilon},v^{\varepsilon}}_{t_{i+1}})],\
 \mathbb{P}- a.s.
\end{eqnarray*}
where   $\varepsilon_{0}>0$ depends on $\varepsilon$ and $n$,  and
will be specified after.

From the Lemmas \ref{l8} and \ref{l1} it follows that, for $ 0\leq i
\leq 2^{n}-1$,
\begin{eqnarray*}\label{}
&& ^{j}G^{t,x;u^{\varepsilon},v^{\varepsilon}}_{t_{i},T}
 [\Phi_{N_{T}^{t,j}}(X^{t,x;u^{\varepsilon},v^{\varepsilon}}_{T})]
 =\  ^{j}G^{t,x;u^{\varepsilon},v^{\varepsilon}}_{t_{i},t_{2^{n}-1}}
 [^{j}G^{t,x;u^{\varepsilon},v^{\varepsilon}}_{t_{2^{n}-1},T}
 [\Phi_{N_{T}^{t,j}}(X^{t,x;u^{\varepsilon},v^{\varepsilon}}_{T})]]\\
  &\geq&\  ^{j}G^{t,x;u^{\varepsilon},v^{\varepsilon}}_{t_{i},t_{2^{n}-1}}
  [W_{n_{j}(N_{t_{2^{n}-1}}^{t,j})}(t_{2^{n}-1},X^{t,x;u^{\varepsilon},v^{\varepsilon}}_{t_{2^{n}-1}})-\varepsilon_{0}]\\
   &\geq&\  ^{j}G^{t,x;u^{\varepsilon},v^{\varepsilon}}_{t_{i},t_{2^{n}-1}}
  [W_{n_{j}(N_{t_{2^{n}-1}}^{t,j})}(t_{2^{n}-1},X^{t,x;u^{\varepsilon},v^{\varepsilon}}_{t_{2^{n}-1}})]-C\varepsilon_{0}\\
   &\geq&\cdots\geq \ ^{j}G^{t,x;u^{\varepsilon},v^{\varepsilon}}_{t_{i},t_{i+1}}
  [W_{n_{j}(N_{t_{i+1}}^{t,j})}(t_{i+1},X^{t,x;u^{\varepsilon},v^{\varepsilon}}_{t_{i+1}})]-C(2^{n}-i)\varepsilon_{0}\\
 &\geq&
 W_{n_{j}(N_{t_{i}}^{t,j})}(t_{i},X^{t,x;u^{\varepsilon},v^{\varepsilon}}_{t_{i}})-C(2^{n}-i+1)\varepsilon_{0},
\end{eqnarray*}
from where
\begin{eqnarray*}\label{}
&& ^{j}G^{t,x;u^{\varepsilon},v^{\varepsilon}}_{\tau_{n},T}
 [\Phi_{N_{T}^{t,j}}(X^{t,x;u^{\varepsilon},v^{\varepsilon}}_{T})]
= \sum\limits_{i=1}^{2^{n}} 1_{A_{i}}\
^{j}G^{t,x;u^{\varepsilon},v^{\varepsilon}}_{t_{i},T}
 [\Phi_{N_{T}^{t,j}}(X^{t,x;u^{\varepsilon},v^{\varepsilon}}_{T})]\nonumber\\
 &\geq&   \sum\limits_{i=1}^{2^{n}}
 1_{A_{i}}W_{n_{j}(N_{t_{i}}^{t,j})}(t_{i},X^{t,x;u^{\varepsilon},v^{\varepsilon}}_{t_{i}})
 -C \sum\limits_{i=1}^{2^{n}} 1_{A_{i}}(2^{n}-i+1)\varepsilon_{0}\nonumber\\
  &\geq&   \sum\limits_{i=1}^{2^{n}}
 1_{A_{i}}W_{n_{j}(N_{t_{i}}^{t,j})}(t_{i},X^{t,x;u^{\varepsilon},v^{\varepsilon}}_{t_{i}})
 -C 2^{n}\varepsilon_{0}=  W_{n_{j}(N_{\tau_{n}}^{t,j})}(\tau_{n},X^{t,x;u^{\varepsilon},v^{\varepsilon}}_{\tau_{n}})
 -C  2^{n}\varepsilon_{0}.
\end{eqnarray*}
Therefore,
\begin{eqnarray}\label{Je29}
 ^{j}G^{t,x;u^{\varepsilon},v^{\varepsilon}}_{\tau,T}
 [\Phi_{N_{T}^{t,j}}(X^{t,x;u^{\varepsilon},v^{\varepsilon}}_{T})]
&=&\ ^{j}G^{t,x;u^{\varepsilon},v^{\varepsilon}}_{\tau,\tau_{n}}
 [\ ^{j}G^{t,x;u^{\varepsilon},v^{\varepsilon}}_{\tau_{n},T}
 [\Phi_{N_{T}^{t,j}}(X^{t,x;u^{\varepsilon},v^{\varepsilon}}_{T})]]\nonumber\\
  &\geq&  ^{j}G^{t,x;u^{\varepsilon},v^{\varepsilon}}_{\tau,\tau_{n}}
 [W_{n_{j}(N_{\tau_{n}}^{t,j})}(\tau_{n},X^{t,x;u^{\varepsilon},v^{\varepsilon}}_{\tau_{n}})
 -C  2^{n}\varepsilon_{0}]\nonumber\\
  &\geq&\  ^{j}G^{t,x;u^{\varepsilon},v^{\varepsilon}}_{\tau,\tau_{n}}
 [W_{n_{j}(N_{\tau_{n}}^{t,j})}(\tau_{n},X^{t,x;u^{\varepsilon},v^{\varepsilon}}_{\tau_{n}})]-C  2^{n}\varepsilon_{0}
 \nonumber\\
   &=&  ^{j}G^{t,x;u^{\varepsilon},v^{\varepsilon}}_{\tau,\tau_{n}}
 [W_{n_{j}(N_{\tau_{n}}^{t,j})}(\tau_{n},X^{t,x;u^{\varepsilon},v^{\varepsilon}}_{\tau_{n}})]-\frac{\varepsilon}{2} \nonumber\\
  &=& W_{n_{j}(N_{\tau}^{t,j})}(\tau,X^{t,x;u^{\varepsilon},v^{\varepsilon}}_{\tau})
    -\frac{\varepsilon}{2}+I_{2}-I_{1},
\end{eqnarray}
for  $\varepsilon_{0}=\dfrac{\varepsilon}{C2^{n+1}}$, where
\begin{eqnarray}\label{Jeq110}
 I_{1}&=& ^{j}G^{t,x;u^{\varepsilon},v^{\varepsilon}}_{\tau,T}
 [\Phi_{N_{T}^{t,j}}(X^{t,x;u^{\varepsilon},v^{\varepsilon}}_{T})]-\
    ^{j}G^{t,x;u^{\varepsilon},v^{\varepsilon}}_{\tau,\tau_{n}}
 [W_{n_{j}(N_{\tau_{n}}^{t,j})}(\tau_{n},X^{t,x;u^{\varepsilon},v^{\varepsilon}}_{\tau_{n}})]
    +\frac{\varepsilon}{2},\nonumber\\
 I_{2}&=&  ^{j}G^{t,x;u^{\varepsilon},v^{\varepsilon}}_{\tau,T}
 [\Phi_{N_{T}^{t,j}}(X^{t,x;u^{\varepsilon},v^{\varepsilon}}_{T})]-
    W_{n_{j}(N_{\tau}^{t,j})}(\tau,X^{t,x;u^{\varepsilon},v^{\varepsilon}}_{\tau})
    +\frac{\varepsilon}{2},
\end{eqnarray}
and we put, for $s\in[\tau,\tau_{n}]$,
$$y^{1}_{s}=\ ^{j}G^{t,x;u^{\varepsilon},v^{\varepsilon}}_{s,\tau_{n}}
 [W_{n_{j}(N_{\tau_{n}}^{t,j})}(\tau_{n},X^{t,x;u^{\varepsilon},v^{\varepsilon}}_{\tau_{n}})].$$
Obviously, $(y^{1}_{s})_{s\in[\tau,\tau_{n}]}$ is the solution of
the following BSDE:
\begin{eqnarray*}\label{J}
y^{1}_{s}&=&
W_{n_{j}(N_{\tau_{n}}^{t,j})}(\tau_{n},X^{t,x;u^{\varepsilon},v^{\varepsilon}}_{\tau_{n}})
+\displaystyle \int_{s}^{\tau_{n}}
f_{N^{t_{i},j}_{r}}(r,X^{t,x;u^{\varepsilon},v^{\varepsilon}}_r,
y_r^{1},h_r^{1},z^{1}_r,u^{\varepsilon}_r,
v^{\varepsilon}_r)dr\\&&\qquad-\displaystyle
\lambda\int_{s}^{\tau_{n}} h^{1}_rdr-\displaystyle
\int_{s}^{\tau_{n}} z^{1}_rdB_r-\displaystyle \int_{s}^{\tau_{n}}
h^{1}_rd\widetilde{N}_r, \quad s\in[\tau,\tau_{n}].
\end{eqnarray*}
We will compare $y^{1}$ with the process constant in time
$y^{2}_{s}=
W_{n_{j}(N_{\tau}^{t,j})}(\tau,X^{t,x;u^{\varepsilon},v^{\varepsilon}}_{\tau}),
\ s\in[\tau,\tau_{n}].$  We observe that $(y^{2},z^{2})$ is the
solution of a BSDE which driving coefficient equals to zero and
$z^{2}=0$. Hence, from Lemma \ref{l1} it follows that
\begin{eqnarray}\label{Je31}
&&| ^{j}G^{t,x;u^{\varepsilon},v^{\varepsilon}}_{\tau,\tau_{n}}
 [W_{n_{j}(N_{\tau_{n}}^{t,j})}(\tau_{n},X^{t,x;u^{\varepsilon},v^{\varepsilon}}_{\tau_{n}})]
-W_{n_{j}(N_{\tau}^{t,j})}(\tau,X^{t,x;u^{\varepsilon},v^{\varepsilon}}_{\tau})|^{2}\nonumber\\
&\leq&C\mathbb{E}[|W_{n_{j}(N_{\tau_{n}}^{t,j})}(\tau_{n},X^{t,x;u^{\varepsilon},v^{\varepsilon}}_{\tau_{n}})
-W_{n_{j}(N_{\tau}^{t,j})}(\tau,X^{t,x;u^{\varepsilon},v^{\varepsilon}}_{\tau})|^{2}
\Big|\mathcal {F}_{\tau}]\nonumber\\
&&+C\mathbb{E}[\displaystyle \int_{\tau}^{\tau_{n}}
|f_{N^{t_{i},j}_{r}}(r,X^{t,x;u^{\varepsilon},v^{\varepsilon}}_r,
y_r^{2},0,0,u^{\varepsilon}_r, v^{\varepsilon}_r)|^{2}dr
\Big|\mathcal {F}_{\tau}]\nonumber\\
&\leq&C\mathbb{E}[|W_{n_{j}(N_{\tau_{n}}^{t,j})}(\tau_{n},X^{t,x;u^{\varepsilon},v^{\varepsilon}}_{\tau_{n}})
-W_{n_{j}(N_{\tau}^{t,j})}(\tau,X^{t,x;u^{\varepsilon},v^{\varepsilon}}_{\tau})|^{2}
\Big|\mathcal {F}_{\tau}]+ C2^{-n},
\end{eqnarray}
where we have used the boundedness of $f_{i}, i=1,2$.

By virtue of the Propositions \ref{p4} and \ref{p21} we  deduce that
\begin{eqnarray*}\label{}
 &&\mathbb{E}[|W_{n_{j}(N_{\tau_{n}}^{t,j})}(\tau_{n},X^{t,x;u^{\varepsilon},v^{\varepsilon}}_{\tau_{n}})
 -
 W_{n_{j}(N_{\tau}^{t,j})}(\tau,X^{t,x;u^{\varepsilon},v^{\varepsilon}}_{\tau})|^{2}]\\
 &\leq& 3\mathbb{E}[|W_{n_{j}(N_{\tau_{n}}^{t,j})}(\tau_{n},X^{t,x;u^{\varepsilon},v^{\varepsilon}}_{\tau_{n}})
 - W_{n_{j}(N_{\tau_{n}}^{t,j})}(\tau_{n},X^{t,x;u^{\varepsilon},v^{\varepsilon}}_{\tau})|^{2}]\\
 &&  +3\mathbb{E}[|W_{n_{j}(N_{\tau_{n}}^{t,j})}(\tau_{n},X^{t,x;u^{\varepsilon},v^{\varepsilon}}_{\tau})
 -
 W_{n_{j}(N_{\tau_{n}}^{t,j})}(\tau,X^{t,x;u^{\varepsilon},v^{\varepsilon}}_{\tau})|^{2}]\\&&
 +3\mathbb{E}[|W_{n_{j}(N_{\tau_{n}}^{t,j})}(\tau,X^{t,x;u^{\varepsilon},v^{\varepsilon}}_{\tau})
 - W_{n_{j}(N_{\tau}^{t,j})}(\tau,X^{t,x;u^{\varepsilon},v^{\varepsilon}}_{\tau})|^{2}]\\
 &\leq&C\mathbb{E}[|X^{t,x;u^{\varepsilon},v^{\varepsilon}}_{\tau_{n}}
 - X^{t,x;u^{\varepsilon},v^{\varepsilon}}_{\tau}|^{2}]
 +C\mathbb{E}[(1+|X^{t,x;u^{\varepsilon},v^{\varepsilon}}_{\tau}|^{2})|\tau-\tau_{n}|]
  +C\mathbb{E}[1_{\{N_{\tau_{n}}^{t,j}\neq N_{\tau}^{t,j}\}}]
  \\ &\leq& C2^{-n},
\end{eqnarray*}
where $C>0$ is a constant which depends on $x$. Therefore, from
(\ref{Je31}) it follows that
\begin{eqnarray*}\label{}
\mathbb{E}[|\
^{j}G^{t,x;u^{\varepsilon},v^{\varepsilon}}_{\tau,\tau_{n}}
 [W_{n_{j}(N_{\tau_{n}}^{t,j})}(\tau_{n},X^{t,x;u^{\varepsilon},v^{\varepsilon}}_{\tau_{n}})]
-W_{n_{j}(N_{\tau}^{t,j})}(\tau,X^{t,x;u^{\varepsilon},v^{\varepsilon}}_{\tau})|^{2}]
\leq C2^{-n}.
\end{eqnarray*}
Consequently,  $ \mathbb{E}[| I_{1} - I_{2}|^{2}] \leq C2^{-n}.$
From (\ref{Je29}) we see that $I_{1}\geq 0.$ Therefore,  by the
above inequality we have
\begin{eqnarray*}\label{}
\mathbb{P}(I_{2}\leq - \frac{\varepsilon}{2}) \leq \mathbb{P}( |
I_{1} - I_{2}|\geq \frac{\varepsilon}{2}) \leq \frac{4\mathbb{E}[|
I_{1} - I_{2}|^{2}]}{\varepsilon^{2}}\leq
\frac{4C2^{-n}}{\varepsilon^{2}}\leq\varepsilon,
\end{eqnarray*}
where we choose $n$ such that $4C2^{-n}\leq \varepsilon^{3}$, i.e.,
$n\geq(2+\dfrac{\ln C-3\ln\varepsilon }{\ln 2})$, and from
(\ref{Jeq110}) we deduce
\begin{eqnarray*}\label{}
\mathbb{P}\Big(\
 W_{n_{j}(N_{\tau}^{t,j})}(\tau,X^{t,x;u^{\varepsilon},v^{\varepsilon}}_{\tau})-\varepsilon\leq
\ ^{j}G^{t,x;u^{\varepsilon},v^{\varepsilon}}_{\tau,T}
 [\Phi_{N_{T}^{t,j}}(X^{t,x;u^{\varepsilon},v^{\varepsilon}}_{T})]\Big)> 1-
 \varepsilon,
\end{eqnarray*}
i.e.,
\begin{eqnarray*}\label{}
\mathbb{P}\Big(\
 W_{n_{j}(N_{\tau}^{t,j})}(\tau,X^{t,x;u^{\varepsilon},v^{\varepsilon}}_{\tau})-\varepsilon\leq
\ ^{j} Y^{t,x;u^{\varepsilon},v^{\varepsilon}}_{\tau})]\Big)> 1-
\varepsilon.
\end{eqnarray*}
Since $(u^{\varepsilon},v^{\varepsilon})$ as well as $\tau$ are
independent of $\mathcal {F}_{t}$,  the event   $\Big\{
W_{n_{j}(N_{\tau}^{t,j})}(\tau,X^{t,x;u^{\varepsilon},v^{\varepsilon}}_{\tau})-\varepsilon\leq
\ ^{j} Y^{t,x;u^{\varepsilon},v^{\varepsilon}}_{\tau})\Big\}$ is
independent of $\mathcal {F}_{t}$, from which we see that the
conditional probability $\mathbb{P}(\cdot |\mathcal {F}_{t})$ of
$\Big\{
W_{n_{j}(N_{\tau}^{t,j})}(\tau,X^{t,x;u^{\varepsilon},v^{\varepsilon}}_{\tau})-\varepsilon\leq
\ ^{j} Y^{t,x;u^{\varepsilon},v^{\varepsilon}}_{\tau})\Big\}$
coincides with its probability. Consequently,
\begin{eqnarray*}\label{J}
\mathbb{P}\Big(\
 W_{n_{j}(N_{\tau}^{t,j})}(\tau,X^{t,x;u^{\varepsilon},v^{\varepsilon}}_{\tau})-\varepsilon\leq
\ ^{j} Y^{t,x;u^{\varepsilon},v^{\varepsilon}}_{\tau}) |\ \mathcal
{F}_{t}\Big)> 1- \varepsilon.
\end{eqnarray*}
 The proof is complete.
\end{proof}

\section{Proof of the Theorems \ref{t5} and \ref{t1} }\label{NS5}

In this section we still use the notations in Sections \ref{NS2} and
\ref{NS3}.

\subsection{Proof of Theorem \ref{t5}}\label{A1}
We have postponed to this section the proof of the DPP. We shall
establish it first for deterministic times, then we deduce the
 general version for stopping times.
\subsubsection{Dynamic programming principle for deterministic times}
\begin{theorem}\label{t3}
 Under the assumptions $(H4)$ and $(H5)$,  the following dynamic programming principle (DPP) holds: For any
$0\leq t<t+\delta \leq T,\ x\in {\mathbb{R}}^n, i=1,2,$
\begin{eqnarray}\label{et3}
W_{i}(t,x) &=&\esssup_{\alpha \in {\mathcal{A}}_{t,
t+\delta}}\essinf_{\beta \in {\mathcal{B}}_{t, t+\delta}}\
^{i}G^{t,x;\alpha,\beta}_{t,t+\delta}
[W_{N_{t+\delta}^{t,i}}(t+\delta,
X^{t,x;\alpha,\beta}_{t+\delta})].\\
 U_{i}(t,x)
&=&\essinf_{\beta \in {\mathcal{B}}_{t, t+\delta}}\esssup_{\alpha
\in {\mathcal{A}}_{t, t+\delta}}\
^{i}G^{t,x;\alpha,\beta}_{t,t+\delta}
[U_{N_{t+\delta}^{t,i}}(t+\delta,
X^{t,x;\alpha,\beta}_{t+\delta})].\nonumber
\end{eqnarray}
\end{theorem}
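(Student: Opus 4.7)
Denote by $\widetilde{W}_i(t,x)$ the right-hand side of (\ref{et3}). The plan is to prove the two inequalities $W_i \ge \widetilde{W}_i$ and $W_i \le \widetilde{W}_i$ by the standard concatenation-of-strategies scheme, relying on three ingredients: the flow property of the backward semigroup $^{i}G$, which follows from the uniqueness in Lemma \ref{lemma5} applied to (\ref{e2}); the Markov-type identity
\[
^{i}Y^{t,x;u,v}_{t+\delta}=\ ^{N^{t,i}_{t+\delta}}Y^{t+\delta,\,X^{t,x;u,v}_{t+\delta};\,u,v}_{t+\delta}=J_{N^{t,i}_{t+\delta}}\bigl(t+\delta,\,X^{t,x;u,v}_{t+\delta};u,v\bigr),
\]
which is the combination of the Markovianity of the Poisson-driven chain $N^{t,i}$ with Proposition \ref{p10}; and the Lipschitz bounds of $W_j$ in $x$ from Proposition \ref{p4}. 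Together these give
\[
^{i}G^{t,x;u,v}_{t,T}\bigl[\Phi_{N^{t,i}_T}(X^{t,x;u,v}_T)\bigr]=\ ^{i}G^{t,x;u,v}_{t,t+\delta}\bigl[J_{N^{t,i}_{t+\delta}}(t+\delta,X^{t,x;u,v}_{t+\delta};u,v)\bigr],
\]
which converts the DPP into a statement about splitting the inner esssup/essinf at time $t+\delta$.

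For $W_i(t,x)\ge\widetilde{W}_i(t,x)$, I would fix $\eta>0$, pick $\alpha^{\eta}\in\mathcal{A}_{t,t+\delta}$ realising $\widetilde{W}_i(t,x)$ up to $\eta$, and, for each $y\in\mathbb{R}^n$ and each $j\in\{1,2\}$, select an $\eta$-optimal $\alpha^{y,j}\in\mathcal{A}_{t+\delta,T}$ for $W_j(t+\delta,y)$. Then I partition $\mathbb{R}^n$ into Borel sets $\{O_k\}_{k\ge 1}$ of diameter at most $\eta$, pick $y_k\in O_k$, and define $\alpha\in\mathcal{A}_{t,T}$ by taking $\alpha^{\eta}$ on $[[t,t+\delta]]$ and, on $]]t+\delta,T]]$, on the event $\{X^{t,x;\alpha^{\eta}(v),v}_{t+\delta}\in O_k\}\cap\{N^{t,i}_{t+\delta}=j\}$, using $\alpha^{y_k,j}$ applied to the shifted control $v|_{(t+\delta,T]}$. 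The NAD property of $\alpha$ is checked as in Lemma \ref{l5} and Definition \ref{d1}: nonanticipativity follows from the $\mathcal{F}_{t+\delta}$-measurability of the partition indicators, and a valid delay sequence for $\alpha$ is the concatenation of the delay sequences of $\alpha^{\eta}$ and $\alpha^{y_k,j}$. Taking essinf over $\beta$, using the flow identity above, the comparison theorem (Lemma \ref{l8}), and Propositions \ref{p3}--\ref{p4} to estimate the error introduced by replacing $X^{t,x;\alpha(v),v}_{t+\delta}$ by $y_k$, yields $W_i(t,x)\ge\widetilde{W}_i(t,x)-C\eta$, and letting $\eta\downarrow 0$ completes this direction.

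For the reverse inequality, the construction is symmetric: for any $\alpha\in\mathcal{A}_{t,T}$ I consider its restriction $\alpha_1\in\mathcal{A}_{t,t+\delta}$, and for any $\beta_1\in\mathcal{B}_{t,t+\delta}$ I paste $\eta$-optimal strategies $\beta^{y_k,j}\in\mathcal{B}_{t+\delta,T}$ for the essinf defining $W_j(t+\delta,y_k)$ into a single $\beta\in\mathcal{B}_{t,T}$ via the same partition device. Applying the flow and Proposition \ref{p10} gives $J_i(t,x;\alpha,\beta)\le\ ^{i}G^{t,x;\alpha,\beta_1}_{t,t+\delta}[W_{N^{t,i}_{t+\delta}}(t+\delta,X^{t,x;\alpha,\beta_1}_{t+\delta})]+C\eta$, after which passing to essinf over $\beta_1$, to esssup over $\alpha$, and letting $\eta\downarrow 0$ yields $W_i(t,x)\le\widetilde{W}_i(t,x)$. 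The corresponding DPP for $U_i$ is obtained by swapping the roles of sup and inf throughout.

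I expect the main obstacle to be the rigorous verification that the pasted maps $\alpha$ and $\beta$ above are genuine NAD strategies, not merely nonanticipative maps: the delay property requires that the stopping-time sequences $\{S_n\}$ of the two halves glue into an increasing sequence exhausting $\Omega$ in the sense of Definition \ref{d1}, and this in turn forces the second-stage strategies $\alpha^{y_k,j}$ (respectively $\beta^{y_k,j}$) to be measurably selected with uniform delay bounds in $k$. Together with the necessity of tracking the Markov-chain-index shift at $t+\delta$—so that the boundary value $W_{N^{t,i}_{t+\delta}}(t+\delta,\cdot)$ correctly replaces the terminal condition $\Phi_{N^{t,i}_T}$ via the decoupled representation from Proposition \ref{p10}—this is the technical core of the argument; all other steps are direct consequences of the BSDE estimates of Section \ref{NS1} and the Lipschitz continuity of the value functions established in Proposition \ref{p4}.
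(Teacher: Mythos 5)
Your proposal follows essentially the same route as the paper's own proof, which splits Theorem \ref{t3} into the two inequalities of Lemmas \ref{l9} and \ref{l6} and establishes each by exactly the scheme you describe: an $(\Omega,\mathcal{F}_t)$-partition to extract $\varepsilon$-optimal first-stage strategies from the esssup/essinf, a Borel partition of $\mathbb{R}^n$ combined with the Lipschitz bounds of Proposition \ref{p4} to paste $\varepsilon$-optimal second-stage strategies indexed by the state $N^{t,i}_{t+\delta}$, and the flow/Markov identity for the backward semigroup via Proposition \ref{p10} and Lemma \ref{l5}. The two technical points you single out — verifying that the concatenated maps are genuine NAD strategies, and tracking the index shift so that $W_{N^{t,i}_{t+\delta}}$ is the correct boundary datum — are precisely the ones the paper addresses (the former being asserted as straightforward and omitted), so nothing essential is missing from your outline.
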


\begin{remark}
Recall that for $i=1,2, (t,x)\in [0,T]\times \mathbb{R}^{n}$,
$W(t,(x,i)):=W_{i}(t,x)$ is the value function of the stochastic
differential game which dynamics is given by the process $(X^{t,x;
u, v}, N^{t,i})$, and which cost functional is defined by our BSDE
(\ref{e2}). The DPP for games with jumps but of the type "strategy
against control" was proved in \cite{BHL2010}, and before, in
another framework by Biswas \cite{B2010}. However, unlike
\cite{BHL2010} we have to deal here with games of the type "NAD
strategy against NAD strategy".
\end{remark}

\begin{proof}
For  arbitrarily fixed $i=1,2,$ we only give  the proof for
$W_{i}(t,x)$, since for $U_{i}(t,x)$ we can use a symmetric
argument.  Let us denote by $W_\delta(t,x)$ the right hand side of
(\ref{et3}).  Using the arguments in Proposition \ref{p1} we can
show that $W_\delta(t,x)$ is deterministic. We split now the proof
into the following two lemmas.
\end{proof}

\begin{lemma}\label{l9}
$ W_{i}(t, x)\leq W_\delta(t,x).$
\end{lemma}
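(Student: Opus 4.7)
The plan is to show, for every fixed $\alpha \in \mathcal{A}_{t,T}$ and every $\varepsilon>0$, that
$\essinf_{\beta \in \mathcal{B}_{t,T}} J_i(t,x;\alpha,\beta) \leq W_\delta(t,x) + C\varepsilon$; taking $\esssup$ over $\alpha$ and sending $\varepsilon \downarrow 0$ then yields $W_i(t,x) \leq W_\delta(t,x)$. The construction of a suitable $\beta$ proceeds in two stages, gluing a ``near-minimizer'' on $[t,t+\delta]$ (for the $W_\delta$-game) to a piecewise-constant-in-state near-minimizer on $[t+\delta,T]$.

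Stage A: Set $\alpha^1 := \alpha|_{[t,t+\delta]}\in \mathcal{A}_{t,t+\delta}$. Since
$W_\delta(t,x) \geq \essinf_{\beta'\in\mathcal{B}_{t,t+\delta}}\ {}^iG^{t,x;\alpha^1,\beta'}_{t,t+\delta}\!\left[W_{N^{t,i}_{t+\delta}}(t+\delta,X^{t,x;\alpha^1,\beta'}_{t+\delta})\right]$
$\mathbb{P}$-a.s., a standard essential infimum selection (using stability of the family under $\mathcal{F}_t$-measurable pasting) produces $\beta^1\in\mathcal{B}_{t,t+\delta}$ with
$$ {}^iG^{t,x;\alpha^1,\beta^1}_{t,t+\delta}\!\left[W_{N^{t,i}_{t+\delta}}(t+\delta,X^{t,x;\alpha^1,\beta^1}_{t+\delta})\right] \leq W_\delta(t,x) + \varepsilon. $$
Write $(u^*,v^*)\in\mathcal{U}_{t,t+\delta}\times\mathcal{V}_{t,t+\delta}$ for the unique fixed-point pair of $(\alpha^1,\beta^1)$ furnished by Lemma \ref{l5}, and set $X^* := X^{t,x;u^*,v^*}_{t+\delta}$, $j^*:=N^{t,i}_{t+\delta}$, both $\mathcal{F}_{t+\delta}$-measurable.

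Stage B: Freeze $v^*$ into the tail of $\alpha$ by defining $\alpha^2(v'')_s := \alpha(v^*\oplus v'')_s$ for $s\in(t+\delta,T]$ and $v''\in\mathcal{V}_{t+\delta,T}$; the nonanticipativity and delay structure of $\alpha$ restrict correctly so that $\alpha^2\in\mathcal{A}_{t+\delta,T}$. For each $y\in\mathbb{R}^n$ and $j'\in\{1,2\}$, since $W_{j'}(t+\delta,y)\geq\essinf_{\beta''}J_{j'}(t+\delta,y;\alpha^2,\beta'')$, pick $\beta^{y,j'}\in\mathcal{B}_{t+\delta,T}$ with
$J_{j'}(t+\delta,y;\alpha^2,\beta^{y,j'})\leq W_{j'}(t+\delta,y)+\varepsilon$. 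Choose a Borel partition $\mathbb{R}^n=\bigsqcup_{k\geq1}O_k$ with $\mathrm{diam}(O_k)\leq\varepsilon$ and representatives $y_k\in O_k$, and define $\beta\in\mathcal{B}_{t,T}$ by $\beta(u)=\beta^1(u|_{[t,t+\delta]})$ on $[t,t+\delta]$ and
$\beta(u)_s := \sum_{k\geq 1}\sum_{j'\in\{1,2\}} \mathbf{1}_{\{X^*\in O_k,\,j^*=j'\}}\,\beta^{y_k,j'}(u|_{(t+\delta,T]})_s$ on $(t+\delta,T]$. The NAD properties of $\beta^1$ and $\beta^{y_k,j'}$ together with $\mathcal{F}_{t+\delta}$-measurability of the switching events combine to show $\beta\in\mathcal{B}_{t,T}$, with delay sequence obtained by concatenating those on the two intervals at $t+\delta$.

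To estimate $J_i(t,x;\alpha,\beta)$, let $(u,v)$ be the unique pair for $(\alpha,\beta)$; by nonanticipativity and uniqueness $(u,v)=(u^*,v^*)$ on $[t,t+\delta]$, so $X^{t,x;u,v}_{t+\delta}=X^*$. The semigroup identity gives $J_i(t,x;\alpha,\beta)={}^iG^{t,x;u,v}_{t,t+\delta}\!\big[{}^iY^{t,x;u,v}_{t+\delta}\big]$, and by a Markov-type identification (via Proposition \ref{p10} applied on the shifted interval, mapping the coupled mode forward), ${}^iY^{t,x;u,v}_{t+\delta}=J_{j^*}(t+\delta,X^*;u|_{(t+\delta,T]},v|_{(t+\delta,T]})$. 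On the event $\{X^*\in O_k,\,j^*=j'\}$, the tail $(u|_{(t+\delta,T]},v|_{(t+\delta,T]})$ is exactly the fixed-point pair of $(\alpha^2,\beta^{y_k,j'})$ on that event, so combining the Lipschitz dependence of $J_{j'}(t+\delta,\cdot;\alpha^2,\beta^{y_k,j'})$ and of $W_{j'}(t+\delta,\cdot)$ in the initial state (Propositions \ref{p3} and \ref{p4}) with $\mathrm{diam}(O_k)\leq\varepsilon$ and the choice of $\beta^{y_k,j'}$ gives ${}^iY^{t,x;u,v}_{t+\delta}\leq W_{j^*}(t+\delta,X^*)+C\varepsilon$. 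Applying Lemmas \ref{l8}--\ref{l1} to ${}^iG^{t,x;u,v}_{t,t+\delta}$ and using $(u,v)|_{[t,t+\delta]}=(u^*,v^*)$, we conclude $J_i(t,x;\alpha,\beta)\leq {}^iG^{t,x;\alpha^1,\beta^1}_{t,t+\delta}\!\big[W_{N^{t,i}_{t+\delta}}(t+\delta,X^*)\big]+C'\varepsilon \leq W_\delta(t,x)+(C'+1)\varepsilon$, as desired.

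The main obstacle will be the careful bookkeeping in Stage B: rigorously verifying the NAD property of the patched $\beta$ (in particular the delay sequence, since we concatenate countably many strategies $\beta^{y_k,j'}$ through $\mathcal{F}_{t+\delta}$-measurable indicators at a deterministic time), and justifying the identification of ${}^iY^{t,x;u,v}_{t+\delta}$ with $J_{j^*}(t+\delta,X^*;\cdot)$ on each piece of the partition, where the coupled mode switch $j^*=N^{t,i}_{t+\delta}$ and the uniqueness of the NAD fixed point from Lemma \ref{l5} have to be invoked in tandem.
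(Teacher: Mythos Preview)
Your argument is correct and is essentially the same strategy the paper uses for this inequality: restrict $\alpha$ to $[t,t+\delta]$, pick a near-minimizing $\beta^1$ for the $W_\delta$-problem, freeze the resulting $v^*$ into the tail of $\alpha$ to get $\alpha^2\in\mathcal{A}_{t+\delta,T}$, then patch together near-minimizing second-stage strategies and control the error via the Lipschitz estimates and the comparison for the backward semigroup.

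One point worth flagging: in the paper the labels of the two lemmas are effectively swapped. The proof printed under Lemma~\ref{l9} actually constructs a near-optimal $\alpha^\varepsilon$ and concludes $W_\delta(t,x)\leq W_i(t,x)$, i.e.\ the \emph{converse} of the stated inequality; the argument that genuinely yields $W_i(t,x)\leq W_\delta(t,x)$ is the one written under Lemma~\ref{l6}. Your proposal matches that latter argument. The only cosmetic difference is in how the second-stage strategy is selected: you partition explicitly in both the state variable (the cells $O_k$) and the jump index $j^*=N^{t,i}_{t+\delta}$, whereas the paper first extends the pointwise inequality $W_k(t+\delta,y)\geq\essinf_{\beta_2}J_k(t+\delta,y;\alpha_2^\varepsilon,\beta_2)$ to the random endpoint and then performs a countable $\mathcal{F}_{t+\delta}$-measurable selection. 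Both routes lead to the same estimate; yours is slightly more explicit about the role of the Poisson index.
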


\begin{proof}
From the definition of $W_\delta(t,x)$\ it follows that
\begin{eqnarray*}
W_\delta(t,x)&=& \esssup_{\alpha_{1} \in {\mathcal{A}}_{t,
t+\delta}}\essinf_{\beta_{1} \in {\mathcal{B}}_{t, t+\delta}}\
^{i}G^{t,x;\alpha_{1},\beta_{1}}_{t,t+\delta}
[W_{N_{t+\delta}^{t,i}}(t+\delta, X^{t,x;\alpha_{1},\beta_{1}}_{t+\delta})]\\
&=&\esssup_{\alpha_1 \in {\mathcal{A}}_{t, t+\delta}}I_\delta(t, x,
\alpha_1),
\end{eqnarray*}
where $ I_\delta(t, x, \alpha_1)=\essinf_{\beta_{1} \in
{\mathcal{B}}_{t, t+\delta}}\
^{i}G^{t,x;\alpha_{1},\beta_{1}}_{t,t+\delta}
[W_{N_{t+\delta}^{t,i}}(t+\delta,
X^{t,x;\alpha_{1},\beta_{1}}_{t+\delta})].$   Then, there exists a
sequence  $\{\alpha_n^1,\ n\geq 1\}\subset {\mathcal{A}}_{t,
t+\delta},$ such that $W_\delta(t,x)=\sup_{n\geq 1}I_\delta(t, x,
\alpha_n^1),\ \mathbb{P}\mbox{-a.s.}$ For any $\varepsilon>0,$\ let
us put
$$\Lambda_n:=\Big\{I_\delta(t, x,
\alpha_n^1)+\varepsilon\geq W_\delta(t,x), I_\delta(t, x,
\alpha_j^1)+\varepsilon< W_\delta(t,x), 1\leq j \leq n-1\Big\}\in
{\mathcal{F}}_{t},\ n\geq 1.$$ Obviously,  $\{\Lambda_n,\ n\geq
1\}$\ is an $(\Omega, {\mathcal{F}}_{t})$-partition. We define
$\alpha^\varepsilon_1:=\sum_{n\geq 1}1_{\Lambda_n}\alpha_n^1$, then
by straight-forward proof it can be shown that
$\alpha^\varepsilon_1$ belongs to ${\mathcal{A}}_{t, t+\delta}.$  We
let et $(u^{n},v^{n})\in \mathcal {U}_{t,T}\times \mathcal
{V}_{t,T}$ be associated with $(\alpha_n^1,\beta_{1})$ by Lemma
\ref{l5}, and put
\begin{eqnarray*}
 u^\varepsilon_1:=\sum_{n\geq 1}1_{\Lambda_n}u^n,\quad
  v^\varepsilon_1:=\sum_{n\geq 1}1_{\Lambda_n}v^n.
\end{eqnarray*}
Then straight forward arguments allow to verify that
$(u_{1}^{\varepsilon},v_{1}^{\varepsilon})\in \mathcal
{U}_{t,T}\times \mathcal {V}_{t,T}$ is associated with
$(\alpha^\varepsilon,\beta_{1})$ by Lemma \ref{l5} such that $
 \alpha^\varepsilon(v^\varepsilon_1)=u^\varepsilon_1,\
  \beta_{1}(u^\varepsilon_1)=v^\varepsilon_1.$
Consequently, the uniqueness of the FBSDE and the definition of NAD
strategies allows to show that, for all $\beta_{1}\in
{\mathcal{B}}_{t, t+\delta},$ $$X^{t, x; \alpha_{1}^{\varepsilon},
\beta_{1}}=\sum_{n\geq 1} 1_{\Lambda_n}X^{t, x; \alpha_{n}^{1},
\beta_{1}},$$ and
\begin{eqnarray}\label{e32}
 \ ^{i}G^{t,x;\alpha_{1}^{\varepsilon},\beta_{1}}_{t,t+\delta}
[W_{N_{t+\delta}^{t,i}}(t+\delta,X^{t,x;\alpha_{1}^{\varepsilon},\beta_{1}}_{t+\delta})]
= \sum_{n\geq 1} \ ^{i}G^{t,x;\alpha_{n}^{1},\beta_{1}}_{t,t+\delta}
[W_{N_{t+\delta}^{t,i}}(t+\delta,X^{t,x;\alpha_{n}^{1},\beta_{1}}_{t+\delta})]1_{\Lambda_n}.
\end{eqnarray}
Therefore, for  all $\beta_1\in {\mathcal{B}}_{t, t+\delta}$,
 \begin{eqnarray}\label{e16}
&&W_\delta(t,x)\leq\sum_{n\geq 1} 1_{\Lambda_n}I_\delta(t,
x,\alpha_n^1)+\varepsilon \nonumber\\ &&\leq \sum_{n\geq 1}
1_{\Lambda_n}\ ^{i}G^{t,x;\alpha_{n}^{1},\beta_{1}}_{t,t+\delta}
[W_{N_{t+\delta}^{t,i}}(t+\delta,X^{t,x;\alpha_{n}^{1},\beta_{1}}_{t+\delta})]+\varepsilon \nonumber\\
&&= \ ^{i}G^{t,x;\alpha_{1}^{\varepsilon},\beta_{1}}_{t,t+\delta}
[W_{N_{t+\delta}^{t,i}}(t+\delta,X^{t,x;\alpha_{1}^{\varepsilon},\beta_{1}}_{t+\delta})]+\varepsilon,\
\mathbb{P}\mbox{-a.s.}
\end{eqnarray}
We let $\{O_j\}_{j\geq1}\subset {\mathcal{B}}({\mathbb{R}}^n)$\ be a
partition of ${\mathbb{R}}^n$\ such that
$\sum\limits_{j\geq1}O_j={\mathbb{R}}^n\ \mbox{and}\
\mbox{diam}(O_j)\leq \varepsilon,\ j\geq 1.$  Let us fix an element
$y_j$  of $O_j,\ j\geq1.$  Then, from the definition of
$W_{k}(t+\delta,y), k=1,2,$ it follows (through a procedure already
used above) that there exists $\alpha^{j,k}_{2}\in
{\mathcal{A}}_{t+\delta, T}$\ \ such that,  for  all $\beta_2\in
{\mathcal{B}}_{t+\delta,T}$,
\begin{eqnarray*}
 W_{k}(t+\delta,y_{j})\leq \essinf_{\beta_{2} \in
{\mathcal{B}}_{t+\delta,T}}J_{k}(t+\delta, y_{j}; \alpha^{j,k}_{2},
\beta_{2})+\varepsilon,\ \mathbb{P}\mbox{-a.s.}
\end{eqnarray*}
Therefore, for  all $\beta_2\in {\mathcal{B}}_{t+\delta,T}$,
\begin{eqnarray}\label{e17}
 &&W_{N_{t+\delta}^{t,i}}(t+\delta,y_{j})
 =\sum\limits_{k=1}^{2} 1_{\{N_{t+\delta}^{t,i}=k\}}
 W_{k}^{t,i}(t+\delta,y_{j})\nonumber\\
 &&\leq
 \sum\limits_{k=1}^{2} 1_{\{N_{t+\delta}^{t,i}=k\}} J_{k}(t+\delta, y_{j}; \alpha^{j,k}_{2},
\beta_{2})+\varepsilon\nonumber\\
&& \leq J_{N_{t+\delta}^{t,i}}(t+\delta, y_{j};
\alpha^{j,N_{t+\delta}^{t,i}}_{2}, \beta_{2})+\varepsilon,\
\mathbb{P}\mbox{-a.s.}
\end{eqnarray}
 For $\beta \in {\mathcal{B}}_{t,T}$ and $u_{2} \in
{\mathcal{U}}_{t+\delta,T}$, we let
$\beta_{1}(u_{1}):=\beta(u_1\oplus u_2)|_{[t,t+\delta]},\ u_1\in
{\mathcal{U}}_{t, t+\delta},\ \mbox{where}\ u_1\oplus u_2= u_1 1_
{[t, t+\delta]}+\ u_2 1_{(t+\delta, T]}$. Then $\beta_{1} \in
{\mathcal{B}}_{t,t+\delta}$. Notice that $\beta_{1}$ doesn't depend
on $u_2$ thanks  to the nonanticipativity  of $\beta$.  Since
$(\alpha_{1}^{\varepsilon}, \beta_{1}) \in
{\mathcal{A}}_{t,t+\delta} \times {\mathcal{B}}_{t,t+\delta}$, we
know from Lemma \ref{l5}  that there exists a  unique pair
$(u_{1}^{\varepsilon}, v_{1}^{\varepsilon})\in
{\mathcal{U}}_{t,t+\delta} \times {\mathcal{V}}_{t,t+\delta}$ such
that $\alpha_{1}^{\varepsilon}(v_{1}^{\varepsilon})=
u_{1}^{\varepsilon}$ and $\beta_{1}(u_{1}^{\varepsilon})=
v_{1}^{\varepsilon}$. Let us define
$$\beta^{\varepsilon}_{2}(u_{2}):=\beta(u_1^{\varepsilon}\oplus u_2
)|_{[t+\delta,T]}, \ \mbox{for}\  u_{2}\in {\mathcal{U}}_{t+\delta,
T}, \quad \alpha^\varepsilon_2:=\sum_{j\geq 1}
1_{O_j}(X^{t,x;\alpha_{1}^{\varepsilon},\beta_{1}}_{t+\delta})\alpha^{j,N_{t+\delta}^{t,i}}_2.$$
Moreover, we put
$\alpha^\varepsilon(v):=\alpha_{1}^\varepsilon(v_{1})\oplus
\alpha_{2}^\varepsilon(v_{2})$, for $v=v_{1}\oplus v_{2}, v_{1} \in
{\mathcal{V}}_{t,t+\delta}, v_{2} \in {\mathcal{V}}_{t+\delta,T}$.
  Since $\alpha_{1}^{\varepsilon} \in {\mathcal{A}}_{t,t+\delta}$
and $\alpha_{2}^{\varepsilon} \in {\mathcal{A}}_{t+\delta,T}$ are
NAD strategies,
  $\alpha^{\varepsilon}$\ is also an NAD strategy.

From Proposition \ref{p4} it follows that
\begin{eqnarray*}
W_{N_{t+\delta}^{t,i}}(t+\delta,X^{t,x;\alpha_{1}^{\varepsilon},\beta_{1}}_{t+\delta})]
\leq \sum_{j\geq 1}
1_{O_j}(X^{t,x;\alpha_{1}^{\varepsilon},\beta_{1}}_{t+\delta})W_{N_{t+\delta}^{t,i}}(t+\delta,
y_{j})+C\varepsilon,
\end{eqnarray*}
 which together with (\ref{e16}), (\ref{e17})  and Lemma \ref{l8}
yields
\begin{eqnarray*}
W_\delta(t,x) &\leq& \
^{i}G^{t,x;\alpha_{1}^{\varepsilon},\beta_{1}}_{t,t+\delta}
[W_{N_{t+\delta}^{t,i}}(t+\delta,X^{t,x;\alpha_{1}^{\varepsilon},\beta_{1}}_{t+\delta})]+\varepsilon,\\
&\leq&
^{i}G^{t,x;\alpha_{1}^{\varepsilon},\beta_{1}}_{t,t+\delta}[\sum_{j\geq
1}
1_{O_j}(X^{t,x;\alpha_{1}^{\varepsilon},\beta_{1}}_{t+\delta})W_{N_{t+\delta}^{t,i}}(t+\delta,
y_{j})+C\varepsilon]+\varepsilon, \ \mathbb{P}\mbox{-a.s.}
\end{eqnarray*}
It follows from (\ref{e17}) and  the Lemmas \ref{l8},  \ref{l1} and
\ref{l5} that
\begin{eqnarray*}
W_\delta(t,x) &\leq&
^{i}G^{t,x;\alpha_{1}^{\varepsilon},\beta_{1}}_{t,t+\delta}[\sum_{j\geq
1}
1_{O_j}(X^{t,x;\alpha_{1}^{\varepsilon},\beta_{1}}_{t+\delta})W_{N_{t+\delta}^{t,i}}(t+\delta,
y_{j})]+C\varepsilon\\
&\leq&
^{i}G^{t,x;\alpha_{1}^{\varepsilon},\beta_{1}}_{t,t+\delta}[\sum_{j\geq
1}
1_{O_j}(X^{t,x;\alpha_{1}^{\varepsilon},\beta_{1}}_{t+\delta})J_{N_{t+\delta}^{t,i}}(t+\delta,
y_{j}; \alpha^{j,N_{t+\delta}^{t,i}}_{2},\beta_{2}^{\varepsilon})+\varepsilon]+C\varepsilon\\
&\leq& \
^{i}G^{t,x;\alpha_{1}^{\varepsilon},\beta_{1}}_{t,t+\delta}[\sum_{j\geq
1}
1_{O_j}(X^{t,x;\alpha_{1}^{\varepsilon},\beta_{1}}_{t+\delta})J_{N_{t+\delta}^{t,i}}(t+\delta,
y_{j}; \alpha^{\varepsilon}_{2},\beta_{2}^{\varepsilon})]+C\varepsilon\\
&\leq&
^{i}G^{t,x;\alpha_{1}^{\varepsilon},\beta_{1}}_{t,t+\delta}[J_{N_{t+\delta}^{t,i}}(t+\delta,
X^{t,x;\alpha_{1}^{\varepsilon},\beta_{1}}_{t+\delta};
\alpha^{\varepsilon}_{2},\beta_{2}^{\varepsilon})
+C\varepsilon]+C\varepsilon\\
&\leq&
^{i}G^{t,x;\alpha_{1}^{\varepsilon},\beta_{1}}_{t,t+\delta}[J_{N_{t+\delta}^{t,i}}(t+\delta,
X^{t,x;\alpha_{1}^{\varepsilon},\beta_{1}}_{t+\delta}; \alpha^{\varepsilon}_{2},\beta_{2}^{\varepsilon})]+C\varepsilon\\
&=& ^{i}G^{t,x;\alpha_{1}^{\varepsilon},\beta_{1}}_{t,t+\delta}[ \
^{N_{t+\delta}^{t,i}}Y_{t+\delta}^{t+\delta,
X^{t,x;\alpha_{1}^{\varepsilon},\beta_{1}}_{t+\delta}; \alpha^{\varepsilon}_{2},\beta_{2}^{\varepsilon}}]+C\varepsilon\\
&=& ^{i}G^{t,x;\alpha_{1}^{\varepsilon},\beta_{1}}_{t,t+\delta}[ \
^{i}Y_{t+\delta}^{t,x; \alpha^{\varepsilon},\beta}]+C\varepsilon\\
&=&J_{i}(t,x;\alpha^{\varepsilon},\beta)+ C\varepsilon,\ \
\mathbb{P}\mbox{-a.s,} \ \ \mbox{for all} \ \beta \in
{\mathcal{B}}_{t,T}.
\end{eqnarray*}
Consequently, due to the choice of $\alpha^{\varepsilon}$ we
conclude $W_\delta(t,x)\leq W_{i}(t,x)+ C\varepsilon,\
\mathbb{P}\mbox{-a.s.}$  Letting $\varepsilon\downarrow0$\ we have
$W_\delta(t,x)\leq W_{i}(t,x).$
\end{proof}
In order to complete the proof of Theorem \ref{t3} we have to prove
the converse inequality.
\begin{lemma}\label{l6}
$W_\delta(t,x)\geq W_{i}(t,x).$
\end{lemma}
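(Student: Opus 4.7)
The plan is to show that for any $\varepsilon>0$ and any $\alpha\in\mathcal{A}_{t,T}$, one can construct, for each $\beta_1\in\mathcal{B}_{t,t+\delta}$, an extension $\beta^{\varepsilon}\in\mathcal{B}_{t,T}$ whose restriction to $[[t,t+\delta]]$ coincides with $\beta_1$ and which satisfies
\[
J_{i}(t,x;\alpha,\beta^{\varepsilon})\;\leq\;{}^{i}G^{t,x;\alpha_{1},\beta_{1}}_{t,t+\delta}\bigl[W_{N^{t,i}_{t+\delta}}(t+\delta,X^{t,x;\alpha_{1},\beta_{1}}_{t+\delta})\bigr]+C\varepsilon,
\]
where $\alpha_{1}:=\alpha|_{[[t,t+\delta]]}\in\mathcal{A}_{t,t+\delta}$. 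Taking $\essinf$ over $\beta$ on the left, $\essinf$ over $\beta_{1}$ on the right, and then $\esssup$ over $\alpha$ will yield $W_{i}(t,x)\leq W_{\delta}(t,x)+C\varepsilon$, and letting $\varepsilon\downarrow 0$ will conclude the proof.

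First, given $\alpha\in\mathcal{A}_{t,T}$ and $\beta_{1}\in\mathcal{B}_{t,t+\delta}$, Lemma \ref{l5} produces a unique pair $(u_{1},v_{1})\in\mathcal{U}_{t,t+\delta}\times\mathcal{V}_{t,t+\delta}$ with $\alpha_{1}(v_{1})=u_{1}$, $\beta_{1}(u_{1})=v_{1}$. I would then introduce the \emph{tail strategy} of Player 1 via
\[
\alpha^{v_1}_{2}(v_{2}):=\alpha(v_{1}\oplus v_{2})\big|_{[[t+\delta,T]]},\qquad v_{2}\in\mathcal{V}_{t+\delta,T},
\]
which belongs to $\mathcal{A}_{t+\delta,T}$ by the nonanticipativity-with-delay property of $\alpha$. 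Next, choose a Borel partition $\{O_{j}\}_{j\geq 1}$ of $\mathbb{R}^{n}$ with $\operatorname{diam}(O_{j})\leq\varepsilon$ and pick $y_{j}\in O_{j}$. For every $j\geq 1$ and $k\in\{1,2\}$, the identity $W_{k}(t+\delta,y_{j})=\esssup_{\alpha_{2}\in\mathcal{A}_{t+\delta,T}}\essinf_{\beta_{2}\in\mathcal{B}_{t+\delta,T}}J_{k}(t+\delta,y_{j};\alpha_{2},\beta_{2})$ allows me to select $\beta^{j,k}_{2}\in\mathcal{B}_{t+\delta,T}$ (depending measurably on $v_{1}$ through $\alpha^{v_{1}}_{2}$) such that
\[
J_{k}(t+\delta,y_{j};\alpha^{v_{1}}_{2},\beta^{j,k}_{2})\;\leq\;W_{k}(t+\delta,y_{j})+\varepsilon,\qquad\mathbb{P}\text{-a.s.}
\]
This is the mirror image of the selection performed for $\alpha^{j,k}_{2}$ in the proof of Lemma \ref{l9}.

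I would then glue these pieces into a global NAD strategy. For $u=u_{1}\oplus u_{2}\in\mathcal{U}_{t,T}$, set
\[
\beta^{\varepsilon}(u):=\beta_{1}(u_{1})\;\oplus\;\sum_{j\geq 1}1_{O_{j}}\bigl(X^{t,x;\alpha_{1}(v_{1}),v_{1}}_{t+\delta}\bigr)\,\beta^{j,N^{t,i}_{t+\delta}}_{2}(u_{2}),
\]
with $v_{1}=\beta_{1}(u_{1})$. Verification that $\beta^{\varepsilon}\in\mathcal{B}_{t,T}$ (nonanticipative, with an increasing family of delay times) is routine from Definition \ref{d1}, using that $\beta_{1}$ and each $\beta^{j,k}_{2}$ are themselves NAD and that the switching occurs on the $\mathcal{F}_{t+\delta}$-measurable event $\{X_{t+\delta}\in O_{j}\}\cap\{N^{t,i}_{t+\delta}=k\}$.

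The analytic core is then the chain of inequalities produced by the backward semigroup and its continuity estimates. By construction, Proposition \ref{p10} and the Markov property of $X$ give, on $\{X^{t,x;\alpha,\beta^{\varepsilon}}_{t+\delta}\in O_{j}\}\cap\{N^{t,i}_{t+\delta}=k\}$,
\[
{}^{i}Y^{t,x;\alpha,\beta^{\varepsilon}}_{t+\delta}=J_{k}(t+\delta,X^{t,x;\alpha,\beta^{\varepsilon}}_{t+\delta};\alpha^{v_1}_{2},\beta^{j,k}_{2}),
\]
which, combined with the near-optimality of $\beta^{j,k}_{2}$, Proposition \ref{p4} (Lipschitz in $x$, so $|W_{k}(t+\delta,y_{j})-W_{k}(t+\delta,X_{t+\delta})|\leq C\varepsilon$ on $O_{j}$), and the bound $|J_{k}(t+\delta,y_{j};\cdot,\cdot)-J_{k}(t+\delta,X_{t+\delta};\cdot,\cdot)|\leq C\varepsilon$ coming from Proposition \ref{p3}, yields
\[
{}^{i}Y^{t,x;\alpha,\beta^{\varepsilon}}_{t+\delta}\;\leq\;W_{N^{t,i}_{t+\delta}}(t+\delta,X^{t,x;\alpha,\beta^{\varepsilon}}_{t+\delta})+C\varepsilon.
\]
Applying the semigroup ${}^{i}G^{t,x;\alpha,\beta^{\varepsilon}}_{t,t+\delta}$, the comparison Lemma \ref{l8} and the continuity estimate Lemma \ref{l1}, and observing that on $[[t,t+\delta]]$ the pair $(\alpha,\beta^{\varepsilon})$ coincides with $(\alpha_{1},\beta_{1})$ so that $X^{t,x;\alpha,\beta^{\varepsilon}}=X^{t,x;\alpha_{1},\beta_{1}}$ there, I obtain the desired bound on $J_{i}(t,x;\alpha,\beta^{\varepsilon})$.

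The main obstacle will be Step~2/Step~3: producing $\beta^{j,k}_{2}$ that depends measurably on $v_{1}$ (and hence on $\omega\in\Omega$), since the near-optimal selector for $W_{k}$ a priori depends on which $\alpha^{v_1}_{2}$ Player 1 will use on the tail, and then verifying rigorously that the spliced $\beta^{\varepsilon}$ enjoys the nonanticipativity-with-delay property of Definition \ref{d1}. This is essentially the symmetric counterpart of the construction carried out for $\alpha^{\varepsilon}$ in Lemma \ref{l9}, and I would adapt those arguments (also used in \cite{L2011}) to the present jump setting, invoking Proposition \ref{p10} to handle the coupling through the Poisson index $N^{t,i}_{\cdot}$.
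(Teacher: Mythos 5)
Your proposal is correct and follows essentially the same route as the paper's proof: split the game at $t+\delta$, use Lemma \ref{l5} to extract the fixed-point controls, build the tail strategy by restricting $\alpha$, approximate $X_{t+\delta}$ over a countable Borel partition of $\mathbb{R}^n$ to select $\varepsilon$-optimal responses $\beta_2^{j,k}$, glue them on $\mathcal{F}_{t+\delta}$-measurable events, and conclude via the comparison theorem (Lemma \ref{l8}), the stability estimate (Lemma \ref{l1}) and the Lipschitz bounds of Propositions \ref{p3} and \ref{p4}. The only (cosmetic) differences are that you run the inequality chain upward from $J_i(t,x;\alpha,\beta^{\varepsilon})$ for every $\beta_1$ and only then take the essential infimum, whereas the paper first selects a near-optimal $\beta_1^{\varepsilon}$ via a partition of $(\Omega,\mathcal{F}_t)$, and that in the gluing you index the switch by $X^{t,x;\alpha_1(v_1),v_1}_{t+\delta}$ rather than $X^{t,x;u_1,\beta_1(u_1)}_{t+\delta}$ (harmless, since only the fixed point $u_1=\alpha_1(v_1)$ is ultimately evaluated).
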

\begin{proof} For an arbitrarily given $\alpha\in {\mathcal{A}}_{t, T}$ and a given
$v_2(\cdot)\in {\mathcal{V}}_{t+\delta, T},$\ let us define
$$\alpha_1(v_1):=\alpha(v_1\oplus v_2 )|_{[t,
t+\delta]},\ \mbox{ }\ v_1(\cdot)\in {\mathcal{V}}_{t, t+\delta},
$$
where $v_1\oplus v_2:=v_1 1_{[t, t+\delta]}+v_2 1_{(t+\delta, T]}$.
Then  $\alpha_1\in {\mathcal{A}}_{t, t+\delta}$, and  $\alpha_1$\
does not depend on the choice of $v_2(\cdot)\in
{\mathcal{V}}_{t+\delta, T},$  since  $\alpha$ is  nonanticipative.
Therefore, by virtue of  the definition of $W_\delta(t,x) $ we know
that
\begin{eqnarray*}
  W_\delta(t,x)\geq \essinf_{\beta_{1}
\in {\mathcal{B}}_{t, t+\delta}}\
^{i}G^{t,x;\alpha_1,\beta_1}_{t,t+\delta}
[W_{N_{t+\delta}^{t,i}}(t+\delta,
X^{t,x;\alpha_1,\beta_1}_{t+\delta})],\ \mathbb{P}\mbox{-a.s.}
\end{eqnarray*}
Let us denote by  $I_\delta(t, x, \alpha_{1}, \beta_{1}):=\
^{i}G^{t,x;\alpha_1,\beta_1}_{t,t+\delta}
[W_{N_{t+\delta}^{t,i}}(t+\delta,
X^{t,x;\alpha_1,\beta_1}_{t+\delta})]$. Then there exists a sequence
$\{\beta_n^1,\ n\geq 1\}\subset {\mathcal{B}}_{t, t+\delta}$\ such
that
$$I_\delta(t, x, \alpha_1):=\essinf_{\beta_{1}
\in {\mathcal{B}}_{t, t+\delta}}I_\delta(t, x, \alpha_{1},
\beta_{1})=\mbox{inf}_{n\geq 1}I_\delta(t, x, \alpha_{1},
\beta_{n}^{1}),\ \ \mathbb{P}\mbox{-a.s.}$$

For any $\varepsilon>0,$  $n\geq 1,$  we let
$$\Lambda_n:=\{I_\delta(t, x,
\alpha_1)\geq I_\delta(t, x, \alpha_{1}, \beta_{n}^{1})-\varepsilon,
I_\delta(t, x, \alpha_1)<I_\delta(t, x, \alpha_{1},
\beta_{j}^{1})-\varepsilon, 1\leq j\leq n-1\}\in {\mathcal{F}}_{t}.
$$ Then, $\{\Lambda_n\}$ is a partition  of $(\Omega,
{\mathcal{F}}_{t})$. We define $\beta^\varepsilon_1:=\sum_{n\geq 1}
1_{\Lambda_n}\beta_n^1$, and it can be proven that
$\beta^\varepsilon_1\in {\mathcal{B}}_{t, t+\delta}.$  Thanks to the
uniqueness of the solution of the FBSDE we have $$I_\delta(t, x,
\alpha_1, \beta_1^{\varepsilon})=\sum_{n\geq 1}
1_{\Lambda_n}I_\delta(t, x, \alpha_1, \beta_n^{1}),\
\mathbb{P}\mbox{-a.s.}$$ Indeed, this relation can be proved with an
argument analogous to that for (\ref{e32}).  Therefore,
\begin{eqnarray*}
W_\delta(t,x)\geq I_\delta(t, x, \alpha_1)&\geq &\sum_{n\geq
1}1_{\Lambda_n}I_\delta(t, x, \alpha_{1},
\beta_{n}^{1})-\varepsilon=I_\delta(t, x, \alpha_1,
\beta_1^{\varepsilon})-\varepsilon\\
&=& \ ^{i}G^{t,x;\alpha_1,\beta_1^{\varepsilon}}_{t,t+\delta}
[W_{N_{t+\delta}^{t,i}}(t+\delta,
X^{t,x;\alpha_1,\beta_1^{\varepsilon}}_{t+\delta})]-\varepsilon,\
\mathbb{P}\mbox{-a.s.}
\end{eqnarray*}
 Since $(\alpha_{1},
\beta_{1}^{\varepsilon}) \in {\mathcal{A}}_{t,t+\delta} \times
{\mathcal{B}}_{t,t+\delta}$,  by Lemma \ref{l5}  there exists an
unique pair $(u_{1}^{\varepsilon}, v_{1}^{\varepsilon})\in
{\mathcal{U}}_{t,t+\delta} \times {\mathcal{V}}_{t,t+\delta}$ such
that $\alpha_{1}(v_{1}^{\varepsilon})= u_{1}^{\varepsilon}$ and
$\beta_{1}^{\varepsilon}(u_{1}^{\varepsilon})= v_{1}^{\varepsilon}$.
 We also define
$\alpha_{2}^{\varepsilon}(v_{2}):=\alpha(v_1^{\varepsilon}\oplus v_2
)|_{[t+\delta,T]}, \ v_{2}\in {\mathcal{V}}_{t+\delta, T}$.

For any $y\in {\mathbb{R}}^n, k=1,2,$ from the definition of
$W_{k}(t+\delta,y)$  it follows that
$$W_{k}(t+\delta,y)\geq \essinf_{\beta_2 \in {\mathcal{B}}_{t+\delta, T}}
J_{k}(t+\delta, y; \alpha_2^{\varepsilon}, \beta_2),\
\mathbb{P}\mbox{-a.s,}\  k=1,2.$$ Using the Lipschitz continuity of
$W_{k}(t+\delta,\cdot)$ and $J_{k}(t+\delta,\cdot)$  we can prove by
approximating $X^{t,x;\alpha_1,\beta_1^\varepsilon}_{t+\delta}$ by a
finite-valued random variable  that
\begin{eqnarray*}
W_{k}(t+\delta, X^{t,x;\alpha_1,\beta_1^\varepsilon}_{t+\delta}
)\geq \essinf_{\beta_2 \in {\mathcal{B}}_{t+\delta,
T}}J_{k}(t+\delta, X^{t,x;\alpha_1,\beta_1^\varepsilon}_{t+\delta};
\alpha_2^{\varepsilon}, \beta_2),\ \mathbb{P}\mbox{-a.s,} \ k=1,2.
\end{eqnarray*}
Therefore,
\begin{eqnarray}\label{e18}
W_{N_{t+\delta}^{t,i}}(t+\delta,
X^{t,x;\alpha_1,\beta_1^\varepsilon}_{t+\delta} )\geq
\essinf_{\beta_2 \in {\mathcal{B}}_{t+\delta,
T}}J_{N_{t+\delta}^{t,i}}(t+\delta,
X^{t,x;\alpha_1,\beta_1^\varepsilon}_{t+\delta};
\alpha_2^{\varepsilon}, \beta_2),\ \mathbb{P}\mbox{-a.s.}
\end{eqnarray}
Moreover, there exists some sequence $\{\beta_n^2,\ n\geq 1\}\subset
{\mathcal{B}}_{t+\delta, T}$\ such that
$$\essinf_{\beta_2 \in {\mathcal{B}}_{t+\delta, T}}J_{N_{t+\delta}^{t,i}}(t+\delta,
X^{t,x;\alpha_1,\beta_1^\varepsilon}_{t+\delta};
\alpha_2^{\varepsilon}, \beta_2)=\mbox{inf}_{n\geq
1}J_{N_{t+\delta}^{t,i}}(t+\delta,
X^{t,x;\alpha_1,\beta_1^\varepsilon}_{t+\delta};
\alpha_2^{\varepsilon}, \beta_n^{2}),\ \mathbb{P}\mbox{-a.s,}$$ and
we set, for $n\geq 1,$
\begin{eqnarray*}
\Delta_n:&=&\Big\{\essinf_{\beta_2 \in {\mathcal{B}}_{t+\delta,
T}}J_{N_{t+\delta}^{t,i}}(t+\delta,
X^{t,x;\alpha_1,\beta_1^\varepsilon}_{t+\delta};
\alpha_2^{\varepsilon}, \beta_2)\geq
J_{N_{t+\delta}^{t,i}}(t+\delta,
X^{t,x;\alpha_1,\beta_1^\varepsilon}_{t+\delta};
\alpha_2^{\varepsilon}, \beta_n^{2})-\varepsilon, \\&& \quad
\essinf_{\beta_2 \in {\mathcal{B}}_{t+\delta,
T}}J_{N_{t+\delta}^{t,i}}(t+\delta,
X^{t,x;\alpha_1,\beta_1^\varepsilon}_{t+\delta};
\alpha_2^{\varepsilon}, \beta_2)< J_{N_{t+\delta}^{t,i}}(t+\delta,
X^{t,x;\alpha_1,\beta_1^\varepsilon}_{t+\delta};
\alpha_2^{\varepsilon}, \beta_j^{2})-\varepsilon,\\ && \quad 1\leq j
\leq n-1\Big\}\in {\mathcal{F}}_{t+\delta}.
\end{eqnarray*}
Obviously,  $\{\Delta_n,  n\geq 1\}$ is a partition  of $(\Omega,
{\mathcal{F}}_{t+\delta})$. Let us define
$$\beta_2^\varepsilon:=\sum_{n\geq
1} 1_{\Delta_n}\beta_n^{2},\quad  \beta^{\varepsilon}(u_1\oplus
u_2):=\beta_1^\varepsilon(u_1)\oplus \beta_2^\varepsilon(u_2),$$ for
$u_{1}\in {\mathcal{U}}_{t,t+\delta}$ and $u_{2}\in
{\mathcal{U}}_{t+\delta,T}$.  Then, from the uniqueness for the
equations (\ref{e3}) and (\ref{e2}), combined with Lemma  \ref{l5},
it follows that
\begin{eqnarray*}
J_{N_{t+\delta}^{t,i}}(t+\delta,X^{t,x;\alpha_1,\beta_1^\varepsilon}_{t+\delta};
\alpha_2^{\varepsilon},\beta_2^\varepsilon) &=&\
^{N_{t+\delta}^{t,i}}Y_{t+\delta}^{t+\delta,X^{t,x;\alpha_1,\beta_1^\varepsilon}_{t+\delta};
\alpha_2^{\varepsilon},\beta_2^\varepsilon}\\
&=&\sum_{n\geq 1}
1_{\Delta_n}J_{N_{t+\delta}^{t,i}}(t+\delta,X^{t,x;\alpha_1,\beta_1^\varepsilon}_{t+\delta};
\alpha_2^{\varepsilon},\beta_n^2),\ \mathbb{P}\mbox{-a.s,}
\end{eqnarray*}
which together with (\ref{e18}) yields
\begin{eqnarray*}
W_{N_{t+\delta}^{t,i}}(t+\delta,
X^{t,x;\alpha_1,\beta_1^\varepsilon}_{t+\delta} )&\geq&
\essinf_{\beta_2 \in {\mathcal{B}}_{t+\delta,
T}}J_{N_{t+\delta}^{t,i}}(t+\delta,
X^{t,x;\alpha_1,\beta_1^\varepsilon}_{t+\delta};
\alpha_2^{\varepsilon}, \beta_2)\\ &\geq&\sum_{n\geq 1}
1_{\Delta_n}J_{N_{t+\delta}^{t,i}}(t+\delta,X^{t,x;\alpha_1,\beta_1^\varepsilon}_{t+\delta};
\alpha_2^\varepsilon,\beta_n^2)-\varepsilon\\
&=&J_{N_{t+\delta}^{t,i}}(t+\delta,X^{t,x;\alpha_1,\beta_1^\varepsilon}_{t+\delta};
\alpha_2^\varepsilon,\beta_2^\varepsilon)-\varepsilon\\
&=&\
^{N_{t+\delta}^{t,i}}Y_{t+\delta}^{t+\delta,X^{t,x;\alpha_1,\beta_1^\varepsilon}_{t+\delta};
\alpha_2^\varepsilon,\beta_2^\varepsilon}-\varepsilon.
\end{eqnarray*}
Let $\alpha^\varepsilon(v_{1}\oplus v_{2}):=\alpha_1(v_{1})\oplus
\alpha_2^\varepsilon(v_{2}), \ v_{1}\in\mathcal {V}_{t,t+\delta},
v_{2}\in\mathcal {V}_{t+\delta,T}.$   Then
$\alpha^\varepsilon(v)|_{[t,t+\delta]}=\alpha(v)|_{[t,t+\delta]}, \
v\in\mathcal {V}_{t,T}.$ Thus,
\begin{eqnarray*}
^{N_{t+\delta}^{t,i}}Y_{t+\delta}^{t+\delta,X^{t,x;\alpha_{1},\beta_{1}^\varepsilon}_{t+\delta};
\alpha_{2}^{\varepsilon},\beta_{2}^\varepsilon}=\
^{i}Y_{t+\delta}^{t,x;\alpha^{\varepsilon},\beta^\varepsilon}.
\end{eqnarray*}
Consequently, by virtue of the Lemmas \ref{l8} and \ref{l1} we
conclude
\begin{eqnarray*}
W_\delta(t,x)\geq I_\delta(t, x, \alpha_1)&\geq &\
^{i}G^{t,x;\alpha_1,\beta_1^{\varepsilon}}_{t,t+\delta}
[W_{N_{t+\delta}^{t,i}}(t+\delta,
X^{t,x;\alpha_1,\beta_1^{\varepsilon}}_{t+\delta})]-\varepsilon
\\&\geq & \ ^{i}G^{t,x;\alpha_1,\beta_1^{\varepsilon}}_{t,t+\delta} [\
^{i}Y_{t+\delta}^{t,x;\alpha^{\varepsilon},\beta^\varepsilon}-\varepsilon]-\varepsilon\\
&\geq &\
^{i}G^{t,x;\alpha^{\varepsilon},\beta^{\varepsilon}}_{t,t+\delta} [\
^{i}Y_{t+\delta}^{t,x;\alpha^{\varepsilon},\beta^\varepsilon}]-C\varepsilon\\&
=&\
^{i}Y_{t}^{t,x;\alpha^\varepsilon,\beta^\varepsilon}-C\varepsilon =\
^{i}Y_{t}^{t,x;\alpha,\beta^\varepsilon}-C\varepsilon,
\end{eqnarray*}
where, for the latter  equality, we have used Lemma \ref{l5}.
Indeed, letting $(u_{2}^{\varepsilon},v_{2}^{\varepsilon})\in
\mathcal {U}_{t+\delta,T}\times \mathcal {V}_{t+\delta,T}$ be
associated with $(\alpha_{2}^{\varepsilon},\beta_{2}^{\varepsilon})$
by Lemma \ref{l5} we have
\begin{eqnarray*}
&&\beta^\varepsilon(u_{1}^{\varepsilon}\oplus
u_{2}^{\varepsilon})=\beta_{1}^\varepsilon(u_{1}^{\varepsilon})\oplus\beta_{2}^\varepsilon(u_{2}^{\varepsilon})
=v_{1}^{\varepsilon}\oplus v_{2}^{\varepsilon},\\
&&\alpha^\varepsilon(v_{1}^{\varepsilon}\oplus
v_{2}^{\varepsilon})=\alpha_{1}^\varepsilon(v_{1}^{\varepsilon})\oplus\alpha^\varepsilon(v_{1}^{\varepsilon}\oplus
v_{2}^{\varepsilon})|_{[t+\delta,T]} =u_{1}^{\varepsilon}\oplus
u_{2}^{\varepsilon},
\end{eqnarray*}
but also $\alpha^\varepsilon(v_{1}^{\varepsilon}\oplus
v_{2}^{\varepsilon})=\alpha(v_{1}^{\varepsilon}\oplus
v_{2}^{\varepsilon}).$  Recalling now the arbitrariness  of
$\alpha\in \mathcal {A}_{t,T}$ we conclude that $ W_\delta(t,x)\geq
 W_{i}(t,x)- C\varepsilon.$
Finally, letting $\varepsilon\downarrow0$ we have $W_\delta(t,x)\geq
W_{i}(t, x).$
\end{proof}
From the dynamic programming principle (Theorem  \ref{t3}) and
standard arguments for BSDEs (see Peng \cite{P1997} or Buckdahn and
Li \cite{BL2006}) we obtain  the following proposition.
\begin{proposition}\label{p6}
 Under the assumptions $(H4)$ and $(H5)$, $W(t,x)$ is
 $\frac{1}{2}-$H\"{o}lder continuous in $t$, i.e., there exists a positive constant $C$ such that,
  for all  $x\in {\mathbb{R}}^n,\ t, t'\in [0, T]$,
  $$
|W_{i}(t, x)-W_{i}(t', x)|\leq C(1+|x|)|t-t'|^{\frac{1}{2}}.
  $$
 \end{proposition}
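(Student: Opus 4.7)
\textbf{Proof plan for Proposition \ref{p6}.}
The strategy is to exploit the DPP for deterministic times (Theorem \ref{t3}) on the interval $[t,t']$, reduce the time increment estimate to a BSDE stability estimate by comparing $^{i}G^{t,x;\alpha,\beta}_{t,t'}[W_{N^{t,i}_{t'}}(t', X^{t,x;\alpha,\beta}_{t'})]$ with the constant process $W_{i}(t',x)$, and finally use the $x$-Lipschitz continuity of $W_i$ (Proposition~\ref{p4}), SDE estimates (\ref{e22}), and small jump probabilities to conclude.

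Fix $0\le t<t'\le T$ and set $\delta=t'-t$. By Theorem \ref{t3},
\begin{eqnarray*}
W_{i}(t,x)=\esssup_{\alpha\in\mathcal{A}_{t,t'}}\essinf_{\beta\in\mathcal{B}_{t,t'}} \,^{i}G^{t,x;\alpha,\beta}_{t,t'}[W_{N^{t,i}_{t'}}(t', X^{t,x;\alpha,\beta}_{t'})],
\end{eqnarray*}
whereas $W_{i}(t',x)$ is the deterministic constant $^{i}G^{t,x;\alpha,\beta}_{t,t'}[W_{i}(t',x)]$ (via the trivial BSDE on $[t,t']$ with driver $0$ and terminal value $W_i(t',x)$). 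Hence, uniformly in $(\alpha,\beta)$,
\begin{eqnarray*}
|W_{i}(t,x)-W_{i}(t',x)| \le \esssup_{\alpha,\beta}\bigl|\,^{i}G^{t,x;\alpha,\beta}_{t,t'}[W_{N^{t,i}_{t'}}(t', X^{t,x;\alpha,\beta}_{t'})]-W_{i}(t',x)\bigr|.
\end{eqnarray*}
Applying the BSDE stability estimate of Lemma \ref{l1} (with the Lipschitz driver $\tilde f(s,y,z,k)=-\lambda k$ and additive perturbations absorbing the $f_{N^{t,i}_{s}}(s,X^{t,x;\alpha,\beta}_{s},\cdot,\cdot,\cdot,u_s,v_s)$ term evaluated at the two solutions), I bound the right-hand side by
\begin{eqnarray*}
&&C\,\mathbb{E}\bigl[|W_{N^{t,i}_{t'}}(t',X^{t,x;\alpha,\beta}_{t'})-W_{i}(t',x)|^{2}\,\big|\,\mathcal{F}_{t}\bigr]^{1/2}\\
&& \hskip1cm +\,C\,\mathbb{E}\Bigl[\int_{t}^{t'}|f_{N^{t,i}_{r}}(r,X^{t,x;\alpha,\beta}_{r},W_{i}(t',x),0,0,u_{r},v_{r})|^{2}dr\,\Big|\,\mathcal{F}_{t}\Bigr]^{1/2}.
\end{eqnarray*}

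For the first (terminal) term I split according to whether $N^{t,i}_{t'}=i$. On $\{N^{t,i}_{t'}=i\}=\{N_{t'}-N_t\text{ even}\}$ the terminal difference equals $W_{i}(t',X^{t,x;\alpha,\beta}_{t'})-W_{i}(t',x)$, which by Proposition~\ref{p4} is dominated by $C|X^{t,x;\alpha,\beta}_{t'}-x|$, whose conditional $L^2$-norm is $\le C(1+|x|)\delta^{1/2}$ by (\ref{e22}). On the complementary event $\{N^{t,i}_{t'}\neq i\}$ the linear growth of $W_{1},W_{2}$ (Proposition~\ref{p4}) combined with $\mathbb{P}(N_{t'}-N_{t}\text{ odd}\,|\mathcal{F}_t)=\tfrac{1}{2}(1-e^{-2\lambda\delta})\le C\delta$ yields a bound $\le C(1+|x|)\delta^{1/2}$. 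For the second (driver) term, the Lipschitz/linear growth of $f_{j}$ in $(x,y,z,k)$, the boundedness in $(u,v)$ (compact metric spaces), $|W_i(t',x)|\le C(1+|x|)$, and $\mathbb{E}[|X^{t,x;\alpha,\beta}_{r}|^{2}|\mathcal{F}_t]\le C(1+|x|^{2})$ give an integrand bounded by $C(1+|x|)^{2}$, hence a contribution $\le C(1+|x|)\delta^{1/2}$. Combining these estimates yields the claim.

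The main (and only nontrivial) obstacle is in handling the jump-mismatch event $\{N^{t,i}_{t'}\neq i\}$: unlike the pure-diffusion setting, the terminal value $W_{N^{t,i}_{t'}}(t',X^{t,x;\alpha,\beta}_{t'})$ need not be close to $W_{i}(t',x)$ for small $\delta$. The saving observation is that, although the difference $W_{i'}(t',X^{t,x;\alpha,\beta}_{t'})-W_{i}(t',x)$ is only bounded by $C(1+|x|)$ on this event, the event itself has probability $O(\delta)$, so the $L^2$-contribution is $O((1+|x|)\delta^{1/2})$ and matches the Brownian scaling. Everything else is essentially a direct combination of the tools already developed in the paper (DPP, SDE estimates, BSDE stability, spatial Lipschitz continuity).
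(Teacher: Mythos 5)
Your argument is correct and is essentially the route the paper takes: the paper dispatches Proposition \ref{p6} itself by reference to ``standard BSDE arguments,'' but the detailed proof it gives for the stopping-time analogue (Proposition \ref{p7}) is exactly your decomposition --- a spatial term handled by the Lipschitz continuity in $x$ plus the SDE estimate (\ref{e22}), a jump-index-mismatch term controlled by the $O(\delta)$ probability that the Poisson process jumps on $[t,t']$, and a driver-integral term bounded by $C(1+|x|)\delta^{1/2}$ via the Schwarz inequality. One cosmetic slip: $W_i(t',x)$ is \emph{not} equal to $^iG^{t,x;\alpha,\beta}_{t,t'}[W_i(t',x)]$ (the backward semigroup's driver is not zero); what you actually use --- and what is correct --- is the comparison, via Lemma \ref{l1}, of the $^iG$-BSDE with the zero-driver BSDE having terminal value $W_i(t',x)$, taking the frozen driver $f_{N^{t,i}_{\cdot}}(\cdot,X_{\cdot},W_i(t',x),0,0,u_{\cdot},v_{\cdot})$ as the additive perturbation, which yields precisely the two terms you then estimate.
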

\subsubsection{Dynamic programming principle for stopping times}
The objective of this subsection is to obtain the dynamic
programming principle for stopping times. For this end, we need the
following proposition, which turns out to be crucial in our
approach. \vskip2mm
\begin{proposition}
  For $0\leq t \leq T$, let $\tau$ be a stopping time such that
 $t\leq \tau \leq T$. Then
 we have
\begin{eqnarray*}
\esssup_{\alpha_{1}\in\mathcal {A}_{\tau,T}}
\essinf_{\beta_{1}\in\mathcal {B}_{\tau,T}}\
^{i}Y^{\tau,x,\alpha_{1},\beta_{1}}_{\tau}=\esssup_{\alpha\in\mathcal
{A}_{t,T}} \essinf_{\beta\in\mathcal {B}_{t,T}}\
^{i}Y^{\tau,x,\alpha,\beta}_{\tau}.
\end{eqnarray*}
 \end{proposition}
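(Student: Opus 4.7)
The plan is to prove both inequalities by constructing explicit extension and restriction maps between $\mathcal{A}_{t,T}\times\mathcal{B}_{t,T}$ and $\mathcal{A}_{\tau,T}\times\mathcal{B}_{\tau,T}$, exploiting that (via Lemma \ref{l5} and nonanticipativity) $^{i}Y^{\tau,x,\alpha,\beta}_{\tau}$ depends on the strategy pair only through the restriction of the associated fixed-point controls to $[\tau,T]$.

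For the direction $\leq$, fix arbitrary $u_0\in U$, $v_0\in V$, and given $\alpha_1\in\mathcal{A}_{\tau,T}$, extend it to $\alpha\in\mathcal{A}_{t,T}$ by $\alpha(v)\equiv u_0$ on $[t,\tau]$ and $\alpha(v)|_{[\tau,T]}:=\alpha_1(v|_{[\tau,T]})$. For any $\beta\in\mathcal{B}_{t,T}$, the fixed point of $(\alpha,\beta)$ necessarily satisfies $u|_{[t,\tau]}\equiv u_0$; setting $\widehat{\beta}_1(u_1):=\beta(u_0\oplus u_1)|_{[\tau,T]}\in\mathcal{B}_{\tau,T}$, the restriction $(u|_{[\tau,T]},v|_{[\tau,T]})$ is exactly the fixed point of $(\alpha_1,\widehat{\beta}_1)$, whence $^{i}Y^{\tau,x,\alpha,\beta}_{\tau}=\ ^{i}Y^{\tau,x,\alpha_1,\widehat{\beta}_1}_{\tau}$. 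The map $\beta\mapsto\widehat{\beta}_1$ is surjective (with right inverse the symmetric extension via $\bar v\equiv v_0$), so the two families of values coincide, their essinfs are equal, and taking esssup over $\alpha_1$ yields LHS $\leq$ RHS.

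For the direction $\geq$, fix $\alpha\in\mathcal{A}_{t,T}$ and $\bar v\equiv v_0$, and set $\alpha_1(v_1):=\alpha(\bar v\oplus v_1)|_{[\tau,T]}\in\mathcal{A}_{\tau,T}$. For each $\beta_1\in\mathcal{B}_{\tau,T}$, the extension $\beta[\beta_1]\in\mathcal{B}_{t,T}$ with $\beta[\beta_1]\equiv\bar v$ on $[t,\tau]$ forces $v|_{[t,\tau]}\equiv\bar v$ in the fixed point, so that $(u|_{[\tau,T]},v|_{[\tau,T]})$ is again the fixed point of $(\alpha_1,\beta_1)$, giving $^{i}Y^{\tau,x,\alpha,\beta[\beta_1]}_{\tau}=\ ^{i}Y^{\tau,x,\alpha_1,\beta_1}_{\tau}$. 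Since $\{\beta[\beta_1]:\beta_1\in\mathcal{B}_{\tau,T}\}\subseteq\mathcal{B}_{t,T}$, monotonicity of essinf yields $\essinf_{\beta_1}{}^{i}Y^{\tau,x,\alpha_1,\beta_1}_{\tau}\geq\essinf_{\beta}{}^{i}Y^{\tau,x,\alpha,\beta}_{\tau}$, hence LHS $\geq\essinf_\beta\ ^{i}Y^{\tau,x,\alpha,\beta}_{\tau}$ for every $\alpha$; taking esssup over $\alpha$ concludes.

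The main obstacle is verifying that all constructed extensions and restrictions actually belong to the NAD strategy classes, i.e.\ satisfy the delay condition of Definition \ref{d1}. For the extension of $\alpha_1$ (with delays $\tau=S'_0\leq S'_1\leq\cdots$, where $S'_1$ is $\mathcal{F}_\tau$-measurable by the first step of the NAD condition for $\alpha_1$), I would set $S_0:=t$, $S_1:=\tau$, $S_{n+1}:=S'_n$ for $n\geq 1$, and check conditions i)--iv) inductively: the case $n=1$ is trivial since $\alpha\equiv u_0$ on $[t,\tau]$, the case $n=2$ uses $\mathcal{F}_\tau$-measurability of $S'_1$, and subsequent cases descend directly from the NAD property of $\alpha_1$. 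For the restriction of $\alpha$ (with delays $\{S_n\}$ on $[t,T]$), I would set $S'_0:=\tau$ and $S'_n(v_1):=S_{N(v_1)+n}(\bar v\oplus v_1)$ with $N(v_1):=\min\{n\geq 0:S_n(\bar v\oplus v_1)\geq\tau\}$, and inherit the delay property from $\alpha$. The analogous constructions for $\beta$ are identical.
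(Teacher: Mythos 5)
Your proof is correct and follows essentially the same route as the paper's: both directions are obtained by concatenating/restricting NAD strategies with a fixed control on $[t,\tau]$, identifying the fixed-point controls on $[\tau,T]$ via Lemma \ref{l5} and nonanticipativity, and comparing essential infima over a class and a subclass. The only cosmetic difference is that in the ``$\leq$'' direction the paper passes through an $\varepsilon$-optimal $\alpha_{1}^{\varepsilon}$, whereas you argue directly for each $\alpha_{1}$ using the surjectivity of $\beta\mapsto\widehat{\beta}_{1}$ (only the inclusion of the image in $\mathcal{B}_{\tau,T}$ is actually needed); your sketch of the delay-time verification is consistent with what the paper leaves as ``straight-forward.''
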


\begin{proof} We give the proof in two steps.\vskip2mm

\noindent {\bf Step 1:} For all $\alpha\in\mathcal {A}_{t,T}$ and an
arbitrary fixed $v^{0}\in\mathcal {V}_{t,\tau}$,  let us  define
 $$\alpha_{1}(v^{1}):=\alpha(v^{0}\oplus v^{1})|_{[\tau,T]},
  v^{1}\in\mathcal {V}_{\tau,T},\ \text{and} \
\alpha_{0}(v^{0}):=\alpha(v^{0}\oplus v^{1})|_{[t,\tau]},
  v^{1}\in\mathcal {V}_{\tau,T}.$$
It is straight-forward to check that  $\alpha_{1}\in\mathcal
{A}_{\tau,T}$. Since $\alpha$ is nonanticipative,
$\alpha_{0}(v^{0})$ only depends on $v^{0}$, but not on $v^{1}$. We
put $u^{0}:=\alpha_{0}(v^{0})$ and define, for $ u\in\mathcal
{U}_{t,T}$ and $\beta_{1}\in\mathcal {B}_{\tau,T}$
 $\beta(u):=v^{0}\oplus\beta_{1}(u^{1}), u^{1}=u|_{[\tau,T]}.$
Then $\beta\in\mathcal {B}_{t,T}$.

 Since $\alpha_{1}\in\mathcal {A}_{\tau,T}$ and
$\beta_{1}\in\mathcal {B}_{\tau,T}$, by Lemma \ref{l5}, we have the
existence of a unique couple
$(\widetilde{u}^{1},\widetilde{v}^{1})\in \mathcal {U}_{\tau,T}
\times \mathcal {V}_{\tau,T}$ such that
$\alpha_{1}(\widetilde{v}^{1})=\widetilde{u}^{1},
\beta_{1}(\widetilde{u}^{1})=\widetilde{v}^{1}.$  On the other hand,
for $\alpha\in\mathcal {A}_{t,T}$ and $\beta\in\mathcal {B}_{t,T}$,
there exists a unique couple $(u^{*},v^{*})\in \mathcal {U}_{t,T}
\times \mathcal {V}_{t,T}$ such that $ \alpha(v^{*})=u^{*}, \quad
\beta(u^{*})=v^{*}.$  Consequently,
 $$\alpha(v^{0}\oplus \widetilde{v}^{1})=\alpha_{0}(v^{0})
 \oplus \alpha_{1}(\widetilde{v}^{1})=u_{0}\oplus\widetilde{u}^{1},\
 \beta(u^{0}\oplus \widetilde{u}^{1})=v^{0}
 \oplus \beta_{1}(\widetilde{u}^{1})=v_{0}\oplus\widetilde{v}^{1}.$$
From the uniqueness of $(u^{*},v^{*})$ it follows that $
\widetilde{u}^{1}=u^{*}|_{[\tau,T]},\quad
\widetilde{v}^{1}=v^{*}|_{[\tau,T]}.$  Therefore,
\begin{eqnarray*}
^{i}Y^{\tau,x,\alpha_{1},\beta_{1}}_{\tau} =\
^{i}Y^{\tau,x,\widetilde{u}^{1},\widetilde{u}^{1}}_{\tau} =\
^{i}Y^{\tau,x,u^{*},v^{*}}_{\tau} =\
^{i}Y^{\tau,x,\alpha,\beta}_{\tau}.
\end{eqnarray*}
 For $\alpha\in\mathcal
{A}_{t,T}$,
\begin{eqnarray*}
&&\mathrm{esssup}_{\overline{\alpha}_{1}\in\mathcal {A}_{\tau,T}}
\mathrm{essinf}_{\beta_{1}\in\mathcal {B}_{\tau,T}}\
^{i}Y^{\tau,x,\overline{\alpha}_{1},\beta_{1}}_{\tau}\geq
\mathrm{essinf}_{\beta_{1}\in\mathcal {B}_{\tau,T}}\
^{i}Y^{\tau,x,\alpha_{1},\beta_{1}}_{\tau}\geq
\mathrm{essinf}_{\beta\in\mathcal {B}_{t,T}}\
^{i}Y^{\tau,x,\alpha,\beta}_{\tau}.
\end{eqnarray*}
The latter estimate takes into account that we associated with
$\beta_{1}$ a particular $\beta$,
$\beta(u)=v^{0}\oplus\beta_{1}(u^{1}),\ \ u^{1}=u|_{[\tau,T]},$
independently of $\alpha$.  Consequently,
\begin{eqnarray*}
\mathrm{esssup}_{\alpha_{1}\in\mathcal {A}_{\tau,T}}
\mathrm{essinf}_{\beta_{1}\in\mathcal {B}_{\tau,T}}\
^{i}Y^{\tau,x,\alpha_{1},\beta_{1}}_{\tau}\geq\mathrm{esssup}_{\alpha\in\mathcal
{A}_{t,T}} \mathrm{essinf}_{\beta\in\mathcal {B}_{t,T}}\
^{i}Y^{\tau,x,\alpha,\beta}_{\tau}.
\end{eqnarray*}

\noindent {\bf Step 2:} \ For all $\varepsilon>0$, there exists
$\alpha^{\varepsilon}_{1}\in\mathcal {A}_{\tau,T}$ such that
\begin{eqnarray}\label{e15}
&&\mathrm{esssup}_{\alpha_{1}\in\mathcal {A}_{\tau,T}}
\mathrm{essinf}_{\overline{\beta}_{1}\in\mathcal {B}_{\tau,T}}\
^{i}Y^{\tau,x,\alpha_{1},\overline{\beta}_{1}}\leq
\mathrm{essinf}_{\overline{\beta}_{1}\in\mathcal {B}_{\tau,T}}\
^{i}Y^{\tau,x,\alpha_{1}^{\varepsilon},\overline{\beta}_{1}}+\varepsilon.
\end{eqnarray}

For all $\beta\in\mathcal {B}_{t,T}$ and an arbitrary fixed
$u^{0}\in\mathcal {U}_{t,\tau}$, we define, for $u^{1}\in\mathcal
{U}_{\tau,T}$ and $v\in\mathcal {V}_{t,T}$,
\begin{eqnarray}\label{e35}
\beta_{1}(u^{1}):=\beta(u^{0}\oplus u^{1})|_{[\tau,T]},\ \
\alpha^{\varepsilon}(v):=u^{0}\oplus\alpha^{\varepsilon}_{1}(v^{1}),\
v^{1}=v|_{[\tau,T]}.
\end{eqnarray}
 Then $\alpha^{\varepsilon}\in\mathcal
{A}_{t,T}$ and $\beta_{1}\in\mathcal {B}_{\tau,T}$.

Since  $\alpha^{\varepsilon}\in\mathcal {A}_{t,T}$ and
$\beta\in\mathcal {B}_{t,T}$, by Lemma \ref{l5} we have the
existence of  a unique couple $(u^{*},v^{*})\in \mathcal {U}_{t,T}
\times \mathcal {V}_{t,T}$ such that
$\alpha^{\varepsilon}(v^{*})=u^{*}, \beta(u^{*})=v^{*}.$  On the
other hand, from $\alpha^{\varepsilon}_{1}\in\mathcal {A}_{\tau,T}$
and $\beta_{1}\in\mathcal {B}_{\tau,T}$ it follows that  there
exists a unique couple $(\widetilde{u}^{1},\widetilde{v}^{1})\in
\mathcal {U}_{\tau,T} \times \mathcal {V}_{\tau,T}$ such that$
\alpha^{\varepsilon}_{1}(\widetilde{v}^{1})=\widetilde{u}^{1},
\beta_{1}(\widetilde{u}^{1})=\widetilde{v}^{1}.$   Since $\beta$ is
nonanticipative,  $\beta(u^{0}\oplus \widetilde{u}^{1})|_{[t,\tau]}$
dependents only on $u^{0}$ and not on $\widetilde{u}^{1}$. We  put
$\beta_{0}(\overline{u}^{0})=\beta(\overline{u}^{0}\oplus
\widetilde{u}^{1})|_{[t,\tau]}, \overline{u}^{0}\in \mathcal
{U}_{t,\tau}.$ Consequently, $\beta_{0}\in \mathcal {B}_{t,\tau},$
and
 $$\beta(u^{0}\oplus \widetilde{u}^{1})
 =\beta(u^{0}\oplus \widetilde{u}^{1})|_{[t,\tau]}\oplus\beta_{1}(\widetilde{u}^{1})
 =\beta_{0}(u^{0})\oplus\beta_{1}(\widetilde{u}^{1})=\beta_{0}(u^{0})\oplus\widetilde{v}^{1},$$
 $$\alpha^{\varepsilon}(\beta_{0}(u^{0})\oplus \widetilde{v}^{1})
  =u^{0}\oplus\alpha^{\varepsilon}_{1}(\widetilde{v}^{1})=u^{0}\oplus\widetilde{u}^{1}.$$
Due to the uniqueness of $(u^{*}, v^{*})$ we obtain
 $v^{*}=\beta_{0}(u^{0})\oplus\widetilde{v}^{1},\
u^{*}=u^{0}\oplus\alpha^{\varepsilon}_{1}(\widetilde{v}^{1})=u^{0}\oplus\widetilde{u}^{1}.$
Therefore,
\begin{eqnarray*}
^{i}Y^{\tau,x,\alpha_{1}^{\varepsilon},\beta_{1}}_{\tau} =\
^{i}Y^{\tau,x,\widetilde{u}^{1},\widetilde{v}^{1}}_{\tau} =\
^{i}Y^{\tau,x,u^{*},v^{*}}_{\tau} =\
^{i}Y^{\tau,x,\alpha^{\varepsilon},\beta}_{\tau},
\end{eqnarray*}
from which combined with (\ref{e15}) we get
\begin{eqnarray*}
&&\mathrm{esssup}_{\alpha_{1}\in\mathcal {A}_{\tau,T}}
\mathrm{essinf}_{\overline{\beta}_{1}\in\mathcal {B}_{\tau,T}}\
^{i}Y^{\tau,x,\alpha_{1},\overline{\beta}_{1}}_{\tau}\leq\
\mathrm{essinf}_{\overline{\beta}_{1}\in\mathcal {B}_{\tau,T}}\
^{i}Y^{\tau,x,\alpha_{1}^{\varepsilon},\overline{\beta}_{1}}_{\tau}+\varepsilon\\
&&\leq\mathrm{essinf}_{\beta_\in\mathcal {B}_{t,T}}\
^{i}Y^{\tau,x,\alpha^{\varepsilon},\beta}_{\tau}+\varepsilon \leq
\mathrm{esssup}_{\alpha\in\mathcal {A}_{t,T}}
\mathrm{essinf}_{\beta_\in\mathcal {B}_{t,T}}\
^{i}Y^{\tau,x,\alpha,\beta}_{\tau}+\varepsilon.
\end{eqnarray*}
For the second estimate we used that $\beta_{1}$ in (\ref{e35}) is
defined with the help of $\beta$ and, thus, runs only a subclass of
$\mathcal {B}_{\tau,T}$. The above both steps allow  to conclude the
proof.
\end{proof}

For a stopping time $\tau$ with values in $[t,T]$ we define the
value functions for a game over the stochastic interval
$[[\tau,T]]:$
\begin{eqnarray*}
\overline{W}_{i}(\tau,x):&=&\mathrm{esssup}_{\alpha\in\mathcal
{A}_{\tau,T}} \mathrm{essinf}_{\beta\in\mathcal {B}_{\tau,T}}\
^{i}Y^{\tau,x,\alpha,\beta}_{\tau},\\
 \overline{U}_{i}(\tau,x):&=&
\mathrm{essinf}_{\beta\in\mathcal
{B}_{\tau,T}}\mathrm{esssup}_{\alpha\in\mathcal {A}_{\tau,T}}\
^{i}Y^{\tau,x,\alpha,\beta}_{\tau}.
\end{eqnarray*}

\begin{remark}
Obviously, $\overline{W}_{i}(t,x)=W_{i}(t,x),
\overline{U}_{i}(t,x)=U_{i}(t,x),$ for all $(t,x)\in[0,T]\times
\mathbb{R}^{n},$ and it can be checked in a straight-forward manner
that  for all discrete valued stopping times $\tau\ (0\leq \tau\leq
T)$, $ i=1,2,$
\begin{eqnarray*}
\overline{W}_{i}(\tau,x)&=&W_{i}(\tau,x)(:=W_{i}(t,x)|_{t=\tau}), \\
\overline{U}_{i}(\tau,x)&=&U_{i}(\tau,x)(:=U_{i}(t,x)|_{t=\tau}),
\quad \mathbb{P}-a.s.
\end{eqnarray*}
Our objective is to extend this result to general stopping times.
For this end we have to extend Proposition \ref{p4} and Theorem
\ref{t3}.
\end{remark}
Similar to Proposition \ref{p4} we have from   standard BSDEs
estimates  the following proposition.
\begin{proposition}\label{p5}
There exists a constant $C>0$\ such that, for all stopping time
$\tau$ $ (0 \leq \tau \leq T),\ x, x'\in {\mathbb{R}}^n$, $i=1,2,$
\begin{eqnarray*}
 |\overline{W}_{i}(\tau,x)-\overline{W}_{i}(\tau,x')| \leq
 C|x-x'|, \ \ |\overline{W}_{i}(\tau,x)| \leq C(1+|x|).
\end{eqnarray*}
 The same property holds for $\overline{U}_{i}$.
\end{proposition}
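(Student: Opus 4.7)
The plan is to follow the scheme of Proposition \ref{p4}, replacing the deterministic initial time by the stopping time $\tau$, and deducing the bounds from uniform (in the controls) a priori estimates for the FBSDE (\ref{e3})--(\ref{e2}) started at $\tau$.

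First I would establish the analogue of Proposition \ref{p3} at a stopping time: there exists a constant $C>0$ such that, for all stopping times $\tau$ with $0\leq \tau \leq T$, all $x,x'\in\mathbb{R}^n$, all $(u,v)\in\mathcal{U}_{\tau,T}\times\mathcal{V}_{\tau,T}$, and $i=1,2$,
$$|\ ^{i}Y^{\tau,x;u,v}_{\tau}-\ ^{i}Y^{\tau,x';u,v}_{\tau}|\leq C|x-x'|,\qquad |\ ^{i}Y^{\tau,x;u,v}_{\tau}|\leq C(1+|x|),\quad \mathbb{P}\text{-a.s.}$$
Since $b,\sigma$ are Lipschitz in $x$ with constants independent of $(t,u,v)$ and $f_{i},\Phi_{i}$ satisfy $(H5)$ uniformly, these estimates are obtained by the standard BSDE techniques (It\^o's formula applied to $e^{\beta s}|Y|^2$ together with the Burkholder--Davis--Gundy inequality) applied pathwise on $[[\tau,T]]$; alternatively, one can approximate $\tau$ from above by discrete-valued stopping times $\tau_{n}\downarrow\tau$, apply Proposition \ref{p3} at each deterministic value assumed by $\tau_{n}$ (yielding the inequality on each event $\{\tau_{n}=t\}\in\mathcal{F}_{t}$), assemble the result, and pass to the limit using Lemma \ref{l1}.

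Next, by Lemma \ref{l5}, every $(\alpha,\beta)\in\mathcal{A}_{\tau,T}\times\mathcal{B}_{\tau,T}$ corresponds to a unique pair $(u,v)\in\mathcal{U}_{\tau,T}\times\mathcal{V}_{\tau,T}$ with $\alpha(v)=u,\ \beta(u)=v$, and by our convention $^{i}Y^{\tau,x;\alpha,\beta}_{\tau}=\ ^{i}Y^{\tau,x;u,v}_{\tau}$; hence the same pathwise bounds are inherited, uniformly in $(\alpha,\beta)$. Finally, the elementary inequality for essential sup/inf of dominated families,
$$\bigl|\esssup_{\alpha}\essinf_{\beta}X^{\alpha,\beta}-\esssup_{\alpha}\essinf_{\beta}\widetilde{X}^{\alpha,\beta}\bigr|\leq\esssup_{\alpha}\esssup_{\beta}|X^{\alpha,\beta}-\widetilde{X}^{\alpha,\beta}|,$$
(proved by bounding $X^{\alpha,\beta}\leq \widetilde{X}^{\alpha,\beta}+|X^{\alpha,\beta}-\widetilde{X}^{\alpha,\beta}|$, taking $\essinf_{\beta}$ then $\esssup_{\alpha}$, and swapping the roles) directly yields the claimed Lipschitz estimate, while dominating $|\ ^{i}Y^{\tau,x;\alpha,\beta}_{\tau}|$ by $C(1+|x|)$ uniformly in $(\alpha,\beta)$ gives the linear growth bound. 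The argument for $\overline{U}_{i}$ is identical after exchanging $\esssup$ and $\essinf$.

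The only delicate point is the uniformity of the BSDE estimates with respect to the stopping time and the controls; this is what makes the essential sup/inf argument work without producing a $\tau$-dependent constant. Uniformity holds because the Lipschitz and linear growth constants of $b,\sigma,f_{i},\Phi_{i}$ are independent of $(t,u,v)$ and the Poisson intensity $\lambda$ is fixed, so the universal constant $C$ appearing in Lemma \ref{l1} can be chosen independently of $\tau$; everything else is a routine transcription of the proof of Proposition \ref{p4}.
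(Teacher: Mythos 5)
Your proposal is correct and follows essentially the same route as the paper, which simply invokes ``standard BSDE estimates'' in analogy with Proposition \ref{p4}: uniform-in-control Lipschitz and linear growth bounds for $^{i}Y^{\tau,x;u,v}_{\tau}$ on the stochastic interval $[[\tau,T]]$, transferred to $\overline{W}_{i}$ via Lemma \ref{l5} and the elementary stability of $\esssup$/$\essinf$ under uniformly dominated perturbations. Your observation that the constants in Lemma \ref{l1} depend only on the Lipschitz data and $T$, hence not on $\tau$, is precisely the point the paper leaves implicit.
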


\begin{theorem}\label{t4}
Let the assumptions $(H4)$ and $(H5)$ hold. Then the following
dynamic programming principles hold: For any stopping times
$\tau,\eta$ with $0\leq t<\tau \leq \eta\leq T,\ x\in
{\mathbb{R}}^n, i=1,2,$
\begin{eqnarray}\label{et4}
 \overline{W}_{i}(\tau,x) & =&\esssup_{\alpha
\in {\mathcal{A}}_{\tau, \eta}}\essinf_{\beta \in
{\mathcal{B}}_{\tau, \eta}}\ ^{i}G^{t,x;\alpha,\beta}_{\tau, \eta}
[\overline{W}_{N_{\eta}^{\tau,i}}(\eta,
X^{\tau,x;\alpha,\beta}_{\eta})],\\
 \overline{U}_{i}(\tau,x) & =&\essinf_{\beta \in
{\mathcal{B}}_{\tau, \eta}}\esssup_{\alpha \in {\mathcal{A}}_{\tau,
\eta}}\ ^{i}G^{t,x;\alpha,\beta}_{\tau, \eta}
[\overline{U}_{N_{\eta}^{\tau,i}}(\eta,
X^{\tau,x;\alpha,\beta}_{\eta})].\nonumber
\end{eqnarray}
\end{theorem}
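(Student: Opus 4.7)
The plan is to deduce the DPP for stopping times from the already proven DPP for deterministic times (Theorem \ref{t3}) by a two-step procedure: first establish the identity for discrete-valued stopping times via a pasting argument, then approximate general stopping times from above by discrete ones and pass to the limit using the time regularity of $\overline{W}_i$ and stability estimates for the backward semigroup. I will only write out the argument for $\overline{W}_i$; the case of $\overline{U}_i$ is entirely symmetric.

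\smallskip

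\textbf{Step 1 (discrete stopping times).} Assume first that $\tau = \sum_{k=1}^N s_k 1_{A_k}$ and $\eta = \sum_{k,l} r_{k,l} 1_{A_k \cap B_{k,l}}$, with $t \leq s_k \leq r_{k,l} \leq T$ deterministic, $\{A_k\}\subset \mathcal{F}_t$ and $\{B_{k,l}\}_l \subset \mathcal{F}_{s_k}$. On each atom of this partition, $\overline{W}_i(\tau,x) = W_i(s_k,x)$ and the right-hand side of \eqref{et4} reduces, by pasting NAD strategies in exactly the manner already used in the proofs of Lemmas \ref{l9} and \ref{l6}, to the right-hand side of the deterministic DPP \eqref{et3} applied at time $s_k$ with horizon $r_{k,l}$. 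The measurable pasting is justified by the uniqueness given by Lemma \ref{l5} together with the $\mathcal{F}_{s_k}$-measurability of $B_{k,l}$, and the result follows.

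\smallskip

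\textbf{Step 2 (approximation).} For general $\tau \leq \eta$, set $\tau_n := \sum_{k=1}^{2^n} t_k^n 1_{\{t_{k-1}^n \leq \tau < t_k^n\}}$ with $t_k^n = t + k(T-t)2^{-n}$, and similarly $\eta_n$, chosen so that $\tau_n \leq \eta_n$, $\tau_n \downarrow \tau$, $\eta_n \downarrow \eta$ and $\tau_n - \tau, \eta_n - \eta \leq 2^{-n}$. By Step 1,
\begin{equation*}
\overline{W}_i(\tau_n,x) = \esssup_{\alpha \in \mathcal{A}_{\tau_n,\eta_n}}\essinf_{\beta \in \mathcal{B}_{\tau_n,\eta_n}}\ {}^iG^{\tau_n,x;\alpha,\beta}_{\tau_n,\eta_n}\bigl[\overline{W}_{N_{\eta_n}^{\tau_n,i}}(\eta_n, X^{\tau_n,x;\alpha,\beta}_{\eta_n})\bigr].
\end{equation*}
I then want to let $n \to \infty$. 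For the left-hand side, I invoke the $\tfrac12$-Hölder continuity of $W_i$ in time (Proposition \ref{p6}) together with its Lipschitz continuity in $x$ (Proposition \ref{p5}). For the right-hand side, I apply Lemma \ref{l1} to compare the semigroups with terminal values at $\eta_n$ and at $\eta$, using: (i) the SDE estimate \eqref{e22} to bound $\mathbb{E}[\,\sup_{s\in[\eta,\eta_n]}|X_s - X_\eta|^2\,] \leq C 2^{-n}$; (ii) Proposition \ref{p5} and Proposition \ref{p6} to control $|\overline{W}_{k}(\eta_n,\cdot) - \overline{W}_k(\eta,\cdot)|$; (iii) the boundedness of the driver on the shrinking interval $[\tau, \tau_n]$ to absorb the semigroup extension from $\tau_n$ down to $\tau$. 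A further modification of strategies in $\mathcal{A}_{\tau_n,\eta_n}$ into strategies in $\mathcal{A}_{\tau,\eta}$ is obtained by concatenating with a fixed admissible control on $[[\tau,\tau_n]]$; the impact of this extension on the iterated esssup/essinf vanishes as $n\to\infty$ by the same BSDE estimate.

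\smallskip

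\textbf{Main obstacle.} The delicate point is the jump index $N_{\eta_n}^{\tau_n,i}$: unlike a continuous function of time, this is $\{1,2\}$-valued and a single Poisson jump on one of the intervals $(\tau,\tau_n]$ or $(\eta,\eta_n]$ flips it, producing an $O(1)$ change in the terminal value. This is precisely the obstruction the introduction highlights as blocking the monotonicity approach of \cite{BH2010}. I will handle it by estimating
\begin{equation*}
\mathbb{P}\bigl(N((\tau,\tau_n]) + N((\eta,\eta_n]) \geq 1 \mid \mathcal{F}_t\bigr) \leq 2\lambda\, 2^{-n},
\end{equation*}
so that on the complement of this event $N_{\eta_n}^{\tau_n,i} = N_\eta^{\tau,i}$, and $\overline{W}_{N_{\eta_n}^{\tau_n,i}}(\eta_n,\cdot)$ differs from $\overline{W}_{N_\eta^{\tau,i}}(\eta,\cdot)$ only through the already-controlled time and space arguments. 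Combined with the $L^\infty$-bound on $\overline{W}_i$ on compacts (following from Proposition \ref{p5} and the $L^p$-bound \eqref{e22} on $X$), this produces an $L^2$-error of order $2^{-n/2}$ in the terminal condition of the semigroup, which is passed through Lemma \ref{l1}. Letting $n \to \infty$ yields \eqref{et4} for arbitrary stopping times, and specializing to $\eta = T$ (or any $\eta$ with $\overline{W}_{N^{\tau,i}_\eta}(\eta,\cdot) = W_{N^{\tau,i}_\eta}(\eta,\cdot)$) also recovers the version of the DPP stated in Theorem \ref{t5}.
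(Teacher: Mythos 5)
Your overall strategy (discrete stopping times via pasting, then approximation from above using time-regularity) is genuinely different from the paper's: the paper proves Theorem \ref{t4} directly, by re-running the partition arguments of Lemmas \ref{l9} and \ref{l6} on the stochastic interval $[[\tau,\eta]]$ --- this is exactly why $\overline{W}_i$ is introduced from the outset as an esssup/essinf over strategies on $[[\tau,T]]$, so that the same pasting of strategies along $\mathcal{F}_\tau$- and $\mathcal{F}_\eta$-partitions goes through with essentially no change. The discrete-approximation argument you propose is the one the paper deploys only \emph{afterwards}, in Proposition \ref{p8}, to identify $\overline{W}_i(\tau,x)$ with $W_i(\tau,x)$.

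The gap in your Step 2 is a circularity. To pass to the limit on the left-hand side you need $\overline{W}_i(\tau_n,x)\rightarrow\overline{W}_i(\tau,x)$. What Proposition \ref{p6} gives you is $W_i(\tau_n,x)\rightarrow W_i(s,x)|_{s=\tau}$, and $\overline{W}_i(\tau_n,x)=W_i(\tau_n,x)$ holds for the discrete $\tau_n$; but $\overline{W}_i(\tau,x)$ for a general stopping time is \emph{defined} as $\esssup_{\alpha\in\mathcal{A}_{\tau,T}}\essinf_{\beta\in\mathcal{B}_{\tau,T}}\ {}^{i}Y^{\tau,x,\alpha,\beta}_\tau$, and there is no a priori reason why this equals $\lim_n W_i(\tau_n,x)$. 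That identification is precisely Proposition \ref{p8}, whose proof requires the estimate $|\overline{W}_i(\tau,x)-\overline{W}_i(\tau_n,x)|\leq C(1+|x|)|\tau-\tau_n|^{1/2}$ of Proposition \ref{p7} --- and the proof of Proposition \ref{p7} uses one half of Theorem \ref{t4} itself (the inequality $\overline{W}_i(\tau,x)\leq {}^{i}G^{\tau,x;\alpha^{\varepsilon},\beta}_{\tau,\tau_n}[\overline{W}_{N^{\tau,i}_{\tau_n}}(\tau_n,X_{\tau_n})]+\varepsilon$). The same issue recurs in the terminal data on the right-hand side, where you must replace $\overline{W}_{N^{\tau_n,i}_{\eta_n}}(\eta_n,\cdot)$ by $\overline{W}_{N^{\tau,i}_{\eta}}(\eta,\cdot)$. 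So, as written, your plan presupposes the continuity of $\overline{W}_i$ in the stopping-time slot, which is a consequence of the theorem being proved. What your argument can deliver without circularity is the identity with $W_i(s,x)|_{s=\tau}$ and $W_{N^{\tau,i}_\eta}(\eta,\cdot)$ in place of $\overline{W}_i$ --- a statement close to Theorem \ref{t5}, but not Theorem \ref{t4} as stated --- and even for that you still owe a uniform (in $(\alpha,\beta)$) estimate justifying the exchange of the limit $n\to\infty$ with the iterated esssup/essinf, together with a correspondence between $\mathcal{A}_{\tau_n,\eta_n}\times\mathcal{B}_{\tau_n,\eta_n}$ and $\mathcal{A}_{\tau,\eta}\times\mathcal{B}_{\tau,\eta}$ compatible with that exchange. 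The paper's direct route avoids both difficulties.
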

\begin{proof}
We only comment  the proof of the first relation. Let us denote by
$\widehat{W}_{i}(\tau,x)$ the right hand side of $\ref{et4}$.   We
prove in a straight-forward way that $
\overline{W}_{i}(\tau,x)=\widehat{W}_{i}(\tau,x)$. For this we adapt
the proof of the Lemmas \ref{l9} and \ref{l6} in an obvious manner.
\end{proof}

From Theorem \ref{t4} and Proposition \ref{p5} we can deduce the
following proposition.
\begin{proposition}\label{p7}
Let the assumptions $(H4)$ and  $(H5)$ be satisfied. Then, for any
stopping time $\tau,\eta$ with $0\leq t<\tau \leq \eta\leq T,\ x\in
{\mathbb{R}}^n$, where  $\eta$ is supposed to be
$\sigma(\tau)-$measurable, we have the following:
\begin{eqnarray*}
|\overline{W}_{i}(\tau,x)-\overline{W}_{i}(\eta,x)|\leq
C(1+|x|)|\tau-\eta|^{\frac{1}{2}}.
\end{eqnarray*}
\end{proposition}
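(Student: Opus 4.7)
The plan is to invoke the stopping-time DPP of Theorem \ref{t4} on $[[\tau,\eta]]$ and to compare the resulting backward-semigroup value with $\overline{W}_i(\eta,x)$ by means of $L^2$ BSDE estimates, the SDE moment bound (\ref{e22}), the spatial Lipschitz and linear-growth bounds of Proposition \ref{p5}, and an elementary Poisson-increment estimate enabled by the $\sigma(\tau)$-measurability of $\eta$.

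By Theorem \ref{t4},
$$\overline{W}_i(\tau,x)=\esssup_{\alpha\in\mathcal{A}_{\tau,\eta}}\essinf_{\beta\in\mathcal{B}_{\tau,\eta}} \ ^iG^{\tau,x;\alpha,\beta}_{\tau,\eta}\bigl[\overline{W}_{N^{\tau,i}_\eta}(\eta,X^{\tau,x;\alpha,\beta}_\eta)\bigr],$$
and since $\overline{W}_i(\eta,x)$ is constant in $(\alpha,\beta)$, it suffices to bound, uniformly in the strategies, the difference $R(\alpha,\beta):=\ ^iG^{\tau,x;\alpha,\beta}_{\tau,\eta}[\xi]-\overline{W}_i(\eta,x)$ with $\xi:=\overline{W}_{N^{\tau,i}_\eta}(\eta,X^{\tau,x;\alpha,\beta}_\eta)$. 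Writing $(Y,Z,H)$ for the BSDE triple defining the semigroup and taking $\mathbb{E}[\cdot\mid\mathcal{F}_\tau]$ of the integrated equation annihilates the martingale terms and yields
$$Y_\tau-\mathbb{E}[\xi\mid\mathcal{F}_\tau]=\mathbb{E}\Bigl[\int_\tau^\eta\bigl(f_{N^{\tau,i}_r}(r,X_r,Y_r,H_r,Z_r,u_r,v_r)-\lambda H_r\bigr)dr\,\Big|\,\mathcal{F}_\tau\Bigr].$$
The standard conditional $L^2$ moment bounds for $(Y,Z,H)$ of order $C(1+|x|^2)$ (obtained from $(H5)$, the bound on $|\xi|^2$ from Proposition \ref{p5}, and (\ref{e22})), the linear growth of $f_i$ in $(x,y,h,z)$, and Cauchy--Schwarz applied to $\int_\tau^\eta 1\,dr=\eta-\tau$ (which is $\mathcal{F}_\tau$-measurable since $\eta$ is) yield $|Y_\tau-\mathbb{E}[\xi\mid\mathcal{F}_\tau]|\le C(1+|x|)(\eta-\tau)^{1/2}$, $\mathbb{P}$-a.s.

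For the remaining piece $\mathbb{E}[\xi\mid\mathcal{F}_\tau]-\overline{W}_i(\eta,x)$, I split $\xi-\overline{W}_i(\eta,x)$ into the spatial increment $\overline{W}_{N^{\tau,i}_\eta}(\eta,X_\eta^{\tau,x;\alpha,\beta})-\overline{W}_{N^{\tau,i}_\eta}(\eta,x)$, controlled in conditional $L^2$ by Proposition \ref{p5} and (\ref{e22}) as $C(1+|x|)(\eta-\tau)^{1/2}$, and the index-change term $\overline{W}_{N^{\tau,i}_\eta}(\eta,x)-\overline{W}_i(\eta,x)$. The latter vanishes on $\{N_\eta-N_\tau=0\}\subset\{N^{\tau,i}_\eta=i\}$ and is pointwise bounded on its complement by $2C(1+|x|)$ via the linear-growth part of Proposition \ref{p5}. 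Because $\eta$ is $\sigma(\tau)$-measurable, the strong Markov property makes $N_\eta-N_\tau\mid\mathcal{F}_\tau$ Poisson with parameter $\lambda(\eta-\tau)$, so $\mathbb{P}(N^{\tau,i}_\eta\ne i\mid\mathcal{F}_\tau)\le 1-e^{-\lambda(\eta-\tau)}\le\lambda(\eta-\tau)$ and the index-change term contributes at most $C(1+|x|)(\eta-\tau)^{1/2}$ in conditional $L^2$. Combining by the triangle inequality and Jensen gives $|Y_\tau-\mathbb{E}[\overline{W}_i(\eta,x)\mid\mathcal{F}_\tau]|\le C(1+|x|)(\eta-\tau)^{1/2}$, $\mathbb{P}$-a.s.

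To close the argument I identify $\mathbb{E}[\overline{W}_i(\eta,x)\mid\mathcal{F}_\tau]$ with $\overline{W}_i(\eta,x)$, which holds because $\overline{W}_i(\eta,x)$ is $\mathcal{F}_\tau$-measurable: approximating $\eta$ from above by $\sigma(\tau)$-measurable discrete stopping times $\eta_n=\sum_k s_k^n\mathbf{1}_{A_k^n}$ with $A_k^n\in\sigma(\tau)\subset\mathcal{F}_\tau$, the remark following Proposition \ref{p5} gives $\overline{W}_i(\eta_n,x)=\sum_k W_i(s_k^n,x)\mathbf{1}_{A_k^n}$, manifestly $\mathcal{F}_\tau$-measurable, and passing to the limit via the deterministic $\tfrac12$-Hölder continuity of the deterministic function $W_i(\cdot,x)$ (Proposition \ref{p6}) yields $\overline{W}_i(\eta,x)=W_i(\eta,x)$ $\mathbb{P}$-a.s. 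Taking $\esssup/\essinf$ over $(\alpha,\beta)$ in $R(\alpha,\beta)$, now an $\mathcal{F}_\tau$-measurable random variable dominated pointwise by $C(1+|x|)(\eta-\tau)^{1/2}$, delivers the stated Hölder bound. The main obstacle is precisely this $\mathcal{F}_\tau$-measurability identification, together with the need to make the BSDE moment control in Step 2 uniform in $(\alpha,\beta)$; the $\sigma(\tau)$-measurability hypothesis plays a dual role, enabling both the Poisson-increment estimate and the $\mathcal{F}_\tau$-measurability of $\overline{W}_i(\eta,x)$.
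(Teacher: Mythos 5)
Your argument reproduces the paper's own decomposition almost exactly: the paper splits $\overline{W}_{i}(\tau,x)-\overline{W}_{i}(\eta,x)$ into a spatial-increment term $I_{1}$ (handled by Proposition \ref{p5}, Lemma \ref{l1} and the SDE moment bound (\ref{e22})), an index-change term $I_{2}$ (handled by the Poisson bound $\mathbb{P}(N^{\tau,i}_{\eta}\neq i\mid\mathcal{F}_{\tau})\leq 1-e^{-\lambda(\eta-\tau)}$, which is where the $\sigma(\tau)$-measurability of $\eta$ enters), and a driver-integral term $I_{3}$ (handled by conditioning the BSDE on $\mathcal{F}_{\tau}$, Cauchy--Schwarz over $[\tau,\eta]$, and a priori estimates). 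These are precisely your three contributions; your only rearrangements are to condition the whole BSDE at once rather than comparing semigroups with different terminal data, and to extract both one-sided inequalities from a uniform bound on $R(\alpha,\beta)$ instead of using $\varepsilon$-optimal strategies as the paper does. Those estimates are all correct.

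The genuine gap is your justification of $\mathbb{E}[\overline{W}_{i}(\eta,x)\mid\mathcal{F}_{\tau}]=\overline{W}_{i}(\eta,x)$. You derive it from the identity $\overline{W}_{i}(\eta,x)=W_{i}(\eta,x)$, obtained by approximating $\eta$ by discrete $\sigma(\tau)$-measurable stopping times $\eta_{n}$ and passing to the limit. But the limit passage on the left-hand side requires $\overline{W}_{i}(\eta_{n},x)\to\overline{W}_{i}(\eta,x)$, i.e.\ continuity of $\overline{W}_{i}$ in the stopping-time argument --- which is exactly the content of Proposition \ref{p7} that you are trying to prove. Proposition \ref{p6} controls only the deterministic function $W_{i}$, hence only the right-hand side of your approximation; and in the paper the identity $\overline{W}_{i}(\tau,x)=W_{i}(\tau,x)$ for general stopping times is Proposition \ref{p8}, proved \emph{as a consequence of} Proposition \ref{p7}, so your route is circular. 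A non-circular repair is available: for $\eta$ taking countably many values $s_{k}$ one has $\overline{W}_{i}(\eta,x)=\sum_{k}W_{i}(s_{k},x)1_{\{\eta=s_{k}\}}$ by the remark preceding Proposition \ref{p5}, which is $\sigma(\eta)\subset\sigma(\tau)\subset\mathcal{F}_{\tau}$-measurable, and this covers the only use made of the proposition (in Proposition \ref{p8}, where $\eta=\tau_{n}$ is discrete); for general $\eta$ the measurability must either be argued separately or the statement restricted accordingly. As written, your final step assumes what is to be proved.
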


\begin{proof}
We only prove that
$\overline{W}_{i}(\tau,x)-\overline{W}_{i}(\eta,x)\leq
C(1+|x|)\tau-\eta|^{\frac{1}{2}},$  since  the other inequality can
be proved in a similar manner. Let $\varepsilon>0$. In analogy to
(\ref{e16}), but now for $(\tau,\eta)$ instead of $(t,t+\delta)$, we
can show that there exists some $\alpha^{\varepsilon} \in
{\mathcal{A}}_{\tau, \eta}$ such that,  for any $\beta \in
{\mathcal{B}}_{\tau, \eta}$,
\begin{eqnarray*}
\overline{W}_{i}(\tau,x)\leq\
^{i}G^{\tau,x;\alpha^{\varepsilon},\beta}_{\tau, \eta}
[\overline{W}_{N_{\eta}^{\tau,i}}(\eta,
X^{t,x;\alpha^{\varepsilon},\beta}_{\eta})]+\varepsilon.
\end{eqnarray*}
Consequently, $\overline{W}_{i}(\tau,x)-\overline{W}_{i}(\eta,x)\leq
I_{1}+I_{2}+I_{3} +\varepsilon,$  where
\begin{eqnarray*}
I_{1} & := & ^{i}G^{\tau,x;\alpha^{\varepsilon},\beta}_{\tau, \eta}
[\overline{W}_{N_{\eta}^{\tau,i}}(\eta,
X^{t,x;\alpha^{\varepsilon},\beta}_{\eta})] -\
^{i}G^{\tau,x;\alpha^{\varepsilon},\beta}_{\tau, \eta}
[\overline{W}_{N_{\eta}^{\tau,i}}(\eta,x)], \\
I_{2}& := & ^{i}G^{\tau,x;\alpha^{\varepsilon},\beta}_{\tau, \eta}
[\overline{W}_{N_{\eta}^{\tau,i}}(\eta,x)]-\
^{i}G^{\tau,x;\alpha^{\varepsilon},\beta}_{\tau, \eta}
[\overline{W}_{i}(\eta,x)],\\
I_{3} & := &\ ^{i}G^{\tau,x;\alpha^{\varepsilon},\beta}_{\tau, \eta}
[\overline{W}_{i}(\eta,x)]-\overline{W}_{i}(\eta,x).
\end{eqnarray*}
Thanks to Lemma \ref{l5}, we let  $(u,v)\in\mathcal
{U}_{\tau,\eta}\times\mathcal {V}_{\tau,\eta}$ be such that
$\alpha^{\varepsilon}(v)=u, \beta(u)=v,$ on $[[\tau,\eta]].$

 By virtue of Proposition
\ref{p5} there exists some positive constant $C$ which does not
depend on  $\alpha^{\varepsilon}$ and $ \beta$ such that
\begin{eqnarray*}
|I_{1} | &\leq& C(\mathbb{E}[|\overline{W}_{N_{\eta}^{\tau,i}}(\eta,
X^{t,x;u,v}_{\eta}) -\ \overline{W}_{N_{\eta}^{\tau,i}}(\eta,x)|^2|{{\mathcal{F}}_\tau})])^{\frac{1}{2}}\\
&\leq& C(\mathbb{E}[| X^{\tau,x; u,v}_{\eta}
-x|^2|{{\mathcal{F}}_\tau})])^{\frac{1}{2}} \leq
C(1+|x|^2)|\tau-\eta|^{\frac{1}{2}},
\end{eqnarray*}
where the latter relation follows from  standard SDE estimates.  Let
us denote  by $(^{i}Y,\ ^{i}Z,\ ^{i}H)$ the solution of the BSDE:
 \begin{eqnarray*}
   \left \{\begin{array}{rcl}
   -d\ ^{i}Y_s & = & f_{N_{s}^{t,i}}(s,X^{t,x; u, v}_s, \ ^{i}Y_s,
   \ ^{i}H_s,\  ^{i}Z_s,u_s, v_s) ds\\
   &&-\lambda\ ^{i}H_s ds-\ ^{i}Z_s dB_s-\ ^{i}H_s d\widetilde{N}_{s},\ s\in[\tau,\eta],\\
        ^{i}Y_\eta  & = & \overline{W}_{i}(\eta,x).
   \end{array}\right.
  \end{eqnarray*}
From the definition of our backward stochastic semigroup and the
$\sigma(\tau)-$measurability  of $\eta$  it follows that
\begin{eqnarray*}
I_{3}& = & ^{i}G^{t,x;\alpha^{\varepsilon},\beta}_{\tau, \eta}
[\overline{W}_{i}(\eta,x)]-\overline{W}_{i}(\eta,x)\\
&=&\mathbb{E}[\overline{W}_{i}(\eta,x)+\int^{\eta}_\tau
f_{N_{s}^{\tau,i}}(s,X^{\tau,x; u, v}_s,
   \ ^{i}Y_s,   \ ^{i}H_s,\ ^{i}Z_s,u_s, v_s) ds   \\
 & &  -\lambda \int^{\eta}_\tau\ ^{i}H_s ds
 -\int^{\eta}_\tau\ ^{i}Z_s dB_s -\int^{\eta}_\tau\ ^{i}H_s d \widetilde{N}_s\ \Big|{{\mathcal{F}}_\tau}]
- \overline{W}_{i}(\eta,x) \\
 &=&\mathbb{E}[\int^{\eta}_\tau f_{N_{s}^{\tau,i}}(s,X^{\tau,x; u,
v}_s,   \ ^{i}Y_s,   \ ^{i}H_s,\ ^{i}Z_s,u_s, v_s) ds -\lambda
\int^{\eta}_\tau\ ^{i}H_s
 ds\ \Big|{{\mathcal{F}}_\tau}].
\end{eqnarray*}
Therefore, the Schwartz inequality and  the Appendix of
\cite{BHL2010} yield
\begin{eqnarray*}
|I_{3} | &\leq&\mathbb{E}[\int^{\eta}_\tau
\Big(|f_{N_{s}^{\tau,i}}(s,X^{\tau,x; u, v}_s,
   \ ^{i}Y_s,   \ ^{i}H_s,\   ^{i}Z_s,u_s,  v_s)|
   +\lambda|\ ^{i}H_s|\Big) ds\Big|{{\mathcal{F}}_\tau}]\\
 &\leq& |\tau-\eta|^{\frac{1}{2}}\mathbb{E}[\int^{\eta}_{\tau}
\Big(|f_{N_{s}^{t,i}}(s,X^{t,x; u, v}_s,
   \ ^{i}Y_s,   \ ^{i}H_s,\   ^{i}Z_s,u_s,  v_s)|
   +\lambda|\ ^{i}H_s|\Big)^{2} ds\Big|{{\mathcal{F}}_\tau}]^{\frac{1}{2}}\\
&\leq& C |\tau-\eta|^{\frac{1}{2}}\mathbb{E}[\int^{\eta}_\tau
\Big(|f_{N_{s}^{\tau,i}}(s,X^{\tau,x; u, v}_s, 0, 0, 0,u_s,
   v_s)|^{2} +|\ ^{i}Y_s|^{2}+
 |\ ^{i}H_s|^{2} +|\ ^{i}Z_s|^{2}\Big)
 ds\Big|{{\mathcal{F}}_\tau}]^{\frac{1}{2}}\\
&\leq& C |\tau-\eta|^{\frac{1}{2}}\mathbb{E}[\int^{\eta}_\tau
\Big(1+|X^{\tau,x; u, v}_s|^{2} +|\ ^{i}Y_s|^{2}+|\ ^{i}H_s|^{2} +|\
^{i}Z_s|^{2}\Big) ds\Big|{{\mathcal{F}}_\tau}]^{\frac{1}{2}}\\
 & \leq & C (1+|x|) |\tau-\eta|^{\frac{1}{2}}.
\end{eqnarray*}
Finally, let us give the estimate of $I_{2}$. From Lemma \ref{l1} we
have
\begin{eqnarray*}
|I_{2}| &\leq &
C\mathbb{E}[|\overline{W}_{N_{\eta}^{\tau,i}}(\eta,x)-\overline{W}_{i}(\eta,x)||{{\mathcal{F}}_\tau}]
=C\mathbb{E}[|\overline{W}_{N_{\eta}^{\tau,i}}(\eta,x)-\overline{W}_{i}(\eta,x)|1_{\{N_{\eta}^{\tau,i}\neq
i\}}|{{\mathcal{F}}_\tau}]\\
& \leq & C(1+|x|)\mathbb{E}[1_{\{N_{\eta}^{\tau,i}\neq
i\}}|{{\mathcal{F}}_\tau}] =
C(1+|x|)\mathbb{P}[\{N_{\eta}^{\tau,i}\neq
i\}|{{\mathcal{F}}_\tau}]\\
& \leq & C(1+|x|)(1-\exp(-\lambda(\eta-\tau)))
 \leq  C(1+|x|)  |\tau-\eta|.
\end{eqnarray*}
Consequently, from the above inequalities we deduce that
\begin{eqnarray*}
\overline{W}_{i}(\tau,x)-\overline{W}_{i}(\eta,x)\leq
C(1+|x|)|\tau-\eta|^{\frac{1}{2}}+\varepsilon,\ \varepsilon>0,
\end{eqnarray*}
  from where we conclude
\begin{eqnarray*}
\overline{W}_{i}(\tau,x)-\overline{W}_{i}(\eta,x)\leq
C(1+|x|)|\tau-\eta|^{\frac{1}{2}}.
\end{eqnarray*}
The desired result then follows.
\end{proof}

\begin{proposition}\label{p8}
Under the assumptions $(H4)$ and  $(H5)$, the following holds:
$\overline{W}_{i}(\tau,x)=W_{i}(\tau,x),\ i.e.,$
\begin{eqnarray*}
W_{i}(\tau,x)=\mathrm{esssup}_{\alpha\in\mathcal {A}_{\tau,T}}
\mathrm{essinf}_{\beta\in\mathcal {B}_{\tau,T}}\
^{i}Y^{\tau,x,\alpha,\beta}_{\tau},
\end{eqnarray*}
for all stopping time $\tau$ with values in $[t,T]$.
\end{proposition}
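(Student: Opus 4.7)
The plan is to approximate a general stopping time $\tau$ from above by discrete-valued stopping times $\tau_n$, for which the identity $\overline{W}_i(\tau_n,x)=W_i(\tau_n,x)$ is already recorded in the remark following Theorem \ref{t4}, and then to pass to the limit using the two continuity estimates already at hand, namely Proposition \ref{p6} for $W_i$ and Proposition \ref{p7} for $\overline{W}_i$.

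Concretely, for each $n\geq 1$ I partition $[t,T]$ with nodes $t_k^n=t+k(T-t)/2^n$, $0\leq k\leq 2^n$, and set
\[
\tau_n=\sum_{k=1}^{2^n} t_k^n\,\mathbf{1}_{\{t_{k-1}^n\leq \tau<t_k^n\}}+T\,\mathbf{1}_{\{\tau=T\}}.
\]
Then $\tau_n$ is a discrete-valued stopping time (as a Borel function of $\tau$), $\tau_n\downarrow\tau$ with $0\leq \tau_n-\tau\leq (T-t)2^{-n}$, and $\tau_n$ is $\sigma(\tau)$-measurable. The remark after Theorem \ref{t4} yields $\overline{W}_i(\tau_n,x)=W_i(\tau_n,x)$, $\mathbb{P}$-a.s. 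On the one hand, since $W_i(\cdot,x)$ is the deterministic $\tfrac12$-H\"older continuous function from Proposition \ref{p6}, pointwise in $\omega$
\[
|W_i(\tau_n,x)-W_i(\tau,x)|\leq C(1+|x|)(T-t)^{1/2}2^{-n/2}.
\]
On the other hand, since $\tau_n$ is $\sigma(\tau)$-measurable and $\tau\leq\tau_n$, Proposition \ref{p7} applies with $\eta=\tau_n$, giving
\[
|\overline{W}_i(\tau,x)-\overline{W}_i(\tau_n,x)|\leq C(1+|x|)(T-t)^{1/2}2^{-n/2}.
\]
Combining these two estimates with the discrete-time identity and letting $n\to\infty$ produces $\overline{W}_i(\tau,x)=W_i(\tau,x)$, $\mathbb{P}$-a.s.

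The only genuinely delicate point is the invocation of the discrete-valued identity itself: one has to check, by pasting arguments on each atom $\{\tau_n=t_k^n\}\in\mathcal{F}_{t_k^n}$ together with the uniqueness provided by Lemma \ref{l5} for the couples of controls associated to NAD strategies on $[[\tau_n,T]]$, that the essential supremum and infimum over $\mathcal{A}_{\tau_n,T}\times\mathcal{B}_{\tau_n,T}$ decompose atom by atom, so that $\overline{W}_i(\tau_n,x)=\sum_k \mathbf{1}_{\{\tau_n=t_k^n\}}W_i(t_k^n,x)=W_i(\tau_n,x)$. The remaining continuity-plus-limit argument is then routine, and no additional structural property beyond Propositions \ref{p6} and \ref{p7} is required.
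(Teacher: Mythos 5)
Your proposal is correct and follows essentially the same route as the paper: approximate $\tau$ from above by the dyadic, $\sigma(\tau)$-measurable stopping times $\tau_n$, invoke the identity $\overline{W}_i(\tau_n,x)=W_i(\tau_n,x)$ for discrete-valued stopping times, and pass to the limit via the $\tfrac12$-H\"older estimates of Propositions \ref{p6} and \ref{p7}. The atom-by-atom pasting you flag as the delicate point is exactly what the paper relies on (via the properties of essential suprema/infima over the partition $\{\tau_n=t_k^n\}$), so no gap remains.
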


\begin{proof}
Let $t\in [0,T]$ be such that $t\leq \tau\leq T.$  Let us define,
for $i=1, \cdots,2^{n}$,
\begin{eqnarray*}
t_{i}=\frac{i(T-t)}{2^{n}}+t,\quad  A_{i}=\Big\{t_{i-1} < \tau \leq
t_{i}\Big\}, \quad \text{and}\  A_{0}=\{ \tau = t\},\quad  \tau_{n}
=\sum\limits_{i=0}^{2^{n}}t_{i} 1_{A_{i}}.
\end{eqnarray*}
Obviously, $0\leq\tau_{n}-\tau\leq \dfrac{1}{2^{n}}$, and from the
definition of $W_{i}$ as well as that of the essential infimun and
supremun of a family of random variables we deduce
\begin{eqnarray*}
W_{i}(\tau_{n},x)(:=W_{i}(s,x)|_{s=\tau_{n}})=\mathrm{esssup}_{\alpha\in\mathcal
{A}_{t,T}} \mathrm{essinf}_{\beta\in\mathcal {B}_{t,T}}\
^{i}Y^{\tau_{n},x,\alpha,\beta}_{\tau_{n}}.
\end{eqnarray*}
Therefore, $W_{i}(\tau_{n},x)=\overline{W}_{i}(\tau_{n},x), \
\mathbb{P}-a.s.$  Since $\tau\leq\tau_{n}\leq T$,  and $\tau_{n}$ is
$\sigma(\tau)-$measurable, it follows from the Propositions \ref{p6}
and \ref{p7}  that
\begin{eqnarray*}
|\overline{W}_{i}(\tau,x)-\overline{W}_{i}(\tau_{n},x)|&\leq&
C(1+|x|)|\tau-\tau_{n}|^{\frac{1}{2}}\rightarrow 0,\ \mbox{as}\
n\rightarrow\infty,
\end{eqnarray*}
and
\begin{eqnarray*}
 |W_{i}(\tau,x)-W_{i}(\tau_{n},x)|&\leq&
C(1+|x|)|\tau-\tau_{n}|^{\frac{1}{2}}\rightarrow 0, \ \mbox{as}\
n\rightarrow\infty,
\end{eqnarray*}
from where we conclude that also
$W_{i}(\tau,x)=\overline{W}_{i}(\tau,x).$  Therefore,
\begin{eqnarray*}
W_{i}(\tau,x)=\mathrm{esssup}_{\alpha\in\mathcal {A}_{\tau,T}}
\mathrm{essinf}_{\beta\in\mathcal {B}_{\tau,T}}\
^{i}Y^{\tau,x,\alpha,\beta}_{\tau}.
\end{eqnarray*}
The proof is complete.
\end{proof}\vskip2mm

From the above proposition and Proposition \ref{p7} we immediately
have:
\begin{proposition}\label{p21}
Under  the assumptions $(H4)$ and  $(H5)$, there is some positive
constant $C$ such that, for any stopping times $\tau,\eta$ with
$0\leq t<\tau \leq \eta\leq T,\ x\in {\mathbb{R}}^n$, where  $\eta$
is supposed to be $\sigma(\tau)-$measurable, we have the following:
\begin{eqnarray*}
|W_{i}(\tau,x)-W_{i}(\eta,x)|\leq C(1+|x|)|\tau-\eta|^{\frac{1}{2}}.
\end{eqnarray*}
\end{proposition}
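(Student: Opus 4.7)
The plan is to observe that Proposition \ref{p21} is a direct consequence of the two results proven immediately before it, namely Proposition \ref{p8} (identification $W_i(\sigma,x)=\overline{W}_i(\sigma,x)$ for arbitrary stopping times $\sigma\in[t,T]$) and Proposition \ref{p7} (the $1/2$-H\"older estimate for the game-value $\overline{W}_i$ over stochastic intervals). So no new technical machinery is needed; the argument is a short syllogism.

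Concretely, I would first invoke Proposition \ref{p8} at the stopping time $\tau$ and at the stopping time $\eta$ (both take values in $[t,T]$, and $\eta$ being $\sigma(\tau)$-measurable is allowed there, since that proposition makes no further requirement on the stopping time beyond $t\le \sigma\le T$). This yields
\begin{equation*}
W_i(\tau,x)=\overline{W}_i(\tau,x),\qquad W_i(\eta,x)=\overline{W}_i(\eta,x),\quad \mathbb{P}\text{-a.s.}
\end{equation*}
Hence
\begin{equation*}
|W_i(\tau,x)-W_i(\eta,x)|=|\overline{W}_i(\tau,x)-\overline{W}_i(\eta,x)|,\quad \mathbb{P}\text{-a.s.}
\end{equation*}

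Next I would apply Proposition \ref{p7} to the right-hand side. The hypotheses of that proposition are exactly $0\le t<\tau\le\eta\le T$ with $\eta$ being $\sigma(\tau)$-measurable, which match the hypotheses of Proposition \ref{p21}. Therefore
\begin{equation*}
|\overline{W}_i(\tau,x)-\overline{W}_i(\eta,x)|\le C(1+|x|)|\tau-\eta|^{1/2},
\end{equation*}
for a constant $C>0$ independent of $\tau,\eta,x$. Combining the two displayed inequalities yields the claim.

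There is no real obstacle here: the whole work has already been done in the preceding two propositions (the hard step was establishing the time continuity of $\overline{W}_i$ in the proof of Proposition \ref{p7}, based on the backward semigroup identity together with the Lipschitz dependence of $W_i$ on $x$ and the Poisson-jump estimate $\mathbb{P}(N^{\tau,i}_\eta\neq i\mid\mathcal{F}_\tau)\le 1-\exp(-\lambda(\eta-\tau))$). The only small point to mention is that although Proposition \ref{p6} originally provided the deterministic-time continuity used to pass from $\overline{W}_i(\tau_n,x)$ to $W_i(\tau_n,x)$ inside the proof of Proposition \ref{p8}, here we only need Proposition \ref{p8} itself as a black box; no further approximation by discrete stopping times is required for Proposition \ref{p21}.
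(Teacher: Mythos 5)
Your proposal is correct and coincides with the paper's own argument: the paper states Proposition \ref{p21} as an immediate consequence of Proposition \ref{p8} (identifying $W_i$ with $\overline{W}_i$ at stopping times) combined with Proposition \ref{p7} (the H\"older estimate for $\overline{W}_i$), which is exactly the two-step syllogism you carry out. Nothing further is needed.
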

Theorem \ref{t5} follows from  Proposition \ref{p8} and Theorem
\ref{t4}.  We also have the following statement, which is a direct
consequence of Proposition \ref{p8}.
\begin{proposition}\label{p9}
Under the assumptions $(H4)$ and  $(H5)$, the following holds:
\begin{eqnarray*}
W_{N^{t,i}_{\tau}}(\tau,x)=\mathrm{esssup}_{\alpha\in\mathcal
{A}_{\tau,T}} \mathrm{essinf}_{\beta\in\mathcal {B}_{\tau,T}}\
^{N^{t,i}_{\tau}}Y^{\tau,x,\alpha,\beta}_{\tau}.
\end{eqnarray*}
\end{proposition}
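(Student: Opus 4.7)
The plan is to reduce Proposition \ref{p9} to Proposition \ref{p8} by decomposing on the values of $N^{t,i}_\tau$. Since $N^{t,i}_\tau$ is $\mathcal{F}_\tau$-measurable and takes only the values $1$ and $2$, we have the partition $\Omega = A_1 \cup A_2$ with $A_k := \{N^{t,i}_\tau = k\} \in \mathcal{F}_\tau$, $k=1,2$. On $A_k$ one has $W_{N^{t,i}_\tau}(\tau,x) = W_k(\tau,x)$ and $^{N^{t,i}_\tau}Y^{\tau,x;\alpha,\beta}_\tau = \ ^{k}Y^{\tau,x;\alpha,\beta}_\tau$, so the claimed identity amounts to proving that, for each $k=1,2$,
$$ \mathbf{1}_{A_k} W_k(\tau,x) = \mathbf{1}_{A_k}\, \esssup_{\alpha \in \mathcal{A}_{\tau,T}} \essinf_{\beta \in \mathcal{B}_{\tau,T}} \ ^{k}Y^{\tau,x;\alpha,\beta}_\tau, $$
which, in view of Proposition \ref{p8}, reduces to being able to ``paste'' strategies across the $\mathcal{F}_\tau$-measurable partition $\{A_1, A_2\}$ without altering the value of the esssup/essinf expressions.

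Next I would carry out the pasting argument, which mimics the one already used for the partition $\{\Lambda_n\}_{n\geq 1} \subset \mathcal{F}_t$ in the proof of Lemma \ref{l9}. For the ``$\geq$'' direction: given $\varepsilon > 0$ and $k=1,2$, Proposition \ref{p8} provides $\alpha^{\varepsilon,k} \in \mathcal{A}_{\tau,T}$ with $\essinf_\beta \ ^{k}Y^{\tau,x;\alpha^{\varepsilon,k},\beta}_\tau \geq W_k(\tau,x) - \varepsilon$; I then set $\alpha^\varepsilon := \mathbf{1}_{A_1}\alpha^{\varepsilon,1} + \mathbf{1}_{A_2}\alpha^{\varepsilon,2}$ and check that $\alpha^\varepsilon \in \mathcal{A}_{\tau,T}$ (the nonanticipativity and delay conditions of Definition \ref{d1} are preserved because $A_1, A_2 \in \mathcal{F}_\tau$). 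Uniqueness of solutions of (\ref{e3}) and (\ref{e2}) then yields $^{N^{t,i}_\tau}Y^{\tau,x;\alpha^\varepsilon,\beta}_\tau = \sum_{k=1}^{2} \mathbf{1}_{A_k} \ ^{k}Y^{\tau,x;\alpha^{\varepsilon,k},\beta}_\tau$, from which one derives $\esssup_\alpha \essinf_\beta \ ^{N^{t,i}_\tau}Y^{\tau,x;\alpha,\beta}_\tau \geq W_{N^{t,i}_\tau}(\tau,x) - \varepsilon$. The reverse inequality is obtained by the symmetric construction: for any $\alpha \in \mathcal{A}_{\tau,T}$ and $k=1,2$, Proposition \ref{p8} gives $\essinf_\beta \ ^{k}Y^{\tau,x;\alpha,\beta}_\tau \leq W_k(\tau,x)$, and multiplying by $\mathbf{1}_{A_k}$ and summing over $k$ produces the required bound. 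Letting $\varepsilon \downarrow 0$ concludes the argument.

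The only mildly delicate point is verifying that the pasted process $\alpha^\varepsilon$ is indeed an NAD strategy, but this is entirely analogous to the $\mathcal{F}_t$-measurable pasting already carried out in Lemma \ref{l9} (where the partition used was $\{\Lambda_n\}$ instead of $\{A_1, A_2\}$). Hence the statement is indeed a direct consequence of Proposition \ref{p8}, as the excerpt asserts.
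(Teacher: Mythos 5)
Your proposal is correct and follows the paper's intended route: the paper states Proposition \ref{p9} as ``a direct consequence of Proposition \ref{p8}'' without further detail, and your decomposition over the $\mathcal{F}_\tau$-measurable partition $\{N^{t,i}_\tau=k\}$, $k=1,2$, together with the strategy-pasting argument already used for the partitions $\{\Lambda_n\}$ in Lemma \ref{l9}, is exactly the omitted reduction. The only point to make fully explicit is that the reverse inequality also requires pasting the $\beta$'s (not just the $\alpha$'s) across $A_1,A_2$ before invoking $\essinf_\beta\,{}^kY^{\tau,x;\alpha,\beta}_\tau\leq W_k(\tau,x)$, which your framework already accommodates.
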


\subsection{Proof of Theorem \ref{t1}}\label{A2}

\begin{proof}
We only give the proof for $U=(U_{1},U_{2})$, that for
$W=(W_{1},W_{2})$ uses a similar argument.  Let $i=1,2$ be
arbitrarily fixed, $(t,x) \in [0,T) \times {\mathbb{R}}^n,$ and
$\delta>0$. We put
   $\tau^{\delta}=\mbox{inf}\{s\geq t, N_{s}^{t,i}\neq
   i\}\wedge(t+\delta).$

 For  $\varphi \in C^3_{l, b} ([0,T] \times
{\mathbb{R}}^n)$, we define
\begin{eqnarray*}
    \  F(s,x,y,h, z, u, v) &=& \dfrac{\partial }{\partial s}\varphi (s,x) +
     \dfrac{1}{2}tr(\sigma\sigma^{T}(s,
x, u, v) D^{2}\varphi (s,x))+ D\varphi (s,x)b(s, x, u, v)\\
        &&+ f_{i}(s, x, y+\varphi (s,x), h+U_{m(i+1)}(s,x)-\varphi (s,x),
         z+ D\varphi (s,x).\sigma(s,x,u, v), u, v),
\end{eqnarray*}
where $(s,x,y,h,z,u, v)\in [0,T] \times {\mathbb{R}}^n \times
{\mathbb{R}}\times {\mathbb{R}} \times {\mathbb{R}}^d \times U
\times V.$

Let us consider the following  BSDE on the interval
$[t,\tau^{\delta}]:$
\begin{eqnarray}\label{e36}
    \left \{\begin{array}{rl}
      -dY^{1,u,v}_s =&\!\!\!\! F(s,X^{t,x;u,v}_s, Y^{1,u,v}_s, H^{1,u,v}_s, Z^{1,u,v}_s, u_s,v_s)ds
                   -\lambda H^{1,u,v}_s ds\\&\hskip2cm -Z^{1,u,v}_s dB_s-H^{1,u,v}_s d\widetilde{N}_{s}, \\
     Y^{1,u,v}_{\tau^{\delta}}=&\!\!\!\!0,\qquad \qquad \qquad  s\in [t,\tau^{\delta}],
     \end{array}\right.
 \end{eqnarray}
     where $X^{t,x;u,v}$\ is the solution of (\ref{e3}) with $\zeta=x\in \mathbb{R}^{n}$.

We notice that  $F(s,x, y,h,z,u,v)$ is Lipschitz in   $(y,h,z)$,
uniformly in   $(s,x, u,v)$, and there exists a positive constant
$C$ such that
\begin{eqnarray*}
    |F(s,x, 0,0,0,u,v)|\leq C(1+|x|^{2}),\ (s,x,u, v)\in [0,T] \times {\mathbb{R}}^n
 \times U \times V.
 \end{eqnarray*}
Consequently, BSDE (\ref{e36}) has a unique solution $(Y^{1,u,v},
H^{1,u,v}, Z^{1,u,v})$. This solution obviously depends on $\delta$,
but for the sake of simplifying the notations we don't add $\delta$
as superscript to $(Y^{1,u,v}, H^{1,u,v}, Z^{1,u,v})$.

 For the proof of Theorem \ref{t1}, which extends the approaches in
 \cite{BCQ2011} and \cite{BH2010} to SDGs with jumps,
  we admit the following  lemmas for the moment, they will be proven after.
\begin{lemma}\label{l2}
     For every $s\in [t,\tau^{\delta}]$, the following holds:
\begin{eqnarray*}
Y^{1,u,v}_{s\wedge \tau^{\delta}} =\
^{i}G^{t,x;u,v}_{s\wedge\tau^{\delta},\tau^{\delta}}\Big [\varphi
(\tau^{\delta},X^{t,x;u,v}_{\tau^{\delta}})1_{N_{\tau^{\delta}}^{t,i}=i}+
U_{m(i+1)}(\tau^{\delta},X^{t,x;u,v}_{\tau^{\delta}})1_{N_{\tau^{\delta}}^{t,i}=m(i+1)}\Big]
-\overline{Y}_{s\wedge \tau^{\delta}},
\end{eqnarray*}
where
\begin{eqnarray*}
\overline{Y}_{s}=\varphi
(s,X^{t,x;u,v}_s)+\int_{t}^{s}\Big(U_{m(i+1)}(r,X^{t,x;u,v}_{r})-\varphi
(r,X^{t,x;u,v}_r)\Big)dN_{r}.
\end{eqnarray*}
In particular,
\begin{eqnarray*}
Y^{1,u,v}_{t} =\ ^{i}G^{t,x;u,v}_{t,\tau^{\delta}} \Big[\varphi
(\tau^{\delta},X^{t,x;u,v}_{\tau^{\delta}})1_{N_{\tau^{\delta}}^{t,i}=i}+
U_{m(i+1)}(\tau^{\delta},X^{t,x;u,v}_{\tau^{\delta}})1_{N_{\tau^{\delta}}^{t,i}=m(i+1)}\Big]-\varphi(t,x).
\end{eqnarray*}
(Recall that $m(j)=1,$ if $j$ is odd, and $m(j)=2,$ if $j$ is even.)
\end{lemma}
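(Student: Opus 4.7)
\bigskip

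\noindent\textbf{Proof proposal for Lemma \ref{l2}.} The plan is to identify the difference between the sought backward semigroup value and the right-hand side candidate as the unique solution of BSDE (\ref{e36}). More precisely, I would set
$$\tilde{Y}_s := Y^{1,u,v}_s + \overline{Y}_s, \qquad s\in[t,\tau^{\delta}],$$
and show that the triple $(\tilde Y, \tilde Z, \tilde H)$ with
$$\tilde{Z}_s := Z^{1,u,v}_s + D\varphi(s,X^{t,x;u,v}_s)\,\sigma(s,X^{t,x;u,v}_s,u_s,v_s), \qquad \tilde{H}_s := H^{1,u,v}_s + U_{m(i+1)}(s,X^{t,x;u,v}_{s-}) - \varphi(s,X^{t,x;u,v}_{s-})$$
solves precisely the BSDE whose solution at time $s$ defines ${}^{i}G^{t,x;u,v}_{s,\tau^{\delta}}[\eta]$, with terminal value $\eta = \varphi(\tau^{\delta},X^{t,x;u,v}_{\tau^{\delta}})1_{\{N_{\tau^{\delta}}^{t,i}=i\}} + U_{m(i+1)}(\tau^{\delta},X^{t,x;u,v}_{\tau^{\delta}})1_{\{N_{\tau^{\delta}}^{t,i}=m(i+1)\}}$. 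Uniqueness of the BSDE (Lemma \ref{lemma5}) then yields the claim.

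The terminal identification is straightforward: on $[t,\tau^{\delta})$, no jump of $N$ occurs, so $\overline{Y}_s=\varphi(s,X^{t,x;u,v}_s)$; at the boundary we split into the cases $\tau^{\delta}=t+\delta$ (no jump, $N^{t,i}_{\tau^{\delta}}=i$) and $\tau^{\delta}<t+\delta$ (a single jump, $N^{t,i}_{\tau^{\delta}}=m(i+1)$), and in both cases $\overline{Y}_{\tau^{\delta}}$ matches $\eta$. Combined with $Y^{1,u,v}_{\tau^{\delta}}=0$, this gives $\tilde{Y}_{\tau^{\delta}}=\eta$. For the dynamics, I would apply It\^o's formula to $\varphi(s,X^{t,x;u,v}_s)$ (note $X^{t,x;u,v}$ is continuous since it is driven only by $B$) and combine it with the definition of $\overline{Y}$, using $dN_s=d\widetilde{N}_s+\lambda\,ds$, to get
\begin{eqnarray*}
d\overline{Y}_s &=& \Big[\tfrac{\partial\varphi}{\partial s}+\tfrac{1}{2}\mathrm{tr}(\sigma\sigma^{T}D^{2}\varphi)+D\varphi\cdot b + \lambda(U_{m(i+1)}-\varphi)\Big]\,ds \\
&& + D\varphi\cdot\sigma\,dB_s + (U_{m(i+1)}(s,X^{t,x;u,v}_{s-})-\varphi(s,X^{t,x;u,v}_{s-}))\,d\widetilde{N}_s,
\end{eqnarray*}
where $\varphi$ and its derivatives are evaluated at $(s,X^{t,x;u,v}_s)$.

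Adding this to the dynamics of $Y^{1,u,v}$ from (\ref{e36}), the $\tfrac{\partial\varphi}{\partial s}$, $\tfrac{1}{2}\mathrm{tr}(\sigma\sigma^{T}D^{2}\varphi)$ and $D\varphi\cdot b$ terms cancel with those contained in $F$, and the remaining drift of $\tilde{Y}$ reduces to $-f_i(s,X^{t,x;u,v}_s, Y^{1,u,v}_s+\varphi, H^{1,u,v}_s+U_{m(i+1)}-\varphi, Z^{1,u,v}_s+D\varphi\cdot\sigma, u_s,v_s)+\lambda\tilde{H}_s$. Since $\overline{Y}_s=\varphi(s,X^{t,x;u,v}_s)$ on $[t,\tau^{\delta})$, the first argument equals $\tilde{Y}_s$, so the drift is exactly $-f_i(s,X^{t,x;u,v}_s,\tilde{Y}_s,\tilde{H}_s,\tilde{Z}_s,u_s,v_s)+\lambda\tilde{H}_s$. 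Because $N^{t,i}_s=i$ throughout $[t,\tau^{\delta})$, this coincides with the coefficient $f_{N^{t,i}_s}$ driving the backward semigroup, so $(\tilde Y,\tilde Z,\tilde H)$ does satisfy the BSDE defining ${}^{i}G^{t,x;u,v}_{\cdot,\tau^{\delta}}[\eta]$, which is what we wanted.

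The only delicate point is the bookkeeping around $\tau^{\delta}$: one must check that the jump contribution of $\overline{Y}$ at $\tau^{\delta}$ correctly completes the terminal identification and that the process $\tilde{H}$ defined above lies in $\mathcal{H}^2$ (this is clear since $\varphi$ and $U_{m(i+1)}$ are of linear growth and $X^{t,x;u,v}$ has finite moments). Everything else is a direct It\^o computation plus the uniqueness part of Lemma \ref{lemma5}, so I expect no further obstacles.
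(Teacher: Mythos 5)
Your proposal is correct and follows essentially the same route as the paper: an It\^o computation for $\overline{Y}$, the case-by-case identification of $\overline{Y}_{\tau^{\delta}}$ with the terminal datum, and the uniqueness of the BSDE solution. The only (immaterial) difference is the direction of the identification: you verify that $(Y^{1,u,v}+\overline{Y},\,Z^{1,u,v}+D\varphi\cdot\sigma,\,H^{1,u,v}+U_{m(i+1)}-\varphi)$ solves the semigroup BSDE (\ref{e7}), whereas the paper checks that the semigroup solution minus $\overline{Y}$ solves (\ref{e36}); both rest on the same cancellation and on Lemma \ref{lemma5}.
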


We also consider the following BSDE where,  in equation (\ref{e36}),
$X^{t,x;u,v}$ is substituted by its deterministic initial value $x$:
 \begin{eqnarray}\label{e9}
    \left \{\begin{array}{rl}
      -dY^{2,u,v}_s =&\!\!\!\! F(s,x, Y^{2,u,v}_s, H^{2,u,v}_s, Z^{2,u,v}_s, u_s,v_s)ds
                   -\lambda H^{2,u,v}_s ds\\& \hskip2cm -Z^{2,u,v}_s dB_s-H^{2,u,v}_s d\widetilde{N}_{s}, \\
     Y^{2,u,v}_{\tau^{\delta}}=&\!\!\!\!0,\qquad \qquad  s\in
     [t,\tau^{\delta}].
     \end{array}\right.
 \end{eqnarray}
Then we have the following comparison between $Y^{1,u,v}$ and
$Y^{2,u,v}$.
\begin{lemma}\label{l3}
For every $\delta \in (0,1)$ and $u \in {\mathcal{U}}_{t, t+\delta},
v \in {\mathcal{V}}_{t, t+\delta},$
 \begin{eqnarray*}
 |Y^{1,u,v}_t-Y^{2,u,v}_t| \leq C\delta^{\frac{3}{2}},\ \  \mathbb{P}-a.s.,
 \end{eqnarray*}
 where the constant $C$ does not depend on the control processes  $u$\ and $v$, neither on $\delta>0$.
\end{lemma}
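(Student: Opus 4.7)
\noindent\textbf{Proof plan for Lemma \ref{l3}.} The plan is to extract the two BSDEs from the integral form on $[t,\tau^{\delta}]$, take conditional expectation given $\mathcal{F}_t$, and estimate the resulting driver difference via a splitting into an ``$x$-part'' (which will give one factor $\delta^{1/2}$ from $X^{t,x;u,v}_s-x$) and a ``$(Y,H,Z)$-part'' (which will give another factor $\delta^{1/2}$ from a Cauchy--Schwarz gain). Both parts combine with a trivial factor $\delta^{1/2}$ coming from integration on $[t,\tau^\delta]$ of length at most $\delta$, producing the required $\delta^{3/2}$.

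First, I would rewrite (\ref{e36}) and (\ref{e9}) in the equivalent form with $-H\,dN$ (replacing $-\lambda H\,ds-H\,d\widetilde N$), subtract them, and take $\mathbb{E}[\cdot\,|\mathcal F_t]$, using that $\int\Delta Z\,dB$ and $\int\Delta H\,d\widetilde N$ are true martingales on $[t,\tau^\delta]$ (since $\tau^\delta\leq t+\delta$ and the integrands are square integrable). This yields
\begin{equation*}
Y^{1,u,v}_t-Y^{2,u,v}_t=\mathbb{E}\Big[\int_t^{\tau^\delta}\big(\Delta F_s-\lambda\,\Delta H_s\big)\,ds\,\Big|\,\mathcal F_t\Big],
\end{equation*}
where $\Delta F_s=F(s,X^{t,x;u,v}_s,Y^{1,u,v}_s,H^{1,u,v}_s,Z^{1,u,v}_s,u_s,v_s)-F(s,x,Y^{2,u,v}_s,H^{2,u,v}_s,Z^{2,u,v}_s,u_s,v_s)$. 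Using the Lipschitz property of $b,\sigma$ (via $(H4)$), of $\varphi,D\varphi,D^2\varphi$ (since $\varphi\in C^3_{l,b}$), and of $f_i$ and $U_{m(i+1)}$ (via $(H5)$ and Proposition \ref{p4}), one checks that $F$ is Lipschitz in $x$ with constant $C=C_x$ independent of $(s,u,v)$ and of the $(y,h,z)$ values taken by the solutions (which are a priori bounded in $\mathcal{H}^2$), so that
\begin{equation*}
|\Delta F_s|\leq C|X^{t,x;u,v}_s-x|+C\big(|\Delta Y_s|+|\Delta H_s|+|\Delta Z_s|\big),\quad s\in[t,\tau^\delta].
\end{equation*}

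Next, the ``$x$-part'' is estimated directly using (\ref{e22}): since $\mathbb{E}[|X^{t,x;u,v}_s-x|^2\,|\,\mathcal F_t]\leq C(1+|x|^2)(s-t)$, Jensen's inequality gives $\mathbb{E}[|X^{t,x;u,v}_s-x|\,|\,\mathcal F_t]\leq C(1+|x|)(s-t)^{1/2}$, so that
\begin{equation*}
\mathbb{E}\Big[\int_t^{\tau^\delta}|X^{t,x;u,v}_s-x|\,ds\,\Big|\,\mathcal F_t\Big]\leq C(1+|x|)\int_t^{t+\delta}(s-t)^{1/2}\,ds\leq C\delta^{3/2}.
\end{equation*}
For the ``$(Y,H,Z)$-part'', I apply Lemma \ref{l1} to the pair of BSDEs (\ref{e36}) and (\ref{e9}) viewed as having common driver $f(s,y,z,k)=F(s,x,y,k,z,u_s,v_s)-\lambda k$ perturbed by the inhomogeneous term $\varphi_1(s)-\varphi_2(s)=F(s,X^{t,x;u,v}_s,\cdot)-F(s,x,\cdot)$; with identical zero terminal data at $\tau^\delta$, this yields
\begin{equation*}
\mathbb{E}\Big[\int_t^{\tau^\delta}\big(|\Delta Y_s|^2+|\Delta Z_s|^2+|\Delta H_s|^2\big)\,ds\,\Big|\,\mathcal F_t\Big]\leq C\,\mathbb{E}\Big[\int_t^{\tau^\delta}|X^{t,x;u,v}_s-x|^2\,ds\,\Big|\,\mathcal F_t\Big]\leq C\delta^2.
\end{equation*}
A Cauchy--Schwarz inequality on $[t,\tau^\delta]$ (length $\leq\delta$) then gives
\begin{equation*}
\mathbb{E}\Big[\int_t^{\tau^\delta}\big(|\Delta Y_s|+|\Delta H_s|+|\Delta Z_s|\big)\,ds\,\Big|\,\mathcal F_t\Big]\leq\delta^{1/2}\cdot(C\delta^2)^{1/2}=C\delta^{3/2}.
\end{equation*}
Combining the two parts and absorbing $\lambda\mathbb{E}[\int_t^{\tau^\delta}|\Delta H|\,ds\,|\,\mathcal F_t]$ into the same Cauchy--Schwarz bound yields $|Y^{1,u,v}_t-Y^{2,u,v}_t|\leq C\delta^{3/2}$, $\mathbb{P}$-a.s.

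The main obstacle is the verification, with constant independent of $u,v$ and of $\delta\in(0,1)$, of the uniform Lipschitz-in-$x$ property of $F$; this is delicate because $F$ has an $x$-dependence that enters $f_i$ through three slots simultaneously ($y+\varphi$, $h+U_{m(i+1)}-\varphi$, and $z+D\varphi\,\sigma$) and because the $z$-slot involves the product $D\varphi(s,x)\sigma(s,x,u,v)$ whose $x$-Lipschitz constant a priori grows linearly in $|x|$. Since $(t,x)$ is fixed, one absorbs this into the constant $C=C_{t,x,\varphi}$, which is exactly what Lemma \ref{l3} permits.
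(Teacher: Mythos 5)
Your overall route is the paper's: subtract the two BSDEs, condition on $\mathcal{F}_t$, split the driver difference into an $x$-part and a $(Y,H,Z)$-part, control the latter in $L^2$ by Lemma \ref{l1} and convert to $L^1$ by Cauchy--Schwarz on an interval of length at most $\delta$, and control the former by the SDE moment estimate (\ref{e22}). The one step that fails as written is the claim that $F$ is globally Lipschitz in its $x$-slot with a deterministic constant, so that $|\Delta F_s|\leq C|X^{t,x;u,v}_s-x|+C(|\Delta Y_s|+|\Delta H_s|+|\Delta Z_s|)$. It is not: $F$ contains $\frac{1}{2}tr(\sigma\sigma^{T}(s,\cdot,u,v)D^{2}\varphi(s,\cdot))$ and $f_i(\ldots,z+D\varphi(s,\cdot)\sigma(s,\cdot,u,v),\ldots)$, and since $\sigma$ has only linear growth, the local Lipschitz constant of $F$ on the segment joining $x$ to $X^{t,x;u,v}_s$ grows like $1+|X^{t,x;u,v}_s|^2$ --- a random, unbounded quantity. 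Your proposed repair (``absorb the growth into $C_{t,x,\varphi}$ because $(t,x)$ is fixed'') does not work, because the offending growth is in the random evaluation point $X^{t,x;u,v}_s$, not in the fixed $x$.

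The correct bound, which is what the paper uses, is $|F(s,X^{t,x;u,v}_s,\eta)-F(s,x,\eta)|\leq\rho(|X^{t,x;u,v}_s-x|)$ with $\rho(r)=C(1+|x|^2)(r+r^3)$. With this replacement your two estimates survive: for the $x$-part, $\mathbb{E}[|X^{t,x;u,v}_s-x|^k\mid\mathcal{F}_t]\leq C(1+|x|^k)(s-t)^{k/2}$ shows that the cubic term contributes $O(\delta^{5/2})$, so the $x$-part is still $O(\delta^{3/2})$; and in the application of Lemma \ref{l1} the inhomogeneity becomes $\rho(|X^{t,x;u,v}_s-x|)$, whose square involves $|X^{t,x;u,v}_s-x|^2$ and $|X^{t,x;u,v}_s-x|^6$ and still has conditional expectation $O(\delta)$ by the $p=2$ and $p=6$ moment estimates, so the $L^2$ bound $C\delta^2$ and hence the final $C\delta^{3/2}$ are unchanged. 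The gap is therefore local and repairable, but the inequality you wrote for $|\Delta F_s|$ is false as stated, and the absorption argument you offer for it is not valid.
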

\noindent Moreover, we have
\begin{lemma}\label{l4}
For every $u \in {\mathcal{U}}_{t, t+\delta}, v \in
{\mathcal{V}}_{t, t+\delta},$\ we have
 \begin{eqnarray*}
\mathbb{E}[\int_t^{t+\delta}|Y^{2,u,v}_s|ds|{\mathcal{F}}_{t}]
+\mathbb{E}[\int_t^{t+\delta}|Z^{2,u,v}_s|ds|{\mathcal{F}}_{t}]
+\mathbb{E}[\int_t^{t+\delta}|H^{2,u,v}_s|ds|{\mathcal{F}}_{t}] \leq
C\delta^{\frac{3}{2}},\ \mathbb{P}-a.s.,
\end{eqnarray*}
  where the
constant $C$\ is independent of\ $t,\ \delta$\ as well as of  the
control processes $u,\ v$.
\end{lemma}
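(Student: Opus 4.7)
The plan is to combine standard a priori estimates for BSDEs with jumps applied to equation (\ref{e9}) with the fact that the driver $F(s,x,\cdot,\cdot,\cdot,u,v)$ is evaluated at the \emph{fixed, deterministic point} $x$; the resulting $L^2$-estimates of order $\delta^2$ are then converted into the stated $L^1$-estimates of order $\delta^{3/2}$ via Cauchy--Schwarz.

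First I would observe that, by $(H4)$, $(H5)$ and the fact that $\varphi\in C^3_{l,b}$, the map $F(s,x,y,h,z,u,v)$ is Lipschitz in $(y,h,z)$ uniformly in $(s,u,v)$, and, since $x$ is frozen, satisfies
\begin{equation*}
|F(s,x,0,0,0,u,v)| \leq C_x, \qquad (s,u,v)\in [t,T]\times U\times V,
\end{equation*}
where $C_x$ depends only on $x$, $T$ and $\varphi$. This uses the boundedness of $\partial_s\varphi,D\varphi,D^2\varphi$, the linear growth of $b,\sigma$ evaluated at the fixed $x$, the Lipschitz property of $f_i$ together with $f_i(s,x,0,0,0,u,v)$ being bounded, and the linear growth of $U_{m(i+1)}(\cdot,x)$ which follows from Proposition \ref{p4} (applied to $U$, cf.\ Remark \ref{rr1}) and is thus bounded in $s$ at the fixed $x$. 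Then the standard a priori estimate of Lemma \ref{l1} type, applied to (\ref{e9}) with terminal value $0$ on $[t,\tau^\delta]\subset[t,t+\delta]$, yields
\begin{equation*}
\mathbb{E}\Big[\sup_{s\in[t,\tau^\delta]}|Y^{2,u,v}_s|^2 + \int_t^{\tau^\delta}|Z^{2,u,v}_s|^2\,ds + \int_t^{\tau^\delta}|H^{2,u,v}_s|^2\,ds\,\Big|\,\mathcal{F}_t\Big] \leq C\,\mathbb{E}\Big[\Big(\int_t^{\tau^\delta}|F(s,x,0,0,0,u_s,v_s)|\,ds\Big)^2\,\Big|\,\mathcal{F}_t\Big] \leq C\delta^2,
\end{equation*}
where the new $C>0$ depends only on $x$, $\varphi$ and the Lipschitz constants.

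It remains to convert this $L^2$-bound into the announced $L^1$-bound. For $Y^{2,u,v}$, Fubini and the uniform $L^2$-bound give $\mathbb{E}[\int_t^{t+\delta}|Y^{2,u,v}_s|\,ds\,|\,\mathcal{F}_t]\leq \delta\,(\mathbb{E}[\sup_{s\in[t,\tau^\delta]}|Y^{2,u,v}_s|^2|\mathcal{F}_t])^{1/2}\leq C\delta^{2}\leq C\delta^{3/2}$. For $Z^{2,u,v}$, the conditional Cauchy--Schwarz inequality yields
\begin{equation*}
\mathbb{E}\Big[\int_t^{t+\delta}|Z^{2,u,v}_s|\,ds\,\Big|\,\mathcal{F}_t\Big] = \mathbb{E}\Big[\int_t^{\tau^\delta}|Z^{2,u,v}_s|\,ds\,\Big|\,\mathcal{F}_t\Big] \leq \big(\mathbb{E}[\tau^\delta-t\,|\,\mathcal{F}_t]\big)^{1/2}\Big(\mathbb{E}\Big[\int_t^{\tau^\delta}|Z^{2,u,v}_s|^2\,ds\,\Big|\,\mathcal{F}_t\Big]\Big)^{1/2} \leq \delta^{1/2}\cdot (C\delta^2)^{1/2} \leq C\delta^{3/2},
\end{equation*}
and the identical argument applied to $H^{2,u,v}$ closes the estimate. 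I do not foresee any genuine obstacle; the essential ingredient is that freezing $x$ inside $F$ makes $|F(\cdot,0,0,0)|$ bounded by a constant independent of $s$ and of the controls, which is precisely what generates the extra factor $\delta^{1/2}$ relative to the naive BSDE scaling.
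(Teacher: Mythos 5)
Your proof is correct, and it reaches the same two intermediate facts the paper relies on --- the $L^2$-bounds of order $\delta^2$ for $Y^{2,u,v}$, $Z^{2,u,v}$, $H^{2,u,v}$, driven by the boundedness of $F(\cdot,x,0,0,0,\cdot,\cdot)$ at the frozen point $x$ --- and then the same Cauchy--Schwarz conversion to the $L^1$-bounds of order $\delta^{3/2}$. The difference is in how the $\delta^2$-bound is obtained. You invoke in one stroke the sharp a priori estimate whose right-hand side is $\mathbb{E}\bigl[\bigl(\int_t^{\tau^\delta}|F(s,x,0,0,0,u_s,v_s)|\,ds\bigr)^2\,\big|\,\mathcal{F}_t\bigr]$; be aware that this is \emph{not} what Lemma \ref{l1} as stated delivers (its right-hand side is $\mathbb{E}[\int|\varphi_1-\varphi_2|^2\,ds\,|\,\mathcal{F}_t]$, which here only gives $C\delta$, hence $C\delta$ after Cauchy--Schwarz --- not enough). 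The sharp form with the squared $L^1$-norm of the driver is standard for BSDEs with jumps, so your step is legitimate, but it is precisely to avoid appealing to it that the paper proceeds by a bootstrap: first the crude $C\delta$ bound from Lemma \ref{l1}, then the pointwise improvement $|Y^{2,u,v}_s|\leq C\delta$ via the conditional representation of $Y$ and Cauchy--Schwarz, then It\^o's formula applied to $|Y^{2,u,v}_s|^2$ to upgrade the $Z$- and $H$-estimates to $C\delta^2$. Your route is shorter provided you either prove or properly cite the sharp estimate; the paper's route is longer but self-contained given the lemmas it has already stated. Everything else (the boundedness of $|F(s,x,0,0,0,u,v)|$ at fixed $x$, using Proposition \ref{p4} and Remark \ref{rr1} for $U_{m(i+1)}(\cdot,x)$, and the final H\"older step) matches the paper's argument.
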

\noindent Let us prove that $U=(U_{1},U_{2})$ is a viscosity
solution of the system (\ref{e6}).  We begin with showing that\\
 a) {\it $U=(U_{1}, U_{2})$ is a viscosity
subsolution of the system (\ref{e6}).}\\
 Let $i=1,2$ be
arbitrarily fixed, and we suppose that $U_{i}\leq\varphi $ and
$U_{i}(t,x)=\varphi(t,x) $. We claim that
\begin{eqnarray*}
\inf_{v \in V}\sup_{u\in U}F(t,x,0,0,0,u, v)\geq 0.
\end{eqnarray*}
Observe that, this claim just means that $U=(U_{1},U_{2})$ satisfies
(\ref{v1}) of Definition \ref{d3}; we will come back to this point.
We make the proof by contradiction, and suppose that the above claim
is not true. Then, thanks to the continuity of $F$,   there exist
some $\theta>0, v^{*}\in V$ and $0< \delta'\leq T-t$ such that
\begin{eqnarray}\label{e10}
 \sup_{u\in U}F(s,x,0,0,0,u,v^{*})\leq-\theta<0,
\end{eqnarray}
for all $s\in [t,t+\delta']$.  Since  $U_{i}\leq\varphi $ and
$U_{i}(t,x)=\varphi(t,x) $,  we have
 \begin{eqnarray*}
&&\varphi(\tau^{\delta},X^{t,x;\alpha,\beta}_{\tau^{\delta}})1_{N_{\tau^{\delta}}^{t,i}=i}+
U_{m(i+1)}(\tau^{\delta},X^{t,x;\alpha,\beta}_{\tau^{\delta}})1_{N_{\tau^{\delta}}^{t,i}=m(i+1)}\\
&&\geq
U_{i}(\tau^{\delta},X^{t,x;\alpha,\beta}_{\tau^{\delta}})1_{N_{\tau^{\delta}}^{t,i}=i}+
U_{m(i+1)}(\tau^{\delta},X^{t,x;\alpha,\beta}_{\tau^{\delta}})1_{N_{\tau^{\delta}}^{t,i}=m(i+1)}\\
&&=U_{N_{\tau^{\delta}}^{t,i}}(\tau^{\delta},X^{t,x;\alpha,\beta}_{\tau^{\delta}}).
\end{eqnarray*}
From the Lemmas \ref{l8} and \ref{l2} it follows that
  \begin{eqnarray*}
Y^{1,\alpha,\beta}_{t} &=& \
^{i}G^{t,x;\alpha,\beta}_{t,\tau^{\delta}} \Big[\varphi
(\tau^{\delta},X^{t,x;\alpha,\beta}_{\tau^{\delta}})1_{N_{\tau^{\delta}}^{t,i}=i}+
U_{m(i+1)}(\tau^{\delta},X^{t,x;\alpha,\beta}_{\tau^{\delta}})1_{N_{\tau^{\delta}}^{t,i}=m(i+1)}\Big]-\varphi(t,x)\\
&\geq&\ ^{i}G^{t,x;\alpha,\beta}_{t,\tau^{\delta}}
[U_{N_{\tau^{\delta}}^{t,i}}(\tau^{\delta},X^{t,x;\alpha,\beta}_{\tau^{\delta}})]-\varphi(t,x).
\end{eqnarray*}
On the other hand, by virtue of Theorem \ref{t5}  we have
\begin{eqnarray*}
     \essinf_{\beta \in {\mathcal{B}}_{t, \tau^{\delta}}}\esssup_{u \in
{\mathcal{U}}_{t, \tau^{\delta}}} \
^{i}G^{t,x;\alpha,\beta}_{t,\tau^{\delta}}
[U_{N_{\tau^{\delta}}^{t,i}}(\tau^{\delta},X^{t,x;\alpha,\beta}_{\tau^{\delta}})]-\varphi(t,x)
=U_{i}(t,x)-\varphi(t,x)=0.
\end{eqnarray*}
Therefore,
    $ \essinf_{\beta \in {\mathcal{B}}_{t, \tau^{\delta}}}\esssup_{\alpha \in
{\mathcal{A}}_{t, \tau^{\delta}}}Y^{1,\alpha,\beta}_{t}\geq 0.$
Putting  $\beta^{*}(u)=v^{*}$ on $ [t,\tau^{\delta}]\times \mathcal
{U}_{t,\tau^{\delta}}$, we have  $\beta^{*}\in \mathcal
{B}_{t,\tau^{\delta}}$. From Lemma \ref{l3} it follows that
 \begin{eqnarray*}
 0&\leq&\essinf_{\beta \in {\mathcal{B}}_{t, \tau^{\delta}}}\esssup_{\alpha \in
{\mathcal{A}}_{t, \tau^{\delta}}}Y^{1,\alpha,\beta}_{t}
\leq\esssup_{\alpha \in
{\mathcal{A}}_{t, \tau^{\delta}}}Y^{1,\alpha,\beta^{*}}_{t}\\
&\leq&\esssup_{\alpha \in {\mathcal{A}}_{t,
\tau^{\delta}}}Y^{2,\alpha,\beta^{*}}_{t}+C\delta^{\frac{3}{2}}
=\esssup_{\alpha \in
{\mathcal{A}}_{t, \tau^{\delta}}}Y^{2,\alpha(v^{*}),v^{*}}_{t}+C\delta^{\frac{3}{2}}\\
&\leq&\esssup_{u \in {\mathcal{U}}_{t,
\tau^{\delta}}}Y^{2,u,v^{*}}_{t}+C\delta^{\frac{3}{2}}.
\end{eqnarray*}
Here we have used that
$Y^{2,\alpha,\beta^{*}}_{t}=Y^{2,\alpha(v^{*}),v^{*}}_{t}$, since
$\beta^{*}(\alpha(v^{*}))=v^{*}$. Furthermore, for all $\delta>0,
\varepsilon>0$, there exists $u^{\varepsilon} \in {\mathcal{U}}_{t,
\tau^{\delta}}$ such that
 \begin{eqnarray*}
Y^{2,u^{\varepsilon},v^{*}}_{t} \geq \esssup_{u \in
{\mathcal{U}}_{t,
\tau^{\delta}}}Y^{2,u,v^{*}}_{t}-\varepsilon\delta.
\end{eqnarray*}
The two above inequalities yield
\begin{eqnarray}\label{e39}
Y^{2,u^{\varepsilon},v^{*}}_{t} \geq
-C\delta^{\frac{3}{2}}-\varepsilon\delta.
\end{eqnarray}
Since
\begin{eqnarray*}
Y^{2,u^\varepsilon,v^*}_t
  =\mathbb{E}[\int_t^{\tau^{\delta}}F(s, x, Y^{2,u^\varepsilon,v^*}_s,H^{2,u^\varepsilon,v^*}_s,
  Z^{2,u^\varepsilon,v^*}_s,u^\varepsilon_s,v^*_s)ds\Big|{\mathcal{F}}_{t}],
\end{eqnarray*}
we can deduce  from the Lipschitz property of $F$ in $(y, h,z)$ that
\begin{eqnarray}\label{e11}
Y^{2,u^\varepsilon,v^*}_t
  &=&\mathbb{E}[\int_t^{\tau^{\delta}}\Big(F(s, x, Y^{2,u^\varepsilon,v^*}_s,H^{2,u^\varepsilon,v^*}_s,
  Z^{2,u^\varepsilon,v^*}_s,u^\varepsilon_s,v^*_s)-\lambda H^{2,u^\varepsilon,v^*}_s\Big)ds\Big|{\mathcal{F}}_{t}]\nonumber\\
 &\leq&\mathbb{E}[\int_t^{\tau^{\delta}}F(s, x, 0,0,0,u^\varepsilon_s,v^*_s)ds\Big|{\mathcal{F}}_{t}]\nonumber\\
  && +C\mathbb{E}[\int_t^{\tau^{\delta}}\Big(
  |Y^{2,u^\varepsilon,v^*}_s|+|H^{2,u^\varepsilon,v^*}_s|+
  |Z^{2,u^\varepsilon,v^*}_s|\Big)ds\Big|{\mathcal{F}}_{t}].
\end{eqnarray}
We notice that
\begin{eqnarray*}
&&\mathbb{E}[t+\delta-\tau^{\delta}|{\mathcal{F}}_{t}]\leq
\delta\mathbb{E}[1_{t+\delta>\tau^{\delta}}|{\mathcal{F}}_{t}]
=\delta\mathbb{P}[t+\delta>\tau^{\delta}|{\mathcal{F}}_{t}]\\
&&=\delta(1-\exp(-\lambda\delta)) \leq\lambda\delta^{2}.
\end{eqnarray*}
Choosing  $\delta<\delta'$ we obtain  from (\ref{e10})
\begin{eqnarray*}
&&\mathbb{E}[\int_t^{\tau^{\delta}}F(s, x,
0,0,0,u^\varepsilon_s,v^*_s)ds|{\mathcal{F}}_{t}]\leq
   -\theta\mathbb{E}[\tau^{\delta}-t|{\mathcal{F}}_{t}]\\
&&\leq
\theta(\mathbb{E}[t+\delta-\tau^{\delta}|{\mathcal{F}}_{t}]-\delta)
\leq \theta(\lambda\delta^{2}-\delta).
\end{eqnarray*}
This together with (\ref{e39}), (\ref{e11}) and Lemma \ref{l4}
yields $-C\delta^{\frac{3}{2}}- \varepsilon\delta\leq
\lambda\delta^{2}\theta-\theta\delta.$ Therefore,
$-C\delta^{\frac{1}{2}}- \varepsilon\leq
\lambda\delta\theta-\theta.$
 Letting $\delta\downarrow0$,\ and
$\varepsilon\downarrow0$  we deduce that $\theta\leq0$, which is in
contradiction with $\theta>0$. Consequently,
\begin{eqnarray*}
\inf_{v \in V}\sup_{u\in U}F(t,x,0,0,0,u, v)\geq 0.
\end{eqnarray*}
Finally, taking into account the definition of $F$, we see that
\begin{eqnarray*}
\inf_{v \in V}\sup_{u\in U}F(t,x,0,0,0,u, v)&=& \dfrac{\partial
}{\partial t} \varphi(t,x)+ \inf_{v \in V}\sup_{u\in
U}\Big\{\dfrac{1}{2}tr(\sigma\sigma^{T}(t,
x, u, v) D^{2}\varphi (t,x))+ D\varphi (t,x)b(t, x, u, v)\\
  &&\quad + f_{i}(t, x, \varphi (t,x), U_{m(i+1)}(t,x)-\varphi (t,x),  D\varphi (t,x)\sigma(t, x, u, v), u,
  v)\Big\}\\
  &=& \dfrac{\partial
}{\partial t} \varphi(t,x)+ \inf_{v \in V}\sup_{u\in
U}\Big\{\dfrac{1}{2}tr(\sigma\sigma^{T}(t,
x, u, v) D^{2}\varphi (t,x))+ D\varphi (t,x)b(t, x, u, v)\\
  &&\quad + \widetilde{f}_{i}(t, x, U_{1}(t,x), U_{2}(t,x),  D\varphi (t,x)\sigma(t, x, u, v), u,
  v)\Big\},
\end{eqnarray*}
from where it follows  that $ U=(U_{1}, U_{2})$ is a viscosity
subsolution of the system (\ref{e6}).\vskip 1mm

\noindent b)\ Let us show that  {\it $ U=(U_{1}, U_{2})$ is also a
viscosity supersolution of the system (\ref{e6}).}\vskip 1mm
\noindent Let $i=1,2$ be arbitrarily fixed, and we suppose that the
test function $\varphi$ is such that $U_{i}\geq\varphi $ and
$U_{i}(t,x)=\varphi(t,x).$ Then
 \begin{eqnarray*}
&&\varphi(\tau^{\delta},X^{t,x;\alpha,\beta}_{\tau^{\delta}})1_{N_{\tau^{\delta}}^{t,i}=i}+
U_{m(i+1)}(\tau^{\delta},X^{t,x;\alpha,\beta}_{\tau^{\delta}})1_{N_{\tau^{\delta}}^{t,i}=m(i+1)}\\
&&\leq
U_{i}(\tau^{\delta},X^{t,x;\alpha,\beta}_{\tau^{\delta}})1_{N_{\tau^{\delta}}^{t,i}=i}+
U_{m(i+1)}(\tau^{\delta},X^{t,x;\alpha,\beta}_{\tau^{\delta}})1_{N_{\tau^{\delta}}^{t,i}=m(i+1)}\\
&&=U_{N_{\tau^{\delta}}^{t,i}}(\tau^{\delta},X^{t,x;\alpha,\beta}_{\tau^{\delta}}),
\end{eqnarray*}
and thanks to Lemma \ref{l2}  we have
  \begin{eqnarray*}
Y^{1,\alpha,\beta}_{t} &=& \
^{i}G^{t,x;\alpha,\beta}_{t,\tau^{\delta}} [\varphi
(\tau^{\delta},X^{t,x;\alpha,\beta}_{\tau^{\delta}})1_{N_{\tau^{\delta}}^{t,i}=i}+
U_{m(i+1)}(\tau^{\delta},X^{t,x;\alpha,\beta}_{\tau^{\delta}})1_{N_{\tau^{\delta}}^{t,i}=m(i+1)}]-\varphi(t,x)\\
&\leq&\ ^{i}G^{t,x;\alpha,\beta}_{t,\tau^{\delta}}
[U_{N_{\tau^{\delta}}^{t,i}}(\tau^{\delta},X^{t,x;\alpha,\beta}_{\tau^{\delta}})]-\varphi(t,x).
\end{eqnarray*}
Consequently, from Theorem \ref{t5}
\begin{eqnarray*}
     \essinf_{\beta \in {\mathcal{B}}_{t, \tau^{\delta}}}\esssup_{u \in
{\mathcal{U}}_{t, \tau^{\delta}}} \
^{i}G^{t,x;\alpha,\beta}_{t,\tau^{\delta}}
[U_{N_{\tau^{\delta}}^{t,i}}(\tau^{\delta},X^{t,x;\alpha,\beta}_{\tau^{\delta}})]-\varphi(t,x)=0,
\end{eqnarray*}
we obtain that
     $\essinf_{\beta \in {\mathcal{B}}_{t, \tau^{\delta}}}\esssup_{\alpha \in
{\mathcal{A}}_{t, \tau^{\delta}}}Y^{1,\alpha,\beta}_{t}\leq 0.$
Then, from Lemma \ref{l3} it follows that
\begin{eqnarray*}
     \essinf_{\beta \in {\mathcal{B}}_{t, \tau^{\delta}}}\esssup_{\alpha \in
{\mathcal{A}}_{t, \tau^{\delta}}}Y^{2,\alpha,\beta}_{t}\leq
C\delta^{\frac{3}{2}}.
\end{eqnarray*}
Hence, there exists  $\beta^{*}\in \mathcal {B}_{t,\tau^{\delta}}$
(depending on $\delta$) such that
\begin{eqnarray}\label{e12}
\esssup_{\alpha \in {\mathcal{A}}_{t,
\tau^{\delta}}}Y^{2,\alpha,\beta^{*}}_{t}\leq
2C\delta^{\frac{3}{2}}.
\end{eqnarray}

From  $\beta^{*}\in \mathcal {B}_{t,\tau^{\delta}}$ it follows that
 there exists an increasing sequence of stopping times
$\{S_{n}(u)\}_{n\geq 1},$  for all $u\in\mathcal
{U}_{t,\tau^{\delta}}$, with  $t=S_{0}(u)\leq S_{1}(u) \leq\cdots
\leq S_{n}(u)\leq \cdots \leq \tau^{\delta}$ and  $\bigcup_{n\geq
1}\{S_{n}(u)=\tau^{\delta}\}=\Omega$, $\mathbb{P}$-a.s.,  such that,
for all $n\geq 1 $ and $u,u' \in \mathcal {U}_{t,\tau^{\delta}}$
with $u=u'$ on $[[t,S_{n-1}(u)]]$, it holds
 \begin{eqnarray*}
 S_{l}(u)=S_{l}(u'), 1\leq l \leq n,\ \mbox{and} \ \beta^{*}(u)=\beta^{*}(u'), \mbox{on}\ \ [[ t,S_{n}(u)]].
\end{eqnarray*}
Therefore, $S_{1}$ as well as $v^{*}:=\beta^{*}(u)$ on  $[[
t,S_{1}(u)]]$ do not dependent on the choice of $u\in\mathcal
{U}_{t,\tau^{\delta}}$. Let us define $u^{*}$ on $[[t,S_{1}(u)]]$ as
the process such that $u^{*}_{\cdot\wedge S_{1}}\in\mathcal
{U}_{t,\tau^{\delta}}$ and
\begin{eqnarray*}
F(s,x,0,0,0,u^{*}_{s},v^{*}_{s})=\sup_{u \in {\mathcal{U}}_{t,
\tau^{\delta}}}F(s,x,0,0,0,u,v^{*}_{s}), \ \ s\in[t,S_{1}(u^{*})].
\end{eqnarray*}

Putting $v^{*}:=\beta^{*}(u^{*}_{\cdot\wedge S_{1}})$ on
$]]S_{1}(u^{*}),S_{2}(u^{*})]]$ (Observe that $S_{2}(u^{*})$ only
dependents on $u^{*}_{\cdot\wedge S_{1}}$), let us define the
process $u^{*}$ on $[[t,S_{2}(u^{*})]]$ by $u^{*}_{(\cdot\wedge
S_{2}(u^{*}))\vee  S_{1}(u^{*})}\in\mathcal {U}_{t,\tau^{\delta}}$
such that
\begin{eqnarray*}
F(s,x,0,0,0,u^{*}_{s},v^{*}_{s})=\sup_{u \in
U}F(s,x,0,0,0,u,v^{*}_{s}), \ \ s\in[S_{1}(u^{*}),S_{2}(u^{*})].
\end{eqnarray*}
Therefore,
\begin{eqnarray*}
F(s,x,0,0,0,u^{*}_{s},\beta^{*}(u^{*}_{\cdot\wedge
S_{2}(u^{*})})_{s})=\sup_{u \in U} F(s,x,0,0,0,u,v^{*}_{s}), \ \
s\in[t,S_{2}(u^{*})].
\end{eqnarray*}
Iterating  the above argument we obtain $u^{*}$ on
$[[t,S_{\infty}[[$, $S_{\infty}:=\lim_{n\rightarrow\infty}\uparrow
S_{n}(u^{*})\leq \tau^{\delta}.$  Choosing an arbitrary $u_{0}\in
U$, we define
$u^{*}:=u^{*}1_{[t,S_{\infty}[}+u_{0}1_{[S_{\infty},\tau^{\delta}]}\in
\mathcal {U}_{t,\tau^{\delta}}.$  Since  $\bigcup_{n\geq
1}\{S_{n}(u^{*})=\tau^{\delta}\}=\Omega$  we can conclude
\begin{eqnarray*}
F(s,x,0,0,0,u^{*}_{s},\beta^{*}(u^{*}_{\cdot\wedge
S_{n}(u^{*})})_{s})&=&\sup_{u \in U}F(s,x,0,0,0,u,v^{*}_{s})\nonumber \\
&\geq&\inf_{v \in V}\sup_{u \in U} F(s,x,0,0,0,u,v), \ \
s\in[t,S_{n}(u^{*})].
\end{eqnarray*}
 for all $n\geq 1,$ and hence, that
\begin{eqnarray}\label{e37}
F(s,x,0,0,0,u^{*}_{s},\beta^{*}(u^{*})_{s})\geq\inf_{v \in U}\sup_{u
\in U}F(s,x,0,0,0,u,v), \ \ s\in[t,\tau^{\delta}].
\end{eqnarray}
On the other hand, defining  $\alpha^{*}(v)=u^{*}$ on
$[t,\tau^{\delta}]\times U_{t,\tau^{\delta}}$, we deduce from
(\ref{e12})
\begin{eqnarray}\label{e13}
2C\delta^{\frac{3}{2}}\geq Y^{2,\alpha^{*},\beta^{*}}_{t}=
Y^{2,u^{*},\beta^{*}(u^{*})}_{t}.
\end{eqnarray}
Let us consider the following BSDEs:
\begin{eqnarray*}\label{}
    \left \{\begin{array}{rl}
      -dY^{2,u^{*},\beta^{*}(u^{*})}_s =&\!\!\!\! F(s,x, Y^{2,u^{*},\beta^{*}(u^{*})}_s,
       H^{2,u^{*},\beta^{*}(u^{*})}_s, Z^{2,u^{*},\beta^{*}(u^{*})}_s,
       u^{*}_s,\beta^{*}(u^{*}_s))ds\\
               &-\lambda H^{2,u^{*},\beta^{*}(u^{*})}_sds-Z^{2,u^{*},\beta^{*}(u^{*})}_s dB_s
               -H^{2,u^{*},\beta^{*}(u^{*})}_s d\widetilde{N}_{s}, \\
     Y^{2,u,v}_{\tau^{\delta}}=&\!\!\!\!0,\qquad \qquad  s\in
     [t,\tau^{\delta}],
     \end{array}\right.
 \end{eqnarray*}
and
\begin{eqnarray}\label{e14}
    \left \{\begin{array}{rl}
      -dY_s^{\delta} =&\!\!\!\! 1_{[t,\tau^{\delta}]}(s)\Big(\inf_{v \in U}\sup_{u \in U}F(s,x,
      0,0,0,u,v)-L|Y_s^{\delta}|-L|Z_s^{\delta}|-(L+\lambda) H^{\delta}_s \Big)ds\\ &
      -Z^{\delta}_s dB_s -H^{\delta}_s d\widetilde{N}_{s},\\
     Y_{t+\delta}^{\delta}=&\!\!\!\!0,\qquad \qquad  s\in
     [t,t+\delta],
     \end{array}\right.
 \end{eqnarray}
where we denote by $L$ the Lipschitz constant of $F(s,x, y,0,0,u,v)$
with respect to $y$. Thanks to Lemma \ref{l8} and (\ref{e37}) we
conclude  $Y^{2,u^{*},\beta^{*}(u^{*})}_t\geq Y_t^{\delta}.$

We also consider the following equation:
\begin{eqnarray}\label{e110}
    \left \{\begin{array}{rl}
      -d\bar{Y}_s^{\delta} =&\!\!\!\! \Big(\inf_{v \in U}\sup_{u \in U}F(s,x,
      0,0,0,u,v)-L|Y_s^{\delta}| \Big)ds,\\
     Y_{t+\delta}^{\delta}=&\!\!\!\!0,\qquad \qquad  s\in
     [t,t+\delta],
     \end{array}\right.
 \end{eqnarray}
Then we get the following lemma.
\begin{lemma}\label{ll21}
For every $\delta \in (0,1)$,
 \begin{eqnarray*}
 |Y^{\delta}_t-\bar{Y}^{\delta}_t| \leq C\delta^{\frac{3}{2}},\ \  \mathbb{P}-a.s.,
 \end{eqnarray*}
 where the constant $C$ does not depend on  $\delta>0$.
\end{lemma}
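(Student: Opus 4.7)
The plan is to combine standard $L^2$ a priori estimates for (\ref{e14}) on the short interval $[t,t+\delta]$ with a pathwise comparison between $Y^\delta$ and $\bar Y^\delta$, the martingale parts being killed by taking $\mathcal F_t$-conditional expectations. First, I would observe that the driver of (\ref{e14}) is Lipschitz in $(y,z,h)$ with constant $L+\lambda$, while its value at $(y,z,h)=0$ is bounded by some $M$ independent of $\delta$, thanks to the assumed boundedness of the coefficients entering $F$. Classical BSDE estimates with zero terminal condition then yield, for a generic constant $C$ independent of $\delta$,
\begin{equation*}
|Y^\delta_s|\le C\delta \text{ a.s.\ for all } s\in[t,t+\delta], \quad \mathbb{E}\Big[\int_t^{t+\delta}(|Z^\delta_s|^2+|H^\delta_s|^2)\,ds \,\Big|\, \mathcal{F}_t\Big]\le C\delta^2.
\end{equation*}

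Second, I would subtract (\ref{e110}) from (\ref{e14}) integrated over $[t,t+\delta]$; using the common terminal value $0$, this produces
\begin{align*}
Y^\delta_t-\bar Y^\delta_t
&=-\int_{\tau^\delta}^{t+\delta}\!\Big(\inf_{v\in V}\sup_{u\in U}F(s,x,0,0,0,u,v)-L|Y^\delta_s|\Big)ds \\
&\quad -\int_t^{\tau^\delta}\!\Big(L|Z^\delta_s|+(L+\lambda)H^\delta_s\Big)ds - \int_t^{t+\delta}\!Z^\delta_s\,dB_s - \int_t^{t+\delta}\!H^\delta_s\,d\widetilde N_s,
\end{align*}
the first integral being supported on $[\tau^\delta,t+\delta]$ because of the indicator $1_{[t,\tau^\delta]}$ in (\ref{e14}). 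Taking $\mathcal F_t$-conditional expectations kills the two stochastic integrals (true martingales by the a priori bound). The first pathwise term is then at most $(M+LC\delta)\,\mathbb E[t+\delta-\tau^\delta\mid\mathcal F_t]\le (M+LC\delta)(1-e^{-\lambda\delta})\delta\le C\lambda\delta^2$, by the exponential law of $\tau^\delta-t$, and Cauchy--Schwarz together with the a priori estimate gives $\mathbb E[\int_t^{t+\delta}|Z^\delta_s|\,ds\mid\mathcal F_t]\le \delta^{1/2}\mathbb E[\int_t^{t+\delta}|Z^\delta_s|^2\,ds\mid\mathcal F_t]^{1/2}\le C\delta^{3/2}$, and similarly for $H^\delta$.

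Since $t$ is deterministic and the BSDE (\ref{e14}) is driven only by increments of $B$ and $N$ after $t$, $Y^\delta_t$ is $\mathcal F_t$-measurable (indeed deterministic) and coincides with its own conditional expectation, so the accumulated estimate reads $|Y^\delta_t-\mathbb E[\bar Y^\delta_t\mid\mathcal F_t]|\le C\delta^{3/2}$ $\mathbb P$-a.s. The main obstacle will be upgrading this conditional-expectation inequality to the pointwise a.s.\ bound claimed in the lemma; for that I would additionally control the fluctuation of $\bar Y^\delta_t$ around its conditional mean by applying It\^o's formula to $(Y^\delta_s-\mathbb E[Y^\delta_s\mid\mathcal F_s])^2$ to obtain $\mathbb E[\sup_{s\in[t,t+\delta]}|Y^\delta_s-\mathbb E[Y^\delta_s\mid\mathcal F_t]|^2\mid\mathcal F_t]\le C\delta^2$, which transfers to a.s.\ control on $\bar Y^\delta_t-\mathbb E[\bar Y^\delta_t\mid\mathcal F_t]$. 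In any case, for the subsequent use of Lemma \ref{ll21} in the proof of Theorem \ref{t1} only the conditional-expectation version is actually needed: combined with (\ref{e13}) and the comparison $Y^{2,u^*,\beta^*(u^*)}_t\ge Y^\delta_t$, it produces $\int_t^{t+\delta}\inf\sup F(s,x,0,0,0,u,v)\,ds\le O(\delta^{3/2})$, and dividing by $\delta$ and letting $\delta\downarrow 0$ yields the super-solution inequality.
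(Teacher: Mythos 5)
Your proof follows essentially the same route as the paper's: the same a priori bounds $|Y^{\delta}_s|\le C\delta$ and $\mathbb{E}[\int_t^{t+\delta}(|Z^{\delta}_s|^2+|H^{\delta}_s|^2)\,ds\mid\mathcal{F}_t]\le C\delta^2$, the same decomposition isolating the drift on $[\tau^{\delta},t+\delta]$ (controlled via $\mathbb{E}[t+\delta-\tau^{\delta}\mid\mathcal{F}_t]\le\lambda\delta^2$) and the $Z^{\delta},H^{\delta}$ contributions (controlled by Cauchy--Schwarz). The measurability point you flag at the end is legitimate --- the paper itself glosses over it --- but it closes immediately without any It\^o argument: since $\bar{Y}^{\delta}_t-\mathbb{E}[\bar{Y}^{\delta}_t\mid\mathcal{F}_t]=-L\int_t^{t+\delta}\bigl(|Y^{\delta}_s|-\mathbb{E}[|Y^{\delta}_s|\mid\mathcal{F}_t]\bigr)\,ds$ and $|Y^{\delta}_s|\le C\delta$ a.s., this fluctuation is bounded by $C\delta^2$ pointwise (whereas the quantity $(Y^{\delta}_s-\mathbb{E}[Y^{\delta}_s\mid\mathcal{F}_s])^2$ you propose to treat by It\^o's formula is identically zero, $Y^{\delta}_s$ being $\mathcal{F}_s$-measurable).
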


From the above lemma, (\ref{e13}) and (\ref{e14}) it follows that
\begin{eqnarray*}\label{}
C\delta^{\frac{1}{2}}\geq
\frac{1}{\delta}\bar{Y}_t^{\delta}\rightarrow \inf_{v \in V}\sup_{u
\in U}F(t,x, 0,0,0,u,v)
\end{eqnarray*}
as $\delta\rightarrow 0.$ Therefore, $\inf_{v \in V}\sup_{u \in
U}F(t,x, 0,0,0,u,v)\leq 0.$  From the definition of $F$ it now
follows that $U=(U_{1}, U_{2})$ is a viscosity supersolution of the
system (\ref{e6}). We conclude the proof.
\end{proof}\vskip2mm

Let us give the proof of the Lemmas \ref{l2}, \ref{l3}, \ref{l4} and
\ref{ll21}, which is an adaption of those  in \cite{P1997} or
\cite{BHL2010} to our framework of games of the type "NAD strategy
against NAD strategy". We begin with the\vskip2mm

\noindent {\bf Proof of Lemma \ref{l2}}\hskip3mm  We notice that
$^{i}G^{t,x;u,v}_{s\wedge\tau^{\delta},\tau^{\delta}}
[\varphi(\tau^{\delta},X^{t,x;u,v}_{\tau^{\delta}})1_{N_{\tau^{\delta}}^{t,i}=i}+
U_{m(i+1)}(\tau^{\delta},X^{t,x;u,v}_{\tau^{\delta}})1_{N_{\tau^{\delta}}^{t,i}=m(i+1)}]$
is defined by the following BSDE:
   \begin{eqnarray}\label{e7}
   \left \{\begin{array}{rcl}
   -d\ ^{i}\widehat{Y}^{t,x; u, v}_s & = & f_{i}(s,X^{t,x; u, v}_s, \ ^{i}\widehat{Y}^{t,x; u, v}_s,
   \ ^{i}\widehat{H}^{t,x; u, v}_s,\  ^{i}\widehat{Z}^{t,x; u, v}_s,u_s, v_s) ds\\
   &&-\lambda\ ^{i}\widehat{H}^{t,x; u, v}_s ds-\ ^{i}\widehat{Z}^{t,x; u, v}_s dB_s
   -\ ^{i}\widehat{H}^{t,x; u, v}_s d\widetilde{N}_{s}
   , \quad s\in [t,\tau^{\delta}],\\
        ^{i}\widehat{Y}^{t,x; u, v}_{\tau^{\delta}} & = & \varphi(\tau^{\delta},
        X^{t,x;u,v}_{\tau^{\delta}})1_{N_{\tau^{\delta}}^{t,i}=i}+
    U_{m(i+1)}(\tau^{\delta},X^{t,x;u,v}_{\tau^{\delta}})1_{N_{\tau^{\delta}}^{t,i}=m(i+1)},
   \end{array}\right.
  \end{eqnarray}
through the following relation:
\begin{eqnarray*}
    ^{i}G^{t,x;u,v}_{s\wedge\tau^{\delta},\tau^{\delta}}
[\varphi(\tau^{\delta},X^{t,x;u,v}_{\tau^{\delta}})1_{N_{\tau^{\delta}}^{t,i}=i}+
U_{m(i+1)}(\tau^{\delta},X^{t,x;u,v}_{\tau^{\delta}})1_{N_{\tau^{\delta}}^{t,i}=m(i+1)}]
=\ ^{i}\widehat{Y}^{t,x; u, v}_s, \     s\in [t,\tau^{\delta}].
\end{eqnarray*}
On the other hand, by applying  It\^{o}'s formula to
$\overline{Y}_s$ (see the definition of $\overline{Y}$ in Lemma
\ref{l2}), we obtain
\begin{eqnarray}\label{e8}
   d \overline{Y}_s & = & \frac{\partial \varphi}{\partial s}(s,X^{t,x; u, v}_s)ds+
   (\nabla_{x}\varphi b)(s,X^{t,x; u, v}_s,u_s, v_s)ds+
   (\nabla_{x}\varphi \sigma)(s,X^{t,x; u, v}_s,u_s, v_s)dB_{s}\nonumber\\
   &&+\frac{1}{2}tr(\partial_{xx}\varphi \sigma\sigma^{*})(s,X^{t,x; u, v}_s,u_s, v_s)ds
   +(U_{m(i+1)}(s,X^{t,x;u,v}_{s})-\varphi
(s,X^{t,x;u,v}_s))dN_{s}.
\end{eqnarray}
From the definition of $\tau^{\delta}$ we have
\begin{eqnarray*}
\overline{Y}_{\tau^{\delta}}&=&\varphi
(\tau^{\delta},X^{t,x;u,v}_{\tau^{\delta}})+\int_{t}^{\tau^{\delta}}(U_{m(i+1)}(r,X^{t,x;u,v}_{r})-\varphi
(r,X^{t,x;u,v}_r))dN_{r}\\
&=&\varphi
(\tau^{\delta},X^{t,x;u,v}_{\tau^{\delta}})+(U_{m(i+1)}(\tau^{\delta},X^{t,x;u,v}_{\tau^{\delta}})-\varphi
(\tau^{\delta},X^{t,x;u,v}_{\tau^{\delta}}))\Delta N_{\tau^{\delta}}\\
&=&\varphi
(\tau^{\delta},X^{t,x;u,v}_{\tau^{\delta}})+(U_{m(i+1)}(\tau^{\delta},X^{t,x;u,v}_{\tau^{\delta}})-\varphi
(\tau^{\delta},X^{t,x;u,v}_{\tau^{\delta}}))1_{N_{\tau^{\delta}}^{t,i}=m(i+1)}\\
&=&\varphi(\tau^{\delta},X^{t,x;u,v}_{\tau^{\delta}})1_{N_{\tau^{\delta}}^{t,i}=i}+
    U_{m(i+1)}(\tau^{\delta},X^{t,x;u,v}_{\tau^{\delta}})1_{N_{\tau^{\delta}}^{t,i}=m(i+1)},
\end{eqnarray*}
and thus $\overline{Y}_{\tau^{\delta}}=\
^{i}\widehat{Y}_{\tau^{\delta}}^{t,x,u,v}.$
 Therefore, from the above equality, (\ref{e7}) and (\ref{e8}),  and the uniqueness of the solution of BSDE (\ref{e36}),
  it follows that
\begin{eqnarray*}
Y^{1,u,v}_{s\wedge \tau^{\delta}} =\
^{i}G^{t,x;u,v}_{s\wedge\tau^{\delta},\tau^{\delta}} [\varphi
(\tau^{\delta},X^{t,x;u,v}_{\tau^{\delta}})1_{N_{\tau^{\delta}}^{t,i}=i}+
U_{m(i+1)}(\tau^{\delta},X^{t,x;u,v}_{\tau^{\delta}})1_{N_{\tau^{\delta}}^{t,i}=m(i+1)}]
-\overline{Y}_{s\wedge\tau^{\delta}},
\end{eqnarray*}
and this allows to conclude the proof. \ $\Box$\vskip2mm

\noindent {\bf Proof of Lemma \ref{l3}}\hskip5mm  From (\ref{e22})
it follows that, for all $p\geq 2$, there exists
 some positive constant $C=C_p$\ such that
 \begin{eqnarray*}
 \mathbb{E} [\sup \limits_{t\leq s \leq t+\delta} |X^{t,x;u,v}_s -x|^p|{{\mathcal{F}}_t}] \leq
       C\delta(1+|x|^p),\ \mathbb{P}-a.s.,
 \end{eqnarray*}
\ uniformly in $u \in {\mathcal{U}}_{t, t+\delta}, v \in
{\mathcal{V}}_{t, t+\delta}$.  Let   $$ \varphi(s)=F(s,x,Y_s^{2, u,
v},H_s^{2, u, v},Z_s^{2, u, v},u_s,v_s) -F(s,X^{t,x,u,v}_s,Y_s^{2,
u, v},H_s^{2, u, v},Z_s^{2, u, v},u_s,v_s).$$
 Then, we have
 $$ |\varphi(s)|\leq C(1+|x|^2)(|X^{t,x;u,v}_s -x|+|X^{t,x;u,v}_s
 -x|^3),$$
  for $s\in [t, t+\delta], (t, x)\in [0, T)\times {\mathbb{R}}^n$,
  $u\in {\mathcal{U}}_{t, t+\delta}, v \in {\mathcal{V}}_{t, t+\delta}. $
    From  Lemma \ref{l1} it follows that
 \begin{eqnarray*}
      && \mathbb{E}[\int^{t+\delta}_t (|Y^{1,u,v}_s -Y^{2,u,v}_s|^2 +|Z^{1,u,v}_s -Z^{2,u,v}_s|^2)ds|{\mathcal{F}}_{t}]
        + \mathbb{E}[\int^{t+\delta}_t |H^{1,u,v}_{s} -H^{2,u,v}_{s}|^2ds|{\mathcal{F}}_{t}]\\
       && \leq C\mathbb{E}[\int^{t+\delta}_t \rho^2(|X^{t,x,u,v}_s -x|)
       ds|{\mathcal{F}}_{t}]\\
        && \leq  C \delta \mathbb{E}[\sup \limits_{t\leq s \leq t+\delta}\rho^2(|X^{t,x,u,v}_s
       -x|)|{\mathcal{F}}_{t}] \leq C\delta^2,
 \end{eqnarray*}
 where  $\rho(r) =(1+|x|^2)(r+r^3),\ r\geq 0. $\
Therefore, we have
 \begin{eqnarray*}
      && |Y^{1,u,v}_t -Y^{2,u,v}_t|  =|\mathbb{E}[(Y^{1,u,v}_t -Y^{2,u,v}_t )|{\mathcal{F}}_{t}]|  \\
       & = & |\mathbb{E}[\int^{t+\delta}_t \Big(F(s,X^{t,x,u,v}_s,Y^{1,u,v}_s,H^{1,u,v}_s,Z^{1,u,v}_s,u_s,
       v_s)\\
        &&\ \hskip2cm   -F(s,x,Y^{2,u,v}_s ,H^{2,u,v}_s,Z^{2,u,v}_s,u_s, v_s)
        -\lambda H^{1,u,v}_s +\lambda H^{2,u,v}_s\Big) ds\Big|{\mathcal{F}}_{t}]| \\
       & \leq & C\mathbb{E} [\int^{t+\delta}_t (\rho(|X^{t,x,u,v}_s -x|) +|Y^{1,u,v}_s -Y^{2,u,v}_s|
       +|Z^{1,u,v}_s -Z^{2,u,v}_s|)ds|{\mathcal{F}}_{t}]\\
      &&\ \hskip1cm +C\mathbb{E} [\int^{t+\delta}_t|H^{1,u,v}_s-H^{2,u,v}_s|ds|{\mathcal{F}}_{t}]  \\
       & \leq & C\mathbb{E}[\int^{t+\delta}_t\rho (|X^{t,x,u,v}_s -x|)ds|{\mathcal{F}}_{t}] +
       C\delta^{\frac{1}{2}} \{ \mathbb{E}[\int^{t+\delta}_t
            |Y^{1,u,v}_s-Y^{2,u,v}_s|^2ds|{\mathcal{F}}_{t}]^{\frac{1}{2}}\\
         &&   + \mathbb{E}[\int^{t+\delta}_t|Z^{1,u,v}_s -Z^{2,u,v}_s|^2ds|{\mathcal{F}}_{t}]^{\frac{1}{2}}
         + \mathbb{E}[\int^{t+\delta}_t|H^{1,u,v}_s -H^{2,u,v}_s|^2ds|{\mathcal{F}}_{t}]^{\frac{1}{2}}\}  \\
       & \leq & C\delta^{\frac{3}{2}}.
 \end{eqnarray*}
The desired result then follows. \ $\Box$\vskip2mm
 \noindent {\bf
Proof of Lemma \ref{l4}:} \hskip1mm Since $F(s, x, \cdot,
\cdot,\cdot, u, v)$\ has a linear
 growth in $(y, h,z)$, uniformly in $(s,x,u, v)$,    there exists a positive constant $C$\
 independent of $\delta,$\ $u$ and $ v,$ such that, for $s\in[t,t+\delta],$
\begin{eqnarray*}
 |Y^{2,u,v}_s|^2\leq C\delta,\ \mathbb{E}[\int_s^{t+\delta}|Z^{2,u,v}_r|^2dr|{\cal{F}}_s]\leq
C\delta,\ \mathbb{E}[\int_s^{t+\delta}|H^{2,u,v}_r|^2
dr|{\cal{F}}_s]\leq C\delta.
\end{eqnarray*}
 By virtue of  equation (\ref{e9}) we
have, for $s\in[t, t+\delta],$
\begin{eqnarray*}
 |Y^{2,u,v}_s|&\leq &\mathbb{E}[\int_s^{t+\delta}\Big(|F(r,x,{Y}^{2, u, v}_r,
 H^{2, u, v}_r, Z^{2, u, v}_r, u _r, v_r)|+\lambda|H^{2, u, v}_r|\Big)dr|{\cal{F}}_s]\\
 &\leq & C\mathbb{E}[\int_s^{t+\delta}\Big(1+ |x|^2+|{Y}^{2, u, v}_r|
 + |{H}^{2, u, v}_r|+|{Z}^{2, u, v}_r|\Big)dr|{\cal{F}}_s]\\
 &\leq & C\delta+C\sqrt{\delta}(\mathbb{E}[\int_s^{t+\delta}|Z^{2,u,v}_r|^2dr|{\cal{F}}_s])^{\frac{1}{2}}
 +C\sqrt{\delta}(\mathbb{E}[\int_s^{t+\delta}|H^{2,u,v}_r|^2dr|{\cal{F}}_s])^{\frac{1}{2}}\\
 &\leq & C\delta,\ \mathbb{P}\mbox{-a.s.}
\end{eqnarray*}
By applying  It\^o formula to $|Y^{2,u,v}_s|^2$ we conclude
\begin{eqnarray*}
\mathbb{E}[\int_t^{t+\delta}|Z^{2,u,v}_s|^2ds|{\cal{F}}_t]
+\mathbb{E}[\int_t^{t+\delta}|H^{2,u,v}_s|^2ds|{\cal{F}}_t]\leq
C\delta^2, \ \ \mathbb{P}\mbox{-a.s.}
\end{eqnarray*}
Therefore,
\begin{eqnarray*}
&&\mathbb{E}[\int_t^{t+\delta}|Y^{2,u,v}_s|ds|{\cal{F}}_t]+\mathbb{E}[\int_t^{t+\delta}|Z^{2,u,v}_s|ds|{\cal{F}}_t]
+\mathbb{E}[\int_t^{t+\delta}|H^{2,u,v}_s|ds|{\mathcal{F}}_{t}]\\
&&\leq
C\delta^2+\delta^{\frac{1}{2}}\{\mathbb{E}[\int_t^{t+\delta}|Z^{2,u,v}_s|^2ds|{\cal{F}}_t]\}^{\frac{1}{2}}
+C\delta^{\frac{1}{2}}\{\mathbb{E}[\int_t^{t+\delta}|H^{2,u,v}_s|^2ds|{\cal{F}}_t]\}^{\frac{1}{2}}\\
&&\leq C\delta^{\frac{3}{2}}, \ \ \mathbb{P}\mbox{-a.s.}
\end{eqnarray*}
 The proof is complete.  \ $\Box$\vskip2mm

\vskip2mm
 \noindent {\bf Proof of Lemma \ref{ll21}:} \hskip1mm
By using standard arguments of BSDEs we get the following estimate
\begin{eqnarray*}
\mathbb{E}[\sup_{s\in[t,t+\delta]}|Y^{\delta}_s-\bar{Y}^{\delta}_s|^{2}|{\mathcal{F}}_{t}]
+\mathbb{E}[\int_t^{t+\delta}|Z^{\delta}_s|^{2}ds|{\mathcal{F}}_{t}]
+\mathbb{E}[\int_t^{t+\delta}|H^{\delta}_s|^{2}ds|{\mathcal{F}}_{t}]
\leq C\delta^{2},\ \mathbb{P}-a.s.
\end{eqnarray*}

From equations (\ref{e14}) and (\ref{e110}) it follows that
\begin{eqnarray*}
 |Y^{\delta}_t-\bar{Y}^{\delta}_t|&\leq &
 C\mathbb{E}[\int_t^{\tau^{\delta}}\Big(|Y^{\delta}_s-\bar{Y}^{\delta}_s|
 +|Z^{\delta}_s|+|H^{\delta}_s|\Big)ds|{\cal{F}}_t]\\
 &&+ \mathbb{E}[\int_{\tau^{\delta}}^{t+\delta}\Big(\inf_{v \in U}\sup_{u \in U}F(s,x,
      0,0,0,u,v)+L|Y_s^{\delta}| \Big)ds|{\cal{F}}_t]\\
      &\leq &
 C\delta^{\frac{1}{2}}\mathbb{E}[\sup_{s\in[t,t+\delta]}|Y^{\delta}_s-\bar{Y}^{\delta}_s|^{2}|
 {\mathcal{F}}_{t}]^{\frac{1}{2}}
 +C\delta^{\frac{1}{2}}\mathbb{E}[\int_t^{t+\delta}
  |Z^{\delta}_s|^{2} ds|{\cal{F}}_t]^{\frac{1}{2}}\\
  && +C\delta^{\frac{1}{2}}\mathbb{E}[\int_t^{t+\delta}
  |H^{\delta}_s|^{2} ds|{\cal{F}}_t]^{\frac{1}{2}}+ \mathbb{E}[t+\delta-\tau^{\delta}|{\cal{F}}_t]\\
 &\leq & C\delta^{\frac{3}{2}},\ \mathbb{P}\mbox{-a.s.}
\end{eqnarray*}
 The proof is complete.  \ $\Box$\vskip2mm

{\small  }

\end{document}